\documentclass[a4paper,10pt]{amsart}
\usepackage[utf8]{inputenc}
\usepackage{amsthm}
\usepackage{amsmath}
\usepackage{comment}
\usepackage{bm}
\usepackage{amsfonts}
\usepackage{amssymb}
\usepackage{mathtools}
\usepackage{stmaryrd}
\usepackage{mathrsfs}
\usepackage{setspace}
\usepackage{xcolor}
\usepackage{textgreek}
\usepackage{todonotes}
\usepackage{caption}
\usepackage{tikz}
\usepackage{pgfplots}
    \pgfplotsset{
        compat=1.15,
        width=8cm,
    }
\usepackage[a4paper, left=2.8cm, right=2.8cm, top=3cm, bottom=3cm]{geometry}

\usepackage{thmtools} 
\usepackage{bm}
\usepackage{keyval}

\usepackage[pdfdisplaydoctitle,colorlinks,breaklinks,urlcolor=blue,linkcolor=blue,citecolor=blue]{hyperref} 
\mathtoolsset{showonlyrefs}

\newcommand{\R}{\mathbb{R}}

\renewcommand{\L}{{L^{\eta,\gamma}}}

\newcommand{\Srad}{{\mathscr{S}_{\mathrm{rad}}}}

\let\div\relax 

\DeclareMathOperator{\div}{div}

\newcommand{\eps}{\varepsilon}

\newcommand{\CC}{\mathbb{C}}

\newcommand{\NN}{\mathbb{N}}

\newcommand{\RR}{\mathbb{R}}

\newcommand{\vertiii}[1]{{\left\vert\kern-0.25ex\left\vert\kern-0.25ex\left\vert #1 
    \right\vert\kern-0.25ex\right\vert\kern-0.25ex\right\vert}}

\numberwithin{equation}{section}
\newtheorem{theorem}{Theorem}[section]

\newtheorem{defi}[theorem]{Definition}
\newtheorem{cor}[theorem]{Corollary}

\newtheorem{lemma}[theorem]{Lemma}

\newtheorem{prop}[theorem]{Proposition}

\newtheorem{rmk}[theorem]{Remark}

\theoremstyle{definition}

\newenvironment{acknowledgements}{%
  \begin{abstract}
}{%
  \end{abstract}
}

\title[Instability and non-uniqueness for the Keller-Segel system]{Spectral instability and non-uniqueness for the Keller-Segel system}

\author[E. Luongo]{Eliseo Luongo}
\address{Fakultät für Mathematik, Universität Bielefeld, 33501 Bielefeld, Germany} 
\email{eluongo@math.uni-bielefeld.de  }
\author[U. Pappalettera]{Umberto Pappalettera}
\address{Departement Mathematik und Informatik, Universit\"at Basel, Spiegelgasse 1, CH-4051 Basel, Switzerland.} 
\email{umberto.pappalettera@unibas.ch}

\keywords{Keller-Segel, Spectral instability, Non-uniqueness.}
\date\today

\begin{document}

\begin{abstract}
We show that the Cauchy problem associated with the parabolic-elliptic Keller-Segel model is locally ill-posed in $L^q(\mathbb{R}^n)$ for dimensions $n \in \{3,\dots,9\}$ and throughout the supercritical range $q\in [1,\frac{n}{2})$.
An analog non-uniqueness result is given in the critical space of bounded functions taking values in  $L^{n/2,\infty}(\R^n)$.
The non-uniqueness is driven by an instability mechanism in self-similarity variables, in the spirit of the program proposed by Jia and Šverák \cite{JiSv15} for the three-dimensional Navier–Stokes equations.
\end{abstract}

\maketitle

\section{Introduction}
Chemotaxis is a biological phenomenon describing the displacement
of a population of bacteria in response to a secreted chemical signal that is spread in the environment. From a mathematical viewpoint, one of the most studied chemotactic systems is the parabolic–elliptic Keller–Segel model introduced in \cite{JaLu92}:
\begin{align} \label{eq:KS_physical}
\begin{dcases}
\partial_t c = \Delta c - \nabla \cdot (vc),
\\
v = \nabla (-\Delta)^{-1} c,
\end{dcases}
\end{align}
where the unknown is a scalar quantity $c : [0,T] \times \R^n \to \R$.
System \eqref{eq:KS_physical} can also be interpreted as a simplified model for self-gravitating matter in stellar dynamics, see for instance \cite{chandrasekhar1957introduction, wolansky1992steady, ascasibar2013approximate}.
The equation defining $v$ can be formally solved via convolution with the gradient of the Green function $K$:
\begin{align*}
   v(t,x) = \int_{\R^n} \nabla K(x-y) c(t,y) dy.
\end{align*}
In this paper we are mostly concerned with \emph{mild} solutions to \eqref{eq:KS_physical}, defined as follows.

\begin{defi}\label{def_mild_sol}
    Let $q\in [1,+\infty]$ and $T>0$. A mild $L^q$-solution to \eqref{eq:KS_physical} on the time interval $[0,T]$ and with initial condition $c_0\in L^q(\R^n)$ is a function $c  \in C([0,T] ; L^q(\R^n))$ which is a distributional solution to \eqref{eq:KS_physical} and satisfies the mild formulation
\begin{align} \label{eq:mild.formulation}
    c(t,\cdot) - e^{\Delta t}c_0 =-\int_0^t e^{\Delta (t-s)} \nabla \cdot (v(s,\cdot) c(s,\cdot)) ds,
\end{align}
as an equality in $L^q(\R^n)$, for every $t \in [0,T]$.
\end{defi}

In the definition above, we implicitly assume that the product $vc$ is a well-defined distribution on $(0,T)\times \R^n$ and the right-hand side of \eqref{eq:mild.formulation} belongs to $L^q(\R^n)$ for every $t \in [0,T]$.

Equation \eqref{eq:KS_physical} is invariant under the scaling 
\begin{align*}
    c(t,x) \mapsto \lambda^2 c(\lambda^2 t,\lambda x),
    \quad
    \lambda>0.
\end{align*}
Accordingly, the space $C([0,T] ; L^\frac{n}{2}(\R^n))$ is \emph{critical} for the Keller-Segel system \eqref{eq:KS_physical} under this scaling. 
In the \emph{subcritical} regime $q>\frac{n}{2}$, viscous effects dominate the dynamics in \eqref{eq:KS_physical}, and local existence and uniqueness of mild $L^q$-solutions are expected; see, for instance, \cite[Theorem 3.1, Lemma 3.1(ii)]{Ka99} and \cite[Theorem 2]{BiHeNa94} for results specific to \eqref{eq:KS_physical}, and \cite[Theorem 2.1]{PrSiWi18} for general well-posedness results for parabolic equations.
Some well-posedness results are also available in the critical case $q=\frac{n}{2}$, see for instance \cite{BeMa14} in dimension $n=2$ and \cite{KoSuYa12,Fe22} in dimensions $n \geq 3$; however, solutions may blow up in finite time if the critical $L^{\frac{n}{2}}(\R^n)$-norm of the initial datum is sufficiently large \cite{CoPeZa04}, \cite[Corollary 5.1.1]{BiHeNa94}; see also \cite{CaCoEb12,KiXu16,SoWi19,naito2021blow,GlSc24,HuKi24,NgNoZa25,HuKiYa25} for further blow-up/no blow-up criteria and results.

On the other hand, in the \emph{supercritical} regime $q<\frac{n}{2}$, nonlinear effects dominate and ill-posedness is expected for \eqref{eq:KS_physical}. However, to the best of our knowledge no example of non-uniqueness is available in the literature, neither in  $C([0,T] ; L^q(\R^n))$ nor in other supercritical spaces. In this paper, we prove existence and non-uniqueness of mild $L^q$-solutions for every supercritical exponent. More precisely we show that:
\begin{theorem} \label{thm:non.uniqueness}
Let $n \in \{3,\dots,9\}$. Then, for every $q \in [1,n/2)$, there exist nontrivial $c_0 \in L^q(\R^n)$, $T \in (0,\infty)$, and two distinct mild $L^q$-solutions of \eqref{eq:KS_physical} in the sense of \autoref{def_mild_sol} with initial condition $c_0$.
\end{theorem}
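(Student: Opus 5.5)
The strategy follows the Jia–Šverák program as carried out by Albritton–Brué–Colombo for forced Navier–Stokes, adapted to the unforced Keller–Segel system. We look for two solutions with the same self-similar initial datum $c_0(x)=\bar C(x/|x|)|x|^{-2}$; for radial data this is simply $c_0 = A|x|^{-2}$. Such a datum lies in $L^{n/2,\infty}$ but not in $L^q$. We therefore first build non-uniqueness for the self-similar problem and only at the end localize in space to reach $L^q$ with $q<n/2$.

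\textbf{Self-similar variables.} We set $\tau=\log t$, $\xi = x/\sqrt t$ and $c(t,x)=t^{-1}C(\tau,\xi)$. The equation becomes
\[
\partial_\tau C = \Delta_\xi C + \tfrac12 \xi\cdot\nabla_\xi C + C - \nabla\cdot(V C),\qquad V=\nabla(-\Delta)^{-1}C .
\]
Self-similar solutions with datum $A|x|^{-2}$ are steady states $\bar C$ of this equation with $\bar C(\xi)\sim A|\xi|^{-2}$ at infinity. The first solution is the radial self-similar profile $\bar C$. We would construct it by reducing to an ODE for the mass function $M(r)=\int_{B_r}\bar C$, or by taking a one-parameter family of radial profiles obtained by shooting from $r=0$. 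The dimension restriction $3\le n\le 9$ should enter through the known phase portrait of this ODE: for $n\le 9$ the profiles oscillate around the singular steady state $2(n-2)|x|^{-2}$ (the Joseph–Lundgren-type threshold). This oscillation supplies a profile whose linearization carries an unstable eigenvalue.

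\textbf{Spectral instability and the second solution.} Let $L$ be the linearization of the self-similar equation at $\bar C$, acting on a weighted space such as $L^2(e^{|\xi|^2/4}d\xi)$ restricted to radial functions. We must show that $L$ has an eigenvalue $\lambda$ with $\mathrm{Re}\,\lambda>0$, and we expect this to be the main difficulty. Our first attempt is to work with the mass variable, which turns $L$ into a Sturm–Liouville operator. One then counts the sign changes of $\partial_A\bar C_A$ along the family of profiles, or of $M-M_{\rm sing}$, and uses the oscillation available for $n\le 9$ to produce a positive eigenvalue by Sturm comparison. If this fails, we would pick the profile at a fold or bifurcation of the family and use a continuity argument on the spectrum, which is discrete with compact resolvent in the Gaussian-weighted space. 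Once we have an eigenfunction $\eta$ with $L\eta=\lambda\eta$, we construct $C=\bar C+ e^{\lambda\tau}\eta + C^{\rm per}$, solving for $C^{\rm per}=O(e^{2\mathrm{Re}\lambda\,\tau})$ by a fixed point as $\tau\to-\infty$. For this we need semigroup bounds $\|e^{\tau L}\|\lesssim e^{(a+\delta)\tau}$, where $a$ is the maximal growth bound. We would obtain these by choosing $\lambda$ of maximal real part and invoking a compact-perturbation argument, since $L$ minus a compact operator is dissipative in the weighted space. Returning to physical variables, the perturbation has size $t^{\lambda}$ in the critical norm, so it vanishes as $t\to0$ and has the same initial datum $A|x|^{-2}$. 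This already yields non-uniqueness in $L^\infty_t L^{n/2,\infty}$.

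\textbf{Localization to $L^q$.} We truncate the datum by setting $c_0=\chi A|x|^{-2}$, with $\chi$ a smooth cutoff equal to $1$ near the origin. Since $|x|^{-2}\in L^q_{\rm loc}$ exactly when $q<n/2$, we get $c_0\in L^q$ for all $q\in[1,n/2)$. Both solutions are then corrected by a remainder that solves the perturbed equation with small forcing supported away from the origin. The remainder is found by a fixed point in a subcritical space on a short interval $[0,T]$; this works because the error from the cutoff is smooth near $0$ and the self-similar solutions are bounded there. The two corrected solutions stay distinct for small $t$, because their difference $t^{\lambda}\,t^{-1}\eta(x/\sqrt t)$ dominates the remainders. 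Finally we check that each is continuous in time with values in $L^q$ (using $e^{\Delta t}c_0\to c_0$ and the decay of the nonlinear term) and satisfies \eqref{eq:mild.formulation}.
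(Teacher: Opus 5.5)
Your self-similar part follows the paper's route: reduced mass variables, zeros of $\partial_\alpha u_\alpha$ (which is exactly the solution $f_\alpha$ of \eqref{eq:ODE.linearized}), oscillation about the singular steady state giving $3\le n\le 9$, and an ancient solution built by a fixed point from $\tau=-\infty$. The hard step there, proving that $f_\alpha$ really vanishes, is only announced; the paper gets it from the rescaling $\alpha\to\infty$, the tail $\rho^2\tilde u\to1$ of the limit profile, and Picone comparison.

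The genuine gap is the localization. The remainder does not solve a problem with merely ``small forcing supported away from the origin''. Its linear part is the linearization at the self-similar solution, in the paper's variables
\[
\partial_t h=\Delta h+4n\bar w h+2y\cdot\nabla(\bar w h)+\dots,\qquad \bar w(t,y)=\tfrac1t\bar\Omega\bigl(y/\sqrt t\bigr).
\]
Here $\bar w(t,0)=\alpha/t$, so the self-similar solution is not bounded near the origin. Moreover $h$ reaches the origin instantly by diffusion, even though $h_0$ vanishes there. Since $\mu^2\bar w(\mu^2t,\mu y)=\bar w(t,y)$, the Duhamel operator $h\mapsto\int_0^t e^{(t-s)\Delta}[4n\bar wh+2y\cdot\nabla(\bar wh)](s)\,ds$ commutes with $h\mapsto h(\mu^2t,\mu\,\cdot)$. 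Hence its operator norm on $L^\infty(0,T;Y^r)$ is independent of $T$: shrinking $T$ makes nothing small, and a perturbative short-time fixed point in a subcritical space cannot close. This linear problem has to be solved exactly, through the semigroup generated by $L$.

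Doing so exposes a constraint your plan misses. Write $h=e^{t\Delta}h_0+\phi(\log t,y/\sqrt t)$; then $\phi$ evolves by $L-1$ with a source of size $e^{-\frac{n}{2r}\tau}$ in $Y^{q_a,r}$. With the growth bound $\|S(\tau)\|\lesssim e^{(\lambda+\delta)\tau}$, the Duhamel integral from $\tau=-\infty$ is controlled when $\lambda+\delta<1-\frac{n}{2r}$.

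This is sharp in the following sense. The function $t^{\lambda-1}\bar\Omega^{lin}(y/\sqrt t)$ solves the homogeneous remainder equation and tends to $0$ in $Y^r$ when $\lambda>1-\frac{n}{2r}$. In that regime $h_1+\psi$ is an admissible remainder for $\tilde w_2$, and it gives back $w_1$, so nothing prevents the localized second solution from coinciding with the first.

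The same inequality is what your distinctness claim silently uses. In $L^r$, where the remainders stay bounded, the unstable part has norm $\sim t^{\lambda-1+\frac{n}{2r}}$. It dominates only if $\lambda<1-\frac{n}{2r}$; in $L^q$ with $q<n/2$ it tends to $0$.

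So you need an unstable eigenvalue that is maximal \emph{and} small, relative to an exponent $r>\frac n2$ (the paper takes $r\in(\frac n2,\frac{2n}{3})$). Choosing ``$\lambda$ of maximal real part'' is not enough. The paper obtains the smallness in \autoref{thm:instability}: for any $\bar\lambda>0$, let $\bar\alpha$ be the infimum of those $\alpha$ for which the solution of $(L_\alpha-\bar\lambda)f=0$, $f(0)=1$, $f'(0)=0$ has a positive zero. Then the top eigenvalue of $L_{\bar\alpha}$ lies in $(0,\bar\lambda]$.

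Relatedly, this argument needs the semigroup and its growth bound on the polynomially weighted spaces $Y^{q_a,r}$, because the remainder only decays like $|y|^{-2}$. A Gaussian-weighted $L^2$ alone is not enough, and you also need the fact (\autoref{cor:eigenvalues}) that the unstable eigenvalues agree in both settings.
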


Moreover, we are able to construct non-unique weak solutions to \eqref{eq:KS_physical} in a scaling critical space, up to replacing the Lebesgue $L^{n/2}(\R^n)$ with the (scaling equivalent) Lorentz space $L^{n/2,\infty}(\R^n)$. Specifically we can construct distinct solutions in $L^\infty([0,T];L^{n/2,\infty}(\R^n))$. Here the lack of time continuity is not surprising, as the heat semigroup is not strongly continuous on the Lorentz space.

It is worth to mention that non-uniqueness in \emph{any} critical space is completely new for the Keller-Segel system in dimension $n \geq 3$ (see \cite{LuSuVe12} for a result on measure-valued solutions in dimension $n=2$), and our result is even more interesting if one considers that uniqueness of small solutions in $L^\infty([0,T];L^{n/2,\infty}(\R^n))$ is expected by fixed-point arguments à la Koch-Tataru \cite{koch2001well}, cf. the discussion at \cite[Page 4 and Remark 3.2 (ii)]{Fe22}. 
We have the following:
\begin{theorem}
    \label{thm:critical}
    Let $n \in \{3,\dots,9\}$. Then there exist nontrivial $c_0 \in L^{n/2,\infty}(\R^n)$, $T \in (0,\infty)$, and two distinct weak solutions of \eqref{eq:KS_physical} in $L^\infty([0,T];L^{n/2,\infty}(\R^n))$ attaining the initial datum $c_0$ in the sense of distributions.
\end{theorem}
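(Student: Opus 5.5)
Following the Jia–Šverák program, I would work in self-similar variables. For a solution $c$ on $(0,T)\times\R^n$ I set
\begin{align*}
c(t,x)=\tfrac1t\, C\big(\tau,\xi\big),\qquad \xi=\tfrac{x}{\sqrt t},\quad \tau=\log t .
\end{align*}
Then \eqref{eq:KS_physical} becomes the autonomous equation
\begin{align*}
\partial_\tau C = \Delta C+\tfrac12\xi\cdot\nabla C + C - \nabla\cdot(V C),\qquad V=\nabla(-\Delta)^{-1}C .
\end{align*}
The initial datum is taken homogeneous of degree $-2$, namely $c_0(x)=\alpha|x|^{-2}$ with $\alpha>0$ subcritical, i.e. below the singular steady state $2(n-2)|x|^{-2}$. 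Such a datum lies in $L^{n/2,\infty}(\R^n)$, and it lies in $L^q$ for $q<n/2$ after a spatial cutoff. I would proceed in four steps.

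\emph{Step 1.} Construct the first solution $\bar c$: a radial forward self-similar profile $\bar C(\xi)$ solving the stationary self-similar equation with $\bar C\sim \alpha|\xi|^{-2}$ at infinity. Using the radial mass function $M(r)=\int_{B_r}\bar C$, this reduces to a second-order ODE shooting problem in $r$.

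\emph{Step 2.} Exhibit, for some value of the parameter $\alpha$ (or along a continuous branch), an unstable eigenvalue of the linearized operator
\begin{align*}
L_{\bar C}\varphi=\Delta\varphi+\tfrac12\xi\cdot\nabla\varphi+\varphi-\nabla\cdot(\bar V\varphi+\bar C\,\nabla(-\Delta)^{-1}\varphi)
\end{align*}
in a weighted space $L^2(e^{|\xi|^2/4})$. The eigenvalue $\lambda$ must have $\mathrm{Re}\,\lambda>0$, and in fact $\mathrm{Re}\,\lambda$ must exceed the threshold needed for the perturbation to vanish at $t=0$. Radially, this again becomes an ODE eigenvalue problem in the mass variable, and an instability can be sought by continuation in $\alpha$ toward the singular solution, where $n\le 9$ is the range in which the singular steady state oscillates (Joseph–Lundgren type threshold). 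This explains the dimensional restriction.

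\emph{Step 3.} Build the second solution
\begin{align*}
C=\bar C+e^{\lambda\tau}\eta+\psi ,
\end{align*}
with $\eta$ the eigenfunction and $\psi=o(e^{\mathrm{Re}\lambda\,\tau})$ as $\tau\to-\infty$. The correction $\psi$ is obtained by a fixed point of Duhamel type for the semigroup $e^{\tau L}$ on $(-\infty,\tau_0]$. This uses semigroup bounds $\|e^{\tau L}P\|\lesssim e^{(a+\epsilon)\tau}$, where $a$ is the spectral bound of $L$ restricted to the complement of the unstable modes, together with the bilinear estimate for $\nabla\cdot(V\psi)$ (Hardy–Littlewood–Sobolev plus parabolic smoothing).

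\emph{Step 4.} Undo the change of variables. Since $e^{\lambda\tau}=t^{\lambda}$, the difference $c-\bar c$ is $t^{\mathrm{Re}\lambda-1}$ times a profile in $x/\sqrt t$. Its $L^{n/2,\infty}$ norm is $\lesssim t^{\mathrm{Re}\lambda}\to0$, which gives the same initial datum, two distinct solutions, and boundedness in $L^{n/2,\infty}$. Weak-solution identities follow by testing. For \autoref{thm:critical} no localization is needed; the same construction with a truncated datum, as in \autoref{thm:non.uniqueness}, handles the $L^q$ version.

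The main obstacle is Step 2: rigorously proving the existence of an unstable eigenvalue. I would try to reduce it to a radial ODE and show, via a Sturm-type oscillation count, that the number of sign changes of the eigenfunction increases as $\alpha$ grows toward the singular profile. If that fails, a computer-assisted enclosure of the eigenvalue is the fallback.
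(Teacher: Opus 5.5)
Your outline has the same architecture as the paper, and Steps 3--4 are essentially its Section~4. The paper works in reduced-mass variables on $\R^{n+2}$ and in the spaces $Y^{\hat q,\hat r}$. It takes $\lambda$ to be the \emph{maximal} unstable eigenvalue, so that $\|S(\tau)\|\lesssim e^{(\lambda+\delta)\tau}$ and the Duhamel integral from $-\infty$ converges against the $O(e^{2(\lambda-\delta)\tau})$ forcing without any spectral projection.

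\textbf{Where the gap is.} The genuine gap is Step~2. It is the entire difficulty of the theorem, and you leave it as a hope with a computer-assisted fallback.

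\textbf{First missing ingredient: a self-adjoint Sturm--Liouville reformulation.} You need a rigorous bridge from the non-self-adjoint operator $L_{\bar C}$ (nonlocal in the $c$ variable) to a Sturm--Liouville problem. $L_{\bar C}$ is not symmetric in $L^2(e^{|\xi|^2/4})$. The paper passes to $w=A[c]$ on $\R^{n+2}$, where the radial linearization around the profile $u_\alpha$ reads
\[
f''+\Big(\frac{n+1}{\rho}+\frac{\rho}{2}+2\rho u_\alpha\Big)f'+\big(1+4nu_\alpha+2\rho u_\alpha'\big)f .
\]
This operator is symmetric in $L^2_\pi$ with $\pi=\exp\int_0^{|y|}(\frac s2+2su_\alpha(s))\,ds$, which is comparable to $e^{|y|^2/4}|y|^{2\ell_\alpha}$ for large $|y|$, not the Gaussian weight. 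The paper then proves that in $Y^{\eta,\gamma}$ with $\eta<n/2$:
\begin{itemize}
\item the spectrum in $\{\mathrm{Re}\geq 0\}$ consists of eigenvalues, since the essential spectrum lies in $\{\mathrm{Re}\leq 1-\frac{n}{2\eta}\}$;
\item their eigenfunctions decay like $e^{-\rho^2/4}$ (Frobenius at $0$, Liouville--Green asymptotics at $\infty$);
\item hence these eigenvalues coincide with those of the self-adjoint realization.
\end{itemize}
Only then does oscillation theory reduce instability to the existence of a positive zero of the solution $f_\alpha$ of $L_\alpha f=0$, $f(0)=1$, $f'(0)=0$.

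\textbf{Second missing ingredient: a mechanism that produces the zero.} This is the more serious gap, and the mechanism you suggest points in the wrong direction. The relevant parameter is the central value $u_\alpha(0)=\alpha$ of the profile, not the far-field coefficient of the datum.
\begin{itemize}
\item Small central values give stable expanders: the paper shows $f_\alpha>0$ whenever $\alpha<\frac{1}{16n}$.
\item Unstable profiles are reached only as $u_\alpha(0)\to\infty$. Continuing in the datum coefficient toward $2(n-2)$ gives you no handle on them.
\item Your restriction to data below the singular steady state is unjustified. The paper's datum is $2(n-2)\ell_{\bar\alpha}|x|^{-2}$, and only $\ell_{\bar\alpha}\in(0,2]$ is known, whereas the singular value is $\ell=1$.
\end{itemize}

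\textbf{What actually works.} The argument goes through the blow-up limit $\alpha\to\infty$.
\begin{itemize}
\item The rescaled profile $\tilde u_\alpha(\rho)=\alpha^{-1}u_\alpha(\rho/\sqrt{\alpha})$ converges locally in $C^1$ to the \emph{regular} steady state $\tilde u$.
\item One must prove that its tail matches the singular one: $\rho^2\tilde u\to 1$ and $\rho^3\tilde u'\to -2$. The paper does this through the autonomous Emden-type ODE for $z(t)=e^{2t}\tilde u(e^t)$, using an energy argument for $n\geq 4$ and Poincar\'e--Bendixson plus Bendixson--Dulac for $n=3$.
\item Picone comparison with the Euler equation $y''+\frac{n+3}{\rho}y'+\frac{4(n-1)-\varepsilon'}{\rho^2}y=0$ then applies. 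This equation oscillates iff $(n+2)^2<16(n-1)$, i.e.\ $3\leq n\leq 9$, so the limit $\tilde f$ has arbitrarily many zeros.
\item These zeros persist for $f_\alpha$ when $\alpha$ is large.
\end{itemize}
Your Joseph--Lundgren heuristic is the correct one: that Euler equation is exactly the linearization at $|y|^{-2}$. But without the tail asymptotics of $\tilde u$ and the comparison argument, Step~2, and hence the theorem, is not proved.

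A minor point in Step~4: for $n=3$, membership in $L^{3/2,\infty}$ alone does not make $c\nabla(-\Delta)^{-1}c$ locally integrable. The weak formulation therefore needs the bounds $\|c(t)\|_{L^{\hat r}}\lesssim t^{-(1-\frac{n}{2\hat r})}$ with $\frac n2<\hat r<n$, as in the paper.
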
 

The proof of \autoref{thm:non.uniqueness} and \autoref{thm:critical} follows the program developed by Jia and Šverák in \cite{JiSv15} to prove non-uniqueness of Leray-Hopf weak solutions to 3D Navier-Stokes equations. Although the latter are different equations than the Keller-Segel system, the two models share some similarities: $c$ behaves like the Navier-Stokes vorticity in terms of scaling, the equations are nonlocal, and the nonlinearity depends on the gradient of the solution.

The program of \cite{JiSv15} consists of two fundamental (and relatively independent) blocks, that in our context look as follows.

The first is a form of \emph{spectral instability} for self-similar solutions. Rewrite \eqref{eq:KS_physical} in similarity variables $\Xi(\tau,\theta) := e^\tau c(e^\tau,e^{\tau/2}\theta)$ to get:
\begin{align} \label{eq:Xi}
    \partial_\tau \Xi &= \Delta \Xi + \frac{\theta}{2} \cdot \nabla \Xi + \Xi - \nabla \Xi \cdot \nabla (-\Delta)^{-1}\Xi + \Xi^2,
    \quad 
    (\tau,\theta) \in (-\infty,\log T] \times \R^n,
\end{align}
and linearize \eqref{eq:Xi} around a given $\tau$-independent profile $\bar \Xi$. The resulting equation for the perturbation $\Phi(\tau,\theta) := \Xi(\tau,\theta)-\bar \Xi(\theta)$ reads as:
\begin{align} \label{eq:Phi}
    \partial_\tau \Phi &= \Delta \Phi + \frac{\theta}{2} \cdot \nabla \Phi + \Phi - \nabla \bar\Xi \cdot \nabla (-\Delta)^{-1}\Phi - \nabla \Phi \cdot \nabla (-\Delta)^{-1} \bar\Xi + 2 \bar\Xi \Phi
    =: L_{\bar\Xi}[\Phi].
\end{align}
Then one expects that for small $\Phi$ with respect to a suitable Banach norm, the behavior of solutions to \eqref{eq:Xi} is determined by the linearized equation above, and more specifically by the spectral properties of the linearized operator $L_{\bar\Xi}$. In particular, existence of an eigenvalue of $L_{\bar\Xi}$ with positive real part and sufficiently fast decay at infinity of the corresponding eigenfunction are a strong indication of non-uniqueness of solutions to \eqref{eq:Xi}.

The main obstacle to the application of the Jia and Šverák program is, in fact, the verification of this spectral instability.
For the Navier-Stokes equations specifically, the first numerical evidence of this fact has been given in \cite{GuSv23}; 
building upon the groundbreaking work of Vishik \cite{vishik2018instability,vishik2018instability2} (see also \cite{lecturenotes24,Ca+25,DoMe25}), Albritton, Bru\`e and Colombo gave in \cite{ABC22} the first rigorous proof of spectral instability for the Navier-Stokes equation with an external forcing, made \emph{ad hoc} to help with the construction of unstable self-similar profiles, cf. also \cite{ABC23} on bounded domains; a computer assisted proof of spectral instability in the unforced case has recently been claimed by Hou, Wang, and Yang in \cite{HoWaYa25}. 

This work demonstrates that the program of Jia and Šverák can successfully be implemented for the Keller-Segel system. We are able to rigorously prove spectral instability for the linearization around a self-similar, radially symmetric solution of \eqref{eq:KS_physical}. Unlike the previous works, our argument is entirely analytic and does not require the introduction of external forcings into the equation or computer-assisted methods. We have the following: 
\begin{theorem} \label{thm:instability.intro}
Let $n \in \{3,\dots,9\}$ and fix $\eta \in (1,n/2)$, $\gamma \in [\eta,\infty)$, and $\bar\lambda>0$. Then there exists a $\tau$-independent profile $\bar \Xi$ such that the linearized operator $L_{\bar\Xi}$ admits a realization on the space $L^{\eta}_{\mathrm{rad}}(\R^n) \cap L^{\gamma}_{\mathrm{rad}}(\R^n)$ of radially symmetric functions with a positive eigenvalue $\lambda \in (0,\bar\lambda]$, and $\sigma(L_{\bar\Xi}) \subset \{ \mu\in \CC \, : \, \mathrm{Re}(\mu) \leq \lambda\}$.
Moreover, the eigenfunction corresponding to $\lambda$ decays at infinity exponentially fast.
\end{theorem}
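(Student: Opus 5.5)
The plan is to exploit radial symmetry to turn $L_{\bar\Xi}$ into a \emph{local} Sturm--Liouville operator, and then to track its top eigenvalue along a continuous family of self-similar profiles. For radial $\Xi$, introduce the (rescaled) cumulative mass $m(\tau,\rho) := \int_{|\theta|<\rho}\Xi(\tau,\theta)\,d\theta$. Then $\nabla(-\Delta)^{-1}\Xi = -|S^{n-1}|^{-1} m(\rho)\rho^{1-n}\,\theta/\rho$, and \eqref{eq:Xi} becomes a scalar local parabolic equation:
\begin{align}
\partial_\tau m = m'' - \tfrac{n-1}{\rho} m' + \tfrac{\rho}{2} m' + \big(1-\tfrac n2\big)m + \tfrac{1}{|S^{n-1}|\rho^{n-1}}\, m\, m'.
\end{align}
Stationary solutions of this equation are ODE solutions. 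I would invoke (or reprove by a shooting argument) the classical family of forward self-similar radial profiles $\bar\Xi_\alpha$, $\alpha\in[0,\alpha_*)$, parametrized by the central value. This family is continuous in $\alpha$, starts at $\bar\Xi_0=0$, and, as $\alpha\to\alpha_*$, its profiles converge locally to the singular steady state $2(n-2)|\theta|^{-2}$. The linearization $L_\alpha$ in the mass variable is a second-order operator $\mathcal L_\alpha w = w'' + (\tfrac\rho2 - \tfrac{n-1}{\rho} + \tfrac{\bar m_\alpha}{|S^{n-1}|\rho^{n-1}})w' + (\dots)w$. After conjugating by an explicit weight (Gaussian at infinity), it becomes self-adjoint, with compact resolvent, on a weighted $L^2$ space. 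Hence its spectrum there is real and simple, and the $k$-th eigenfunction has exactly $k$ interior zeros (Sturm oscillation).

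The instability comes from the oscillation of the profiles around the singular solution. Linearizing at $2(n-2)\rho^{-2}$ produces an Euler-type equation whose indicial roots are complex exactly when $(n-2)^2 < 8(n-2)$, i.e.\ $n\le 9$ (the Joseph--Lundgren-type threshold). So for $n\in\{3,\dots,9\}$ the number of sign changes of $\partial_\alpha \bar\Xi_\alpha$, equivalently of the relevant ODE solution, tends to infinity as $\alpha\to\alpha_*$. By a Sturm comparison argument, the number of positive eigenvalues of $\mathcal L_\alpha$ is unbounded. At $\alpha=0$, by contrast, the operator is the Ornstein--Uhlenbeck-type operator, whose radial eigenvalues are explicitly negative. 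The principal eigenvalue $\lambda_1(\alpha)$ depends continuously on $\alpha$ (analytic perturbation of simple eigenvalues), so by the intermediate value theorem there is some $\alpha$ with $\lambda_1(\alpha)\in(0,\bar\lambda]$. For that $\alpha$, every other point of the weighted-$L^2$ spectrum lies below $\lambda=\lambda_1(\alpha)$. The eigenfunction inherits Gaussian-type (in particular exponential) decay from the weighted $L^2$ setting: the ODE at infinity has one rapidly decaying and one polynomial branch, and the weighted-$L^2$ eigenfunction must select the former. Differentiating in $\rho$ transfers this to $\Phi$.

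The main obstacle is transferring the spectral picture from the weighted Hilbert space to $L^\eta_{\mathrm{rad}}\cap L^\gamma_{\mathrm{rad}}$. There the operator is not self-adjoint and has essential spectrum. I would write $L_{\bar\Xi} = L_0 + K$, where $L_0 = \Delta + \tfrac\theta2\cdot\nabla + 1$. The perturbation $K$ is relatively compact, because $\bar\Xi$ decays like $|\theta|^{-2}$ and $\nabla(-\Delta)^{-1}\bar\Xi$ decays like $|\theta|^{1-n}$; to see compactness I would use the smoothing of the Ornstein--Uhlenbeck semigroup together with the decay of the coefficients. A direct semigroup estimate on radial functions gives $\|e^{\tau L_0}\|_{L^p\to L^p}\lesssim e^{(1-\frac{n}{2p})\tau}$. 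Since $\eta>1$ and $\gamma\ge\eta$, this places the essential spectrum in $\{\mathrm{Re}\,\mu\le 1-\tfrac{n}{2\eta}\}$, a half-plane to the left of $0$ because $\eta<n/2$. Any remaining spectrum with $\mathrm{Re}\,\mu>1-\tfrac n{2\eta}$ then consists of eigenvalues, and elliptic regularity plus ODE asymptotics show that these eigenfunctions are exponentially decaying. They therefore lie in the weighted $L^2$ space, where we already know all such eigenvalues are real and $\le\lambda$. This proves $\sigma(L_{\bar\Xi})\subset\{\mathrm{Re}\,\mu\le\lambda\}$. The delicate points are the relative compactness near the origin, where $\bar\Xi$ is bounded but the mass weight $\rho^{1-n}$ is singular, and the requirement $\eta>1$ needed for the Riesz-potential bounds.
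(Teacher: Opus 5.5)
Your architecture matches the paper's: a radial reduction to a local equation (your cumulative mass is $2|S^{n-1}|\rho^n w$, where $w=A[c]$ is the paper's reduced mass on $\R^{n+2}$), a Gaussian-weighted self-adjoint Sturm--Liouville realization, oscillation counting, and a transfer to $L^\eta\cap L^\gamma$. That transfer goes through relative compactness, $\omega_{\mathrm{ess}}=1-\frac{n}{2\eta}$, and the fact that eigenfunctions with $\mathrm{Re}\,\mu\ge 0$ must lie on the Gaussian branch.

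The gap is in the step that carries the theorem: that the solution $f_\alpha=\partial_\alpha u_\alpha$ of the linearized ODE has arbitrarily many zeros for large $\alpha$. (Sturm theory counts zeros of this mass-type perturbation, not sign changes of $\partial_\alpha\bar\Xi_\alpha$.) You get it by calling the convergence of the profiles to $2(n-2)|\theta|^{-2}$ classical, with no reference or argument, and by saying the linearization there is of Euler type.

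In self-similar variables it is not of Euler type. In the paper's reduced-mass variables, the linearization at the singular profile $\rho^{-2}$ is $f''+(\frac{n+3}{\rho}+\frac{\rho}{2})f'+(1+\frac{4(n-1)}{\rho^2})f=0$. The drift $\frac{\rho}{2}f'+f$ is not lower order for $\rho\gtrsim 1$, so the Euler structure, and with it the oscillation, lives only in $\rho\ll 1$. Since $u_\alpha$ is regular at the origin, what you actually need is this: for each $k$, $\rho^2u_\alpha\approx 1$ and $\rho^3u_\alpha'\approx -2$ on a window of logarithmic length $\gtrsim k$ inside the region $\alpha^{-1/2}\lesssim\rho\ll 1$. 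This is exactly the main content of Section 2 of the paper. The paper also notes that the existing arguments for the nonlinear heat equation do not carry over, because of the gradient terms.

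The paper obtains it as follows.
\begin{itemize}
\item It rescales, $\tilde u_\alpha(\rho)=u_\alpha(\rho/\sqrt\alpha)/\alpha$ and $\tilde f_\alpha(\rho)=f_\alpha(\rho/\sqrt\alpha)$, which kills the drift as $\alpha\to\infty$ and reduces everything to one regular steady state $\tilde u$.
\item It proves $\rho^2\tilde u\to 1$ and $\rho^3\tilde u'\to -2$ through the autonomous equation for $z(t)=e^{2t}\tilde u(e^t)$. For $n\ge 4$ this uses a Lyapunov function. For $n=3$, where the damping $n-4+2z$ changes sign, it uses Bendixson--Dulac in the variables $(z,\log(\dot z+z))$, Poincar\'e--Bendixson, and a spiralling argument to exclude $(0,0)$ from the $\omega$-limit set.
\item It applies Sturm--Picone against $y''+\frac{n+3}{\rho}y'+\frac{4(n-1)-\varepsilon'}{\rho^2}y=0$, and transfers finitely many simple zeros of $\tilde f$ to $\tilde f_\alpha$ by locally uniform convergence.
\end{itemize}
You also take for granted global existence, positivity, and the $O(\rho^{-2})$, $O(\rho^{-3})$ decay of $u_\alpha$, $u_\alpha'$ for every $\alpha>0$. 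The paper has to prove these, and your compactness argument uses them.

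There are two smaller points.
\begin{itemize}
\item Your intermediate-value argument on the top eigenvalue is a reasonable alternative to the paper's choice $\bar\alpha=\inf A$, which rests on the bound $\bar\mu\ge\frac{1}{16n\alpha}$ and an ODE comparison at infinity. However, continuity in $\alpha$ does not follow from analytic perturbation theory. The spaces $L^2_{\pi_\alpha}$ change with $\alpha$ (the weight grows like $\rho^{2\ell_\alpha}e^{\rho^2/4}$), and no analytic dependence of $u_\alpha$ in a suitable norm is established. A ground-state transform to a fixed $L^2$ space would give norm-resolvent continuity instead: there the potentials share the confining part $\frac{\rho^2}{16}$ and differ by a uniformly bounded term that tends to zero locally uniformly.
\item The bound $\|e^{\tau L_0}\|_{L^\gamma\to L^\gamma}\lesssim e^{(1-\frac{n}{2\gamma})\tau}$ has a positive exponent when $\gamma>\frac n2$. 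So placing the essential spectrum on $L^\eta\cap L^\gamma$ in $\{\mathrm{Re}\,\mu\le 1-\frac{n}{2\eta}\}$ requires the $L^\eta\to L^\gamma$ smoothing for $\tau\ge 1$, as in the paper.
\end{itemize}
Working directly in physical variables with Riesz-potential bounds, instead of in $Y^{\eta,\gamma}_\nabla$ with the Hardy-based isomorphism $A$, is a legitimate variant for this theorem. Both routes need $\eta>1$ at the same place.
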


For the proof of this result, in \autoref{sec:spectral} we preliminarily identify the eigenvalues of $L_{\bar\Xi}$ having positive real part with the eigenvalues of an auxiliary self-adjoint operator on a weighted $L^2(\R^{n+2})$ space. The latter can be ``counted'' using Sturm-Liouville oscillation theory as zeros of an associated ODE, which is studied in \autoref{sec:ODE}. Counting these zeros is where the main difficulties reside (as one might expect): for our approach we need some tools from dynamical systems to quantitatively analyze the ``tail'' of a properly rescaled self-similar profile, and then we invoke a comparison result for ODEs due to Picone \cite{picone1910sui}.
In high dimension $n>9$ and in dimension $n=2$, our approach fails and we suspect that radially symmetric self-similar solutions are, in fact, stable\footnote{ This is the case for the nonlinear heat equation, see \cite[Point 3 of Theorem 2.5]{GlHoLaLu25}. }.
As far as we know, the only other example of PDE where the program of Jia and Šverák has been carried out successfully (without forcing nor computer assistance) is the nonlinear heat equation by the first author and his collaborators in \cite{GlHoLaLu25}.

\autoref{thm:instability.intro} above is already sufficient to produce two distinct weak solutions to \eqref{eq:KS_physical} in the critical space $L^\infty([0,T];L^{n/2,\infty}(\R^n))$, thus proving \autoref{thm:critical}.
Indeed, spectral instability produces two distinct ``ancient'' solutions (called in this way since they are defined for times $\tau \in (-\infty,\log T]$ in similarity variables), one corresponding to $\Phi \equiv 0$ and the other obtained from the unstable eigenfunction of $L_{\bar \Xi}$, see \autoref{sec:strategy} below and the construction of \autoref{sec:ancient}.

On the other hand, these solutions do not decay fast enough at spatial infinity to lie in the $L^q(\R^n)$ class, and additional work is needed in order to prove \autoref{thm:non.uniqueness}.  
The second block of the Jia-Šverák program is a \emph{localization} procedure. Indeed, a localization of the initial condition is necessary in order to gain the desired integrability stated in \autoref{thm:non.uniqueness}. As mentioned in \cite{JiSv15}: \emph{The reason why this is not straightforward is that the equations for the ``remainders'' contain critically singular lower order terms which cannot be assumed to be small}.
We deal with the localization problem in \autoref{sec:localization}.
We also provide a variant of \autoref{thm:instability.intro} (see \autoref{thm:instability}) that somehow captures the ``minimal'' requirements to produce non-unique $L^q$-valued solutions of \eqref{eq:KS_physical} and in general simplifies the localization procedure; it also allows to cover the case $q=1$ that is left out in \autoref{thm:instability.intro}. 

\subsection{Overview of the arguments}\label{sec:strategy}
Before going to the actual proofs, let us first give an overview of our arguments that we think can help the reader orientate in the forthcoming sections. In the following, given a function $F$ that depends on time and space variables, we will often denote $F(t,\cdot)$ simply by $F(t)$.

\subsubsection{Reduced mass equation}
The two distinct mild $L^q$-solutions that we construct to prove \autoref{thm:non.uniqueness} have radial symmetry, namely $c(t,x)=c(t,x')$ whenever $|x|=|x'|$.
A main feature of \eqref{eq:KS_physical} is that it can be manipulated with a linear transformation $w := A[c]$ in such a way that a radially symmetric solution $c$ is mapped into a function $w$ solving a local PDE.
The precise transformation has already been used for instance in \cite{brenner1999diffusion} and is given by
\begin{align}
    A[c](t,y) := \frac{1}{2\omega_{n-1}|y|^n} \int_{B(0,|y|)} c(t,x)dx,
    \quad
    (t,y) \in [0,T] \times \R^{n+2},
\end{align}
where $\omega_{n-1}$ denotes the volume of the $(n-1)$-dimensional sphere; for radially symmetric $c(t,\cdot)$, the \emph{reduced mass} function $w(t,\cdot)$ is also radially symmetric and satisfies
\begin{align} \label{eq:w}
  \partial_t w = \Delta w + y\cdot \nabla (w^2) + 2n w^2,
  \quad
  (t,y) \in [0,T] \times \R^{n+2}.
\end{align}

One can try to investigate spectral instability of \eqref{eq:w} too, using the Jia-Šverák approach described above. 
Self-similar solutions $\Omega(\tau,\xi) := e^\tau w(e^\tau,e^{\tau/2}\xi)$ satisfy
\begin{align} 
\label{eq:Omega}
  \partial_\tau \Omega &= \Delta \Omega + \frac{\xi}{2} \cdot \nabla \Omega + \Omega + \xi \cdot \nabla (\Omega^2) + 2n\Omega^2,
    \quad
    (\tau,\xi) \in (-\infty,\log T] \times \R^{n+2}.
\end{align}
The equation for the perturbation $\Psi(\tau,\xi)=\Omega(\tau,\xi)-\bar\Omega(\xi)$ analog to \eqref{eq:Phi} is:
\begin{align} \label{eq:Psi}
    \partial_\tau \Psi &= \Delta \Psi + \frac{\xi}{2} \cdot \nabla \Psi + \Psi + 2\xi \cdot \nabla (\bar \Omega \Psi) + 4n\bar\Omega \Psi =: L_{\bar \Omega}[\Psi].
\end{align}

Denote $\tilde{A}$ the operator so that $\tilde{A}[\Xi]=\Omega$ for every $w=A[c]$. A quick computation reveals that $\tilde{A}$ and $A$ have the same formal expression: 
\begin{align}
    \tilde{A}[\Xi](\tau,\xi)
    =
    \frac{1}{2\omega_{n-1}|\xi|^n} \int_{B(0,|\xi|)} \Xi(\tau,\theta)d\theta.
\end{align}
Hereafter we identify $A=\tilde{A}$ with a little abuse of notation.

The relevant spaces for the study of spectral instability of the linearized equations \eqref{eq:Phi} and \eqref{eq:Psi} are the (weighted) Lebesgue spaces of radially symmetric functions, defined for $q \geq 1$:
\begin{align}
  X^q := L^q_{\mathrm{rad}}(\R^n,d\theta),
  \quad
  Y^q := L^q_{\mathrm{rad}}(\R^{n+2},|\xi|^{-2}d\xi),
  \quad
  Y_{\nabla}^q :=\{ \Omega \in  Y^q : \xi \cdot \nabla \Omega\in Y^q \},
\end{align}
and their intersections 
\begin{align}
  X^{\eta,\gamma} := X^\eta \cap X^\gamma,
  \quad
  Y^{\eta,\gamma} := Y^\eta \cap Y^\gamma,
  \quad
Y_\nabla^{\eta,\gamma} := Y_\nabla^\eta \cap Y_\nabla^\gamma.
\end{align}
It turns out (\autoref{lem:isomorphism}) that the map $A : X^{\eta,\gamma} \to Y_\nabla^{\eta,\gamma}$ is a linear isomorphism of Banach spaces for every $\eta,\gamma \in (1,\infty)$ and therefore most of the spectral properties of \eqref{eq:Phi} can be equivalently formulated in terms of \eqref{eq:Psi}. 
Moreover, the spectral analysis of \eqref{eq:Psi} can be carried out also in the borderline case $\eta=1$ where $A$ is not an isomorphism but $A^{-1}$ is well-defined and continuous, producing non-uniqueness of $L^1$-valued mild solutions to the Keller-Segel system in ``physical'' variables \eqref{eq:KS_physical} while bypassing a proof of spectral instability in $X^{1,\gamma}$. Therefore, from now on we will focus on studying properties of \eqref{eq:w}, deducing the corresponding results for \eqref{eq:KS_physical} by the map $A^{-1}$.

\subsubsection{Spectral instability of $L_{\bar\Omega}$} 
Being radially symmetric and $\tau$-independent, the profile $\bar\Omega$ around which the linearized equation \eqref{eq:Psi} is defined is uniquely determined by the parameter $\alpha := \bar\Omega(0)$, in which case we define for simplicity $L_{\alpha} := L_{\bar \Omega}$.

Hence, spectral instability of $L_{\bar\Omega}$ can be reformulated as the problem of finding a value of $\alpha \in \R$ such that the operator $L_{\alpha}$ is spectrally unstable (in suitable Banach spaces).

Let us describe our strategy to show this.
Notice that for radially symmetric objects like $\bar \Omega$ and $\Psi$ from \eqref{eq:Psi}, we can equivalently study the evolution of their radial profile.
For instance, $u_\alpha(|\xi|):=\bar \Omega (\xi)$ with $\bar{\Omega}(0)=\alpha$ solves the second order ODE 
\begin{align}\label{self-similar_ode_intro}
\begin{dcases}  u_\alpha'' 
+ 
\frac{n+1}{\rho}  u_\alpha' 
+ \frac{\rho}{2}  u_\alpha' + 
 u_\alpha  + 2 \rho  u_\alpha u_\alpha' + 2n  u_\alpha^2= 0, 
 \quad
 \rho > 0,
 \\
 u_\alpha(0)=\alpha,
 \\
 u_\alpha'(0)=0,
 \end{dcases}
\end{align}
and if $f(|\xi|)=\Psi(\xi)$ then $L_\alpha \Psi(\xi)$ equals, for $|\xi|=\rho$:
\begin{align}
L_\alpha \Psi =    f'' 
+ 
\frac{n+1}{\rho} f' +  \frac{\rho}{2}f' 
+ 
f +  2 \rho u_\alpha'  f +2 \rho u_\alpha  f' 
+ 4n u_\alpha f .
\end{align}
The key feature of the expression above is that we can rewrite the right-hand side in self-adjoint form if we consider functions $f$ living in a weighted space $L^2_{\pi,\mathrm{rad}}$.
This produces a new operator $L^{\pi}_\alpha$, with domain $D(L^\pi_\alpha) \subset L^2_{\pi,\mathrm{rad}}$, which: $i)$ coincides with $L_\alpha$ on smooth functions decaying at infinity sufficiently fast; and $ii)$ can be thoroughly studied by means of Sturm–Liouville theory, for instance to determine the number of positive eigenvalues. 
If one is able to establish a link between spectral instability of $L_\alpha$ and $L^\pi_\alpha$, then the door is open to apply results for self-adjoint Sturm-Liouville operators to the (non-self-adjoint!) operator $L_\alpha$.
As an example, in this paper we prove that the spectrum of $L_\alpha$ to the right of the imaginary axis coincides with that of $L_\alpha^\pi$ and is made of eigenvalues, see \autoref{cor:eigenvalues} and the discussion at the beginning of \autoref{subsec_unstable_eigen}, and therefore spectral instability of $L_\alpha$ and $L^\pi_\alpha$ are equivalent.

To show the existence of an unstable eigenvalue for $L^\pi_\alpha$, by Sturm-Liouville theory one can alternatively find a value of $\alpha$ such that the unique solution of 
\begin{align}
\begin{dcases}
f'' 
+ 
\frac{n+1}{\rho} f' +  \frac{\rho}{2}f' 
+ 
f +  2 \rho u_\alpha'  f +2 \rho u_\alpha  f' 
+ 4n u_\alpha f
=0,
\\
f(0)=1,
\\
f'(0)=0,
\end{dcases}
\end{align}
vanishes at some point $\rho_0>0$. 
So spectral instability boils down to studying zeros of an ODE, which is the content of \autoref{sec:ODE} and more specifically \autoref{prop:f.zero}.
The reader should not think that this procedure ``trivializes'' the problem: since the profile $u_\alpha$ is not explicit, finding zeros of $f$ is not an easy task.
For instance, to show spectral instability for the nonlinear heat equation, the authors of \cite{GlHoLaLu25} had to rely on several results from previous papers \cite{HaWe82,Naito} despite the fact that their equation is simpler because of the missing gradient nonlinearity $\rho u_\alpha'  f + \rho u_\alpha  f'$; in addition, exactly because of this additional term in the equation, the aforementioned results cannot be extended to our setting and we need to develop a new approach to study the zeros of $f$ that relies on Picone comparison principle and some theory of dynamical systems. 

Not every value of $\alpha \in \R$ leads to spectral instability for Keller-Segel. For example, in the proof of \autoref{thm:instability} we show that if $\alpha \in [0,\frac{1}{16n})$ then $f(\rho)>0$ for every $\rho \geq 0$ and the operator $L_\alpha$ is stable.
In fact, our proof of spectral instability holds for every $\alpha$ larger than an implicit value $\alpha_0=\alpha_0(n)$ depending only on the physical dimension $n$.

Let us explain why this is the case.
What we do is to rescale the parameter $\rho$ in the following way
\begin{align}
    \tilde u_\alpha(\rho) := \frac{u_\alpha(\rho/\sqrt{\alpha})}{\alpha},
    \quad
    \tilde{f}_\alpha(\rho)
    :=
    f(\rho/\sqrt{\alpha}),
\end{align}
so that $\tilde u_\alpha$, $\tilde f_\alpha$ solve the system
\begin{align}
\begin{dcases}
%
%
\tilde{u}_\alpha'' 
+ 
\frac{n+1}{\rho} \tilde{u}_\alpha' 
+ 
\frac{\rho}{2\alpha} \tilde{u}_\alpha'
+ 
\frac{1}{\alpha} \tilde{u}_\alpha  
+ 
2 \rho \tilde{u}_\alpha \tilde{u}_\alpha'
+ 
2n \tilde{u}_\alpha^2 
 = 0,
\\
%
%
\tilde{f}_\alpha'' 
+ 
\frac{n+1}{\rho} \tilde{f}_\alpha' 
+ 
\frac{\rho}{2\alpha} \tilde{f}_\alpha'
+ 
\frac{1}{\alpha}\tilde{f}_\alpha 
+ 
2 \rho \tilde{u}_\alpha \tilde{f}_\alpha'
+ 
2 \rho \tilde{f}_\alpha \tilde{u}_\alpha' 
+ 
4n \tilde{u}_\alpha \tilde{f}_\alpha 
= 0,
\end{dcases}
\end{align}
and in the limit $\alpha \to \infty$ we get the limit ODE system
\begin{align} 
\begin{dcases}
%
%
\tilde{u}'' 
+ 
\frac{n+1}{\rho} \tilde{u}' 
+ 
2 \rho \tilde{u} \tilde{u}' 
+ 
2n \tilde{u}^2 
= 0,
\\
%
%
\tilde{f}'' 
+ 
\frac{n+1}{\rho} \tilde{f}' 
+ 
2 \rho \tilde{u} \tilde{f}'
+ 
2 \rho \tilde{f} \tilde{u}'
+ 
4n \tilde{u} \tilde{f} 
 = 0.
\end{dcases}
\end{align}
Since $\tilde{f}_\alpha \to \tilde{f}$ uniformly on compacts and $\tilde{f}$ is not  identically zero, if we find a positive root for $\tilde{f}$ then $f_\alpha$ will vanish somewhere for every $\alpha$ sufficiently large.
By ODE analysis of \autoref{sec:ODE}, $u_\alpha(\rho) \sim \rho^{-2}$ and $u'_\alpha(\rho) \sim \rho^{-3}$ for $\rho \gg 1$.
The reason why showing the existence of roots of $\tilde{f}$ is easier than doing the same for $f$ is that, after rescaling, the dominant term $\frac{\rho}{2} f'+f$ has disappeared from the equation and the equation can be rigorously ``compared'' to a homogeneous equation of the form $y'' + \frac{A}{\rho} y' + \frac{B}{\rho^2}y=0$ with Picone comparison principle \cite{picone1910sui}. Indeed, if the condition $(A-1)^2-4B<0$ holds, this equation has oscillating solutions of the form
\begin{align}
    y(\rho) =  C_1 |\rho|^\frac{1-A}{2}\sin(\mu \log|\rho|)
    +
    C_2 |\rho|^\frac{1-A}{2}\cos(\mu \log|\rho|) ,
    \quad
    \mu:= \frac12 |(A-1)^2-4B|^{1/2}.
\end{align}
The condition above on $A,B$ is ultimately responsible for the constraint $n \in \{3,\dots,9\}$ on the physical dimensions. The precise numbers $A,B$ with which to apply the comparison principle are determined by the values of the limits
\begin{align}
    \lim_{\rho \to \infty} \rho^2 \tilde{u}(\rho),
    \quad
    \lim_{\rho \to \infty} \rho^3 \tilde{u}'(\rho),
\end{align}
that we study in \autoref{ssec:tail}.
It turns out that the quantity $z(t) := e^{2t} \tilde{u}(e^{t})$ solves the second-order autonomous ODE 
\begin{align} \label{eq:Emden.intro}
    \ddot{z} + (n-4+2z) \dot{z} -  2(n-2)(z-z^2) =0, \quad
   t \in \R,
\end{align}
whose limit behavior as $t \to \infty$ can be studied with tools from two-dimensional dynamical systems, such as Poincaré-Bendixson and Bendixson–Dulac theorems. 
The analysis of \eqref{eq:Emden.intro} is particularly rich when $n=3$ (which in fact requires a different proof than the case $n \geq 4$), as hinted by the investigation of the so-called \emph{termination points} of Emden's problem done in \cite{JoLu73}; as a matter of fact, \eqref{eq:Emden.intro} alone is not ``enough'' to uniquely determine the limit behavior of $(z,\dot{z})$ when $t \to \infty$, and in our proof we \emph{must} incorporate the information $\tilde{u} = \lim_{\alpha \to \infty} \tilde{u}_\alpha$.  
We remark that studying the tail behavior of $z_\alpha(t) := e^{2t} \tilde{u}_\alpha(e^{t})$ with finite $\alpha$ would be much more difficult, since the resulting ODE for $z_\alpha(t)$ is non-autonomous.

\subsubsection{Construction of the non-uniqueness} Building on the spectral instability of $L_{\alpha}$ for suitable choices of $\alpha>0$, one can construct two distinct solutions of \eqref{eq:w}, and consequently of \eqref{eq:KS_physical}, as described below.

Recall the self-similar change of variables $\Omega(\tau,\xi) := e^\tau w(e^\tau,e^{\tau/2}\xi)$ and that we aim to establish non-uniqueness for \eqref{eq:Omega}. For each $\alpha>0$ and profile $u_\alpha$ solving \eqref{self-similar_ode_intro}, the function $\bar\Omega := u_\alpha(|\cdot|)$ is a stationary solution of \eqref{eq:Omega}. The key idea in the program of Jia and Šverák is that, if $L_{\alpha}$ has an eigenvalue with positive real part, then it is possible to construct a second solution $\tilde{\Omega}(\tau)$ of \eqref{eq:Omega} satisfying 
\begin{align*}
    \tilde{\Omega}(\tau)\rightarrow \bar\Omega,
    \quad\text{as }\tau\rightarrow-\infty.
\end{align*}
This construction is carried out rigorously in \autoref{sec:ancient}; here we only outline how it is possible. In our setting, the part of $\sigma(L_{\alpha})$ lying in $\{z \in \CC \,:\, \mathrm{Re}(z) \geq 0\}$ is made by only finitely many real eigenvalues. Let $\lambda_{\alpha}$ denote the largest one, and let $\bar\Omega^{lin}$ be a corresponding eigenfunction. Both $\bar\Omega^{lin}$ and its gradient decay exponentially in space; in particular $\bar\Omega^{lin}\in Y^{q,r}_{\nabla}$ for every $r>\frac{n}{2}$. We look for $\tilde{\Omega}$ of the form
\begin{align*}
    \tilde{\Omega}(\tau):=\bar\Omega+\Psi(\tau):=\bar\Omega+e^{\lambda_{\alpha}\tau}\bar\Omega^{lin}+\Omega^{per}.
\end{align*}
Clearly,
\begin{align*}
  \Omega^{lin}(\tau):= e^{\lambda_{\alpha}\tau}\bar\Omega^{lin}\rightarrow  0,
  \quad\text{as }\tau\rightarrow-\infty,
\end{align*}
while $\Omega^{per}$ is required to solve:
\begin{align}\label{system_ancient_intro}
    \begin{dcases}
\partial_{\tau}\Omega^{per}=
L_{\alpha}\Omega^{per}+N(\Omega^{per})+N(\Omega^{lin})+V(\Omega^{per}),
&\text{for }\tau \in (-\infty,\log T],
\\
\Omega^{per}(\tau)\rightarrow 0, &\text{as }\tau\rightarrow-\infty,
    \end{dcases}
\end{align}
where we define
\begin{align}\label{nonlinearity_ancient}
    N(\omega)&:=2n \omega^2+\xi\cdot\nabla (\omega^2),
    \quad 
    V(\omega):=4n \omega\Omega^{lin}+2\xi\cdot\nabla (\omega\Omega^{lin}).
\end{align}
Since $L_{\alpha}$ generates a semigroup on $Y^{q,r}$, up to taking $T$ sufficiently small one can construct solutions to \eqref{system_ancient_intro} taking values in $Y^{q,r}_{\nabla}$ via a fixed-point argument, by analyzing jointly the evolution of $\Omega^{per}$ and $|\xi|\Omega^{per}$. Moreover, thanks to the regularizing properties of the semigroup $L_{\alpha}$ on $Y^{q,r}$ analogous to those of the heat semigroup on $X^{q,r}$ (see \autoref{cor:S_alpha}), it is possible to show that
\begin{align*}
    \Omega^{per}(\tau),\  |\xi|\Omega^{per}(\tau)\in Y^{q,r}_1:=Y_1^q\cap Y_1^r,
\end{align*} 
where, for $p\in [1,+\infty)$, we have denoted
\begin{align*}
   Y_1^p:=\left\{f\in Y^{p}:\, \int_{\R^{n+2}}\frac{|\nabla f(y)|^p}{|y|^2}dy<+\infty\right\}
\end{align*}
endowed with its natural norm. A naive approach would be to study directly the evolution of $\Omega^{per}$ in $Y^{q,r}_{\nabla}$, without introducing the additional quantity $|\xi|\Omega^{per}$. However, it is unclear whether this strategy can succeed, since the nonlinearity $N(\cdot)$ requires control not only of $\Omega^{per}$ and its derivatives, but also of their spatially weighted counterparts. In particular, controlling $N(\Omega^{per})$ in $Y_{\nabla}^{q,r}$ necessitates bounds on $|\xi|\Omega^{per}$. Moreover, our approach is \emph{minimal} in terms of spectral instability assumptions. The fixed-point argument only requires instability in $Y^{q,r}$, rather than $Y^{q,r}_{\nabla}$; the additional regularity is recovered a posteriori from the equation satisfied by $\Omega^{per}$ and from the regularizing properties of the semigroup generated by $L_{\alpha}$. This feature is important not only in light of the discussion above, but also because it allows us to cover the left endpoint $q=1$; indeed, the construction of ancient solutions directly for \eqref{eq:Phi}, instead of \eqref{eq:Omega}, would require a spectral instability for \eqref{eq:Phi} in the space $X^{1,r}$, which is left out by \autoref{thm:instability.intro} since $A$ is not an isomorphism between $X^1$ and $Y_{\nabla}^1$ (essentially due to the failure of Hardy inequality in $L^1$).

Most importantly, due to the quadratic structure of the right-hand side of \eqref{system_ancient_intro}, one can show that, in suitable function spaces,
\begin{align}\label{eq:decaying}
    \|\Omega^{per}(\tau)\|\ll \|\Omega^{lin}(\tau)\|, \quad \text{as }\tau\rightarrow -\infty,
\end{align}
which implies 
\begin{align*}
    \tilde{\Omega}(\tau)\rightarrow \bar{\Omega},\quad \text{as }\tau\rightarrow -\infty, \quad \mbox{and }\tilde{\Omega}\neq  \bar{\Omega}.
\end{align*}
Inverting the self-similar change of variables yields two radial distributional solutions $\tilde{w}_1,\tilde{w}_2$ of \eqref{eq:w} arising from the same initial datum
\begin{align*}
    \tilde{w}_0(y):=\frac{\ell_{\alpha}}{|y|^2},\quad \text{for some }\ell_{\alpha}>0.
\end{align*}
One solution is the self-similar expander
\begin{align*}
   \tilde{w}_1(t,y):=\frac{1}{t}\bar{\Omega}\left(\frac{y}{\sqrt{t}}\right), 
\end{align*}
while the other one is obtained via the perturbation $\Psi$ with the formula: 
\begin{align*}
    \tilde{w}_2(t,y):=\frac{1}{t}\bar{\Omega}\left(\frac{y}{\sqrt{t}}\right)+\frac{1}{t}\Psi\left(\log t,\frac{y}{\sqrt{t}}\right).
\end{align*}

By applying the map $A^{-1}$, one can check that $\tilde{w}_1, \tilde{w}_2$ give two distinct solutions satisfying the requirements of \autoref{thm:critical}. We refer to \autoref{ssec:proof.critical} for details. 

Notice, however, that the initial datum $\tilde{w}_0$ decays too slowly as $|y|\rightarrow +\infty$ to have the desired integrability stated in \autoref{thm:non.uniqueness}; in particular $\tilde{w}_0\notin Y_{\nabla}^q$. Therefore, $\tilde{w}_1, \tilde{w}_2$ cannot directly yield two distinct mild $L^q$-solutions of \eqref{eq:KS_physical} via the map $A^{-1}$. 
To enforce integrability and prove \autoref{thm:non.uniqueness}, we modify $ \tilde{w}_0$ far from the origin and deform the corresponding solutions accordingly. This is carried out in \autoref{subsec:localization_PDE}. We decompose $\tilde{w}_0=w_0+h_0$, where $w_0\in Y_{\nabla}^q$ with $w_0(y) = \frac{\ell_\alpha}{ |y|^{2}}$ close to the origin, and the remainder $h_0$ satisfies  $h_0\in Y_1^r\cap Y^r_{\nabla}$ and $|y|h_0\in Y^p$ for all $p>n$. We then analyze the evolution of $h_0$ in order to subtract it from the two solutions above, thereby obtaining two solutions $w_1,w_2$ of \eqref{eq:w} with common initial datum $w_0$. The corresponding Cauchy problem is 
	\begin{align}\label{nonlinear PDE_intro}
		\begin{dcases}
			\partial_t h=\Delta h+4n\bar{w}h+2y\cdot\nabla(\bar{w}h)+f(h),\\
			h(0)=h_0,
		\end{dcases}
	\end{align}
	where the potential term is self-similar, i.e.
	\begin{align*}
		\bar{w}(t,y)=\frac{1}{t}\bar{\Omega}\left(\frac{y}{\sqrt{t}}\right),
	\end{align*}
	and the forcing term $f$ depends on an auxiliary function $\psi$ and is given by
	\begin{align*}
		f(h)&:=4n\psi h+2y\cdot\nabla(\psi h)-2nh^2-y\cdot \nabla (h^2).
	\end{align*}
	We denote by $h_1$ the solution to \eqref{nonlinear PDE_intro} corresponding to $\psi=0$, and by $h_2$ the one corresponding to \begin{align*}
	    \psi(t,y)=\frac{1}{t}\Psi\left(\log t,\frac{y}{\sqrt{t}}\right).
	\end{align*} 
    Constructing such solutions is highly nontrivial, due to the time-dependent and singular nature of the potential term at time $t=0$. 
    Nevertheless, we develop a well-posedness theory for problems of the form \eqref{nonlinear PDE_intro}. In particular, we prove local existence and uniqueness in a subspace contained in $Y^r_{\nabla}$ provided that the maximal eigenvalue $\lambda_{\alpha}$ is small enough, namely:
	\begin{equation}\label{Eq:la_bar}
		\lambda_{\alpha} < 1-\frac{n}{2r}.
	\end{equation}
	Note that the right-hand side of \eqref{Eq:la_bar} can be arbitrarily small, forcing correspondingly small values of $\lambda_{\alpha}$. Values of $\alpha$ for which \eqref{Eq:la_bar} holds are available by our \autoref{thm:instability}. As in the construction of $\Omega^{per}$, the analysis requires control not only of $h$, but also of $|y|h$, which is possible since $|y|h_0\in Y^p$ for $p>n.$\\
    Finally, in \autoref{subsec:end_proof}, we construct two solutions $c_1,c_2$ to the Keller-Segel system \eqref{eq:KS_physical} from 
    \begin{align*}
        w_1:=\tilde{w}_1-h_1,\quad w_2:=\tilde{w}_2-h_2,
    \end{align*}
    via the operator $A^{-1}$, and show that they are mild $L^q$-solutions of \eqref{eq:KS_physical} with common initial datum $c_0\in L^q(\R^n)$. At this stage, it suffices that $A^{-1}$ is bounded from $Y^q_{\nabla}$ to $X^q$, which holds also for $q=1$. 
    Moreover, using a quantitative version of the decay estimate \eqref{eq:decaying} obtained in \autoref{sec:ancient}, we conclude that $c_1\neq c_2,$ thereby completing the proof.
\subsubsection{Final Remarks}
As is apparent from the strategy outlined above, and as will become even clearer in the forthcoming sections, \autoref{thm:non.uniqueness} can be readily extended to cover more extreme cases of non-uniqueness for \eqref{eq:KS_physical}, both in terms of the number of distinct solutions arising from the same initial datum and in terms of the \emph{genericity} of such data.\\
Given $k\in \mathbb{N}$ and a collection $\{a_j\}_{j\in \{1,\dots,k\}}\subset \R_{>0}$ such that $a_j\neq a_{j'} $ whenever $j\neq j'$, it is possible to find $T'<0$ and $k$ distinct solutions of \eqref{eq:Omega} on $(-\infty,T']$ of the form
\begin{align*}
\tilde{\Omega}_j(\tau):=\bar\Omega+\Psi_j(\tau):=\bar\Omega+\Omega^{lin}_j+\Omega^{per}_j,\quad \text{ where}\quad \Omega^{lin}_j(\tau):=a_j e^{\lambda_{\alpha}\tau}\bar\Omega^{lin},
\end{align*}
and $\Omega^{per}_j$ solves \eqref{system_ancient_intro} with $\Omega^{lin}$ replaced by $\Omega^{lin}_j$. The localization procedure described in the previous subsection then produces $k$ different local mild $L^q$-solutions of \eqref{eq:KS_physical}. A similar remark holds for \autoref{thm:critical} in the Lorentz case $L^{n/2,\infty}(\R^n)$, without requiring any localization. \\
The initial datum $c_0$ leading to the non-uniqueness is radial and behaves like
\begin{align*}
    c_0(x)=\frac{2\ell_{\alpha}(n-2)}{|x|^2}
\end{align*}
near $x=0$, while remaining uniformly bounded elsewhere. Moreover, non-uniqueness persists for every initial condition $\tilde{c}_0$ of the form
\begin{align*}
    \tilde{c}_0(x)=c_0(x)+\bar{c}_0(x), 
\end{align*}
where $\bar{c}_0\in X^{q,r}$ and $A[\bar{c}_0]$ satisfies the assumptions required in \autoref{thm:nonlinear_localization}; this is achieved by simply incorporating $A[\bar{c}_0]$  into the initial condition of \eqref{nonlinear PDE_intro}. This holds, for instance, if $\bar{c}_0\in C^{\infty}_c(\R^n)$ and is radial. Such perturbations do not remove the singular behavior of $c_0$ near the origin.

\begin{acknowledgements}
The authors are grateful to Irfan Glogić for suggesting this problem to them and for the interesting discussions.
EL and UP have received funding from the European Research Council (ERC) under the European Union’s Horizon 2020 research and innovation programme (grant agreement No. 949981).
UP has received funding from the Swiss National Science Foundation under the SNSF Ambizione grant No. 233216.
\end{acknowledgements}

\section{ODE analysis} \label{sec:ODE}
This section contains the ODE results needed to show \autoref{prop:f.zero} on the existence of zeros of $f$.
More specifically, in \autoref{subsec:selfsimilarprofile} we study the ODE for the radial profile $u_\alpha$ of the expanding self-similar solution, deriving properties such as smoothness, global existence, and decay at infinity; in \autoref{ssec:linearized.system} we introduce the system of ODEs obtained by linearizing \eqref{eq:ODE} around a profile $u_\alpha$, and we show \autoref{lem:principal_eigen} to characterize the different scenarios in which $f$ remains positive or admits at least a root $\rho_0 \in (0,\infty)$; \autoref{ssec:zeros} is devoted to the proof of \autoref{prop:f.zero}, using \autoref{prop:tails} on the tails of $\tilde{u}$ proved in \autoref{ssec:tail}.

Hereafter we use the  notation $a \lesssim b$ if there is some unimportant finite constant $C$ such that $a \leq Cb$. The symbol $\lesssim_p$ is used to stress that $C=C(p)$ depends upon a parameter $p$. 

\subsection{Profile of expanding self-similar solutions  $u_\alpha$}\label{subsec:selfsimilarprofile}
This subsection is devoted to the analysis of the ODE satisfied by the radial profile $u_\alpha$ of the expanders.
More specifically, for fixed $n \in \NN$, $n \geq 3$, we consider on the semiaxis $\rho \geq 0$:
\begin{align} \label{eq:ODE}
    \begin{dcases}
u_\alpha'' 
+ 
\frac{n+1}{\rho} u_\alpha' 
+ 
u_\alpha + \frac{\rho}{2} u_\alpha' + 2n u_\alpha^2 + 2 \rho u_\alpha u_\alpha' = 0,
\\
u_\alpha(0)=\alpha>0,
\\
u_\alpha'(0) = 0.
    \end{dcases}
\end{align}

\begin{prop} \label{prop:local.existence.uniqueness}
    For every $\alpha>0$ there exists a unique local solution $u_\alpha \in C^2([0,\tau_\alpha))$ to \eqref{eq:ODE} up to a maximal existence time $\tau_\alpha$. $u_\alpha$ is analytic in a right neighborhood of $0$. Moreover, if $I$ is a compact subset of $[0,\tau_{\alpha_\star})$ for some $\alpha_\star >0$, then $I \subset [0,\tau_\alpha)$ for every $\alpha$ in a neighborhood of $\alpha_\star$ and the map $\alpha \mapsto u_\alpha$ is continuous at $\alpha_\star$ with respect to the $C^1$ convergence in $I$.
\end{prop}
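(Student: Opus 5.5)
The ODE \eqref{eq:ODE} has a regular singular point at $\rho=0$ because of the term $\frac{n+1}{\rho}u_\alpha'$. My plan is to handle the origin with an integral (Volterra) formulation plus a fixed point, and to treat $\rho>0$ with standard Cauchy--Lipschitz theory.

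\textbf{Integral formulation.} Multiplying by $\rho^{n+1}$, the equation becomes
\begin{align}
(\rho^{n+1}u')' = -\rho^{n+1}\Big(u+\frac{\rho}{2}u'+2nu^2+2\rho uu'\Big).
\end{align}
Integrating from $0$ and using $u'(0)=0$ gives
\begin{align}
u'(\rho) &= -\rho^{-n-1}\int_0^\rho s^{n+1}\Big(u+\frac{s}{2}u'+2nu^2+2suu'\Big)(s)\,ds, \\
u(\rho) &= \alpha+\int_0^\rho u'(s)\,ds .
\end{align}
This is a fixed-point problem for the pair $(u,v)=(u,u')$ in $C([0,\delta])^2$. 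The operator $v\mapsto \rho^{-n-1}\int_0^\rho s^{n+1}g(s)\,ds$ is bounded by $\frac{\rho}{n+2}\|g\|_\infty$, hence it has small norm for $\delta$ small. The nonlinearities are locally Lipschitz, so the map is a contraction on a ball around $(\alpha,0)$ once $\delta=\delta(\alpha)$ is small.

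Next I check that a continuous fixed point is genuinely a $C^2$ solution with $u''(0)$ finite. Indeed $u'(\rho)=-\frac{\rho}{n+2}(\alpha+2n\alpha^2)+o(\rho)$, and the equation then gives $u''$ continuous up to $0$. Conversely, any $C^2$ solution on $[0,\delta]$ satisfies the integral equations, which gives uniqueness near $0$.

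\textbf{Extension to the maximal interval.} For $\rho\ge\delta/2$ the coefficients are smooth, so the solution extends uniquely to a maximal interval $[0,\tau_\alpha)$ by Picard--Lindelöf. Uniqueness holds globally, since two solutions coincide near $0$ and then agree on their common interval by ODE uniqueness for $\rho>0$.

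\textbf{Analyticity.} I would look for $u=\sum_k a_k\rho^{2k}$ with $a_0=\alpha$. The equation is invariant under $\rho\mapsto-\rho$, so even series are natural. Substituting gives the recursion
\begin{align}
2k(2k+n)\,a_k = -\big(\text{polynomial in } a_0,\dots,a_{k-1}\big),
\end{align}
coming from the terms $u$, $\frac{\rho}{2}u'$, $2nu^2$ and $2\rho uu'$. Each of these is a convolution-type expression of degree at most $2$ in the coefficients and multiplies $a_j$ by factors of size $O(k)$. Dividing by $2k(2k+n)\sim k^2$, a majorant argument yields $|a_k|\le CM^k$, so the series has a positive radius of convergence. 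The same fixed-point map also preserves the space of even power series convergent in $\{|\rho|<r\}$ (complex $\rho$), which is an alternative route to the same bound. By uniqueness the series coincides with $u_\alpha$.

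The main obstacle is the nonlinear majorant estimate. Alternatively, analyticity follows from the Cauchy--Kovalevskaya / Briot--Bouquet theory for regular singular points, which I would cite if the majorant bookkeeping becomes messy.

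\textbf{Continuous dependence.} Let $I=[0,R]\subset[0,\tau_{\alpha_\star})$ and set $M=\|u_{\alpha_\star}\|_{C^1(I)}+1$.

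Near $0$, the contraction constant and $\delta$ can be chosen uniformly for $\alpha$ in a neighborhood of $\alpha_\star$. The fixed point of a contraction depends continuously on the parameter $\alpha$, which gives $C^1$ closeness on $[0,\delta]$.

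On $[\delta,R]$ the system for $(u,u')$ has smooth coefficients and is Lipschitz on the set $\{|u|+|u'|\le M\}$. Gronwall's inequality then gives
\begin{align}
\|(u_\alpha-u_{\alpha_\star},\,u_\alpha'-u_{\alpha_\star}')\|_{L^\infty(\delta,R)}\le C\,|(u_\alpha-u_{\alpha_\star},\,u_\alpha'-u_{\alpha_\star}')(\delta)|,
\end{align}
provided $(u_\alpha,u_\alpha')$ stays in that region. A continuity (bootstrap) argument shows that it does for $\alpha$ close to $\alpha_\star$. This also excludes blow-up before $R$, so $I\subset[0,\tau_\alpha)$, and it yields the claimed $C^1$ continuity.
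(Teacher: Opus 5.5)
Your proposal is correct and follows essentially the paper's route. Both arguments use a Volterra-type fixed point at the singular point $\rho=0$; you iterate on the pair $(u,u')$, whereas the paper integrates the terms $\frac{\rho}{2}u'$ and $2\rho u u'$ by parts to get a scalar equation for $u$ alone. Both then extend the solution using smoothness of the coefficients for $\rho>0$, identify a convergent power series with $u_\alpha$ by uniqueness, and obtain $C^1$ continuity from a parameter-dependent contraction near $0$ plus standard continuous dependence away from $0$. The majorant step you flag as the main obstacle is in fact easy: the quadratic convolution contributes at most $kC^2M^{k-1}$ and $\frac{n+k-1}{2k+n}\le 1$, so $|a_k|\le CM^k$ closes by induction with $C\ge\alpha$ and $M\ge C+1$.
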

\begin{proof}
We recall the following formula, that we will use extensively throughout this section. If we rewrite \eqref{eq:ODE} as $u_\alpha'' +  h  u_\alpha' + g = 0$, where the functions $h = h(\rho)$ and $g = g(\rho)$ are locally integrable, then for any function $H=H(\rho)$ satisfying $H' = hH$ and $H(\rho)>0$ for every $\rho>0$, we can rewrite \eqref{eq:ODE} as $(H u_\alpha')' + H g = 0$. By direct integration between $0 \leq \rho_1 < \rho_2$ one obtains 
\begin{align} \label{eq:HOmega_alpha'}
   H(\rho_2) u_\alpha'(\rho_2) = H(\rho_1)u_\alpha'(\rho_1) - \int_{\rho_1}^{\rho_2} H(\rho) g(\rho) d\rho.
 \end{align}
We apply \eqref{eq:HOmega_alpha'} with $h(\rho) := \frac{n+1}{\rho}$, $g(\rho) := u_\alpha(\rho)+\frac{\rho}{2} u_\alpha'(\rho)+2nu_\alpha^2(\rho) + 2\rho u_\alpha(\rho)u_\alpha'(\rho)$, $H(\rho) := \rho^{n+1}$, and $\rho_1:=0$. Taking the integral on the interval $\rho_2 \in [0,\rho]$ and integrating by parts we get a fixed point formulation
\begin{align}
u_\alpha(\rho)
=
\mathcal{T}[u_\alpha](\rho)
:=
\alpha
&-\int_0^\rho \rho_2^{-n-1}\int_0^{\rho_2}
s^{n+1} \left( 
-\frac{n}{2} u_\alpha(s) 
+ 
(n-2) u_\alpha^2(s)  
\right)
ds d\rho_2
\\
&-
\int_0^\rho \rho_2 \left( \frac{1}{2} u_\alpha(\rho_2)+u_\alpha^2(\rho_2) \right) d\rho_2.
\end{align}

By contraction principle this produces a unique local solution of class $C([0,\overline{\tau}_\alpha])$ for some $\overline{\tau}_\alpha>0$ depending on $\alpha$, and using \eqref{eq:ODE} it is easy to check that $u_\alpha \in C^2([0,\overline{\tau}_\alpha])$. After $\overline{\tau}_\alpha$ the coefficients of \eqref{eq:ODE} are smooth and locally bounded, giving $u_\alpha \in C^2([0,\tau_\alpha))$ up to a maximal existence time $\tau_\alpha \geq \overline{\tau}_\alpha>0$. Moreover, $\tau_\alpha<\infty$ if and only if $\lim_{\rho \uparrow \tau_\alpha}|u_\alpha(\rho)|=\infty$.

To see that $u_\alpha$ is analytic around $0$, let us introduce the power series $\sum_{k}a_k \rho^k$ where $a_0 := \alpha$, $a_1 := 0$, and $\{a_k\}_{k \in \NN}$ is defined by recursion
\begin{align}
    a_{k+2} := -\frac{1}{(k+2)(k+n+2)} \left( \frac{k+2}{2}a_k + 2\sum_{h=0}^k (n+h) a_h a_{k-h} \right).
\end{align}
By direct verification, the power series $\sum_{k} a_k \rho^k$ has a positive convergence radius $R$ and is a fixed point of $\mathcal{T}$. By uniqueness of the fixed point, $u_\alpha(\rho) = \sum_{k}a_k \rho^k$ for every $\rho \in [0,\overline{\tau}_\alpha \wedge R)$.

The second part of the proposition follows again by the fixed point formulation above, in its version with continuous dependence on a parameter $\alpha$, and smoothness of the coefficients of \eqref{eq:ODE} away from $\rho = 0$. More precisely, contraction principle gives $\overline{\tau}_{\alpha_\star}>0$ and local existence and uniqueness of a solution $u_\alpha \in C^2([0,\overline{\tau}_{\alpha_\star}])$ for every $\alpha$ in a neighborhood of $\alpha_\star$, with continuity of the map $\alpha \mapsto u_\alpha$ at the point $\alpha=\alpha_\star$ with respect to the uniform convergence in $[0,\overline{\tau}_{\alpha_\star}]$. 
Then, \eqref{eq:HOmega_alpha'} gives continuity of the map $\alpha \mapsto u_\alpha'$ as well, and in particular $(u_\alpha(\overline{\tau}_{\alpha_\star}), u_\alpha'(\overline{\tau}_{\alpha_\star})) \to (u_{\alpha_\star}(\overline{\tau}_{\alpha_\star}), u_{\alpha_\star}'(\overline{\tau}_{\alpha_\star}))$ as $\alpha \to \alpha_\star$. 
Finally, since the coefficients of \eqref{eq:ODE} are smooth and bounded on $I \setminus [0,\overline{\tau}_{\alpha_\star}]$, up to taking a smaller neighborhood of $\alpha_\star$ we can extend existence, uniqueness and continuous dependence on $\alpha$ to the whole interval $I$.  
\end{proof}

In fact, the maximal existence time $\tau_\alpha$ in the previous proposition is infinite for every $\alpha>0$. To show this, we need to show that the solution $u_\alpha$ does not blow up to either plus or minus infinity.
The first case can be immediately ruled out by observing that the ``energy'' 
\begin{align}
    E_{\alpha} := 
    \frac{(u_\alpha')^2}{2} 
    +
    \frac{u_\alpha^2}{2} 
    +
    \frac{2nu_\alpha^3}{3} 
\end{align}
satisfies $E_\alpha'(\rho) \leq 0$ for every $\rho>0$ such that $u_\alpha(\rho) \geq -1/4$, as can be shown by direct differentiation.
However, in principle $E_\alpha$ is not bounded from below, meaning that blow up to minus infinity cannot be excluded by this kind of considerations and requires more work.
Here we prove the more precise result:
\begin{prop} \label{prop:Omega_alpha>0}
   For any $\alpha > 0$ it holds $u_\alpha(\rho)>0$ and $G_{\alpha}(\rho) := 2nu_{\alpha}(\rho) + 2\rho u_{\alpha}'(\rho) > 0$ for every $\rho \geq 0$.
   Moreover, $u_\alpha(\rho) < \alpha(1+\frac{\alpha}{2}\rho^2)^{-1}$ for every $\rho > 0$ and there exists $\ell_\alpha := \lim_{\rho \to \infty} \rho^2 u_\alpha(\rho) \in (0,2]$.
\end{prop}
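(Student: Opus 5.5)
The plan is to first prove $G_\alpha>0$ and deduce everything else from it. The identity to exploit is that $G_\alpha$ is, up to a constant, the radial profile of the self-similar density $\bar\Xi=A^{-1}[\bar\Omega]$. Indeed, setting $v_\alpha(\rho):=\rho^n u_\alpha(\rho)$, which is a constant multiple of the mass in $B(0,\rho)$, one has
\begin{align}
v_\alpha'(\rho)=\rho^{n-1}\bigl(nu_\alpha+\rho u_\alpha'\bigr)=\tfrac12\rho^{n-1}G_\alpha(\rho).
\end{align}
So $G_\alpha>0$ is equivalent to $v_\alpha$ being strictly increasing, and then $u_\alpha>0$ follows immediately, since $v_\alpha(\rho)>v_\alpha(0)=0$.

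\textbf{Positivity of $G_\alpha$.} Differentiate \eqref{eq:ODE} (equivalently, write \eqref{eq:Xi} for a stationary radial profile). Then $\Theta:=G_\alpha/2$ satisfies a \emph{linear} second-order equation
\begin{align}
\Theta''+\Bigl(\tfrac{n-1}{\rho}+\tfrac{\rho}{2}+2\rho u_\alpha\Bigr)\Theta'+\bigl(1+2\Theta\bigr)\Theta=0,
\end{align}
where I regard $2\Theta$ in the zero-order coefficient as a given continuous function along the solution. The term $2\rho u_\alpha\Theta'$ comes from $-\nabla\Theta\cdot\nabla(-\Delta)^{-1}\Theta$ with $\nabla(-\Delta)^{-1}\Theta=-\rho u_\alpha\,\hat\theta$ up to normalization; I would check the constants against \eqref{eq:ODE}. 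Initially $\Theta(0)=n\alpha>0$. Suppose $\rho_0$ is the first zero. I would not rely on a Sturm-type argument, since the zero-order coefficient $1+2\Theta$ is positive and this does not exclude oscillation. Instead I would use a first-order relation. Integrating $(\rho^{n+1}u_\alpha')'=-\rho^{n+1}\bigl((1-\tfrac n2)u_\alpha+\tfrac14G_\alpha+u_\alpha G_\alpha\bigr)$ from $0$, equivalently the mass-flux identity for $v_\alpha$, gives
\begin{align}
\tfrac12\rho^{n-1}\Theta'\ \text{expressed through}\ v_\alpha,\ \rho v_\alpha',\ v_\alpha^2/\rho^{n}.
\end{align}
Concretely, $v_\alpha$ solves the first-order-in-flux equation $v_\alpha''-\frac{n-1}{\rho}v_\alpha'+\frac\rho2 v_\alpha'-\ldots+\frac{2}{\rho^{n-1}}v_\alpha v_\alpha'=0$, obtained from \eqref{eq:w} in mass form. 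At $\rho_0$ we have $v_\alpha'(\rho_0)=0$, and the equation then gives $v_\alpha''(\rho_0)$ in terms of $v_\alpha(\rho_0)>0$ alone. I expect this quantity to be nonnegative, because the $v$-equation is of the form $v''+b(\rho)v'+c(\rho)v=0$ with every term carrying a factor $v'$ except the scaling one. If it is $0$, uniqueness forces $v_\alpha'\equiv0$, which contradicts $\Theta(0)>0$. Making this first-zero argument airtight is where I expect the main obstacle: the precise sign at $\rho_0$ after the $(1-n/2)$ term is accounted for. If it fails, my fallback is a maximum principle on the linear $\Theta$-equation with an exponential weight.

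\textbf{Upper bound.} Let $y(\rho):=\alpha(1+\frac\alpha2\rho^2)^{-1}$, which solves the Riccati equation $y'=-\rho y^2$ with $y(0)=\alpha$. I would show $u_\alpha'<-\rho u_\alpha^2$ for $\rho>0$. To do so, use \eqref{eq:HOmega_alpha'} with $H=\rho^{n+1}$ and write $u_\alpha+\frac\rho2u_\alpha'=\frac{G_\alpha}{4}+(1-\frac n2)u_\alpha$ and $2nu_\alpha^2+2\rho u_\alpha u_\alpha'=u_\alpha G_\alpha$. Using $G_\alpha>0$, $u_\alpha>0$ and monotonicity of $u_\alpha$ (obtained along the way), $-\rho^{n+1}u_\alpha'$ should dominate $\int_0^\rho s^{n+1}u_\alpha G_\alpha\,ds\ge\rho^{n+2}u_\alpha^2$ up to the sign of the $(1-\frac n2)u_\alpha$ term; I need to check that term carefully. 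Comparing the Riccati inequality with $y$, for instance through $(1/u_\alpha)'>\rho=(1/y)'$, gives $u_\alpha<y$.

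\textbf{Limit $\ell_\alpha$.} The upper bound gives $\rho^2u_\alpha<2$, hence $\limsup\le2$. Since $v_\alpha=\rho^nu_\alpha$ is increasing and $n\ge3$, positivity of any limit needs a separate lower bound. For existence of the limit I would show that $m(\rho):=\rho^2u_\alpha$ is eventually monotone: $m'=\rho(2u_\alpha+\rho u_\alpha')$, and the equation for large $\rho$ is dominated by $\frac\rho2(\rho u_\alpha'+2u_\alpha)$, so $(e^{\rho^2/4}\,\cdot)$-type integration of \eqref{eq:HOmega_alpha'} with $H=\rho^{n+1}e^{\rho^2/4}$ shows that $\rho u_\alpha'+2u_\alpha$ is $O(e^{-\rho^2/4})$ plus lower-order terms of a fixed sign. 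Hence $m'$ is integrable, the limit exists, and it is positive because $m$ is eventually nondecreasing and positive.
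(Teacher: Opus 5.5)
Your positivity argument is correct, and the sign you flagged as the main obstacle is the favourable one. In the mass equation $v_\alpha''-\frac{n-1}{\rho}v_\alpha'+\frac\rho2 v_\alpha'+(1-\frac n2)v_\alpha+\frac{2}{\rho^{n-1}}v_\alpha v_\alpha'=0$, the scaling term is the only one without a factor $v_\alpha'$. Hence at the first zero $\rho_0$ of $G_\alpha$ you get $v_\alpha''(\rho_0)=\frac{n-2}{2}v_\alpha(\rho_0)>0$, equivalently $G_\alpha'(\rho_0)=(n-2)\rho_0u_\alpha(\rho_0)>0$ by \eqref{eq:G_alpha'}. Here $v_\alpha(\rho_0)>0$ because $v_\alpha$ increases on $[0,\rho_0]$ from $v_\alpha(0)=0$. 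This contradicts $G_\alpha>0$ on $[0,\rho_0)$ outright, so the uniqueness fallback is not needed. \autoref{lem:decreasing} then gives $0<u_\alpha<\alpha$, hence $\tau_\alpha=\infty$, which you should state explicitly. This is a genuine simplification of the paper. The paper needs $\alpha<\frac14$ in Step 1 only to exclude $u_\alpha$ vanishing before $G_\alpha$, and it reaches large $\alpha$ through the continuity argument of Steps 2--4. Your identity $(\rho^nu_\alpha)'=\frac12\rho^{n-1}G_\alpha$ makes that detour unnecessary.

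The upper bound, however, does not follow as proposed. With $H=\rho^{n+1}$ one has exactly
\[
-\rho^{n+1}u_\alpha'-\rho^{n+2}u_\alpha^2=\Bigl(\frac{\rho^{n+2}}{2}u_\alpha-\frac n2\int_0^\rho s^{n+1}u_\alpha\,ds\Bigr)+(n-2)\int_0^\rho s^{n+1}u_\alpha^2\,ds .
\]
The bracket is not sign-definite a priori: its derivative is $\frac{\rho^n}{2}(\rho^2u_\alpha)'$, whose sign is not fixed (see below). So no termwise domination is available, and the only compensating positive term is the one you discarded when writing $\int_0^\rho s^{n+1}u_\alpha G_\alpha\,ds\ge\rho^{n+2}u_\alpha^2$. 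The missing idea is that $F:=-\rho^{n+1}u_\alpha'-\rho^{n+2}u_\alpha^2$ solves
\[
F'+\frac\rho2F=\rho^{n+1}u_\alpha\Bigl(1+\bigl(n-2-\tfrac{\rho^2}{2}\bigr)u_\alpha\Bigr).
\]
The right-hand side is positive provided $\rho^2u_\alpha\le2$, which is the very bound being proved, so a bootstrap is unavoidable. First, $F>0$ for small $\rho>0$. As long as $F\ge0$, the Riccati comparison gives $u_\alpha\le\alpha(1+\frac\alpha2\rho^2)^{-1}<2\rho^{-2}$. Hence $F'>0$ at any putative first zero of $F$, a contradiction. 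This is exactly the paper's $J_\alpha=-e^{\rho^2/4}F$ together with \eqref{eq:J'}.

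The limit step has a similar flaw: there are no lower-order terms of a fixed sign. Integrating with $H=\rho^{n+1}e^{\rho^2/4}$ and then by parts gives the exact identity
\[
\rho u_\alpha'+2u_\alpha=\rho^{-n}e^{-\rho^2/4}\int_0^\rho s^{n-1}e^{s^2/4}G_\alpha(s)\bigl(1-s^2u_\alpha(s)\bigr)\,ds .
\]
So $m=\rho^2u_\alpha$ is pushed towards $1$ from either side; for instance, if $\ell_\alpha>1$ the integrand is eventually negative. The claim ``$m$ eventually nondecreasing'' is therefore unfounded. Existence of the limit survives, e.g.\ via \eqref{eq:rho2Omega+2rhoOmega'}, whose integrand $2su_\alpha G_\alpha$ is positive, but $\ell_\alpha>0$ is left unproved. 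For that you need the paper's Step 3. Assume $\ell_\alpha=0$ and rewrite \eqref{eq:rho2Omega+2rhoOmega'} with the tail $\int_\rho^\infty2su_\alpha G_\alpha\,ds$. Then \autoref{lem:bound.rho.m.Omega} lets you bootstrap $\rho^mu_\alpha\to0$ for every $m$. This contradicts $u_\alpha(\rho)\ge u_\alpha(1)\rho^{-n}$ for $\rho\ge1$, which is precisely your monotonicity of $v_\alpha$.
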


Before moving to the proof of \autoref{prop:Omega_alpha>0} we need to collect some auxiliary results.

\begin{lemma} \label{lem:decreasing}
Fix $\alpha>0$ and suppose there exists $\rho_0 \in (0,\tau_\alpha)$ such that $u_\alpha(\rho) \geq 0$ for every $\rho \in [0,\rho_0]$. Then $u_\alpha'(\rho)<0$ for every $\rho \in (0,\rho_0]$.
\end{lemma}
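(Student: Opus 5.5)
We use the integrated identity \eqref{eq:HOmega_alpha'}, but split the equation more cleverly than in the proof of \autoref{prop:local.existence.uniqueness}. Write \eqref{eq:ODE} as $u_\alpha'' + h u_\alpha' + g = 0$ with
\begin{align}
h(\rho) := \frac{n+1}{\rho} + \frac{\rho}{2} + 2\rho u_\alpha(\rho),
\quad
g(\rho) := u_\alpha(\rho) + 2n u_\alpha^2(\rho).
\end{align}
Here $h$ is locally integrable on $(0,\rho_0]$ because $u_\alpha$ is continuous. An integrating factor is
\begin{align}
H(\rho) := \rho^{n+1}\exp\Big(\int_0^\rho \big(\tfrac{s}{2} + 2 s u_\alpha(s)\big) ds\Big),
\end{align}
which is positive for $\rho>0$ and satisfies $H' = hH$ and $H(\rho)\to 0$ as $\rho \downarrow 0$. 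Since $u_\alpha'$ is bounded near $0$ (indeed $u_\alpha'(0)=0$), we may take $\rho_1 \downarrow 0$ in \eqref{eq:HOmega_alpha'}, and for every $\rho\in(0,\rho_0]$ we obtain
\begin{align}
H(\rho) u_\alpha'(\rho) = -\int_0^{\rho} H(s)\big(u_\alpha(s) + 2n u_\alpha^2(s)\big) ds.
\end{align}

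By hypothesis $u_\alpha \geq 0$ on $[0,\rho_0]$, so the integrand is nonnegative. Moreover $u_\alpha(0)=\alpha>0$, so by continuity $u_\alpha>0$ on some interval $[0,\delta)$. There the integrand is strictly positive because $H>0$ on $(0,\delta)$. Hence the integral is strictly positive for every $\rho\in(0,\rho_0]$. Dividing by $H(\rho)>0$ gives $u_\alpha'(\rho)<0$.

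The point needing care is the choice of splitting. Putting the terms $\frac{\rho}{2}u_\alpha'$ and $2\rho u_\alpha u_\alpha'$ into the drift $h$ rather than into $g$ is what keeps $g$ sign-definite. Otherwise $g$ would contain $u_\alpha'$, whose sign is exactly what we are trying to determine. The only other technical point is justifying the limit $\rho_1\to 0$, which follows from $H(0^+)=0$ and the boundedness of $u_\alpha'$ near $0$ given by \autoref{prop:local.existence.uniqueness}.
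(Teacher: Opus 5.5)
Your proof is correct and follows the paper's argument. Both use the same splitting $h=\frac{n+1}{\rho}+\frac{\rho}{2}+2\rho u_\alpha$, $g=u_\alpha+2nu_\alpha^2$ in the integrated identity, and the sign of $g$ does all the work. The only difference is how you handle the origin: the paper integrates from a small $\rho_1>0$ where $u_\alpha'(\rho_1)<0$ (using $u_\alpha''(0)<0$) and then lets $\rho_1\downarrow 0$, whereas you send the base point to $0$ directly via $H(0^+)=0$ and obtain strictness from $u_\alpha(0)=\alpha>0$.
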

\begin{proof}
    We use \eqref{eq:HOmega_alpha'}, choosing $h(\rho):= \frac{n+1}{\rho} + \frac{\rho}{2}+2\rho u_\alpha(\rho)$ and $g(\rho) := u_\alpha(\rho)+2nu_\alpha^2$, $0<\rho_1 \ll 1$ such that $u_\alpha(\rho) \geq  0$ for every $\rho \in [0,\rho_1]$ and $u_\alpha'(\rho_1)<0$  (notice that this is always possible since $u_\alpha''(0)<0$), and $H(\rho) = \exp \int_{\rho_1}^\rho h(s)ds$.
    Then \eqref{eq:HOmega_alpha'} implies that $u_\alpha'(\rho)<0$ for every $\rho \in [\rho_1,\rho_0]$. We conclude by arbitrariness of $\rho_1$.
\end{proof}

\begin{lemma} \label{lem:bound.rho.m.Omega}
    Fix $\alpha>0$ and suppose $\tau_\alpha = \infty$. If for some $m  \geq 2$ and $C< \infty$ it holds $\sup_{\rho \geq 0}(1+\rho)^m |u_\alpha (\rho)| \leq C$, then $\sup_{\rho \geq 0}(1+\rho)^{m+1} |u_\alpha'(\rho)| \lesssim C+C^2$, with implicit constant independent of $\alpha$.
    Moreover, if $\lim_{\rho \to \infty} \rho^m u_\alpha(\rho)=0$ then $\lim_{\rho \to \infty} \rho^{m+1} u_\alpha'(\rho)=0$. 
\end{lemma}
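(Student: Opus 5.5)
We prove \autoref{lem:bound.rho.m.Omega} via the integrated form \eqref{eq:HOmega_alpha'}, this time keeping the drift term $\frac{\rho}{2}u_\alpha'$ inside the integrating factor. We regard \eqref{eq:ODE} as a first order linear equation for $u_\alpha'$ and rewrite it in divergence-like form. Since $\rho u_\alpha u_\alpha' = \frac{\rho}{2}(u_\alpha^2)'$, the equation can be written as
\begin{align}
\left(\rho^{n+1} u_\alpha'\right)' + \frac{1}{2}\left(\rho^{n+2} u_\alpha\right)' - \frac{n}{2}\rho^{n+1}u_\alpha + \left(\rho^{n+2}u_\alpha^2\right)' + (n-2)\rho^{n+1}u_\alpha^2 = 0 .
\end{align}
This is the same identity that was used to derive $\mathcal{T}$ in the proof of \autoref{prop:local.existence.uniqueness}. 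Integrating from $0$ to $\rho$ gives
\begin{align}
\rho^{n+1}u_\alpha'(\rho) = -\frac{\rho^{n+2}}{2}u_\alpha(\rho) - \rho^{n+2}u_\alpha^2(\rho) + \int_0^\rho s^{n+1}\Big(\frac n2 u_\alpha(s) - (n-2)u_\alpha^2(s)\Big)ds .
\end{align}
The boundary terms on the right are of size $\rho^{n+2-m}$, which is one power too large to give $\rho^{-m-1}$ decay of $u_\alpha'$. This formula is therefore not sufficient by itself, and the drift must be absorbed instead.

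So we use \eqref{eq:HOmega_alpha'} with $h(\rho)=\frac{n+1}{\rho}+\frac{\rho}{2}+2\rho u_\alpha$, $g=u_\alpha+2nu_\alpha^2$, and $H(\rho)=\rho^{n+1}e^{\rho^2/4}\exp\big(2\int_0^\rho s u_\alpha(s)ds\big)$. Since $|u_\alpha|\le C(1+\rho)^{-m}$ with $m\ge 2$, the exponent $2\int_0^\rho s|u_\alpha|\,ds$ grows at most like $C\log(1+\rho)$ when $m=2$ and is bounded when $m>2$. Taking $\rho_1=0$, we get
\begin{align}
u_\alpha'(\rho) = -\frac{1}{H(\rho)}\int_0^\rho H(s)g(s)\,ds .
\end{align}

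We split into $\rho\le 1$ and $\rho\ge1$. For $\rho\le1$, $H(s)/H(\rho)\le (s/\rho)^{n+1}e^{2C}$, so $|u_\alpha'|\lesssim (C+C^2)\rho$. For $\rho\ge1$, the key estimate is a Laplace-type bound for $\frac{1}{H(\rho)}\int_0^\rho H(s)(1+s)^{-m}ds$. The factor $e^{(s^2-\rho^2)/4}$ concentrates the integral within distance $O(1/\rho)$ of $s=\rho$. Heuristically this gives $\frac{2}{\rho}(1+\rho)^{-m}$, which is the desired $\rho^{-m-1}$.

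The obstacle is the possibly polynomially growing factor $\exp(2\int s u_\alpha)$, which may behave like $(1+\rho)^{2C}$. To handle it, we bound the ratio for $s\le\rho$ by
\begin{align}
\frac{H(s)}{H(\rho)} \le e^{(s^2-\rho^2)/4}\exp\Big(2C\int_s^\rho \frac{r}{(1+r)^2}dr\Big) \le e^{(s^2-\rho^2)/4}\Big(\frac{1+\rho}{1+s}\Big)^{2C}.
\end{align}
We then split the integral into $s\in[\rho/2,\rho]$ and $s\le \rho/2$. On $[\rho/2,\rho]$ the polynomial factor is at most $2^{2C}$. On $s\le\rho/2$ the Gaussian factor is at most $e^{-3\rho^2/16}$, which beats any polynomial. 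This produces a constant depending on $C$, so the claimed form $C+C^2$ with implicit constant independent of $\alpha$ needs more care.

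A cleaner route avoids the $\rho$-dependent exponent altogether: keep the term $2\rho u_\alpha u_\alpha'$ on the right-hand side as part of $g$, and use $H=\rho^{n+1}e^{\rho^2/4}$. This gives $u_\alpha' = -H^{-1}\int_0^\rho H(u_\alpha+2nu_\alpha^2+2su_\alpha u_\alpha')\,ds$. Integrating the last term by parts, $2su_\alpha u_\alpha' = s(u_\alpha^2)'$, the boundary term is $\rho u_\alpha^2(\rho)=O(C^2\rho^{1-2m})$, which is at most $\rho^{-m-1}$ since $m\ge2$. The remaining integrand is $-(sH)'u_\alpha^2/H$, with $(sH)'/H\sim \rho^2/2$, so it is of size $s^2u_\alpha^2\lesssim C^2 s^{2-2m}\le C^2 s^{-m}$. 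The Laplace bound then gives $|u_\alpha'|\lesssim (C+C^2)\rho^{-m-1}$ with universal constants. This is the route we will carry out.

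For the limit statement, we apply the same representation with $\sup_{s\ge R}s^m|u_\alpha(s)|=\varepsilon_R\to0$. The contribution from $s\le R$ is killed by $e^{(R^2-\rho^2)/4}$, and the tail contributes $\lesssim(\varepsilon_R+C\varepsilon_R)\rho^{-m-1}$, so $\rho^{m+1}u_\alpha'(\rho)\to0$.
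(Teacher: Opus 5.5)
Your final route is exactly the paper's. You take $H=\rho^{n+1}e^{\rho^2/4}$, integrate $2su_\alpha u_\alpha'=s(u_\alpha^2)'$ by parts to get $u_\alpha'(\rho)=-\rho u_\alpha^2(\rho)-H(\rho)^{-1}\int_0^\rho H(s)\big(u_\alpha+(n-2-s^2/2)u_\alpha^2\big)ds$, use $m\ge 2$ to control the integrand by $(C+C^2)(1+s)^{-m}$ and the boundary term by $C^2(1+\rho)^{-m-1}$, split the Laplace integral at $\rho/2$, and run the same argument with $\varepsilon$ for the limit statement; this is precisely how the paper argues. The only cosmetic fix is to write the weights as $(1+s)$ rather than $s$, since $(sH)'/H=n+2+s^2/2$. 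The correct bound is then $(1+s)^2u_\alpha^2\lesssim C^2(1+s)^{-m}$, not $s^{2-2m}$, which blows up at $s=0$.
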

\begin{proof}
By \eqref{eq:HOmega_alpha'} applied as in \autoref{prop:local.existence.uniqueness}, and multiplying both sides by $(1+\rho)^{m+1}$, we have
\begin{align}
    (1+\rho)^{m+1}|u_\alpha'(\rho)| 
    &\leq
    (1+\rho)^{m+1}\rho^{-n-1} e^{-\frac{\rho^2}{4}} \int_0^{\rho} s^{n+1}e^{\frac{s^2}{4}} |u_\alpha(s) + (n-2-s^2/2) u_\alpha^2(s) |ds
    \\
    &\quad+
    (1+\rho)^{m+2} u_\alpha^2(\rho);
\end{align}
By assumption $(1+\rho)^{m+2} u_\alpha^2(\rho) \leq C^2 (1+\rho)^{2-m} \leq C^2$. On the other hand, $|u_\alpha(s) + (n-2-s^2/2) u_\alpha^2(s)| \lesssim C(1+s)^{-m}+ C^2(1+s)^{2-2m} \leq (C+C^2)(1+s)^{-m}$ and thus we can bound 
\begin{align}
 \int_0^{\rho} &s^{n+1}e^{\frac{s^2}{4}} |u_\alpha(s) + (n-2-s^2/2) u_\alpha^2(s) |ds
 \\
 &\leq
 (C+C^2)  \int_0^{\rho} s^{n+1}e^{\frac{s^2}{4}} (1+s)^{-m} ds
 \\
 &\leq
 (C+C^2) (\rho/2)^{n+2}e^{\frac{\rho^2}{16}}
 + 
 (C+C^2)\int_{\rho/2}^{\rho} s^{n+1}e^{\frac{s^2}{4}} (1+s)^{-m} ds
 \\
 &\leq
 (C+C^2) (\rho/2)^{n+2}e^{\frac{\rho^2}{16}}
 + 
 (C+C^2) \rho^{n+1}(1+\rho/2)^{-m-1}\int_{\rho/2}^{\rho} e^{\frac{s^2}{4}} (1+s) ds.
\end{align}
Hence,
\begin{align}
    (1+\rho)^{m+1}|u_\alpha'(\rho)| 
    &\lesssim
    (C+C^2)
    \left( 
    (1+\rho)^{m+2} e^{-\frac{3\rho^2}{16}}
    +
    e^{-\frac{\rho^2}{4}}
    \int_{\rho/2}^{\rho} e^{\frac{s^2}{4}} (1+s) ds
    +
    1 \right)
    \lesssim
    C+C^2,
\end{align}
as for fixed $m$ the quantity between parentheses is uniformly bounded in $\rho \geq 0$.

The second part of the statement follows by the same computation as above, noticing that if $|u_\alpha(\rho)| \leq \varepsilon \rho^{-m}$ for $0<\varepsilon \ll 1$ and every $\rho  \geq \rho_\varepsilon$, then the previous line reads
\begin{align}
    (1+\rho)^{m+1}|u_\alpha'(\rho)| 
    &\lesssim
    (C+C^2)
    \left( 
    (1+\rho)^{m+2} e^{-\frac{3\rho^2}{16}}
    +
    \varepsilon e^{-\frac{\rho^2}{4}}
    \int_{\rho/2}^{\rho} e^{\frac{s^2}{4}} (1+s) ds
    +
    \varepsilon^2 \right),
\end{align}
for every $\rho \geq 2\rho_\varepsilon$. Since $\varepsilon$ is arbitrary and $\lim_{\rho \to \infty} (1+\rho)^{m+2} e^{-\frac{3\rho^2}{16}}=0$, we get $(1+\rho)^{m+1}|u_\alpha'(\rho)| \to 0$, as desired.
\end{proof}

\begin{proof}[Proof of \autoref{prop:Omega_alpha>0}]
Our strategy is inspired by the proof of \cite[Propositions 3.6 and 3.7]{HaWe82}.

\emph{Step 1: $\alpha \in (0,1/4)$}.
By elementary computations one can verify that the derivative of $u_\alpha$ can be expressed for every $\rho>0$ as
\begin{align} \label{eq:Omega_alpha'}
    \rho^{n+1}u_\alpha'(\rho)
    =
    -\frac{\rho^{n+2}}{2} u_\alpha(\rho) + \int_0^\rho s^{n+1} u_\alpha(s) \left( \frac{n}{2 } -G_\alpha(s) \right) ds.
 \end{align}
On the other hand, $G_{\alpha}$ is a $C^1$ function, and by \eqref{eq:ODE} its derivative is given by
\begin{align} \label{eq:G_alpha'}
    G_{\alpha}'(\rho)
    =
    - 2\rho  \left( G_{\alpha}(\rho)  \left(u_{\alpha}(\rho) + \frac14 \right) + \left( 1-\frac{n}2\right)u_{\alpha}(\rho)   \right).
\end{align}

Let us preliminarily assume $\alpha<1/4$ and suppose by contradiction that at least one of $u_\alpha(\rho)$ and $G_\alpha(\rho)$ is not strictly positive for all $\rho \in [0,\infty)$, and define $\rho_0>0$ as the first value of $\rho$ for which either $u_\alpha(\rho) = 0$ or $G_\alpha(\rho)=0$.

By \autoref{lem:decreasing}, $u_\alpha'(\rho)<0$ for every $\rho \in (0,\rho_0]$ and therefore $\max_{s \in [0,\rho_0]} G_\alpha(s) = G_\alpha(0) = 2n\alpha < \frac{n}{2}$.
Thus, if we evaluate \eqref{eq:Omega_alpha'} at $\rho = \rho_0$, the left-hand side is always negative, whereas the integral on the right-hand side is always positive: we deduce that $u_\alpha(\rho_0)>0$ and therefore the alternative $G_\alpha(\rho_0)=0$ must hold.
However, in this case we evaluate \eqref{eq:G_alpha'} at $\rho = \rho_0$ and we get $G_\alpha'(\rho_0)>0$, contradicting  minimality of $\rho_0$.
We deduce that no such $\rho_0$ can exist and therefore $u_\alpha$, $G_\alpha$ are always positive.

\emph{Step 2: $\alpha>1/4$}.
Let $A := \{ \alpha >0 \,:\, \exists \rho \geq 0 \,\, \mbox{s.t.}\,\,  u_\alpha(\rho)=0 \}$ and denote $\alpha_\star := \inf A$. We have just shown that $\alpha_\star \geq 1/4$; suppose by contradiction $\alpha_\star < \infty$. 
Notice that if $u_{\alpha_\star}(\rho_0)=0$ for some $\rho_0 > 0$, then we must necessarily have $u_{\alpha_\star}'(\rho_0) \neq 0$ (otherwise $u_{\alpha_\star} \equiv 0$ by uniqueness of solutions to \eqref{eq:ODE}, which is incompatible with the initial condition $u_{\alpha_\star}(0)=\alpha_\star$) and in particular $u_{\alpha_\star}(\rho)<0$ for some $\rho$ in a neighborhood of $\rho_0$.
Therefore, by continuity of the map $\alpha \mapsto u_\alpha$ at the point $\alpha_\star$ given by \autoref{prop:local.existence.uniqueness}, we must have $u_{\alpha_\star}(\rho)>0$ for every $\rho \geq 0$, otherwise $u_\alpha$ would keep having zeros for every $\alpha$ in a neighborhood of $\alpha_\star$, contradicting minimality of $\alpha_\star$.

Next we claim that $\lim_{\rho \to \infty} \rho^2 u_{\alpha_\star}(\rho) =: \ell_{\alpha_\star}$ exists and $\ell_{\alpha_\star} \in [0,2]$. The lower bound is clear, since $u_{\alpha_\star}>0$; as for the upper bound, we can consider the quantity:
\begin{align}
    J_{\alpha_\star}(\rho)
    :=
    \rho^{n+1} e^{\frac{\rho^2}{4}}
    \left( u_{\alpha_\star}'(\rho) + \rho u_{\alpha_\star}^2(\rho)\right).
\end{align}
Notice that $J_{\alpha_\star}(0)=0$ and $J_{\alpha_\star}$ is non-positive in a right neighborhood of the origin, by Taylor expanding $u_{\alpha_\star}$.
We claim that $J_{\alpha_\star}(\rho) \leq 0$ for every $\rho \geq 0$.
Indeed, by explicit computation it holds
\begin{align} \label{eq:J'}
    J'_{\alpha_\star}(\rho) =
    -\rho^{n+1} e^{\frac{\rho^2}{4}}
    \left( u_{\alpha_\star}(\rho) + (n-2-\rho^2/2) u_{\alpha_\star}^2(\rho)\right).
\end{align}
Then we deduce that if $[0,\rho_0]$ is the maximal interval such that $J_{\alpha_\star}(\rho) \leq 0$ for every $\rho \in [0,\rho_0]$, then we have by comparison 
\begin{align} \label{eq:rho2Omega_alpha.leq.2}
    u_{\alpha_\star}'(\rho) \leq - \rho u_{\alpha_\star}^2(\rho)
\quad
    \Longrightarrow
    \quad
    0< u_{\alpha_\star}(\rho) \leq \frac{\alpha_\star}{1+\frac{\alpha_\star}{2}\rho^2} \leq \frac{2}{\rho^2}
\end{align}
for every $\rho \in [0,\rho_0]$, implying $J'_{\alpha_\star}(\rho_0)<0$ by \eqref{eq:J'}. Thus we can choose $\rho_0 = \infty$ and therefore by \eqref{eq:rho2Omega_alpha.leq.2} we have $\limsup_{\rho \to \infty} \rho^2 u_{\alpha_\star}(\rho) \le 2$. 
To show that the limit exists, differentiate the quantity $\rho^{2} u_{\alpha_\star}(\rho)
    + 2\rho u_{\alpha_\star}'(\rho)$ to obtain
\begin{align} \label{eq:rho2Omega+2rhoOmega'}
    \rho^{2} u_{\alpha_\star}(\rho)
    =
    -2\rho u_{\alpha_\star}'(\rho)
    +
    2n \alpha_\star -
    2n u_{\alpha_\star}(\rho)
    -\int_0^\rho 2s u_{\alpha_\star}(s) G_{\alpha_\star}(s)ds.
\end{align}
Then by \autoref{lem:bound.rho.m.Omega} the right-hand side above admits a limit when $\rho \to \infty$, hence so does the left-hand side. 

In the following, we consider separately the two cases $\ell_{\alpha_\star} = 0$ and $\ell_{\alpha_\star}>0$, seeking a contradiction in each case. This will contradict $\alpha_\star < \infty$ and conclude the proof that $u_\alpha>0$. In turn, this implies $G_\alpha>0$ arguing as in Step 1.

\emph{Step 3. $\ell_{\alpha_\star} = 0$}.
For the sake of contradiction, suppose $\ell_{\alpha_\star} = 0$.
Then we can show that 
\begin{align} \label{eq:rho.m.Omega.to.0}
    \lim_{\rho \to \infty} \rho^m u_{\alpha_\star}(\rho) = 0,
    \quad
    \forall m \geq 2.
\end{align}
We argue by induction on $m$. The base case $m=2$ is true by assumption. Then, suppose \eqref{eq:rho.m.Omega.to.0} is true for a given $m \geq 2$: by \autoref{lem:bound.rho.m.Omega} we have $\lim_{\rho \to \infty} \rho^{m+1} u_{\alpha_\star}'(\rho) = 0$, and therefore $\lim_{\rho \to \infty} \rho^{m}G_{\alpha_\star}(\rho)=0$ as well.
Then, using \eqref{eq:rho2Omega+2rhoOmega'} we get
\begin{align}
    \rho^{m+2} u_{\alpha_\star}(\rho)
    =
    -2\rho^{m+1} u_{\alpha_\star}'(\rho)
    +
    n \rho^{m} u_{\alpha_\star}(\rho)
    +\rho^{m}\int_\rho^\infty 2s u_{\alpha_\star}(s) G_{\alpha_\star}(s)ds,
\end{align}
and by $\lim_{\rho \to \infty} \rho^{m+1} u_{\alpha_\star}'(\rho) = \lim_{\rho \to \infty} \rho^{m}G_{\alpha_\star}(\rho)=0$ we obtain $\rho^{m+2} u_{\alpha_\star}(\rho) \to 0$. Therefore, \eqref{eq:rho.m.Omega.to.0} holds.
But this leads to a contradiction: indeed, since $u_{\alpha_\star}(\rho)>0$ for every $\rho \geq 0$, the arguments in Step 1 give $G_{\alpha_\star}(\rho)>0$ for every $\rho \geq 0$. Hence by comparison
\begin{align}
    \rho u_{\alpha_\star}'(\rho) > - n u_{\alpha_\star}(\rho)
    \quad
    \Longrightarrow
    \quad
    u_{\alpha_\star}(\rho) \geq u_{\alpha_\star}(1) \rho^{-n},
\end{align}
in conflict with \eqref{eq:rho.m.Omega.to.0}. 

\emph{Step 4. $\ell_{\alpha_\star}>0$}.
By \eqref{eq:rho2Omega_alpha.leq.2} and \autoref{lem:bound.rho.m.Omega} we can find $\rho_0>1$ large enough so that for some unimportant constant $C>0$ it holds
\begin{align}
    \frac{\ell_{\alpha_\star}}{2\rho_0^2}<u_{\alpha_\star}(\rho_0)<\frac{2\ell_{\alpha_\star}}{\rho_0^2},
    \quad
    -\frac{C}{\rho_0^3}<u_{\alpha_\star}'(\rho_0) < 0.
\end{align}
In particular, without loss of generality we might choose $\rho_0$ such that
\begin{align} \label{eq:condition.alpha.star}
 u_{\alpha_\star}(\rho_0)<\frac14,
 \quad0<G_{\alpha_\star}(\rho_0)<\frac{n}{2},
 \quad
 \frac{\rho_0}{2}u_{\alpha_\star}(\rho_0)+u_{\alpha_\star}'(\rho_0)>0.
\end{align}
By \autoref{prop:local.existence.uniqueness}, there exists a neighborhood $U_{\alpha_\star}$ of $\alpha_\star$ such that for every $\alpha \in U_{\alpha_\star}$ we have $u_\alpha(\rho)>0$ for every $\rho \leq \rho_0$ and \eqref{eq:condition.alpha.star} holds with $\alpha_\star$ replaced by $\alpha$.

Now we take any $\alpha \in U_{\alpha_\star}$, $\alpha>\alpha_\star$ and look at equation \eqref{eq:Omega_alpha'} between $\rho_0$ and $\rho>\rho_0$:
\begin{gather}
 \rho^{n+1}u_\alpha'(\rho) = \rho_0^{n+1}u_\alpha'(\rho_0) 
    + 
    \frac{\rho_0^{n+2}}{2} u_\alpha(\rho_0)
    -
    \frac{\rho^{n+2}}{2} u_\alpha(\rho)
    +
    \int_{\rho_0}^\rho s^{n+1} u_\alpha(s) \left( \frac{n}{2 } -G_\alpha(s) \right) ds.  
\end{gather}
We want to show that $u_\alpha$ remains strictly positive for every $\rho > \rho_0$, contradicting $\alpha_\star = \inf A$. To do this, we can argue as in Step 1: Indeed, as long as $u_\alpha$ is positive then $u_\alpha'<0$ by \autoref{lem:decreasing}, and therefore $G_\alpha(\rho)<2nu_\alpha(\rho) \leq 2nu_\alpha(\rho_0) < n/2$. Thus the integral in the equation above is positive, and 
\begin{align}
    u_\alpha'(\rho) 
    > 
    \rho^{-n-1} \rho_0^{n+1} \left( u_\alpha'(\rho_0) 
    + 
    \frac{\rho_0}{2} u_\alpha(\rho_0) \right)
    -
    \frac{\rho}{2} u_\alpha(\rho),
\end{align}
implying $u_\alpha'(\rho)>0$ at the first $\rho$ for which $u_\alpha(\rho)=0$. This leads to the same contradiction encountered in Step 1, and completes the proof of $u_\alpha,G_\alpha>0$.

\emph{Step 5: Conclusion}. We are left to show $u_\alpha(\rho) < \alpha(1+\frac{\alpha}{2}\rho^2)^{-1}$ for every $\rho > 0$ and the existence of the limit $\ell_\alpha := \lim_{\rho \to \infty} \rho^2 u_\alpha(\rho) \in (0,2]$. But since we now know that $u_\alpha>0$, we can repeat the arguments presented in Steps 2-3 for $\alpha=\alpha_\star$ and recover the same results for every $\alpha>0$.
\end{proof}
\subsection{Linearized equation} \label{ssec:linearized.system}
Let us introduce the ODE obtained by linearization of \eqref{eq:ODE} around a profile $u_\alpha$, which reads as follows:
\begin{align} \label{eq:ODE.linearized}
    \begin{dcases}
f_\alpha'' 
+ 
\left( \frac{n+1}{\rho} +  \frac{\rho}{2} +2 \rho u_\alpha \right) f_\alpha' 
+ 
f_\alpha
+ 4n u_\alpha f_\alpha +  2 \rho u_\alpha'  f_\alpha = 0,
\\
f_\alpha(0)=1,
\\
f_\alpha'(0) = 0.
    \end{dcases}
\end{align}
In the following lemma we study how the solution of \eqref{eq:ODE.linearized} behaves as its coefficients change.
Denote $\mathcal{V}_\alpha := 4n u_\alpha + 2\rho u_\alpha'$.
For fixed $\alpha>0$, by \autoref{prop:Omega_alpha>0} we have $0 < \mathcal{V}_\alpha(\rho) \lesssim \rho^{-2}$ for every $\rho \geq 0$ and $\rho^2 \mathcal{V}_\alpha(\rho) > n \ell_\alpha>0$ for every $\rho$ sufficiently large. Moreover, by \autoref{prop:local.existence.uniqueness} and the equation satisfied by $u_\alpha$, we have that $\mathcal{V}_\alpha$ is smooth.

Fix $\lambda>0$ and consider the \emph{eigenvalue problem}
\begin{equation} \label{eq:eigen}
\begin{dcases}
    f'' + \left( \frac{n+1}{\rho} +  \frac{\rho}{2} +2 \rho u_\alpha \right) f'  + f  + \mu \mathcal{V}_\alpha f =  \lambda f,
    \quad
    \mu \in \R,
    \\
    f(0) = 1,
    \\
    f'(0) = 0.
\end{dcases}
    \end{equation} 

For every $\alpha,\lambda>0$ and $\mu \in \R$, given previous \autoref{prop:Omega_alpha>0} and \autoref{lem:bound.rho.m.Omega}, the coefficients of \eqref{eq:eigen} are smooth and locally bounded far from the origin, where the singularity $\sim 1/\rho$ can be treated arguing similarly to \autoref{prop:local.existence.uniqueness}. Moreover, finite time blow up cannot happen by linearity of the ODE. As a consequence, a unique $C^2([0,\infty))$ solution of \eqref{eq:eigen} exists.

\begin{lemma} \label{lem:principal_eigen}
Let $\alpha,\lambda>0$ be fixed. Then the following hold:
\begin{itemize}
    \item[$i$)] there exists $\bar \mu = \bar\mu(\alpha,\lambda) \in (0,\infty)$ such that the solution $f$ of \eqref{eq:eigen} with $\mu=\bar \mu$  satisfies $f(\rho) >0$ and 
\begin{align} \label{eq:decay_f}
\int_1^\infty \frac{d\rho}{H(\rho) f^2(\rho)} = \infty,
\quad
H(\rho) := \exp \int_1^\rho \left( \frac{n+1}{s} + \frac{s}{2} + 2 su_\alpha(s)\right) ds;
\end{align}
    \item[$ii$)]
    If $\mu>\bar \mu$ then the associated solution $f$ of \eqref{eq:eigen} admits at least a root $\rho_0 \in (0,\infty)$;
    \item[$iii$)] 
    If $\mu < \bar \mu$ then the associated solution $f$ of \eqref{eq:eigen} is strictly positive and 
\begin{align} 
\int_1^\infty \frac{d\rho}{H(\rho) f^2(\rho)} < \infty.
\end{align}
\end{itemize}
\end{lemma}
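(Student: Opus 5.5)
We set the problem in Sturm--Liouville form. Since $H' = hH$ with $h = \frac{n+1}{\rho}+\frac{\rho}{2}+2\rho u_\alpha$, equation \eqref{eq:eigen} becomes
\begin{align}
(H f')' + H\big( (1-\lambda) + \mu \mathcal{V}_\alpha \big) f = 0 .
\end{align}
The plan is to show that the zero set and the positivity of $f$ depend monotonically on $\mu$, and then to define $\bar\mu$ as a threshold.

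Set
\begin{align}
\bar\mu := \sup\{\mu\in\R : f_\mu>0 \text{ on } [0,\infty)\}.
\end{align}
\emph{Step 1 (monotonicity).} Because $\mathcal{V}_\alpha>0$, the Sturm comparison theorem (equivalently, Picone's identity applied to $f_{\mu_1},f_{\mu_2}$ with $\mu_1<\mu_2$, both starting at $f(0)=1$, $f'(0)=0$) shows two things. If $f_{\mu_2}>0$ on $[0,R]$, then $f_{\mu_1}>0$ on $[0,R]$ and $f_{\mu_1}/f_{\mu_2}$ is nondecreasing there. Moreover, the first zero of $f_\mu$, when it exists, decreases strictly and continuously in $\mu$; continuity follows from continuous dependence on $\mu$, which we can argue as in \autoref{prop:local.existence.uniqueness}. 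Hence the set of $\mu$ with $f_\mu>0$ everywhere is an interval $(-\infty,\bar\mu)$ or $(-\infty,\bar\mu]$.

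\emph{Step 2 ($\bar\mu$ is positive and finite).} For $\mu\le 0$ small enough, $(1-\lambda)+\mu\mathcal V_\alpha$ can be made $\le 0$ wherever needed. The cleanest case is $\lambda\ge 1$, $\mu\le 0$. Then $(Hf')'=-H q f\ge 0$ while $f>0$, so $Hf'$ is nondecreasing from $0$. Thus $f'\ge0$ and $f\ge1$ never vanishes.

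For $\lambda<1$ this argument fails and we need another route. We note that $f\equiv$ (a suitable positive multiple of $H^{-1}$-type solution) is not available, so instead we use the explicit positive function $g(\rho)=e^{-\rho^2/4}$-like comparison, or we argue directly. Since $\rho f'/2+f$ dominates at large $\rho$, one checks that for $\mu$ very negative $f$ is increasing, hence positive. This gives $\bar\mu>-\infty$. Obtaining $\bar\mu>0$ needs positivity at $\mu=0$, and this step should be checked. To get $\bar\mu<\infty$, we use $\rho^2\mathcal V_\alpha>n\ell_\alpha$ for large $\rho$. For $\mu$ huge, $\mu\mathcal V_\alpha$ dominates on a fixed compact interval $[1,2]$, where $\mathcal V_\alpha$ is bounded below. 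By Sturm comparison with $y''+Ky=0$, $f$ then oscillates there.

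\emph{Step 3 (characterising $\bar\mu$ by the integral, which I expect to be the main obstacle).} For $f>0$ everywhere, the reduction-of-order second solution $f\int_\rho^\infty \frac{ds}{Hf^2}$ exists precisely when the integral is finite. In that case $f$ is non-principal at infinity: it grows like $e^{0}$-type, with the principal solution decaying like $H^{-1}\sim e^{-\rho^2/4}$. The key claim is that the threshold $\bar\mu$ is attained (closedness) and that $f_{\bar\mu}$ is principal, i.e. the integral diverges.

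First, suppose $f_{\bar\mu}$ had a zero. By continuity in $\mu$, nearby $f_\mu$ with $\mu<\bar\mu$ would also vanish, which is a contradiction. Hence $f_{\bar\mu}>0$, which gives part $i$) positivity.

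Next, suppose $\int^\infty \frac{1}{Hf_{\bar\mu}^2}<\infty$. Then $f_{\bar\mu}$ is non-principal, and the standard Leighton/Hartman perturbation argument shows that a slightly larger $\mu$ still gives a nonoscillatory positive solution on all of $[0,\infty)$. Concretely, this uses the Riccati variable $w=Hf'/f$ and the fact that $\int^\infty H\mathcal V_\alpha\,(\text{principal})^2$ is finite. That contradicts the definition of $\bar\mu$, so we get \eqref{eq:decay_f}.

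Conversely, for $\mu<\bar\mu$, Picone monotonicity gives $f_\mu/f_{\bar\mu}$ nondecreasing, strictly so. Since $f_{\bar\mu}$ is principal and $f_\mu$ is not proportional to it, $f_\mu$ must be non-principal, and the integral is finite; this is $iii$). Finally, $ii$) is immediate from the definition of $\bar\mu$ together with Step 1.

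The delicate points are the openness/perturbation argument at infinity, where we need the exact asymptotics $H\sim\rho^{n+1+2\ell_\alpha}e^{\rho^2/4}$ and $\mathcal V_\alpha\lesssim\rho^{-2}$, and confirming $\bar\mu>0$.
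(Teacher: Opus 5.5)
\textbf{Verdict: the route is viable, but two steps are not proved.}

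Your threshold definition $\bar\mu=\sup\{\mu: f_\mu>0 \text{ on } [0,\infty)\}$ is a legitimate alternative to the paper's variational one. The paper defines $\bar\mu$ as the infimum of $\int(|\nabla F|^2-(1-\lambda)F^2)\pi$ subject to $\int F^2\mathcal V_\alpha\pi=1$ on $H^1_\pi(\R^{n+2})$, and quotes $ii$), $iii$) from Naito's comparison principle. In your version, Step~1, $\bar\mu<\infty$, positivity of $f_{\bar\mu}$ and $ii$) all go through. The two steps that carry the content of the lemma are the ones left unproved.

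\textbf{Positivity of $\bar\mu$.} This is part of the statement, and you leave it open. Your fallback for $\lambda<1$ is also wrong: since $\mathcal V_\alpha\to0$, $1-\lambda+\mu\mathcal V_\alpha>0$ for large $\rho$, so $f_\mu$ is not increasing there. The comparison function you mention in passing does the job. For $\phi=e^{-\rho^2/4}$,
\[
(H\phi')'+H(1-\lambda+\mu\mathcal V_\alpha)\phi=-H\kappa\phi,\qquad \kappa:=\lambda+\tfrac n2+\rho^2u_\alpha-\mu\mathcal V_\alpha ,
\]
and $\kappa>0$ whenever $\mu<(\lambda+\frac n2)/(4n\alpha)$, because $0<\mathcal V_\alpha\le 4n\alpha$. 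Then $W=H(\phi f_\mu'-\phi' f_\mu)$ satisfies $W(0)=0$ and $W'=H\kappa\phi f_\mu$, so $f_\mu/\phi$ increases from $1$ and $f_\mu$ never vanishes. This gives $\bar\mu\ge(\lambda+\frac n2)/(4n\alpha)$. That is stronger than the bound $1/(16n\alpha)$ the paper extracts from the weighted Poincar\'e inequality and reuses in \autoref{thm:instability}, so it covers that later use too.

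\textbf{The divergence \eqref{eq:decay_f} at the threshold.} This is the more serious gap. It rests on an openness claim: if $\int^\infty(Hf_{\bar\mu}^2)^{-1}<\infty$, then $f_{\bar\mu+\varepsilon}>0$ on all of $[0,\infty)$ for small $\varepsilon$. Continuity in $\mu$ only gives this on compacts. The finiteness of $\int^\infty H\mathcal V_\alpha u^2$ that you invoke does not control the tail: a variation-of-constants argument around the non-principal $f_{\bar\mu}$ needs $\int_0^\infty H\mathcal V_\alpha f_{\bar\mu}u<\infty$, which requires asymptotics of $f_{\bar\mu}$.

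An elementary fix runs as follows.
\begin{itemize}
\item If $f'+\rho(\frac12+2u_\alpha)f\le 0$ for all large $\rho$, then $Hf^2\lesssim\rho^{n+1}e^{-\rho^2/4}\to0$ and the integral diverges.
\item Hence non-principality gives arbitrarily large $\rho_0$ with $f_{\bar\mu}'(\rho_0)+\rho_0(\frac12+2u_\alpha(\rho_0))f_{\bar\mu}(\rho_0)>0$, and this persists for $\mu$ near $\bar\mu$.
\item Integrate $\rho^{n+1}\times$\eqref{eq:eigen} from $\rho_0$, as in Step~4 of \autoref{prop:Omega_alpha>0}. The resulting integrand $-\frac n2-\lambda+(4n\mu-2n-4)u_\alpha+2(\mu-1)\rho u_\alpha'$ is negative for large $\rho$, so $f_\mu'>0$ at any first zero after $\rho_0$, a contradiction.
\end{itemize}
The paper sidesteps all of this. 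Its minimizer has $\int_1^\infty f^2H\rho^{-2}\,d\rho<\infty$, and Cauchy--Schwarz against $\int_1^\infty\rho^{-1}d\rho=\infty$ yields \eqref{eq:decay_f} immediately.

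\textbf{A smaller point in $iii$).} The argument ``$f_\mu$ is not proportional to $f_{\bar\mu}$'' proves nothing, because the two functions solve different equations. Use instead that $W=H(f_\mu'f_{\bar\mu}-f_\mu f_{\bar\mu}')$ is increasing for $\mu<\bar\mu$. Then $f_\mu/f_{\bar\mu}\ge 1+W(1)\int_1^\rho(Hf_{\bar\mu}^2)^{-1}$ on $[1,\infty)$, and therefore $\int_1^\infty(Hf_\mu^2)^{-1}\le 1/W(1)$.
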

\begin{proof}
    Part $i)$ follows by variational principle within the weighted Sobolev space $H^1_\pi(\R^{n+2})$ with weight $\pi$ given by $\pi(y) := \exp \int_0^{|y|}(\frac{s}{2}+2su_\alpha(s))ds$, similarly to the proof of \cite[Proposition D.1]{Naito}.
More precisely, let us consider the weighted spaces 
\begin{align*}
L_\pi^2(\R^{n+2}) 
&:= 
\left\{ F \in L^2(\R^{n+2}) : \int_{\R^{n+2}}   |F(y)|^2 \pi(y) dy < \infty \right\},
\\
H_\pi^1(\R^{n+2}) 
&:= 
\left\{ F \in H^1(\R^{n+2}) : \int_{\R^{n+2}} \left( |\nabla F(y)|^2 + |F(y)|^2 \right) \pi(y) dy < \infty \right\}.
\end{align*}

Notice that by \autoref{prop:Omega_alpha>0} the weight $\pi$ satisfies
\begin{align} 
   \theta(y) &:=  \left(\Delta \log \pi + \frac12 |\nabla \log \pi|^2 \right)(y)
    \\
    &= \frac{n+2}{2} + 2(n+2) u_\alpha(|y|) + 2|y|u'_{\alpha}(|y|) + \frac{|y|^2}{2}\left( \frac12 +2u_\alpha(|y|)\right)^2
    \\
    \label{eq:theta}
    &\geq \frac{n+2}{2} + 4 u_\alpha(|y|)  + \frac{|y|^2}{2}\left( \frac12 +2u_\alpha(|y|)\right)^2,
\end{align}
in particular $\theta(y) \to +\infty$ when $|y| \to \infty$.
Thus, by \cite[Proposition 1.1]{EsKa87} the embedding $H^1_\pi(\R^{n+2}) \subset L_\pi^2(\R^{n+2})$ is compact. Moreover, by \cite[Lemma 1.5]{EsKa87} we have for every $F \in H^1_\pi(\R^{n+2})$ 
\begin{align} \label{eq:weighted_poincarè}
\int_{\R^{n+2}} |\nabla F(y)|^2  \pi(y) dy 
\geq 
\frac12 
\int_{\R^{n+2}} |F(y)| ^2  \theta(y) \pi(y) dy.
\end{align}

Then we can consider the variational problem for the \emph{principal eigenvalue} 
\begin{align*}
\bar\mu := \inf_{F \in H^{1}_\pi(\R^{n+2})} \left\{ 
\int_{\R^{n+2}} \left( |\nabla F(y)|^2 - (1-\lambda) |F(y)|^2 \right) \pi(y) dy
\,:\,
\int_{\R^{n+2}} |F(y)|^2 \mathcal{V}_\alpha(|y|) \pi(y) dy = 1   \right\}.
\end{align*}

\emph{Step 1: $\bar \mu \in (0,\infty)$}.
The set defining the infimum is non-empty, since any non-zero $F \in C^\infty_c(\R^{n+2})$ can be suitably renormalized to satisfy $\int_{\R^{n+2}} |F(y)|^2 \mathcal{V}_\alpha(|y|) \pi(y) dy = 1$.
In particular, $\bar \mu<\infty$.

To see that $\bar \mu> 0$, let us preliminarily  observe that in virtue of \autoref{prop:Omega_alpha>0} and \autoref{lem:decreasing}, for every $\alpha \in (0,\infty)$ it holds 
\begin{align}
    \| \mathcal{V}_\alpha(|\cdot|)\|_{L^\infty(\R^{n+2})} = \mathcal{V}_\alpha(0) = 4n \alpha.
\end{align}
Hence, for every $F$ satisfying the constraints of the variational problem we have
\begin{align} \label{eq:lower.bound.F^2}
    1 = \int_{\R^{n+2}} |F(y)|^2 \mathcal{V}_\alpha(|y|) \pi(y) dy  \leq
    4n \alpha \int |F(y)|^2 \pi(y)dy,
\end{align}
and using \eqref{eq:weighted_poincarè}, \eqref{eq:theta}, $n \geq 3$, and \eqref{eq:lower.bound.F^2} we have:
\begin{align}
\int_{\R^{n+2}} \left( |\nabla F(y)|^2 - (1-\lambda) |F(y)|^2 \right) \pi(y) dy
&\geq
\int_{\R^{n+2}}    |F(y)| ^2  \left(\frac{\theta(y)}{2} -1\right) \pi(y) dy
\\
&\geq \label{eq:lower.bound.barmu}
\frac14 \int_{\R^{n+2}}    |F(y)| ^2  \pi(y) dy
\geq   \frac{1}{16n \alpha }
    > 0.
\end{align}

\emph{Step 2: Finding a minimizer}.
Since the embedding $H^1_\pi(\R^{n+2}) \subset L_\pi^2(\R^{n+2})$ is compact, given a minimizing sequence $\{F_k\}_{k \in \NN}$ approaching the infimum of the variational problem we can extract a subsequence converging weakly in $H^1_\pi(\R^{n+2})$ and strongly in $L_\pi^2(\R^{n+2})$ towards a limit $\tilde F \in H^1_\pi(\R^{n+2})$.
Since $\mathcal{V}_\alpha(|\cdot|) \in L^\infty(\R^{n+2})$, the constraint $\int_{\R^{n+2}} |\tilde F(y)|^2 \mathcal{V}_\alpha(|y|) \pi(y) dy = 1$ is satisfied, and by lower semicontinuity of the norm with respect to weak convergence, $\tilde F$ is a minimizer of the variational problem. 
Moreover, using that $|\nabla F|^2 \leq |\nabla \tilde F|^2$ for $F:=|\tilde F|$ and that the variational problem is rotationally invariant, we can additionally assume that the minimizer satisfies $F(y) = f(|y|)$ for some non-negative function $f$.

\emph{Step 3: Euler-Lagrange equations}.
Since $F$ realizes the infimum defining $\bar \mu$, 
it solves the Euler-Lagrange equation for every smooth test function $\varphi \in C^\infty_c(\R^{n+2})$
\begin{align} \label{eq:EL_F}
\int_{\R^{n+2}} \left( \nabla F(y) \cdot \nabla \varphi(y)  - (1-\lambda) F(y) \varphi(y) \right) \pi(y)dy = \bar \mu \int_{\R^{n+2}} F(y) \varphi(y) \mathcal{V}_\alpha(|y|) \pi(y) dy;
\end{align}
namely, $F \in H^1_\pi(\R^{n+2})$ is a weak solution of the elliptic PDE
\begin{align}
    \mbox{div}(\pi \nabla F) + (1-\lambda) F\pi + \bar \mu F \mathcal{V}_\alpha(|\cdot|) \pi = 0.
\end{align}
Notice that $\mathcal{V}_\alpha(|\cdot|)$ is smooth as a consequence of the power series expansion of $u_\alpha$ around the origin given in \autoref{prop:local.existence.uniqueness} and smoothness of the coefficients of \eqref{eq:ODE}. 
Since $\pi$ is also smooth and strictly positive, by elliptic regularity theory \cite[Section 6.3.1, Theorem 3]{Evans} and Harnack inequality we deduce that $F \in C^2_{loc}(\R^{n+2})$ and $F(y)>0$ for every $y \in \R^{n+2}$.
Moreover, since $F$ is radial and $C^2_{loc}(\R^{n+2})$ we have $\nabla F(0) = 0  $. In particular, $f \in C^2_{loc}([0,\infty))$ satisfies $f(\rho) > 0$ for every $\rho \geq0 $ and $f'(0)=0$.
Furthermore, by change of variables, integration by parts, and using that $F$ solves \eqref{eq:EL_F}, we have that $f$ solves the ODE
\begin{align*}
f'' + \left( \frac{n+1}{\rho} +  \frac{\rho}{2} +2 \rho u_\alpha \right) f'  + f + \bar\mu \mathcal{V}_\alpha f = \lambda f 
\end{align*}
with initial conditions $f(0)=F(0)>0$ and $f'(0)=0$.
By linearity, we can suppose without loss of generality
that $f(0)=1$. Putting all together, $f$ is the unique solution of system \eqref{eq:eigen}.

\emph{Step 4: Decay of $f$ at $\infty$.}
Since $|F|^2 \mathcal{V}_\alpha(|\cdot|) \pi \in L^1(\R^{n+2})$ and $\rho^2 \mathcal{V}_\alpha(\rho) > n \ell_\alpha>0$ for every $\rho$ sufficiently large, it holds 
\begin{align} \label{eq:aux001}
    \infty 
    &> 
    \int_{\{ |y| \geq 1\}}
    |F(y)|^2 \mathcal{V}_\alpha(|y|) \pi(y) dy 
    \\
    &= C_n
    \int_1^\infty f^2(\rho) \mathcal{V}_\alpha(\rho) H(\rho)  d\rho
    \gtrsim 
    \int_1^\infty f^2(\rho) H(\rho) \rho^{-2} d\rho ,
\end{align}
for some dimensional constant $C_n$.
By Cauchy-Schwarz inequality we get
\begin{align}
    \infty = \int_1^\infty \frac{1}{\rho} d\rho
    =
    \int_1^\infty \frac{f(\rho) H^{1/2}(\rho) \rho^{-1}}{f(\rho) H^{1/2}(\rho)} d\rho
    \lesssim
    \left(
    \int_1^\infty  f^2(\rho) H(\rho) \rho^{-2} d\rho
    \right)^{1/2}
\left(
    \int_1^\infty \frac{d\rho}{H(\rho) f^2(\rho)}
    \right)^{1/2},
\end{align}
and \eqref{eq:aux001} yields
\begin{align}
\int_1^\infty \frac{d\rho}{H(\rho) f^2(\rho)} = \infty.
\end{align}

Parts $ii)$ and $iii)$ of the lemma follow by the comparison principle given in \cite[Proposition A.1]{Naito}.
More specifically, point $ii)$ follows by \cite[Proposition A.1 i)]{Naito}, while under the assumption of point $iii)$ we have by \cite[Proposition A.1 ii)]{Naito} that $f>0$ and 
\begin{align}
    \int_1^\infty \frac{d\rho}{H(\rho) f^2(\rho)} < \infty.
\end{align}
\end{proof}

\subsection{Zeros of linearized ODE}
\label{ssec:zeros}
The main goal of this subsection is to prove the following 

\begin{prop} \label{prop:f.zero}
  Let $n \in \{3, \dots,9\}$. Then for every $k \in \NN$ there exists $\alpha_k>0$ such that for every $\alpha>\alpha_k$ the solution $f_\alpha$ of \eqref{eq:ODE.linearized} vanishes at least in $k$ different points $0<\rho_1<\dots<\rho_k$.
\end{prop}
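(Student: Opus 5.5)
The plan follows the rescaling strategy sketched in the introduction. Set $\tilde u_\alpha(\rho):=u_\alpha(\rho/\sqrt\alpha)/\alpha$ and $\tilde f_\alpha(\rho):=f_\alpha(\rho/\sqrt\alpha)$. These solve the rescaled system, with $\tilde u_\alpha(0)=1$, $\tilde f_\alpha(0)=1$ and vanishing derivatives at $0$. The coefficients $\frac{\rho}{2\alpha}$ and $\frac1\alpha$ tend to zero locally uniformly as $\alpha\to\infty$. Arguing as in \autoref{prop:local.existence.uniqueness}, via the fixed point formulation with a continuous parameter $\varepsilon=1/\alpha\in[0,\varepsilon_0)$ and the singular term $\frac{n+1}{\rho}$ handled near $0$ by integration with $H=\rho^{n+1}$, we will show:
\begin{itemize}
\item $(\tilde u_\alpha,\tilde f_\alpha)\to(\tilde u,\tilde f)$ in $C^1(I)$ on every compact $I\subset[0,\infty)$;
\item here $(\tilde u,\tilde f)$ solves the limit system with $\tilde u(0)=\tilde f(0)=1$;
\item the limit solution exists globally.
\end{itemize}
Global existence of $\tilde u$ is ensured by positivity and the bound $\tilde u\le(1+\rho^2/2)^{-1}$ passing to the limit from \autoref{prop:Omega_alpha>0}, since $\tilde u_\alpha(\rho)<(1+\rho^2/2)^{-1}$ uniformly in $\alpha$.

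The key reduction concerns transversal zeros. If $\tilde f$ has $k$ zeros $0<r_1<\dots<r_k$, each is simple, since $\tilde f(r_j)=\tilde f'(r_j)=0$ would force $\tilde f\equiv0$. So $\tilde f$ changes sign at each $r_j$. By $C^0$ convergence on $[0,r_k+1]$, $\tilde f_\alpha$ then changes sign near each $r_j$ for $\alpha$ large. Hence $f_\alpha$ has $k$ zeros $\rho_j\approx r_j/\sqrt\alpha$, which gives $\alpha_k$. It therefore suffices to show that $\tilde f$ has infinitely many zeros on $(0,\infty)$.

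For the oscillation of $\tilde f$ we use the tail asymptotics $\rho^2\tilde u(\rho)\to L$ and $\rho^3\tilde u'(\rho)\to -2L$ from \autoref{prop:tails} (the paper's tail result for the limit profile, obtained from the autonomous ODE \eqref{eq:Emden.intro} for $z(t)=e^{2t}\tilde u(e^t)$ converging to its equilibrium $z=1$, so $L=1$). Then for $\rho\ge R$ the equation for $\tilde f$ reads
\[\tilde f''+\frac{A(\rho)}{\rho}\tilde f'+\frac{B(\rho)}{\rho^2}\tilde f=0,\]
with $A(\rho)\to n+1+2L$ and $B(\rho)\to 4nL-4L$. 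Fix $A_0>n+1+2L$ and $B_0<4(n-1)L$ close to the limits, chosen so that $(A_0-1)^2-4B_0<0$. With $L=1$ this requires $(n+2)^2<16(n-1)$, i.e. $n^2-12n+20<0$, i.e. $2<n<10$, which is exactly $n\in\{3,\dots,9\}$. The Euler equation $y''+\frac{A_0}{\rho}y'+\frac{B_0}{\rho^2}y=0$ has solutions $\rho^{(1-A_0)/2}\sin(\mu\log\rho)$ with infinitely many zeros. Writing both equations in Sturm form, with $(p y')'+q y=0$, $p=\rho^{A_0}$ for the comparison equation and $\tilde p=\exp\int^\rho A/s$ for $\tilde f$, we want to apply Picone's comparison theorem \cite{picone1910sui}. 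The term $A(\rho)$ is not constant, so we first remove the first-order term by the substitution $\tilde f=g\,\exp(-\frac12\int^\rho A/s)$. This yields $g''+Q(\rho)g=0$ with $Q(\rho)\rho^2\to B-\frac{(A-1)^2}{4}+\cdots>\frac14$. Kneser's/Sturm's comparison with $\frac{1/4+\delta}{\rho^2}$ then gives infinitely many zeros of $g$, hence of $\tilde f$.

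The main obstacle is the tail step. Since the introduction stresses that \eqref{eq:Emden.intro} alone does not determine the limit, particularly for $n=3$, we invoke \autoref{prop:tails} as given. The one point to check is that its limit $L$ and the derivative limit yield $\liminf\rho^2Q>1/4$ in exactly the range $n\le9$. If $\rho^3\tilde u'$ only converges in a weaker sense, the fallback is to work with $\mathcal V=4n\tilde u+2\rho\tilde u'$ directly, using liminf bounds, which suffice for Sturm comparison.
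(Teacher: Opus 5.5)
Your proposal is correct and follows essentially the same route as the paper: the same rescaling and locally uniform convergence to $(\tilde u,\tilde f)$, the same simple-zero argument to transfer zeros back to $\tilde f_\alpha$, the tail limits from \autoref{prop:tails}, and removal of the first-order term followed by Sturm comparison. Your criterion $\liminf_{\rho\to\infty}\rho^2Q(\rho)>\frac14$ is exactly the paper's comparison $\tilde V\geq V$ against the normal form of the Euler equation with $A=n+3$, $B=4(n-1)-\varepsilon'$, since $B+\frac{A}{2}-\frac{A^2}{4}=\frac14+\mu^2$; the preliminary detour through the self-adjoint form with $p=\rho^{A_0}$ is unnecessary.
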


The proof of this proposition is not straightforward and relies on a delicate argument based on Sturm–Picone comparison Theorem, which requires a detailed description of the ``tail'' of $u_\alpha$ for large values of $\alpha$.
To study this, it is convenient to introduce the rescaled quantities
\begin{align}
    \tilde{u}_\alpha(\rho) := \frac{u_\alpha(\rho/\sqrt{\alpha})}{\alpha},
    \quad
    \tilde{f}_\alpha(\rho) := f_\alpha(\rho/\sqrt{\alpha}),
\end{align}
so that $\tilde{u}_\alpha$ remains of order one uniformly in $\alpha$. Clearly $f_\alpha$ vanishes in $k$ different points if and only if $\tilde{f}_\alpha$ does.
It is immediate to verify that $(\tilde{u}_\alpha,\tilde{f}_\alpha)$ satisfies the system
\begin{align} \label{eq:rescaled.ODE.linearized}
\begin{dcases}
%
%
\tilde{u}_\alpha'' 
+ 
\frac{n+1}{\rho} \tilde{u}_\alpha' 
+ 
\frac{1}{\alpha} \tilde{u}_\alpha 
+ 
\frac{\rho}{2\alpha} \tilde{u}_\alpha' 
+ 
2n \tilde{u}_\alpha^2 
+ 
2 \rho \tilde{u}_\alpha \tilde{u}_\alpha' = 0,
\\
%
%
\tilde{f}_\alpha'' 
+ 
\frac{n+1}{\rho} \tilde{f}_\alpha' 
+ 
\frac{1}{\alpha}\tilde{f}_\alpha 
+ 
\frac{\rho}{2\alpha} \tilde{f}_\alpha' 
+ 
4n \tilde{u}_\alpha \tilde{f}_\alpha 
+ 
2 \rho \tilde{u}_\alpha \tilde{f}_\alpha'
+ 
2 \rho \tilde{f}_\alpha \tilde{u}_\alpha' = 0,
\\
%
%
\tilde{u}_\alpha(0)=\tilde{f}_\alpha(0)=1,
\\
\tilde{u}_\alpha'(0)=\tilde{f}_\alpha'(0) = 0.
\end{dcases}
\end{align}
To understand the behavior of the pair $(\tilde{u}_\alpha,\tilde{f}_\alpha)$ when $\alpha \to \infty$, we can argue as in \autoref{prop:local.existence.uniqueness} and write solutions to \eqref{eq:rescaled.ODE.linearized} in the following form:
\begin{align}
\tilde{u}_\alpha(\rho)
=
1
&-\int_0^\rho \rho_2^{-n-1}\int_0^{\rho_2}
s^{n+1} \left( 
-\frac{n}{2\alpha} \tilde{u}_\alpha(s) 
+ 
(n-2) \tilde{u}_\alpha^2(s)  
\right)
ds d\rho_2
\\
&-
\int_0^\rho \rho_2 \left( \frac{1}{2\alpha} \tilde{u}_\alpha(\rho_2)+\tilde{u}_\alpha^2(\rho_2) \right) d\rho_2;
\end{align}
and
\begin{align}
\tilde{f}_\alpha(\rho) 
=
1
&-
\int_0^\rho
\rho_2^{-n-1}
\int_0^{\rho_2}s^{n+1} 
\left(  
-\frac{n}{2\alpha} \tilde{f}_\alpha(s) 
+ 
2(n-2) \tilde{u}_\alpha(s) \tilde{f}_\alpha(s) 
\right)
ds d\rho_2
\\
&-
2 \int_0^\rho \rho_2 \tilde{u}_\alpha(\rho_2) \tilde{f}_\alpha(\rho_2)d\rho_2
-
\frac{1}{2\alpha}\int_0^\rho \rho_2 \tilde{f}_\alpha(\rho_2)d\rho_2.
\end{align}
We can interpret $(\tilde{u}_\alpha,\tilde{f}_\alpha)$ as the fixed point of a contraction map, depending on a parameter $\frac{1}{\alpha}$, on a space $C([0,\overline{\tau}_\alpha])$ for some $\overline{\tau}_\alpha>0$. Moreover, the interval $[0,\overline{\tau}_\alpha]$ can be chosen uniformly in $\alpha>1$, call it $[0,\overline{\tau}]$. 
Hence, contraction principle and the expressions above give continuity of the map $\alpha \mapsto (\tilde{u}_\alpha,\tilde{f}_\alpha)$ with respect to the $C^1([0,\overline{\tau}])$ convergence.
Moreover, smoothness of the coefficients far from $\rho=0$ entails locally $C^1$ convergence of the pair $(\tilde{u}_\alpha,\tilde{f}_\alpha)$, as $\alpha \to \infty$, towards $(\tilde{u},\tilde{f})$ satisfying 
\begin{align}
\tilde{u}(\rho)
=
1
&-\int_0^\rho \rho_2^{-n-1}\int_0^{\rho_2}
s^{n+1} 
(n-2) \tilde{u}^2(s)  
ds d\rho_2
-
\int_0^\rho \rho_2 \tilde{u}^2(\rho_2) d\rho_2;
\end{align}
and
\begin{align}
\tilde{f}(\rho) 
=
1
&-
\int_0^\rho
\rho_2^{-n-1}
\int_0^{\rho_2}s^{n+1} 
\left(  
2(n-2) \tilde{u}(s) \tilde{f}(s) 
\right)
ds d\rho_2
-
2 \int_0^\rho \rho_2 \tilde{u}(\rho_2) \tilde{f}(\rho_2)d\rho_2.
\end{align}
Differentiating twice the expressions above, it is easy to check that $(\tilde{u},\tilde{f})$ solves the ODE system
\begin{align} \label{eq:limit.ODE.linearized}
\begin{dcases}
%
%
\tilde{u}'' 
+ 
\frac{n+1}{\rho} \tilde{u}' 
+ 
2n \tilde{u}^2 
+ 
2 \rho \tilde{u} \tilde{u}' = 0,
\\
%
%
\tilde{f}'' 
+ 
\frac{n+1}{\rho} \tilde{f}' 
+ 
4n \tilde{u} \tilde{f} 
+ 
2 \rho \tilde{u} \tilde{f}'
+ 
2 \rho \tilde{f} \tilde{u}' = 0,
\\
%
%
\tilde{u}(0)=\tilde{f}(0)=1,
\\
\tilde{u}'(0)=\tilde{f}'(0) = 0,
\end{dcases}
\end{align}
as one could have guessed by simply taking the formal limit $\alpha \to \infty$ in \eqref{eq:rescaled.ODE.linearized}.

By uniqueness of solutions to the equation above, we deduce that $\tilde{f}'(\rho_0) \neq 0$ for every $\rho_0$ such that $\tilde{f}(\rho_0)=0$, otherwise we would have $\tilde{f} \equiv0$ in contrast with the boundary conditions at $\rho=0$.  
Therefore, if we can prove that $\tilde{f}$ vanishes in at least $k$ different points $0<\tilde{\rho}_1 < \dots < \tilde{\rho}_k$, then we can use the uniform convergence of $\tilde{f}_\alpha$ towards $\tilde{f}$ on the compact $[0,\tilde{\rho}_k+1]$ to deduce that also $\tilde{f}_\alpha$ must vanish in at least $k$ different points for every $\alpha$ sufficiently large. 
Thus, in the following we turn to looking for zeros of $\tilde{f}$.

\subsubsection{Tails of $\tilde{u}$} \label{ssec:tail}
One key ingredient needed in the proof of \autoref{prop:f.zero} is the following:
\begin{prop} \label{prop:tails}
    Let $n \geq 3$. Then it holds
    \begin{align} \label{eq:limits.tildeOmega}
        \lim_{\rho \to \infty} 
        \rho^2 \tilde{u}(\rho) = 1,
        \quad
        \lim_{\rho \to \infty} 
        \rho^3 \tilde{u}'(\rho) = -2.
    \end{align}
\end{prop}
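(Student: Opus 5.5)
The plan is to pass to the Emden-type variables announced in the introduction and study a planar autonomous system. Set $z(t):=e^{2t}\tilde u(e^t)$, so that $\dot z(t)=2z(t)+e^{3t}\tilde u'(e^t)$, i.e. $\rho^3\tilde u'(\rho)=\dot z-2z$ at $\rho=e^t$. The first equation of \eqref{eq:limit.ODE.linearized} then becomes \eqref{eq:Emden.intro}:
\begin{align}
\dot z = p,\qquad \dot p = -(n-4+2z)p + 2(n-2)(z-z^2).
\end{align}
Both limits in \eqref{eq:limits.tildeOmega} then reduce to the single claim that $(z(t),\dot z(t))\to(1,0)$ as $t\to\infty$, since then $\rho^2\tilde u=z\to1$ and $\rho^3\tilde u'=\dot z-2z\to-2$.

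\emph{Step 1: a priori information inherited from $\tilde u_\alpha$.} Rescaling \autoref{prop:Omega_alpha>0} gives $0<\tilde u_\alpha(\rho)<(1+\rho^2/2)^{-1}$ and $G>0$. The proof of that proposition also gives $J\le0$, i.e. $u_\alpha'+\rho u_\alpha^2\le0$, which is scale invariant. Passing to the limit $\alpha\to\infty$ (locally $C^1$ convergence) yields $0\le \tilde u\le (1+\rho^2/2)^{-1}$, $n\tilde u+\rho\tilde u'\ge0$ and $\tilde u'+\rho\tilde u^2\le0$. In the $(z,p)$ variables these read
\begin{align}
0\le z\le 2,\qquad p\ge -(n-2)z,\qquad p\le 2z-z^2 .
\end{align}
So the forward orbit lies in a compact region $R$, and in fact $z>0$: a zero of $\tilde u$ would be a double zero, forcing $\tilde u\equiv0$ by uniqueness. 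As $t\to-\infty$ we have $z\sim e^{2t}\to0$, so the orbit is the branch of the unstable manifold of the saddle $(0,0)$, whose linearized eigenvalues are $2$ and $-(n-2)$. The only other equilibrium is $(1,0)$, with characteristic polynomial $\lambda^2+(n-2)\lambda+2(n-2)$; it is a hyperbolic sink, and a focus exactly when $n<10$.

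\emph{Step 2: Poincaré–Bendixson.} The $\omega$-limit set of the orbit is nonempty, compact, and contained in $R$. It is therefore either an equilibrium, a periodic orbit, or a cycle of equilibria joined by orbits; here the latter means a homoclinic loop at $(0,0)$, possibly together with heteroclinic connections. To rule out closed orbits and homoclinic loops for $n\ge4$, I would apply Bendixson–Dulac with the trivial weight: the divergence is $-(n-4+2z)$, which is strictly negative on $\{z>0\}$. Any such closed curve encloses a region in $\{z>0\}$ (the segment $z=0$ consists of the saddle's stable direction only), so the area integral of the divergence would have to vanish, a contradiction. For $n=3$ the divergence $1-2z$ changes sign, so I would look for a Dulac weight of the form $B=z^{a}e^{bp}$ or $B=z^{a}$ making $\operatorname{div}(BF)$ of one sign on $R$. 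Alternatively I would use the barriers $p\le 2z-z^2$ and $p\ge -z$ to confine any periodic orbit to a subregion where a suitable weight works.

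\emph{Step 3: excluding convergence to the origin (main obstacle).} What remains is to rule out $\omega=\{(0,0)\}$, i.e. the orbit coming back to the saddle along its stable manifold; this is the case $\rho^2\tilde u\to0$, and the introduction warns that the ODE alone does not decide it. If it happened, then $\tilde u$ would decay like $\rho^{-n}$ (stable eigenvalue $-(n-2)$), with $\rho^{n+1}\tilde u'\to -nc$ for some $c>0$. I would combine this with the exact identity
\begin{align}
\rho^{n+1}\bigl(\tilde u'+\rho\tilde u^2\bigr)=-(n-2)\int_0^\rho s^{n+1}\tilde u^2\,ds,
\end{align}
and with the companion identity $(\rho^{n+1}\tilde u')'=-\rho^{n+1}\tilde u\,G$. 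These give conserved or monotone quantities, such as the rescaled mass $\int_0^\rho s^{n+1}\tilde u\,G$, whose limits can be compared. The aim is to show they are incompatible with the sign constraints of Step 1, in the spirit of Step 3 of \autoref{prop:Omega_alpha>0}.

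If that fails, I would use the finite-$\alpha$ information instead. By \autoref{prop:Omega_alpha>0} and \autoref{lem:bound.rho.m.Omega}, each $\tilde u_\alpha$ has a strictly positive limit $\ell_\alpha$, and I would try to obtain a lower bound on $\ell_\alpha$ that is uniform in large $\alpha$ and survives the limit. I expect this exclusion step, especially for $n=3$, to be the hardest part of the argument. Once it is done, $\omega=\{(1,0)\}$, and the limits \eqref{eq:limits.tildeOmega} follow as explained at the start.
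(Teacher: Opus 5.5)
For $n\ge 4$ your argument works, and Step~3 is not needed. The orbit leaves the saddle along its unstable manifold, so $\omega=\{(0,0)\}$ means the orbit is itself a homoclinic loop, and your unit-weight Dulac argument ($\operatorname{div}F=-(n-4+2z)<0$ on $\{z>0\}$) already excludes that. One small correction: the stable direction is the line $p=-(n-2)z$, not $\{z=0\}$. The correct reason the enclosed region lies in $\{z>0\}$ is that $R\cap\{z=0\}=\{(0,0)\}$. The paper expresses the same dissipation through the Lyapunov function $E=\frac12\dot z^2+2(n-2)(\frac{z^3}{3}-\frac{z^2}{2})$, with $\dot E=-(n-4+2z)\dot z^2$ and $E(-\infty)=0=E(0,0)$.

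The genuine gap is $n=3$. There you have neither a Dulac weight nor an argument that the orbit does not return to the saddle. Your proposed weights cannot work: on $\{p=0\}\cap R$ one computes $\operatorname{div}(z^ae^{bp}F)=z^a\bigl(1-2z+2bz(1-z)\bigr)$, which is positive near $z=0$ and equals $-1$ at $z=1$. Your Step~3 identities cannot produce a contradiction either. They are one and the same identity, since $s^{n+1}\tilde u\,\tilde G=s^{2-n}\bigl((s^n\tilde u)^2\bigr)'$ with $\tilde G:=2n\tilde u+2\rho\tilde u'$. Under $\tilde u\sim c\rho^{-n}$ that identity only says $nc=(n-2)\int_0^\infty s^{n+1}\tilde u^2\,ds$, which is perfectly consistent. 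The fallback, a lower bound on $\ell_\alpha$ uniform in $\alpha$, would not pass to $\tilde u$ without uniformity in $\rho$. The interchange of the limits $\alpha\to\infty$ and $\rho\to\infty$ is exactly the difficulty.

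The missing idea is that the quantity whose sign you already recorded, $\eta:=\dot z+(n-2)z=\frac{\rho^2}{2}\tilde G$, satisfies a closed equation. The limit of \eqref{eq:G_alpha'} is $\tilde G'=-2\rho\tilde u\tilde G$, that is, $\dot\eta=2(1-z)\eta$. This has three consequences.
\begin{itemize}
\item The line $\{\eta=0\}$ is invariant and is exactly the stable manifold of $(0,0)$. Since $\eta\approx nz>0$ near $t=-\infty$, we get $\eta>0$ along your orbit.
\item If $z\to0$, then $\dot\eta/\eta\to2$ and $\eta$ grows exponentially, contradicting boundedness (for $n=3$ this is \autoref{lem:limsup<1}). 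By the same reasoning, no orbit in $\{\eta>0\}$ tends to $(0,0)$. Orbits in $\{\eta=0\}$ satisfy $\dot z=-(n-2)z$, so none of them emanates from the origin. Hence there are no homoclinic orbits in $\{\eta\ge0\}$.
\item In the variables $(z,v)$ with $v=\log\eta$, the system is $\dot z=e^v-(n-2)z$, $\dot v=2(1-z)$, whose divergence is $-(n-2)$. So $1/\eta$ is the Dulac weight you were looking for, and it excludes periodic orbits.
\end{itemize}
For $n=3$, $\eta$ is exactly the paper's $\zeta=\dot z+z$. The paper uses it for Bendixson--Dulac and for \autoref{lem:limsup<1}. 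It then excludes $(0,0)\in\omega$ by a longer argument on the monotone sequences of local extrema of $z$ together with \autoref{cor:no.intersections}; the invariance of $\{\eta=0\}$ lets you bypass that argument.
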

First of all, notice that by \autoref{prop:Omega_alpha>0}, \autoref{lem:decreasing}, and \autoref{lem:bound.rho.m.Omega} we know that for every $\alpha>0$ and $\rho \geq 0$ it holds 
\begin{align}
   0 \leq \rho^2 \tilde{u}_\alpha(\rho) \leq 2,
   \quad
   -1 \lesssim \rho^3 \tilde{u}_\alpha'(\rho) \leq 0, 
\end{align}
with implicit constant independent of $\alpha$ and $\rho$.
By $C^1$ convergence $\tilde{u}_\alpha \to \tilde{u}$ on compacts, the same inequalities hold uniformly in $\rho \geq 0$ for the limit $\tilde{u}$.
From this we also get $\tilde{u}(\rho) > 0$ for every $\rho \geq 0$: indeed, if $\tilde{u}(\rho_0)=0$ for some $\rho_0>0$, then $\tilde{u}'(\rho_0)=0$ and therefore $\tilde{u} \equiv 0$ by \eqref{eq:limit.ODE.linearized}, in contradiction with $\tilde{u}(0)=1$. 

By \autoref{prop:Omega_alpha>0} we also know that, for fixed $\alpha>0$, the limit $\lim_{\rho \to \infty} \rho^2 \tilde{u}_\alpha(\rho) =: \ell_\alpha$ exists, however we cannot use this information to deduce existence of the limit $\lim_{\rho \to \infty} \rho^2 \tilde{u}(\rho)$ and we have to prove it separately.

In order to study the limits \eqref{eq:limits.tildeOmega}, let us introduce
\begin{align}
    z(t) := e^{2t}\tilde{u}(e^{t}),
    \quad
    \dot z(t) := \frac{d}{dt}z(t) = 2z(t) +e^{3t} \tilde{u}'(e^{t}),
    \quad
    t \in \R.
\end{align}
By the previous observations we know that there exists a finite constant $C$ such that:
\begin{align} \label{eq:bounds.z.y}
    0<z(t) <C,
    \quad
    -C < \dot z(t) < C,
    \quad
    \forall t \in \R.
\end{align}
Using the equation \eqref{eq:limit.ODE.linearized} for $\tilde{u}$ we get a closed ODE for $z=z(t)$ (derivatives with respect to $t$ denoted with dots):
\begin{align} \label{eq:Emden.z}
   \ddot{z} + (n-4+2z) \dot{z} -  2(n-2)(z-z^2) =0, \quad
   t \in \R, 
\end{align}
with boundary conditions at $t=-\infty$:
\begin{align} \label{eq:limits.z}
    \lim_{t \to -\infty} z(t) = 0,
    \quad
    \lim_{t \to -\infty} \dot{z}(t) = 0,
    \quad
    \lim_{t \to -\infty} \frac{\dot{z}(t)}{z(t)} = 2.
\end{align}

\begin{lemma} \label{lem:z'}
  Let $t_0 \in \R$ be given and suppose $0<z(t)\leq 1$ for every $t \in (-\infty,t_0)$. Then $\dot{z}(t) > 0$ for every $t \in (-\infty,t_0]$.
\end{lemma}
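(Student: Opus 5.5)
Write $y:=\dot z$, so that \eqref{eq:Emden.z} reads $\dot y = -(n-4+2z)\,y + 2(n-2)\,z(1-z)$. The plan is to show that $y$ cannot become non-positive while $0<z\le 1$. The key observation is that on $(-\infty,t_0)$ the forcing term $2(n-2)z(1-z)$ is non-negative, since $n\ge 3$ and $0<z\le1$. Moreover it is strictly positive wherever $z<1$.

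\textbf{Step 1: positivity near $-\infty$.} By \eqref{eq:limits.z}, $\dot z/z\to 2$ as $t\to-\infty$. Since $z>0$, there exists $t_1<t_0$ such that $\dot z(t)>0$ for all $t\le t_1$.

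\textbf{Step 2: first vanishing time and integrating factor.} Suppose for contradiction that $\dot z$ vanishes somewhere in $(t_1,t_0]$, and let $t_*$ be the first such time, so $\dot z>0$ on $(-\infty,t_*)$ and $\dot z(t_*)=0$. Set $E(t):=\exp\int_{t_1}^t (n-4+2z(s))\,ds$. Then
\begin{align}
\frac{d}{dt}\big(E\dot z\big) = 2(n-2)\,E\,z(1-z) \geq 0 \quad \text{on } (-\infty,t_0),
\end{align}
hence
\begin{align}
E(t_*)\dot z(t_*) = E(t_1)\dot z(t_1) + \int_{t_1}^{t_*} 2(n-2)E z(1-z)\,ds > 0,
\end{align}
because $\dot z(t_1)>0$ and the integrand is non-negative. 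This contradicts $\dot z(t_*)=0$.

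The case $t_*=t_0$ is covered as well. The identity only needs the integrand to be non-negative on $[t_1,t_0)$, and continuity then extends the conclusion to the endpoint, giving $\dot z(t_0)>0$.

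\textbf{Main subtlety.} The only point needing care is that the argument must not rely on $z<1$ strictly at $t_0$. The inequality $z\le1$ already makes the forcing non-negative, and the strict positivity comes from the initial term $E(t_1)\dot z(t_1)>0$, so no strictness in $z$ is required.

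Alternatively, the same conclusion follows by a sign argument at $t_*$: there $\ddot z(t_*)=2(n-2)z(1-z)\ge0$. If $z(t_*)<1$ this is strictly positive, which is incompatible with $\dot z$ decreasing to $0$ from above. The degenerate case $z(t_*)=1$, $\dot z(t_*)=0$ would give the equilibrium solution $z\equiv1$ by uniqueness, contradicting \eqref{eq:limits.z}. The integrating-factor route avoids this case split, so I would use it.
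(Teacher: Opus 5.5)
Your proposal is correct and follows the paper's proof essentially verbatim. Both get $\dot z>0$ near $-\infty$ from $\dot z/z\to 2$, then use the integrating factor $\exp\int_{t_1}^t(n-4+2z(s))\,ds$ to obtain $\dot z(t)\ge \dot z(t_1)/H(t)>0$ on $(t_1,t_0]$; the paper reads this off directly for every such $t$, so your first-vanishing-time contradiction framing is harmless but unnecessary.
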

\begin{proof}
By the third limit in \eqref{eq:limits.z} and $z(t)>0$ for every $t \in \R$, there exists $t_1 \in \R$ such that $\dot{z}(t)>0$ for every $t \in (-\infty,t_1]$.
If $t_1 \geq t_0$ we are done, otherwise rewrite \eqref{eq:Emden.z} in the form $\ddot{z} + h \dot{z} + g=0$ with $h(t) := n-4+2z(t)$, $g(t) := -2(n-2)(z(t)-z^2(t))$, and $H(t) := \exp \int_{t_1}^t (n-4+2z(s))ds$.
Using the analog of \eqref{eq:HOmega_alpha'}, we get for every $t \in (t_1,t_0]$ 
\begin{align} \label{eq:dotz0}
    \dot{z}(t) = \frac{1}{H(t)} \left( \dot{z}(t_1)+2(n-2)\int_{t_1}^t H(s) (z(s)-z^2(s))ds\right)  \geq \frac{\dot{z}(t_1)}{H(t)}>0.
\end{align}
\end{proof}

\begin{lemma} \label{lem:dotz.to.0}
    Let $a,b \in \R$ with at least one of the two $\neq 0$, and suppose
    $\lim_{t \to \infty} (az(t)+b) \dot{z}^2(t) = 0$.
    Then $\dot{z}(t)$ converges as $t \to \infty$ and $\lim_{t \to \infty} \dot{z}(t)=0$.
\end{lemma}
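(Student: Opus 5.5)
We will combine the a priori bounds \eqref{eq:bounds.z.y} with the ODE \eqref{eq:Emden.z}; the point is that $\ddot z$ is bounded, so $\dot z$ is uniformly Lipschitz. Indeed, from \eqref{eq:Emden.z} and \eqref{eq:bounds.z.y} we get $|\ddot z(t)| \le |n-4+2z||\dot z| + 2(n-2)|z-z^2| \le C'$ for all $t\in\R$.

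\emph{Step 1: reduce to $\dot z^2 \to 0$ on the set where $az+b$ is not small.} Since $z$ is bounded, $az+b$ is bounded. If $a=0$ then $b\neq0$, the hypothesis gives $\dot z^2\to0$ directly, and we are done. So assume $a\neq 0$ and set $z^\ast := -b/a$. On $\{t : |z(t)-z^\ast| \ge \delta\}$ we have $|az+b|\ge |a|\delta$, hence $\dot z^2 \le \frac{1}{|a|\delta}|az+b|\dot z^2$, which tends to $0$ along such times. It remains to control $\dot z$ at times $t$ where $z(t)$ is $\delta$-close to $z^\ast$.

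\emph{Step 2: rule out large $\dot z$ near the level $z^\ast$ by a Lipschitz argument.} Suppose by contradiction that $\limsup_{t\to\infty}|\dot z(t)| > 2\eps>0$. Then there are $t_k\to\infty$ with $|\dot z(t_k)|>2\eps$. Since $|\ddot z|\le C'$, we have $|\dot z|>\eps$ on $I_k:=[t_k,t_k+\eps/C']$, and $\dot z$ has constant sign there. Hence $z$ is strictly monotone on $I_k$ with $|z(t)-z(s)| \ge \eps|t-s|$. Within $I_k$, the set of times where $|z-z^\ast|<\delta$ is an interval of length at most $2\delta/\eps$. Choosing $\delta \ll \eps^2/C'$, we find a subinterval of $I_k$ of length comparable to $\eps/C'$ on which $|z-z^\ast|\ge\delta$ and $|\dot z|>\eps$. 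By Step 1, $\dot z^2\to0$ along such times as $k\to\infty$, which contradicts $|\dot z|>\eps$. Therefore $\dot z(t)\to0$, and in particular $\dot z$ converges.

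\emph{Main obstacle.} The only delicate point is the possible degeneracy of the weight $az+b$ at the level $z^\ast$, which is exactly where the hypothesis gives no information. The Lipschitz bound on $\dot z$ from Step 2, combined with the fact that a trajectory with $|\dot z|$ large must cross the level $z^\ast$ quickly, handles it. This step needs care in choosing $\delta$ relative to $\eps$ and $C'$ so that the interval $I_k$ is not entirely consumed by the $\delta$-neighborhood of $z^\ast$.
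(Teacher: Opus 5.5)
Your proof is correct and follows essentially the same route as the paper: the bound on $\ddot z$ from \eqref{eq:Emden.z} and \eqref{eq:bounds.z.y} keeps $|\dot z|$ bounded below on a window of fixed length, so $z$ moves monotonically by a definite amount there and cannot stay near the degenerate level $-b/a$, which forces $|az+b|\dot z^2$ to be bounded below somewhere in the window. Your bound on the time $z$ spends near $-b/a$ replaces the paper's case analysis on the sign of $\dot z(t_0)$ and of $az+b$ at the midpoint, and you also treat $a=0$ explicitly, but the underlying idea is the same.
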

\begin{proof}
Without loss of generality we may assume $a>0$.
Suppose by contradiction that $\dot{z}(t)$ does not converge to zero: then there exists a $\delta_0>0$ such that $|\dot z(t)|> \delta$ frequently for every $0<\delta<\delta_0$. 
Now pick $0<\delta<\delta_0$ and $0<\varepsilon \ll 1$, and let $t_0=t_0(\delta,\varepsilon)$ be such that $|\dot z(t_0)|> \delta$ and $|az(t)+b| \dot{z}^2(t) < \varepsilon$ for every $t \geq t_0$. 
We distinguish two cases: either $\dot z(t_0)>\delta$ or $\dot z(t_0)<-\delta$.

\emph{Case 1: $\dot z(t_0)>\delta$}.
By \eqref{eq:bounds.z.y} and \eqref{eq:Emden.z}, $|\ddot z|$ is bounded by a finite constant $M$, and therefore we have $\dot z(t)>\dot z(t_0)/2> \delta/2$ for every $t \in [t_0,t_0+ \delta/2M]$, meaning 
\begin{align}
  z(t_0)+\frac{\delta^2}{8M}
  < 
  z(t_0+\delta/4M)
  <
  z(t_0+\delta/2M) 
  -\frac{\delta^2}{8M}.
\end{align}
This leads to a contradiction when $\varepsilon \ll 1$: Indeed, if $az(t)+b \geq 0$ at time $t:=t_0+\delta/4M$ we have
\begin{align}
    \frac{a\delta^4}{32M} -\frac{\varepsilon}{4}
    \leq
    \left(az(t_0)+\frac{a\delta^2}{8M}+b\right) \frac{\dot{z}^2(t_0)}{4} 
    \leq 
    (az(t)+b) \dot{z}^2(t) 
    < 
    \varepsilon,
\end{align}
while if $az(t)+b < 0$ we have $\dot{z}(t)>\dot{z}(t_0+\delta/2M)-\delta/4 > \dot{z}(t_0+\delta/2M)/2$ and thus
\begin{align}
    \frac{a\delta^4}{128M} -\frac{\varepsilon}{4}
    \leq
    -\left(az(t_0+\delta/2M)-\frac{a\delta^2}{8M}+b\right) \frac{\dot{z}^2(t_0+\delta/2M)}{4} 
    \leq 
    -(az(t)+b) \dot{z}^2(t) 
    < 
    \varepsilon.
\end{align}

\emph{Case 2: $\dot{z}(t_0)<-\delta$}. 
The proof is similar to the previous case; now we have $\dot{z}(t)<\dot{z}(t_0)/2<-\delta/2$ for every $t \in [t_0,t_0+\delta/2M]$ and
\begin{align}
    z(t_0+\delta/2M)+\frac{\delta^2}{8M}
  < 
  z(t_0+\delta/4M)
  <
  z(t_0) 
  -\frac{\delta^2}{8M}.
\end{align}
Then, if $az(t)+b \geq 0$ at time $t:=t_0+\delta/4M$ we have $\dot{z}(t) < \dot{z}(t_0+\delta/2M)+\delta/4<\dot{z}(t_0+\delta/2M)/2$ and therefore
\begin{align}
\frac{a\delta^4}{128M} -\frac{\varepsilon}{4}
\leq
\left(az(t_0+\delta/2M)+\frac{a\delta^2}{8M}+b\right) \frac{\dot{z}^2(t_0+\delta/2M)}{4}   
\leq 
(az(t)+b) \dot{z}^2(t) 
< 
\varepsilon,
\end{align}
while if $az(t)+b < 0$
\begin{align}
\frac{a\delta^4}{32M} -\frac{\varepsilon}{4}
\leq
-\left(az(t_0)-\frac{a\delta^2}{8M}+b\right) \frac{\dot{z}^2(t_0)}{4}   
\leq 
-(az(t)+b) \dot{z}^2(t) 
< 
\varepsilon.
\end{align}
\end{proof}

\begin{lemma} \label{lem:z.to.0or1}
If $\lim_{t \to \infty} \dot{z}(t) =0$, then ${z}(t)$ converges as $t \to \infty$ and $\lim_{t \to \infty} z(t) \in \{0,1\}$.
\end{lemma}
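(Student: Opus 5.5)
We want to show: if $\dot z(t)\to 0$, then $z(t)$ converges and its limit is $0$ or $1$.

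\emph{Step 1: $\ddot z \to 0$ along a sequence.} From \eqref{eq:Emden.z} and \eqref{eq:bounds.z.y}, $\ddot z$ is bounded. Since $\dot z\to 0$, for every $T>0$ we have $\int_t^{t+T}\ddot z(s)\,ds=\dot z(t+T)-\dot z(t)\to 0$. Differentiating \eqref{eq:Emden.z} shows that $\dddot z$ is bounded as well, so $\ddot z$ is uniformly continuous. Barbalat's lemma then gives $\ddot z(t)\to 0$.

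\emph{Step 2: the right-hand side of the ODE.} Using $\ddot z\to0$, $\dot z\to 0$ and $z$ bounded in \eqref{eq:Emden.z}, we get
\begin{align}
\lim_{t\to\infty} 2(n-2)\big(z(t)-z^2(t)\big)=0 .
\end{align}
Since $n\ge 3$, this means the distance from $z(t)$ to the set $\{0,1\}$ tends to zero.

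\emph{Step 3: convergence.} The function $z$ is continuous. For large $t$ its values lie in the disjoint set $(-\varepsilon,\varepsilon)\cup(1-\varepsilon,1+\varepsilon)$. By connectedness of $[t_\varepsilon,\infty)$, $z$ eventually stays in only one of these two neighbourhoods. Letting $\varepsilon\to 0$ gives $z(t)\to 0$ or $z(t)\to 1$.

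\emph{Alternative to Step 1.} One can avoid Barbalat as follows. Suppose $|z-z^2|\ge\delta$ on a set of times accumulating at infinity. There $|\ddot z|\ge \delta'>0$ for large $t$. By continuity, and since $\dot z$ varies slowly, this persists on intervals of fixed length $c$ (the bound on $\dddot z$ keeps $\ddot z$ from changing sign quickly). Integrating over such an interval changes $\dot z$ by at least $c\delta'/2$, which contradicts $\dot z\to 0$.

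The only delicate point is justifying $\ddot z\to0$, that is, the uniform continuity of $\ddot z$. This follows from boundedness of $z,\dot z,\ddot z$ via the differentiated equation, so no real obstacle arises.
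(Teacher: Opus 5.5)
Your proof is correct. It reaches the same intermediate fact as the paper, $z-z^2\to 0$, but by a different mechanism.

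\textbf{The paper's route.} The paper does not pass through $\ddot z\to 0$. It argues by contradiction from a time $t_0$ where $|z-z^2|>\delta^{1/3}$, working on a short window of length $\delta^{1/3}/2M$. On that window it uses the integrating-factor formula
\[
\dot z(t)=H(t)^{-1}\Bigl(\dot z(t_0)+2(n-2)\int_{t_0}^t H(s)\bigl(z(s)-z^2(s)\bigr)\,ds\Bigr),
\]
which is \eqref{eq:dotz0} with $t_1=t_0$. This shows $\dot z$ would reach size $\gtrsim\delta^{2/3}>\delta$, contradicting $|\dot z|<\delta$. The argument needs only the bounds \eqref{eq:bounds.z.y} on $z$ and $\dot z$: they keep $z-z^2$ from moving too fast and control $H$. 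The damping term $(n-4+2z)\dot z$ is absorbed into $H$.

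\textbf{Your route.} You differentiate \eqref{eq:Emden.z} once more to get $\dddot z$ bounded, apply Barbalat's lemma to get $\ddot z\to 0$, and then read $z-z^2\to 0$ directly off the equation. This is shorter and more modular. The cost is one extra derivative, which is harmless here because the equation is autonomous with polynomial coefficients and $z$ is smooth.

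\textbf{Your alternative.} Your ``alternative'' argument is essentially the paper's, phrased through $\ddot z$ instead of the integral formula. There you do not even need $\dddot z$: since $\frac{d}{dt}(z-z^2)=(1-2z)\dot z\to 0$, the quantity $z-z^2$, and hence $\ddot z$, keeps a fixed sign and size on intervals of any fixed length.

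\textbf{Step 3.} Your connectedness argument makes explicit how $z-z^2\to 0$ upgrades to convergence of $z$ to a single point of $\{0,1\}$. The paper leaves this step implicit.
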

\begin{proof}
Take $0<\delta \ll 1$ and $t_\delta \in \R$ large enough so that $|\dot z(t)| < \delta$ for every $t \geq t_\delta$, and suppose per absurdum that exists $t_0>t_\delta$ such that $|z(t_0)-z^2(t_0) |> \delta^{1/3}$;
We focus on the case $z(t_0)-z^2(t_0)> \delta^{1/3}$, the other being similar.

Since $|z|,|\dot{z}|$ are bounded, we have
$z(t)-z^2(t) > \delta^{1/3}/2$ for every $t \in [t_0,t_0+\delta^{1/3}/2M]$ for some constant $M>0$. 
We intend to apply \eqref{eq:dotz0} with $t_1=t_0$ and $t=t_0+\delta^{1/3}/2M$: in this case, we have $e^{-(t-t_0)} \leq H(t) \leq e^{K(t-t_0)}$ for some constant $K>0$ and therefore 
\begin{align}
    \dot{z}(t_0+\delta^{1/3}/2M) \geq e^{-\frac{K\delta^{1/3}}{2M}} \left(-\delta  + \frac{n-2}{M}\delta^{2/3} e^{-\frac{\delta^{1/3}}{2M}} \right) \gtrsim \delta^{2/3}>\delta,
\end{align}
reaching a contradiction for $\delta$ sufficiently small.
\end{proof}

The proof of \autoref{prop:tails} is divided into the two cases $n \geq 4$ and $n=3$.

\begin{proof}[Proof of \autoref{prop:tails}, case $n \geq 4$]
Equation \eqref{eq:Emden.z} admits an ``energy''
    \begin{align} \label{eq:energy}
        E(t) := \frac{1}{2} \dot{z}(t)^2 +2(n-2) \left( \frac13 z^3(t) -\frac12 z^2(t) \right)
    \end{align}
that is bounded from below and decreases with respect to $t$: indeed using \eqref{eq:Emden.z} we have
    \begin{align} \label{eq:dot.E}
        \dot{E}(t) 
        &= \dot{z} \ddot{z} + 2(n-2) (z^2-z) \dot z
        =-(n-4+2z) \dot{z}^2 \leq 0.
    \end{align}
Here we have used $n-4+2z \geq 0$. Moreover, since $z(t)>0$ for every $t \in \R$, the inequality is strict unless $\dot{z}(t)=0$. In particular, by the third limit in \eqref{eq:limits.z} there exists $t_1 \in \R$ with $\dot{E}(t_1)<0$, and the energy cannot be globally constant.

Since $E(t)$ and $|\ddot{z}|$ are bounded, it must be $(n-4+2z) \dot{z}^2 \to 0$ as $t \to \infty$. By \autoref{lem:dotz.to.0} and \autoref{lem:z.to.0or1}, we know that $(z,\dot z)$ converges for $t \to \infty$ and the limit is either $(0,0)$ or $(1,0)$.
However the first case cannot occur, as it would imply that $E$ is constant since $\lim_{t \to -\infty} E(t)=0$ by \eqref{eq:limits.z}.
\end{proof}

It remains to treat the case $n=3$. 
In this case, \eqref{eq:Emden.z} reduces to 
\begin{align}
    \ddot{z} = (1-2z) \dot{z} + 2(z-z^2).
\end{align}
It is convenient to introduce the new variable $\zeta(t):= \dot{z}(t)+z(t)$, which allows to rewrite the above as a system of ODEs
    \begin{align} \label{eq:system.ODE.zu}
    \begin{dcases}
      \dot{z} = \zeta-z,
      \\
      \dot{\zeta} = 2(1-z) \zeta.  
    \end{dcases}
    \end{align}
The key observation following from this formulation is that the quantity $\zeta(t)>0$ for every $t \in \R$; Indeed, it holds $\zeta(t_1)>0$ for at least one $t_1 \in \R$ (cf. for instance the first line of the proof of \autoref{lem:z'}) and the second equation in \eqref{eq:system.ODE.zu} guarantees $\zeta(t) \neq 0$ for every $t \in \R$, by uniqueness of solutions. 
Since $\zeta(t)>0$ for every $t \in \R$, we can equivalently rewrite the system for $z(t)$ and $v(t) := \log \zeta(t)$ in the equivalent form
\begin{align} \label{eq:system.ODE.zv}
    \begin{dcases}
      \dot{z} = e^v-z,
      \\
      \dot{v} = 2(1-z). 
    \end{dcases}
\end{align}

\begin{lemma} \label{lem:limsup<1}
If $n=3$, then it is impossible to have $\limsup_{t \to \infty} z(t) <1$.
\end{lemma}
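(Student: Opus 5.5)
The plan is to argue by contradiction using the $(z,v)$ formulation \eqref{eq:system.ODE.zv} together with the a priori bounds \eqref{eq:bounds.z.y}. Suppose $\limsup_{t\to\infty} z(t) < 1$. Then there exist $\delta>0$ and $T\in\R$ such that $z(t)\le 1-\delta$ for every $t\ge T$. The second equation in \eqref{eq:system.ODE.zv} gives $\dot v(t) = 2(1-z(t)) \ge 2\delta$ for all $t\ge T$. Integrating,
\begin{align}
    v(t) \ge v(T) + 2\delta (t-T), \qquad t \ge T,
\end{align}
so $v(t)\to+\infty$, and hence $\zeta(t)=e^{v(t)}\to+\infty$, as $t\to\infty$.

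On the other hand, $\zeta = \dot z + z$. By \eqref{eq:bounds.z.y} both $z$ and $\dot z$ are bounded uniformly in $t\in\R$ by a finite constant $C$. Hence $0<\zeta(t)\le 2C$ for every $t$, and so $v(t)=\log\zeta(t)\le \log(2C)$. This contradicts $v(t)\to+\infty$.

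Equivalently, one can read the contradiction off the first equation: $\dot z = e^{v}-z \ge e^{v(T)+2\delta(t-T)} - C$ eventually exceeds any bound, which is incompatible with \eqref{eq:bounds.z.y}. Either way, the assumption $\limsup_{t\to\infty} z(t)<1$ is untenable.

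There is no real obstacle here. The only point to check is that $\zeta>0$ for all $t$, so that $v=\log\zeta$ is globally defined. This was established just before the statement, using uniqueness for $\dot\zeta = 2(1-z)\zeta$ and the positivity of $\zeta$ near $t=-\infty$ (from the third limit in \eqref{eq:limits.z}). The uniform bounds on $z,\dot z$ come from \autoref{prop:Omega_alpha>0} and \autoref{lem:bound.rho.m.Omega}, passed to the limit $\alpha\to\infty$.
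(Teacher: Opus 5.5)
Your proof is correct and is essentially the paper's argument: assuming $z \le 1-\delta$ eventually, the equation $\dot v = 2(1-z)$ forces $v \to +\infty$, hence $\zeta = \dot z + z \to +\infty$, contradicting the uniform bounds on $z$ and $\dot z$. Your check that $\zeta>0$ globally, so that $v = \log\zeta$ is well defined, matches the paper's justification as well.
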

\begin{proof}
Suppose per absurdum $\limsup_{t \to \infty} z(t) = a<1$, then for every $\varepsilon \in (0,1-a)$ there exists $t_0 \in \R$ such that $z(t) \leq 1-\varepsilon$ for every $t \geq t_0$. Then the second equation in \eqref{eq:system.ODE.zv} informs us that
\begin{align}
     v(t) \geq v(t_0) + 2\varepsilon(t-t_0),
     \quad
     \forall t \geq t_0.
\end{align}
Sending $t \to \infty$ we obtain $v(t) \to +\infty$, or equivalently $\zeta(t) \to + \infty$ as $t \to \infty$; but this is absurd because $\zeta = \dot{z}+z$ must be bounded. 
\end{proof}

\begin{lemma} \label{lem:no.periodic.orbits}
    Let $n=3$, $t_0 \in \R$ and denote $\mathbf{z} := (z(t_0),\dot{z}(t_0))$ and $\omega(\mathbf{z})$ the $\omega$-limit set of $\mathbf{z}$.
    If $\omega(\mathbf{z})$ is a periodic orbit, then $\omega(\mathbf{z})  = \{(1,0)\}$.
\end{lemma}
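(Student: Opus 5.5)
The plan is to work in the coordinates $(z,v)$ of \eqref{eq:system.ODE.zv}, where $v=\log\zeta$ and $\zeta=\dot z+z$, and to apply the Bendixson criterion there. The vector field $F(z,v):=(e^v-z,\,2(1-z))$ is smooth on all of $\R^2$, which is simply connected. Its divergence is
\begin{align}
\partial_z(e^v-z)+\partial_v\big(2(1-z)\big)=-1<0
\end{align}
everywhere. Hence \eqref{eq:system.ODE.zv} has no non-constant periodic orbit. Indeed, if $\Gamma$ were one, bounding a region $D$, the divergence theorem would give $0=\oint_\Gamma F\cdot\nu\,ds=\int_D \div F=-|D|<0$, a contradiction.

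Next I transfer this to the phase plane $(z,\dot z)$, equivalently $(z,\zeta)$. The map $(z,\dot z)\mapsto (z,\log(\dot z+z))$ is a diffeomorphism from $\{\zeta>0\}$ onto $\R^2$, and it conjugates the flow of \eqref{eq:system.ODE.zu} to that of \eqref{eq:system.ODE.zv}. Since $\zeta(t)>0$ along our trajectory, $\omega(\mathbf z)$ lies in the closure $\{\zeta\geq 0\}$.

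Suppose $\omega(\mathbf z)$ is a non-constant periodic orbit $\gamma$. If $\gamma$ contained a point with $\zeta=0$, then, since $\{\zeta=0\}$ is invariant for \eqref{eq:system.ODE.zu} (because $\dot\zeta=2(1-z)\zeta$), the whole orbit would lie on this line. On $\{\zeta=0\}$ we have $\dot z=-z$, whose only bounded complete orbit that returns to itself is the equilibrium $(0,0)$. This contradicts $\gamma$ being non-constant. Therefore $\gamma\subset\{\zeta>0\}$, and its image under the diffeomorphism is a non-constant periodic orbit of \eqref{eq:system.ODE.zv}, which the Bendixson step excludes.

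So a periodic $\omega(\mathbf z)$ must be degenerate, that is, an equilibrium. The equilibria of \eqref{eq:system.ODE.zu} satisfy $\zeta=z$ and $(1-z)\zeta=0$. This gives $(z,\zeta)\in\{(0,0),(1,1)\}$, that is, $(z,\dot z)\in\{(0,0),(1,0)\}$.

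It remains to exclude $\omega(\mathbf z)=\{(0,0)\}$. In that case $z(t)\to0$ as $t\to\infty$, since the trajectory is bounded by \eqref{eq:bounds.z.y} and its $\omega$-limit set is a single point. Then $\limsup_{t\to\infty}z(t)=0<1$, which is impossible by \autoref{lem:limsup<1}. Hence $\omega(\mathbf z)=\{(1,0)\}$.

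The main delicate point is the boundary behaviour: making sure the limit cycle cannot touch $\{\zeta=0\}$, where the change of variables $v=\log\zeta$ degenerates. The invariance of that line under \eqref{eq:system.ODE.zu} handles this.
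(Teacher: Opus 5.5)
Your proposal is correct and follows essentially the same route as the paper. Both use Bendixson's criterion for the divergence $-1$ field in the $(z,v)$ variables, treat the invariant line $\{\zeta=0\}$ separately, identify the equilibria $(0,0)$ and $(1,0)$, and exclude $(0,0)$ via \autoref{lem:limsup<1}. One difference is in your favour: you handle the line $\{\zeta=0\}$ by solving $\dot z=-z$ explicitly, which is cleaner than the paper's appeal to \autoref{lem:dotz.to.0} and \autoref{lem:z.to.0or1}.
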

\begin{proof}
Since we know that $u(t):=\dot{z}(t)+z(t)>0$ for every $t \in \R$, we must have
\begin{align}
    \omega(\mathbf{z}) \subset U:= \{(x,y) \in \R^2 : x+y \geq 0 \}.
\end{align}
Any periodic orbit of the ODE \eqref{eq:Emden.z} contained in $U$ corresponds to a periodic orbit of the system \eqref{eq:system.ODE.zu} contained in the region of space $\{\zeta \geq 0\}$, hence it suffices to characterize the periodic orbits of the latter.

We claim that any non-constant periodic orbit of \eqref{eq:system.ODE.zu} contained in $\{\zeta \geq 0\}$ must satisfy $\zeta(t) \equiv 0$ for every $t \in \R$.
First of all, notice that if $\zeta(t)=0$ for some $t \in \R$ then $\zeta(t) \equiv 0$, thus let us suppose that $\zeta(t)>0$ for every $t \in \R$ and the variable $v(t) := \log \zeta(t)$ is well-defined.
Since $\partial_z (e^v-z) + \partial_v (2(1-z)) \equiv -1$, Bendixson–Dulac Theorem excludes the existence of a non-constant periodic solution of \eqref{eq:system.ODE.zu} lying entirely within the region $\zeta \neq 0$.

Thus we know that, if $\omega(\mathbf{z})$ is a periodic orbit, it is either constant or must be contained in the region of space $\omega(\mathbf{z}) \subset \{(x,y) \in \R^2 : x+y = 0 \}$. But in both cases it is easy to check, for instance by \autoref{lem:dotz.to.0} and \autoref{lem:z.to.0or1}, that it must be $\omega(\mathbf{z}) \subset \{(0,0),(1,0)\}$. Finally, the case $\omega(\mathbf{z})  = \{(0,0)\}$ implies $(z,\dot{z}) \to (0,0)$ and is ruled out by \autoref{lem:limsup<1}.
\end{proof}

The following is a direct consequence of \autoref{lem:no.periodic.orbits}.
\begin{cor} \label{cor:no.intersections}
Let $n=3$ and suppose there exist two distinct times $\tau_0, \tau_1 \in \R$ in which the trajectory of $(z,\dot{z})$ self-intersects at the point $z(\tau_0)=z(\tau_1) =: z_{\times}$ and $\dot{z}(\tau_0)=\dot{z}(\tau_1)=: \dot{z}_\times$. Then $(z(t),\dot{z}(t)) \equiv (z_{\times},\dot{z}_{\times}) = (1,0)$.
\end{cor}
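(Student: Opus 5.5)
The plan is to use the fact that the system \eqref{eq:Emden.z} is autonomous: a self-intersection of its trajectory forces periodicity, and then \autoref{lem:no.periodic.orbits} applies.

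\emph{Step 1: periodicity.} Suppose $(z(\tau_0),\dot z(\tau_0))=(z(\tau_1),\dot z(\tau_1))$ with, say, $\tau_0<\tau_1$, and set $T:=\tau_1-\tau_0>0$. Write \eqref{eq:Emden.z} as a first order planar system in $(z,\dot z)$. Its vector field is smooth and does not depend on $t$, so $t\mapsto (z(t+T),\dot z(t+T))$ solves the same system as $(z,\dot z)$. The two solutions agree at time $\tau_0$, so by uniqueness they coincide for all $t\in\R$. Hence $(z,\dot z)$ is $T$-periodic on the whole real line.

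\emph{Step 2: identifying the orbit.} A periodic trajectory equals its own $\omega$-limit set. Taking $\mathbf{z}=(z(\tau_0),\dot z(\tau_0))$, we get $\omega(\mathbf{z})=\{(z(t),\dot z(t)) : t\in\R\}$, and this set is a (possibly constant) periodic orbit. \autoref{lem:no.periodic.orbits} then gives $\omega(\mathbf{z})=\{(1,0)\}$. It follows that $(z(t),\dot z(t))\equiv(1,0)$ for every $t$, and in particular $(z_\times,\dot z_\times)=(1,0)$.

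\emph{Remark on the case.} The conclusion actually contradicts \eqref{eq:limits.z}, since $z(t)\to 0$ as $t\to-\infty$. So the corollary amounts to saying that no self-intersection occurs, but we only need the statement as written. The only point requiring care is the use of uniqueness for an autonomous system with a locally Lipschitz vector field, which is routine.
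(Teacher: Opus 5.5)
Your proposal is correct and follows the paper's argument: autonomy plus uniqueness make $(z,\dot z)$ periodic with period $|\tau_1-\tau_0|$, so $\omega(\mathbf{z})$ is the whole trajectory, and \autoref{lem:no.periodic.orbits} then gives $(1,0)$. Your closing remark is also right. Periodicity together with $z(t)\to 0$ as $t\to-\infty$ and $z>0$ already shows that the hypothesis can never hold, so the corollary is in effect a statement that the trajectory has no self-intersections.
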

\begin{proof}
Since the system \eqref{eq:Emden.z} governing the evolution of $(z,\dot{z})$ is autonomous with respect to the variable $t$ and solutions are unique, the assumption of the corollary implies that the map $t \mapsto (z(t),\dot{z}(t))$ is periodic with period $|\tau_1-\tau_0|$. In this case, the $\omega$-limit set $\omega(\mathbf{z})$ coincides with the whole trajectory $\Gamma := \{ (z(t),\dot{z}(t)) \in \R^2 \,:\, t \in \R\}$, and the thesis follows by \autoref{lem:no.periodic.orbits}.
\end{proof}

Observe that the critical points of \eqref{eq:Emden.z}, seen as a dynamical system in the $(z,\dot{z})$ variables, are $(0,0)$ (a saddle) and $(1,0)$ (a sink).

Let us fix any $t_0 \in \R$ and let us introduce $\mathbf{z}$ and $\omega(\mathbf{z})$ as in \autoref{lem:no.periodic.orbits} above.
Since the orbit of $\mathbf{z}$ is bounded by \eqref{eq:bounds.z.y}, then by Poincaré–Bendixson Theorem one of the following holds:
\begin{itemize}
    \item[$i$)] $\omega(\mathbf{z})$ is a critical point;
    \item[$ii$)] $\omega(\mathbf{z})$ is a union of a finite number of critical points and homoclinic loops or heteroclinic orbits;
    \item[$iii$)] $\omega(\mathbf{z})$ is a periodic orbit.
\end{itemize}
By \autoref{lem:no.periodic.orbits}, the third scenario above actually coincides with the first one. 
Moreover, as $(0,0)$ is the only saddle point for $(z,\dot{z})$, the closure of each heteroclinic orbit or homoclinic loop must contain $(0,0)$. If we can prove that $(0,0) \notin \omega(\mathbf{z})$, then scenario $ii$) is impossible and the only remaining alternative is case $i)$.
By \autoref{lem:limsup<1}, the first scenario can happen if and only if $\omega(\mathbf{z}) = \{(1,0)\}$, in other words $(z,\dot{z}) \to (1,0)$.

\begin{proof}[Proof of \autoref{prop:tails}, case $n=3$]
By the discussion above, it is sufficient to check $(0,0) \notin \omega(\mathbf{z})$.

\emph{Step 1}.
First, by \autoref{lem:limsup<1} we have $\tau_0 := \inf \{ t \in \R : z(t)=1/2\} < \infty$ and by \autoref{lem:z'} it holds $\dot z(\tau_0)>0$.
Let us define $t_0 :=\inf \{ t >\tau_0 : \dot{z}(t)=0\}$.
Clearly, $t_0>\tau_0$, since $\dot z(\tau_0)>0$.
If $t_0$ is infinite then $\dot{z}(t)>0$ and $z(t) \geq 1/2$ for every $t \geq \tau_0$, and there is nothing to prove; so let us assume $t_0<\infty$.
By a version of \eqref{eq:dotz0} applied between times $\tau_0$ and $t_0$ and by minimality of $t_0$, it must be $1 \leq z(t_0)$.
If $1 = z(t_0)$ then $(z(t_0),\dot{z}(t_0))=(1,0)$ and therefore $(z(t),\dot{z}(t)) \equiv(1,0)$, so $1 < z(t_0)$ must be true.
Again by \eqref{eq:dotz0} this implies that $\dot{z}(t)<0$ for $t$ in a right neighborhood of $t_0$.

Now let us introduce $t_1 :=\inf \{ t >t_0 : \dot{z}(t)=0\}>t_0$.
We distinguish two cases: If $t_1$ is infinite, then $\dot{z}(t)<0$ for every $t > t_0$, and since $z$ is bounded this implies $\dot{z}(t) \to 0$ as $t \to \infty$; by \autoref{lem:z.to.0or1} and \autoref{lem:limsup<1} it must be $z(t) \to 1$ and the thesis follows; if instead $t_1<\infty$, then $0 <z(t_1) < 1$ (arguing as above, using \eqref{eq:dotz0} and the minimality of $t_1$) and $\dot{z}(t)>0$ for $t$ in a right neighborhood of $t_1$.

\emph{Step 2}. 
Next, we want to iterate the construction of Step 1.
With a little abuse of notation, denote $t_{-1}:= -\infty$ and ${z}(t_{-1})=\dot{z}(t_{-1})=0$.

Suppose we are given times $t_{2k-3}<t_{2k-2}<t_{2k-1}$ such that $\dot{z}(t_{2k-3})=\dot{z}(t_{2k-2})=\dot{z}(t_{2k-1})=0$, $\dot{z}(t)>0$ for every $t \in (t_{2k-3},t_{2k-2})$, and $\dot{z}(t)<0$ for every $t \in (t_{2k-2},t_{2k-1})$.
Suppose in addition $z(t_{2k-3})<z(t_{2k-1})<1$ and $1<z(t_{2k-2})$.

Introduce $t_{2k} := \inf \{ t >t_{2k-1} : \dot{z}(t)=0\}>t_{2k-1}$; if $t_{2k}$ is infinite, then $\dot{z}(t)>0$ for every $t > t_{2k-1}$, thus $\dot{z}(t) \to 0$ as $t \to \infty$ and hence $z(t) \to 1$ by \autoref{lem:z.to.0or1} and \autoref{lem:limsup<1}.
If $t_{2k}< \infty$, then the same argument as Step 1 gives $1<z(t_{2k})$ and $\dot{z}(t)<0$ for $t$ in a right neighborhood of $t_{2k}$.
Moreover, by \autoref{cor:no.intersections} it must be
\begin{align}
    1<z(t_{2k}) < z(t_{2k-2}),
\end{align}
because otherwise the curves $[t_{2k-3},t_{2k-2}] \ni t \mapsto (z(t),\dot{z}(t))$ and $[t_{2k-1},t_{2k}] \ni s \mapsto (z(s),\dot{z}(s))$ would intersect, implying $(z,\dot{z}) \equiv (1,0)$.

Similarly, $t_{2k+1} := \inf \{ t >t_{2k} : \dot{z}(t)=0\}>t_{2k}$; if $t_{2k+1}$ is infinite, then $\dot{z}(t)<0$ for every $t > t_{2k}$ and $(z,\dot{z})\to (1,0)$ by the usual argument;
On the other hand, if $t_{2k+1}< \infty$ then
\begin{align}
    z(t_{2k-1}) < z(t_{2k+1})<1,
\end{align}
and $\dot{z}(t)>0$ for $t$ in a right neighborhood of $t_{2k+1}$. 

\emph{Step 3}.
Given the construction of Step 2, we distinguish two cases: either there exists a finite index $m \in \NN$ such that $t_m$ is infinite, in which case we have proved that $(z,\dot{z})\to (1,0)$ and the thesis is true, or we have an infinite sequence of times $\{t_m\}_{m \in \NN}$ that is strictly increasing and has the properties listed above.

Now there are two subcases: if $t_m \to t_{\infty} \in \R$ as $m \to \infty$, then by continuity it holds $\dot{z}(t_\infty)=0$ and
\begin{align}
\lim_{k \to \infty} z(t_{2k+1}) \leq 1 \leq \lim_{k \to \infty} z(t_{2k}).
\end{align}
But since $|\dot{z}|$ is bounded neither of the two inequalities above can be strict, and therefore $z(t_m) \to z(t_{\infty})=1$.
Since we are assuming $t_{\infty} \in \R$ this implies $(z,\dot{z}) \equiv (1,0)$, which is a contradiction.
We deduce that the sequence $\{t_m\}_{m \in \NN}$ must monotonically increase to infinity.
Now suppose the point $(0,0) \in \omega(\mathbf{z})$, then for every $\varepsilon>0$ there exists a sequence of times $\{s_m\}_{m \in \NN}$ diverging to infinity such that $|(z(s_m),\dot{z}(s_m))| < \varepsilon$ for every $m$.
Then for every $m$ there exists $k = k(m)$ such that $s_m \in [t_{2k-1},t_{2k+1})$.
But by construction
\begin{align}
    0<z(t_1)<\dots <z(t_{2k-1}) \leq z(s),
    \quad
    \forall s \in [t_{2k-1},t_{2k+1}).
\end{align}
Hence $|(z(s_m),\dot{z}(s_m))| \geq z(s_m) \geq z(t_1)$, reaching a contradiction for $\varepsilon \ll 1$.
\end{proof}

\subsubsection{Proof of \autoref{prop:f.zero}}
As discussed above, it is sufficient to show that the limit $\tilde{f}$ vanishes in at least $k$ different points $0<\rho_1<\dots<\rho_k$.

By \autoref{prop:tails}, we know that for every $\varepsilon>0$ there exists $\rho_\varepsilon>0$ such that for every $\rho \geq \rho_\varepsilon$ it holds
\begin{align} \label{eq:key.assumption}
    \frac{1-\varepsilon}{\rho^2} < \tilde{u}(\rho) < \frac{1+\varepsilon}{\rho^2},
    \quad
    -2\frac{1+\varepsilon}{\rho^3} < \tilde{u}'(\rho) 
    < -2\frac{1-\varepsilon}{\rho^3}.
\end{align}
In particular, \eqref{eq:key.assumption} above implies
\begin{align}
0
<
\frac{n+1+2(1-\varepsilon)}{\rho} 
&< 
\frac{n+1}{\rho} +2 \rho\tilde{u}(\rho) 
< 
\frac{n+1+2(1+\varepsilon)}{\rho},
\\
\frac{4n(1-\varepsilon)-4(1+\varepsilon)}{\rho^2} 
&< 
4n\tilde{u}(\rho) + 2 \rho \tilde{u}'(\rho).     
\end{align}
Let us therefore fix $0<\varepsilon' \ll 1$ and compare the ODE satisfied by $\tilde{f}$: 
\begin{align}
    \tilde{f}''
    +
    \left( \frac{n+1}{\rho} +2 \rho\tilde{u} \right) \tilde{f}'
    +
    \left( 4n\tilde{u}
    +
    2 \rho \tilde{u}'\right) \tilde{f}
    =
    0,
\end{align}
with the ODE
\begin{align} \label{eq:Euler}
    y''
    +
    \frac{A}{\rho}y'
    +
    \frac{B}{\rho^2} y
    =
    0,
    \quad
    A := n+3,
    \quad
    B := 4(n-1) -\varepsilon'.
\end{align}
Using the transformations
\begin{align}
    \tilde{f}_\star (\rho) 
    := 
    \tilde{f}(\rho) \exp \left( \frac12 \int_{\rho_\varepsilon}^\rho \left( \frac{n+1}{s} +2 s\tilde{u}(s) \right)ds \right),
    \quad
    y_\star (\rho) 
    := 
    y(\rho) \exp \left( \frac12 \int_{\rho_\varepsilon}^\rho \frac{A}{s} ds \right),
\end{align}
both equations for $\tilde{f}$ and $y$ can be written in normal form 
\begin{align}
    \tilde{f}_\star'' + \tilde{V} \tilde{f}_\star = 0,
    \quad
    y_\star'' + V y_\star = 0,
\end{align}
without changing their positive roots, with $\tilde{V},V$ given by
\begin{align}
\tilde{V}(\rho) &:= 
4n\tilde{u} 
+
2 \rho \tilde{u}'
-
\frac12 \left( \frac{n+1}{\rho} +2 \rho\tilde{u} \right)'
-
\frac14 \left( \frac{n+1}{\rho} +2 \rho\tilde{u} \right)^2
\\
&=
4n\tilde{u} 
+
2 \rho \tilde{u}'
+\frac{n+1}{2\rho^2} - \tilde{u} - \rho\tilde{u}' 
 -\frac{(n+1)^2}{4\rho^2} - (n+1)\tilde{u} - \rho^2\tilde{u}^2 ,
    \\
V(\rho) &:= \frac{B}{\rho^2} + \frac{A}{2\rho^2}- \frac{A^2}{4\rho^2}
=
\left(B+\frac{A}{2}-\frac{A^2}{4} \right) \rho^{-2}.
\end{align}

By definition of $A$ and $B$, for every $0<\varepsilon \ll 1$ we can choose $\varepsilon'>0$ such that $\tilde{V} (\rho)\geq V(\rho)$ for every $\rho \geq \rho_\varepsilon$.
Therefore, if we are able to show that $y_\star$ (equivalently $y$) vanishes in at least $k+1$ distinct points $\rho_\varepsilon<\tilde{\rho}_1<\dots<\tilde{\rho}_{k+1}$ we can apply Sturm-Picone comparison Theorem to deduce that $\tilde{f}_\star$ (equivalently $\tilde{f}$) must vanish in at least $k$ distinct points $ {\rho}_1<\dots< {\rho}_{k}$ satisfying $\tilde{\rho}_i \leq \rho_i \leq \tilde{\rho}_{i+1}$.

Equation \eqref{eq:Euler} for $y$ is called \emph{Euler equation}, see
\cite[Chapter 2.1.2, Equation 123]{PoZa03}.
As already mentioned in the introduction, if $(A-1)^2-4B<0$ it has oscillating solutions of the form
\begin{align}
    y(\rho) =  C_1 |\rho|^\frac{1-A}{2}\sin(\mu \log|\rho|)
    +
    C_2 |\rho|^\frac{1-A}{2}\cos(\mu \log|\rho|) ,
    \quad
    \mu:= \frac12 |(A-1)^2-4B|^{1/2},
    \quad
    C_1,C_2 \in \R.
\end{align}
In particular, we can apply Sturm-Picone comparison Theorem \cite[pp.\,19-21]{picone1910sui} with 
\begin{align}
   y_\star(\rho) := \rho^\frac{1-A}{2}\sin(\mu \log \rho) \exp \left( \frac12 \int_{\rho_\varepsilon}^\rho \frac{A}{s} ds \right), 
   \quad
   \rho \geq \rho_\varepsilon,
\end{align}
which has infinitely many roots after $\rho_\varepsilon$.
Let us see when the condition $(A-1)^2-4B<0$ holds. Since $\varepsilon$ is arbitrarily small, it suffices to verify the condition with $\varepsilon=\varepsilon'=0$; in this case we have $A=n+3$, $B=4(n-1)$ and
\begin{align}
    (A-1)^2-4B
    =
    (n+2)^2 - 16(n-1) < 0
    \iff
    2 < n < 10.
\end{align}

\section{Spectral analysis and instability}\label{sec:spectral}

In this section, we prove \autoref{thm:instability.intro} and establish additional properties of the function spaces introduced in \autoref{sec:strategy}, as well as of $S_{\alpha}$, the semigroup generated by $L_{\alpha}$.
More precisely, in \autoref{subsec:preliminaries} we show that $A$ is an isomorphism and derive Sobolev and Hardy inequalities in $Y^q$. 
Next, in \autoref{sec:semigruoup}, we prove that $L_\alpha$ generates a strongly continuous semigroup on both $Y^{q,r}$ and $Y_{\nabla}^{q,r}$, whose infinitesimal generator differs from $L_0$ only by a relatively compact perturbation. This result, combined with the ODE analysis from the previous section and Sturm–Liouville oscillation theory, is then used in \autoref{subsec_unstable_eigen} to reduce the existence of unstable eigenvalues of $L_{\alpha}$ to counting the zeros of \eqref{eq:ODE.linearized}. The latter are guaranteed to exist by \autoref{prop:f.zero}, ultimately yielding the proof of \autoref{thm:instability.intro}. Finally, once the optimal growth bound for $S_{\alpha}$ in the presence of unstable eigenvalues has been established, we investigate its regularizing properties in \autoref{prop:smoothing}.

\subsection{Isomorphism properties and functional inequalities in $Y^q$}\label{subsec:preliminaries}
Let $p\in [1,+\infty]$. In the following, we denote by
\begin{align}
 \iota : Y^{p} \to L^{p}(\R_+,r^{n-1}dr) \to X^{p}   
\end{align}
the isomorphism obtained by identifying radial functions with their radial profiles. More precisely, for a radial function $f\in Y^p$, we define
\begin{align*}
    (\iota f)(\theta) := f(|\theta|e_1)\qquad\theta\in \R^n,
\end{align*}
where $e_1\in \R^{n+2}$ is any fixed unit vector. With a slight abuse of notation, we do not indicate explicitly the dependence of $\iota$ on $p$, and we use the same symbol to denote the corresponding isomorphism between $Y^{p_1,p_2}$ and $X^{p_1,p_2}$, defined analogously.
\\ Next we need to study the properties of the operator $A$ defined on smooth, radial functions of $\R^n$ as 
\begin{align}
    A[c](y) := \frac{1}{2\omega_{n-1}|y|^n} \int_{B(0,|y|)} c(x)dx,
    \quad
    y\in \R^{n+2}.
\end{align}
This is the content of the following lemma, see also \autoref{rmk:isomorphism}.
\begin{lemma} \label{lem:isomorphism}
For every $p \in (1,\infty)$ the map $A : X^p \to Y_{\nabla}^p$ is a linear isomorphism of Banach spaces.
\end{lemma}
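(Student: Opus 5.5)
Everything reduces to one-dimensional radial profiles. For a radial $c\in X^p$ with profile $c(r)$ we have
\begin{align}
A[c](\rho)=\frac{1}{2\rho^n}\int_0^\rho c(s)s^{n-1}ds=:\frac{1}{2}\mathcal{H}c(\rho),
\end{align}
where $\mathcal{H}$ is the $n$-dimensional Hardy averaging operator, up to the normalization $\omega_{n-1}$. The measure on $Y^p$ is $|\xi|^{-2}d\xi$ on $\R^{n+2}$, so in radial coordinates it is $\rho^{n+1}\rho^{-2}d\rho=\rho^{n-1}d\rho$. Hence $\|w\|_{Y^p}$ is, up to constants, $\|w\|_{L^p(\rho^{n-1}d\rho)}$, i.e.\ $\|\iota w\|_{X^p}$, and the whole statement becomes a statement about radial $L^p(\R^n)$.

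\textbf{Boundedness of $A$.} The weighted Hardy inequality gives $\|\mathcal{H}c\|_{L^p(\rho^{n-1}d\rho)}\le \frac{p}{p-1}\cdot\frac1n\|c\|_{L^p(\rho^{n-1}d\rho)}$ for $p>1$. One proves it by Minkowski's integral inequality after the substitution $s=\rho\sigma$, which yields the constant $\int_0^1\sigma^{n-1-n/p}d\sigma$, finite exactly when $p>1$. For the gradient part, differentiate:
\begin{align}
\rho\,\partial_\rho A[c](\rho)=\tfrac12 c(\rho)-nA[c](\rho),
\end{align}
so that $\xi\cdot\nabla A[c]=\tfrac12 c-nA[c]$ lies in $Y^p$ with norm $\lesssim\|c\|_{X^p}$. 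Thus $A:X^p\to Y^p_\nabla$ is bounded, and it is plainly linear. I would first prove all identities for $c\in C_c^\infty$ radial and extend by density, checking that the identity holds distributionally (the identity for $\rho\partial_\rho A[c]$ holds a.e.\ since $A[c]$ is locally absolutely continuous for $c\in L^1_{loc}$).

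\textbf{Inverse.} The identity above suggests the candidate
\begin{align}
A^{-1}[w]:=2\big(\xi\cdot\nabla w+nw\big)
\end{align}
read on profiles, i.e.\ $c=2(\rho w'+nw)=2\rho^{1-n}(\rho^n w)'$. This is bounded $Y^p_\nabla\to X^p$ trivially, even for $p=1$. It remains to show $A\circ A^{-1}=\mathrm{id}$ on $Y^p_\nabla$, which is the only delicate point. We have $A[2\rho^{1-n}(\rho^nw)'](\rho)=\rho^{-n}\big(\rho^n w(\rho)-\lim_{s\to0}s^nw(s)\big)$, so I must show $s^nw(s)\to0$ as $s\to0$ for $w\in Y^p_\nabla$. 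Here $w$ has $w,\rho w'\in L^p(\rho^{n-1}d\rho)$, so $g:=\rho^nw$ satisfies $g'=\rho^{n-1}(\rho w'+nw)$, which lies in $L^1_{loc}$ near $0$ by Hölder, since $\rho^{n-1}\in L^{p'}(\rho^{1-n}\cdot\rho^{n-1}d\rho)$ locally. Therefore $g$ has a limit $L$ at $0$. If $L\neq0$ then $|w|\sim |L|\rho^{-n}$ near $0$, which is not in $L^p(\rho^{n-1}d\rho)$ for any $p\ge1$. Hence $L=0$ and $A\circ A^{-1}=\mathrm{id}$. Combined with $A^{-1}\circ A=\mathrm{id}$ (the differentiation identity), this gives bijectivity with bounded inverse. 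Alternatively one can invoke the open mapping theorem once bijectivity is shown.

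I expect the main obstacle to be this injectivity/limit argument at the origin, together with the a.e.\ absolute continuity needed to justify the differentiation for general $L^p$ data. The Hardy inequality is where $p>1$ enters, and it is exactly what fails at $p=1$, consistent with the remark in the introduction that only $A^{-1}$ stays bounded there.
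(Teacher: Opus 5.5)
Your proof is correct and follows the paper's argument. Both use Hardy's inequality on the radial profiles (the paper reduces to the one-dimensional Hardy inequality via the substitution $h=r^n$ rather than Minkowski), the identity $\rho\,\partial_\rho A[c]=\tfrac12 c-nA[c]$ for the gradient part, and the explicit inverse $2(nw+\rho w')$. Your check that $\rho^n w(\rho)\to 0$ as $\rho\to 0$ for $w\in Y^p_\nabla$ supplies a step the paper leaves implicit when it asserts $A[\Xi]=\Omega$ in the surjectivity argument.
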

\begin{proof}
For notational simplicity we drop the superscript $p$ below.
Given $\Xi \in X$ and $\Omega \in Y$, we denote the radial parts
$\Xi(\theta)=:\Xi_{\mathrm{rad}}(|\theta|)$ and $\Omega(\xi) =: \Omega_{\mathrm{rad}}(|\xi|)$.
The map $\Xi \mapsto \Xi_{\mathrm{rad}}$ (resp. $\Omega \mapsto \Omega_{\mathrm{rad}}$) is a linear isomorphism between $X$ (resp. $Y$) and $L^{p}(\R_+,r^{n-1}dr)$, as can be immediately verified by integration in polar coordinates.

By definition, the map $A$ is linear and $\Omega:=A[\Xi]$ is a radial function for every $\Xi \in L^1_{loc}(\R^n,d\theta)$. Let us check $\Omega \in Y$, or equivalently $\Omega_{\mathrm{rad}} \in L^{p}(\R_+,r^{n-1}dr)$.
First of all, notice that 
\begin{align} \label{eq:definition.T}
\Omega_{\mathrm{rad}}(r) &= \frac{1}{2r^n} \int_0^r \Xi_{\mathrm{rad}}(s) s^{n-1} ds;
\end{align}
let us rewrite the integral above via a change of variables $h:= r^n$ and $k := s^n$ to obtain
\begin{align}
\Omega_{\mathrm{rad}}(h^{1/n}) 
&= 
\frac{1}{2h} \int_0^{h^{1/n}} \Xi_{\mathrm{rad}}(s) s^{n-1} ds
= 
\frac{1}{2hn} \int_0^{h} \Xi_{\mathrm{rad}}(k^{1/n}) dk.
\end{align}
Therefore, by the previous expression and Hardy inequality we have 
\begin{align}
 \int_0^\infty |\Omega_{\mathrm{rad}}(r)|^p r^{n-1}dr
 &=
 \frac{1}{n} \int_0^\infty |\Omega_{\mathrm{rad}}(h^{1/n})|^p dh
\\
&=
\frac{1}{n(2n)^p} \int_0^\infty \left| \frac{1}{h}\int_0^h \Xi_{\mathrm{rad}}(k^{1/n})dk\right|^p dh
\\
&\leq
\frac{p^p}{n(2n(p-1))^p} \int_0^\infty  |\Xi_{\mathrm{rad}}(k^{1/n})|^p dk
\\
&=
\frac{p^p}{(2n(p-1))^p} \int_0^\infty  |\Xi_{\mathrm{rad}}(s)|^p s^{n-1}ds.
\end{align}
The inequality above shows that $A$ maps $X$ into $Y$ continuously. In order to verify $\xi \cdot \nabla \Omega \in Y$ we can equivalently check that $r\Omega_{\mathrm{rad}}' \in L^{p}(\R_+,r^{n-1}dr)$; by \eqref{eq:definition.T} it holds $r\Omega_{\mathrm{rad}}'(r) = -n\Omega_{\mathrm{rad}}(r) + \Xi_{\mathrm{rad}}(r)/2 \in L^{p}(\R_+,r^{n-1}dr)$, and therefore $\Omega \in Y_\nabla$ and the map $A:X \to Y_\nabla$ is well-defined and continuous.

Let us now show that $A$ is bijective with continuous inverse.
Given $\Omega \in Y_\nabla$ with radial part $\Omega_{\mathrm{rad}}$, let $\Xi_{\mathrm{rad}}(r) := 2n \Omega_{\mathrm{rad}}(r)+2r\Omega_{\mathrm{rad}}'(r) \in L^{p}(\R_+,r^{n-1}dr)$; then the function $\Xi$ with radial part $\Xi_{\mathrm{rad}}$ belongs to $X$ and it holds $A[\Xi]=\Omega$, hence $A$ is surjective. The same formula $\Xi_{\mathrm{rad}}(r) = 2n \Omega_{\mathrm{rad}}(r)+2r\Omega_{\mathrm{rad}}'(r)$ also shows that $\mathrm{ker}(A) = \{0\}$ and thus $A$ is injective. 
Continuity of the inverse is also a direct consequence of the explicit formula above, indeed:
\begin{align} \label{eq:A-1}
    \| A^{-1}[\Omega]\|_{X}
    =
    \| 2n \Omega_{\mathrm{rad}}+2r\Omega_{\mathrm{rad}}'\|_{L^{p}(\R_+,r^{n-1}dr)}
    \lesssim \|\Omega\|_{Y_\nabla}.
\end{align}
\end{proof}

\begin{rmk}\label{rmk:isomorphism}
Clearly if $\eta,\gamma \in (1,\infty)$ then $A : X^{\eta,\gamma} \to Y_\nabla^{\eta,\gamma}$ remains a linear isomorphism of Banach spaces. 
Moreover, when $\eta=1$ one has from the explicit formula \eqref{eq:A-1} that the operator $A^{-1} :Y^{1,\gamma}_\nabla \to X^{1,\gamma}$ is well-defined and continuous, although in general $A: X^{1,\gamma} \nsubseteq Y^{1,\gamma}$: for instance, if $\chi$ is the indicator function of the unit ball of $\R^n$, then $\chi \in X^{1,\gamma}$ but $A[\chi] \notin Y^1$.
\end{rmk}

In the following we will also employ the embeddings between weighted spaces below.
\begin{lemma}\label{weighted_sobolev}
    Let $p\in [1,n)$. Then $Y_1^{p}\hookrightarrow Y^{p^*}$ continuously with $p^*$ satisfying 
    \begin{align*}
        \frac{1}{p^*}=\frac{1}{p}-\frac{1}{n}.
    \end{align*}
    Moreover 
    \begin{align*}
        \int_{\R^{n+2}} \frac{1}{|y|^{p+2}}|f(y)|^p dy\lesssim \|f\|_{Y^{p}_1}^p.
    \end{align*}
\end{lemma}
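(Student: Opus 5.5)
The plan is to reduce both statements to one–dimensional inequalities on the radial profile, via the identification $\iota$. For radial $f\in Y^p_1$ with profile $f_{\mathrm{rad}}$, polar coordinates in $\R^{n+2}$ give
\begin{align*}
\|f\|_{Y^p}^p = C_n\int_0^\infty |f_{\mathrm{rad}}(r)|^p r^{n-1}dr,
\qquad
\int_{\R^{n+2}}\frac{|\nabla f(y)|^p}{|y|^2}dy = C_n\int_0^\infty |f_{\mathrm{rad}}'(r)|^p r^{n-1}dr .
\end{align*}
Here the weight $|y|^{-2}$ exactly compensates the two extra dimensions. Hence $F:=\iota f$ is a radial function on $\R^n$ with $\|F\|_{L^p(\R^n)}\simeq\|f\|_{Y^p}$ and $\|\nabla F\|_{L^p(\R^n)}\simeq \|\nabla f\|_{L^p(|y|^{-2}dy)}$. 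So $F\in W^{1,p}_{\mathrm{rad}}(\R^n)$, and the weighted statements should become the classical unweighted ones in dimension $n$.

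For the Sobolev embedding, apply the standard Gagliardo–Nirenberg–Sobolev inequality in $\R^n$ (valid for $p\in[1,n)$): $\|F\|_{L^{p^*}(\R^n)}\lesssim\|\nabla F\|_{L^p(\R^n)}\lesssim \|f\|_{Y^p_1}$. Since $\|F\|_{L^{p^*}(\R^n)}\simeq\|f\|_{Y^{p^*}}$ by the same polar computation with exponent $p^*$, this gives the embedding $Y^p_1\hookrightarrow Y^{p^*}$.

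For the Hardy-type bound, the same change of variables gives
\begin{align*}
\int_{\R^{n+2}}\frac{|f(y)|^p}{|y|^{p+2}}dy = C_n\int_0^\infty |f_{\mathrm{rad}}(r)|^p r^{n-1-p}dr \simeq \int_{\R^n}\frac{|F(x)|^p}{|x|^p}dx .
\end{align*}
We then invoke the classical Hardy inequality $\int_{\R^n}|F|^p|x|^{-p}dx\le (\frac{p}{n-p})^p\int_{\R^n}|\nabla F|^p dx$, which holds for $1\le p<n$.

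The only delicate point is justification. The inequalities should first be proved on a dense class, for instance smooth radial compactly supported functions, and this class must be dense in $Y^p_1$. Density should follow from standard truncation and mollification of the profile, because the correspondence is an isometry up to constants onto $W^{1,p}_{\mathrm{rad}}(\R^n)$. For $p=1$ the Hardy inequality in dimension $n\ge3$ still holds, via the one-dimensional estimate $|f_{\mathrm{rad}}(r)|\le\int_r^\infty|f_{\mathrm{rad}}'|$ and Fubini. I do not expect a real obstacle; the main care is in checking the exponents of $r$ in the polar coordinates.
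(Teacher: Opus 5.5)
Your proposal is correct and follows essentially the same route as the paper: identify $f$ with the radial function $\iota f\in W^{1,p}(\R^n)$, noting that the weight $|y|^{-2}$ exactly absorbs the two extra dimensions in polar coordinates, and then apply the classical Sobolev and Hardy inequalities in $\R^n$. Your explicit polar-coordinate exponent checks and the density/$p=1$ remarks simply spell out what the paper summarizes as ``easy to check''.
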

\begin{proof}
    For $f\in Y_1^{p}$, it is easy to check that
    \begin{align*}
        \iota f\in W^{1,p}(\R^n).
    \end{align*}
Therefore, by standard Sobolev embedding and Hardy's inequality the claims follow since
    \begin{align*}
        \|f\|_{Y^{p^*}}&\lesssim\|\iota f\|_{X^{p^*}}\lesssim \|\iota f\|_{W^{1,p}(\R^n)}\lesssim \|f\|_{Y_1^{p}},\\
        \int_{\R^{n+2}} \frac{1}{|y|^{p+2}}|f(y)|^p dy&\lesssim\int_{\R^n}\frac{1}{|x|^p}|\iota f(x)|^p dx\lesssim  \|\iota f\|_{W^{1,p}(\R^n)}\lesssim \|f\|_{Y_1^{p}}.
    \end{align*}
\end{proof}
Lastly, we need some properties of the heat semigroup on $Y^p$ spaces.
\begin{lemma}\label{lem:heat_semigroup_weighted}

Let $p\in [1,+\infty)$ and $f\in Y^p$, then 
\begin{align} \label{eq:heat.bounded}
 e^{\Delta t}f\in C([0,\infty);Y^p),
 \quad
 \sup_{t \geq 0} \|e^{\Delta t}f\|_{Y^p}&\lesssim \|f\|_{Y^p}.
\end{align}

Moreover, if $p' \in [p,+\infty)$ then we have the ultracontractivity and smoothing estimates
\begin{align} \label{eq:heat.ultracontractive}
   \sup_{t \geq 0} t^{\frac{n}{2}\left(\frac{1}{p}-\frac{1}{p'}\right)} \|e^{\Delta t}f\|_{Y^{p'}}
   &\lesssim
   \|f\|_{Y^p},
   \\
   \label{eq:heat.derivative.bounded}
   \sup_{t \geq 0} t^{\frac{k}{2}+\frac{n}{2}\left(\frac{1}{p}-\frac{1}{p'}\right)} \||\nabla|^k e^{\Delta t}f\|_{Y^{p'}}
   &\lesssim
   \|f\|_{Y^p},
   \quad
   k =1,2,3. 
\end{align}

Furthermore, if $f\in Y^p_1$ then  
\begin{align} \label{eq:heat.bounded.Y1}
 e^{\Delta t}f\in C([0,\infty);Y^p_1),
 \quad
 \sup_{t \geq 0} \|e^{\Delta t}f\|_{Y_1^p}&\lesssim \|f\|_{Y_1^p}.
\end{align}

\end{lemma}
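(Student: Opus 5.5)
The plan is to transfer every statement to $\R^n$ through the isomorphism $\iota$, and then quote the classical properties of the heat semigroup there. The key observation is that for radial $f$ on $\R^{n+2}$ the Laplacian acts on the radial profile as $\partial_r^2+\frac{n+1}{r}\partial_r$. This is the radial Laplacian of $\R^{n+2}$, not of $\R^n$, so $\iota$ does \emph{not} intertwine $e^{\Delta t}$ on $\R^{n+2}$ with the heat semigroup on $\R^n$. The weight $|y|^{-2}$ compensates exactly for the extra two dimensions in the measure $r^{n+1}dr\cdot r^{-2}=r^{n-1}dr$. So I will work directly with the explicit kernel of the heat semigroup on $\R^{n+2}$, restricted to radial functions:
\begin{align}
(e^{\Delta t}f)(y)=\int_0^\infty K_t(|y|,s)f_{\mathrm{rad}}(s)s^{n+1}ds ,
\end{align}
with $K_t(r,s)$ obtained by integrating the Gaussian over spheres. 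The spaces $Y^p$ are $L^p(\R_+,r^{n-1}dr)$, so I will regard the operator as acting on $L^p(r^{n-1}dr)$ with kernel $\tilde K_t(r,s)=K_t(r,s)s^{2}$ relative to the measure $s^{n-1}ds$.

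For \eqref{eq:heat.bounded}, the plan is Schur's test in $L^p(r^{n-1}dr)$. Two bounds are needed:
\begin{align}
\sup_r\int\tilde K_t(r,s)s^{n-1}ds<\infty ,\qquad \sup_s\int\tilde K_t(r,s)r^{n-1}dr<\infty .
\end{align}
The first is $\int K_t(r,s)s^{n+1}ds=1$, which is mass conservation in $\R^{n+2}$. The second reads $s^2\int K_t(r,s)r^{n-1}dr = s^2\,(e^{\Delta t}|y|^{-2})(s)$, using the symmetry $K_t(r,s)=K_t(s,r)$. So it reduces to the pointwise bound $e^{\Delta t}|y|^{-2}\lesssim |y|^{-2}$ on $\R^{n+2}$. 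This holds because $|y|^{-2}$ is a positive supersolution there: $\Delta|y|^{-2}=2(2-n)|y|^{-4}\le0$ for $n\ge 2$, and it is locally integrable since $2<n+2$.

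Ultracontractivity \eqref{eq:heat.ultracontractive} will come from interpolation between the $p\to p$ bound and an $L^1\to L^\infty$ bound $t^{-n/2}$, i.e.\ $\sup_{r,s}s^2K_t(r,s)\lesssim t^{-n/2}$. By scaling, $K_t(r,s)=t^{-(n+2)/2}K_1(r/\sqrt t,s/\sqrt t)$, so it suffices to check $\sup s^2K_1(r,s)<\infty$. This follows from the Gaussian decay of $K_1$ in $|r-s|$ together with the Bessel asymptotics $K_1(r,s)\lesssim (rs)^{-(n+1)/2}e^{-(r-s)^2/4}$ for $rs\gtrsim1$. The derivative bounds \eqref{eq:heat.derivative.bounded} will use the semigroup property $e^{\Delta t}=e^{\Delta t/2}e^{\Delta t/2}$, followed by the same Schur argument applied to the kernels $|\nabla_y|^kK_t$. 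These carry an extra factor $t^{-k/2}$ and are dominated by a Gaussian at slightly larger time, so the same supersolution trick applies. Continuity in $t$ follows by density of $C_c^\infty$ radial functions combined with the uniform bound.

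For \eqref{eq:heat.bounded.Y1}, note that $\nabla e^{\Delta t}f=e^{\Delta t}\nabla f$ componentwise on $\R^{n+2}$. Since $f$ is radial, $\nabla f=f'_{\mathrm{rad}}(|y|)\frac{y}{|y|}$, and $|e^{\Delta t}\nabla f|\le e^{\Delta t}|\nabla f|$ pointwise. So the claim reduces to \eqref{eq:heat.bounded}, applied to the function $|\nabla f|$. That function is radial and lies in $Y^p$, so the bound goes through even though $\nabla f$ itself is not a scalar radial function. The step I expect to be the main obstacle is the uniform Schur/kernel bounds for the derivatives and the $L^\infty$ estimate near $r\sim0$ or $s\sim0$, where the weight $s^2$ and the Bessel structure must be handled carefully.
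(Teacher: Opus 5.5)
Your proposal is correct and follows essentially the paper's route. The two Schur conditions are the paper's $L^\infty$ bound and its weighted $Y^1$ bound (obtained from $\mathcal{G}_t\ast|y|^{-2}\lesssim|y|^{-2}$), combined by interpolation. Your estimate $\sup_{r,s}s^2K_t(r,s)\lesssim t^{-n/2}$ is the paper's Bessel-kernel bound, and the derivative and $Y^p_1$ estimates use the same Gaussian domination and the same inequality $|\nabla e^{\Delta t}f|\le e^{\Delta t}|\nabla f|$.

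The one real difference is how you prove the weight estimate. You use superharmonicity of $|y|^{-2}$ on $\R^{n+2}$, which even gives constant $1$; to make it rigorous, apply the argument to $(|y|^2+\epsilon)^{-1}$, which is smooth and superharmonic when $n+2\ge 4$, and let $\epsilon\to0$. The paper instead splits the convolution integral into the regions $|\zeta|\le|\xi|/2$ and $|\zeta|>|\xi|/2$.
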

\begin{proof}
Let us denote $\mathcal{G}_t(\xi) := (4\pi t)^{-\frac{n+2}{2}} e^{-\frac{|\xi|^2}{4t}}$ the heat kernel on $\R^{n+2}$ and $w(\xi) := |\xi|^{-2}$. Let us preliminarily prove
\begin{align} \label{eq:bound.weight}
    (\mathcal{G}_t \ast w)(\xi) \lesssim w(\xi),
    \quad
    \forall t > 0, \, \forall \xi \neq 0,
\end{align}
with implicit constant independent of $t,\xi$. 
By changing variables $(\mathcal{G}_t \ast w)(\xi) = t^{-1}(\mathcal{G}_1 \ast w)(\xi/\sqrt{t})$ and therefore it suffices to prove the claim for $t=1$.
Let us decompose
\begin{align}
(\mathcal{G}_t \ast w)(\xi) \lesssim     \int_{\R^{n+2} } e^{-\frac{|\xi-\zeta|^2}{4}} w(\zeta) d\zeta
    &\lesssim
    \int_{|\zeta| \leq |\xi|/2}  e^{-\frac{|\xi-\zeta|^2}{4}} w(\zeta) d\zeta
    +
    \int_{|\zeta| > |\xi|/2}e^{-\frac{|\xi-\zeta|^2}{4}} w(\zeta) d\zeta
    \\
    &\lesssim
   e^{-\frac{|\xi|^2}{16}}  \int_{|\zeta| \leq |\xi|/2}  w(\zeta) d\zeta
    +
    |\xi|^{-2} 
    \\
    &\lesssim
   e^{-\frac{|\xi|^2}{16}} |\xi|^n 
    +
    |\xi|^{-2}
    \lesssim w(\xi).
\end{align}
By Fubini Theorem, \eqref{eq:bound.weight} above, and since the heat semigroup is positivity preserving we get
\begin{align}
    \int_{\R^{n+2}} |\mathcal{G}_t \ast f|(\xi) w(\xi) d\xi
    &\lesssim
    \int_{\R^{n+2}} (\mathcal{G}_t \ast |f|)(\xi) w(\xi) d\xi
    \\&=
    \int_{\R^{n+2}} |f(\xi)| (\mathcal{G}_t \ast w)(\xi)  d\xi
    \lesssim
    \int_{\R^{n+2}} |f|(\xi) w(\xi)  d\xi,
\end{align}
establishing boundedness of the heat semigroup in $Y^1$. Since the heat semigroup is bounded in $L^\infty$, Stein-Weiss interpolation Theorem \cite[Theorem 14.3.1]{HyvNVeWe23} gives boundedness in every $Y^p$. Strong continuity in $Y^p$ for $p<\infty$ then follows by this uniform bound, density of smooth functions, and dominated convergence.

As for \eqref{eq:heat.ultracontractive}, by interpolation it suffices to show $\sup_{t \geq 0} t^\frac{n}{2} \| e^{\Delta t} f \|_{L^\infty} \lesssim \| f \|_{Y^1}$.
To prove this bound, we use radiality of $f$ and represent the heat semigroup by means of the transition semigroup of the Bessel process in $\R^{n+2}$, which has density:
\begin{align}
    p_{2t}^{n+2}(r,s) :=
    (2t)^{-1} (s/r)^{n/2} s e^{-\frac{(r^2+s^2)}{4t}} I_{n/2}\left( \frac{rs}{2t} \right),
    \quad
    \forall r,s >0,
\end{align}
where $I_{n/2}$ is the modified Bessel function of the first kind of index $n/2$, cf. \cite[Chapter XI, page 446]{revuzyor99}. Therefore, denoting $f_{\mathrm{rad}}$ the radial part of $f$, we have for every $r:=|\xi|>0$
\begin{align} \label{eq:bessel.auxiliary}
|(\mathcal{G}_t \ast f)(\xi)|
\leq
\int_{\R^{n+2}} (4\pi t)^{-\frac{n+2}{2}} e^{-\frac{|\xi-\zeta|^2}{4t}} |f(\zeta)| d\zeta
=
\int_0^\infty p_{2t}^{n+2}(r,s) |f_{\mathrm{rad}}(s)| ds.
\end{align}
Since we want to obtain a bound in terms of the $Y^1$ norm of $f$, we artificially introduce a weight $s^{n-1}$ in the last integral above and define
\begin{align}
    q_{2t}^{n+2}(r,s) := s^{1-n} p_{2t}^{n+2}(r,s).
\end{align}
By simple algebraic manipulations we get 
\begin{align}
    q_{2t}^{n+2}(r,s) = (2t)^{-n/2} q_{1}^{n+2}(r/\sqrt{2t},s/\sqrt{2t}),
\end{align}
and hence $\sup_{r,s>0} q_{2t}^{n+2}(r,s) \lesssim t^{-n/2}\sup_{r,s>0} q_{1}^{n+2}(r,s)$.
Using the bound $I_{n/2}(z) \lesssim z^{n/2}e^z(1+z)^{-\frac{n+1}{2}}$, see for instance \cite[Eqs. 10.30.1 and 10.30.4]{OlverEtAl2010}, we can control the last supremum by
\begin{align}
\sup_{r,s>0} q_{1}^{n+2}(r,s)
&\lesssim
\sup_{r,s>0}  (1/sr)^{n/2} s^2 e^{-\frac{(r^2+s^2)}{2}} I_{n/2}\left( rs \right)
\\
&\lesssim
\sup_{r,s>0}  \frac{s^2}{(1+rs)^{\frac{n+1}{2}}} e^{-\frac{|r-s|^2}{2}}.
\end{align}
If $r > s/2$ then we can control the quantity above with
\begin{align}
\sup_{s>0,\, r>s/2} q_{1}^{n+2}(r,s)
\lesssim
\sup_{s>0} \frac{s^2}{(1+s^2)^\frac{n+1}{2}} < \infty;
\end{align}
On the other hand, if $r \leq s/2$ then 
\begin{align}
\sup_{s>0,\, 0<r\leq s/2} q_{1}^{n+2}(r,s)
\lesssim
\sup_{s>0} s^2 e^{-\frac{s^2}{8}}< \infty.
\end{align}
All in all we obtain $\sup_{r,s>0} q_{2t}^{n+2}(r,s) \lesssim t^{-n/2}$. 
Inserting this bound in \eqref{eq:bessel.auxiliary} above, we get
\begin{align}
 |(\mathcal{G}_t \ast f)(\xi)|
 \lesssim
 t^{-\frac{n}{2}} 
 \int_0^\infty |f_{\mathrm{rad}}(s)| s^{n-1}ds
 \lesssim
 t^{-\frac{n}{2}} \| f \|_{Y^1}.
\end{align}

The bound \eqref{eq:heat.derivative.bounded} follows by the previous bounds \eqref{eq:heat.bounded}, \eqref{eq:heat.ultracontractive} and observing that for $k=1,2,3$
\begin{align}
    ||\nabla|^k \mathcal{G}_t (\xi)| 
    \lesssim 
    t^{-\frac{k}{2}} \left( 1+ |\xi/\sqrt{t}|^3 \right)  \mathcal{G}_t (\xi)
    \lesssim 
    t^{-\frac{k}{2}}  \mathcal{G}_{2t} (\xi),
\end{align}
with implicit constant independent of $t>0$ and $\xi \in \R^{n+2}$.

Finally, the bound in \eqref{eq:heat.bounded.Y1} follows by the relation $|\nabla e^{\Delta t} f| = |e^{\Delta t} \nabla f| \leq e^{\Delta t} |\nabla f| $, valid for smooth functions $f$, and \eqref{eq:heat.bounded}. Strong continuity in $Y^{p}_1$ follows by usual density arguments.
\end{proof}

\subsection{Semigroup generation and perturbation structure of $L_{\alpha}$}\label{sec:semigruoup}
Let $\Srad$ denote the space of Schwartz radial functions on $\R^{n+2}$ and introduce the operator
\begin{align} \label{eq:def.mathcalL0}
\tilde{\mathcal{L}}_0 :f &\mapsto \Delta f + \frac{\xi}{2} \cdot \nabla f +f,
\quad 
D(\tilde{\mathcal{L}}_0)=\Srad.
\end{align}
\begin{lemma} \label{lem:L_0}
Fix $\eta\in [1,\infty)$ and $\gamma \in [\eta,\infty)$.  
Then the operator $(\tilde{\mathcal{L}}_0,D(\tilde{\mathcal{L}}_0))$ is closable in $Y^{\eta,\gamma}$ and its closure $(\mathcal{L}_0,D(\mathcal{L}_0))$ generates a strongly continuous semigroup $S_0$ on $Y^{\eta,\gamma}$ that satisfies the bound:
\begin{align} \label{eq:growth.bound.Y}
    \| S_0 (\tau) f \|_{Y^{\eta,\gamma}} \lesssim e^{\omega_{\mathrm{ess}} \tau} \| f \|_{Y^{\eta,\gamma}},
    \quad
    \omega_{\mathrm{ess}} := 1-\frac{n}{2\eta};
\end{align}
A similar statement holds true replacing the space $Y^{\eta,\gamma}$ with $Y_\nabla^{\eta,\gamma}$.
\end{lemma}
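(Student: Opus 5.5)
The plan is to make $S_0$ completely explicit and to read the bound \eqref{eq:growth.bound.Y} off scaling, combined with the heat-semigroup estimates of \autoref{lem:heat_semigroup_weighted}. The operator $\tilde{\mathcal{L}}_0$ is the heat equation in the self-similar variables of \eqref{eq:Omega} with the nonlinearity dropped. Indeed, if $w$ solves $\partial_t w=\Delta w$ on $\R^{n+2}$ with $w(1)=f$, then $\Omega(\tau,\xi)=e^\tau w(e^\tau,e^{\tau/2}\xi)$ solves $\partial_\tau\Omega=\tilde{\mathcal{L}}_0\Omega$ with $\Omega(0)=f$. I therefore define
\begin{align}
  S_0(\tau)f(\xi) := e^{\tau}\big(e^{(e^\tau-1)\Delta}f\big)(e^{\tau/2}\xi),\qquad \tau\ge 0 .
\end{align}
The semigroup property follows from the semigroup property of the heat flow together with the scaling $(e^{s\Delta}g)(\lambda\cdot)=e^{s\lambda^{-2}\Delta}[g(\lambda\cdot)]$. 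Radial symmetry is preserved.

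\emph{Scaling.} In $\R^{n+2}$ the change of variables $y=e^{\tau/2}\xi$ gives $|\xi|^{-2}d\xi=e^{-n\tau/2}|y|^{-2}dy$. Hence, for every $p$,
\begin{align}
\|S_0(\tau)f\|_{Y^p}=e^{(1-\frac{n}{2p})\tau}\,\|e^{(e^\tau-1)\Delta}f\|_{Y^p}.
\end{align}

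\emph{Growth bound.} For $\tau\le 1$, \eqref{eq:heat.bounded} gives $\|S_0(\tau)f\|_{Y^{\eta,\gamma}}\lesssim\|f\|_{Y^{\eta,\gamma}}$. For $\tau\ge1$, the $Y^\eta$ component is bounded by $e^{(1-\frac{n}{2\eta})\tau}\|f\|_{Y^\eta}$, again by \eqref{eq:heat.bounded}. For the $Y^\gamma$ component I do not use the $Y^\gamma$ norm of $f$, which would only give the worse rate $1-\frac{n}{2\gamma}$. Instead I apply the ultracontractivity estimate \eqref{eq:heat.ultracontractive} from $Y^\eta$ to $Y^\gamma$ with $s=e^\tau-1\simeq e^\tau$:
\begin{align}
\|S_0(\tau)f\|_{Y^\gamma}\lesssim e^{(1-\frac{n}{2\gamma})\tau}\,e^{-\frac n2(\frac1\eta-\frac1\gamma)\tau}\|f\|_{Y^\eta}=e^{(1-\frac{n}{2\eta})\tau}\|f\|_{Y^\eta}.
\end{align}
Together these give \eqref{eq:growth.bound.Y} with $\omega_{\mathrm{ess}}=1-\frac n{2\eta}$ exactly.

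\emph{Generation and closure.} Strong continuity on $Y^{\eta,\gamma}$ follows from the strong continuity in \eqref{eq:heat.bounded}, the continuity of dilations in $Y^p$ for $p<\infty$, and density of $\Srad$ combined with the uniform bound. Let $G$ be the generator of $S_0$. For $f\in\Srad$, differentiating the explicit formula shows $Gf=\tilde{\mathcal{L}}_0 f$. Moreover $S_0(\tau)\Srad\subset\Srad$, since the heat flow and dilations preserve radial Schwartz functions, and $\Srad$ is dense. By the core theorem (Engel–Nagel, Prop.\ II.1.7), $\Srad$ is then a core for $G$. Consequently $\tilde{\mathcal{L}}_0$ is closable and its closure $\mathcal{L}_0$ equals $G$.

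\emph{The $Y_\nabla^{\eta,\gamma}$ case.} I also need control of $\xi\cdot\nabla S_0(\tau)f$. Two identities do this: $\xi\cdot\nabla[g(e^{\tau/2}\xi)]=((y\cdot\nabla)g)(e^{\tau/2}\xi)$, and the commutator $[y\cdot\nabla,e^{s\Delta}]=2s\Delta e^{s\Delta}$. Together they give
\begin{align}
\xi\cdot\nabla S_0(\tau)f=S_0(\tau)(y\cdot\nabla f)+2(e^\tau-1)\,e^{\tau}\big(\Delta e^{(e^\tau-1)\Delta}f\big)(e^{\tau/2}\cdot).
\end{align}
The first term is handled by the estimate already proved. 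For the second, the smoothing estimate \eqref{eq:heat.derivative.bounded} with $k=2$ gives $s\|\Delta e^{s\Delta}f\|_{Y^p}\lesssim s^{-\frac n2(\frac1\eta-\frac1p)}\|f\|_{Y^\eta}$. After the same rescaling, this yields the rate $e^{(1-\frac n{2\eta})\tau}$ for both $p=\eta$ and $p=\gamma$. Strong continuity and the core argument then carry over verbatim, since $\Srad$ is also dense in $Y_\nabla^{\eta,\gamma}$.

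The main obstacle I expect is this last step: verifying the commutator identity rigorously on $Y^p_\nabla$ (first for Schwartz data, then extending by density) and checking that the extra $\Delta e^{s\Delta}$ term does not spoil the sharp rate near $\tau=0$. There the factor $s$ compensates the $s^{-1}$ singularity of the $k=2$ smoothing estimate.
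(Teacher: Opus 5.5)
Your proof is correct, and its analytic core matches the paper's. Both use the same explicit formula for $S_0$ and the same scaling identity $\|S_0(\tau)f\|_{Y^p}=e^{(1-\frac{n}{2p})\tau}\|e^{(e^\tau-1)\Delta}f\|_{Y^p}$. Both use ultracontractivity from $Y^\eta$ to $Y^\gamma$ for $\tau\ge 1$, which gives the rate $1-\frac{n}{2\eta}$ in both components. Both handle the $Y_\nabla^{\eta,\gamma}$ bound and strong continuity through the commutator term $(e^\tau-1)\Delta e^{(e^\tau-1)\Delta}$.

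The genuine difference is in closability and in identifying the closure with the generator.
\begin{itemize}
\item The paper proves closability separately, by pairing with radial Schwartz functions in the domain of the formal adjoint, with a separate argument at $\eta=1$. It then refers to \cite{GlHoLaLu25} for the fact that the closure generates the explicit semigroup.
\item You construct the $C_0$-semigroup first and apply the core theorem to $\Srad$, which is dense, invariant, and contained in the generator's domain. Closability then comes for free, since a restriction of a closed operator is closable. You also identify $\mathcal{L}_0$ with the generator explicitly, and you treat $\eta=1$ and $Y_\nabla^{\eta,\gamma}$ uniformly.
\end{itemize}

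The cost of your route is two routine facts that you assert without proof.
\begin{itemize}
\item $\Srad$ is dense in $Y_\nabla^{\eta,\gamma}$. The substitution $r=e^t$ turns the radial part of $Y^p_\nabla$ into $W^{1,p}(\R,e^{nt}dt)$, where compactly supported smooth functions are dense, and one cutoff plus mollification works for both exponents at once.
\item The difference quotients converge in norm on $\Srad$. This follows from $S_0(\tau)f-f=\int_0^\tau S_0(\sigma)\tilde{\mathcal{L}}_0 f\,d\sigma$ together with strong continuity.
\end{itemize}

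One slip: the commutator is $[y\cdot\nabla,e^{s\Delta}]=-2s\Delta e^{s\Delta}$, as in \eqref{eq:commutator_1}, not $+2s\Delta e^{s\Delta}$. Your estimates use only the norm of this term, so they are unaffected. The sign does matter if you derive $S_0(\sigma)\tilde{\mathcal{L}}_0 f=\tilde{\mathcal{L}}_0 S_0(\sigma)f$ from the commutator. With the correct sign the $\Delta$-coefficients match because $1+(e^\sigma-1)=e^\sigma$; with yours they would not.
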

\begin{proof}
Closability of $(\tilde{\mathcal{L}}_0,D(\tilde{\mathcal{L}}_0))$ in $Y^{\eta,\gamma}$ follows by the fact that it admits an adjoint defined on $\Srad$. Indeed, if $\eta>1$ then the adjoint is densely defined. On the other hand, if $\eta=1$ and $\{f_k\}_{k \in \NN}$ is a sequence such that $f_k \to 0$ and $\tilde{\mathcal{L}}_0 f_k \to g$ in $Y^1$, then $\int_0^\infty r^{n-1} g_{\mathrm{rad}}(r) \phi_{\mathrm{rad}}(r) dr=0$ for every $\phi \in \mathscr{S}_{rad}$, hence $g = 0$ by the fundamental lemma of calculus of variations.
Since the topology of $Y_\nabla^{\eta,\gamma}$ is stronger than that of $Y^{\eta,\gamma}$, $(\tilde{\mathcal{L}}_0,D(\tilde{\mathcal{L}}_0))$ is also closable in $Y_\nabla^{\eta,\gamma}$.
With a little abuse of notation, in the following we denote $(\mathcal{L}_0,D(\mathcal{L}_0))$ the closure of $(\tilde{\mathcal{L}}_0,D(\tilde{\mathcal{L}}_0))$ with respect to either $Y^{\eta,\gamma}$ or $Y_\nabla^{\eta,\gamma}$, depending upon the context. 

Let us now denote $G_\tau$ the Gaussian kernel on $\R^{n+2}$, rescaled as in \cite[Proposition 2.2]{GlHoLaLu25}:
\begin{align}
    G_\tau(\xi) := (4\pi \alpha(\tau))^{-\frac{n+2}{2}}e^{-\frac{|\xi|^{2}}{4\alpha(\tau)}},
    \quad
    \xi \in \R^{n+2},
    \quad
    \alpha(\tau) := e^\tau-1.
\end{align}
Arguing as therein, we can explicitly compute the heat semigroup in self-similar variables and notice that $(\mathcal{L}_0,D(\mathcal{L}_0))$ generates the semigroup on $Y^{\eta,\gamma}$:
\begin{align} \label{eq:definition.semigroup.Y}
    (S_0(\tau) f)(\xi) := e^{\tau} (G_\tau \ast f) (e^{\tau/2} \xi), \quad
    \xi \in \R^{n+2}.
\end{align}
Strong continuity of $S_0$ in $Y^{\eta,\gamma}$ then descends by \eqref{eq:heat.bounded} and continuity of dilations in $Y^{\eta,\gamma}$.
With equation \eqref{eq:definition.semigroup.Y} in hand, in order to conclude the strong continuity of $S_0$ in $Y_\nabla^{\eta,\gamma}$ we only have to show strong continuity of $\xi \cdot \nabla (S_0 (\tau) f)$ with respect to the $Y^{\eta,\gamma}$ topology. Hence, fix any $f \in Y_\nabla^{\eta,\gamma}$; by change of variables $\zeta := e^{\tau/2}\xi$ we have
\begin{align}
    \|\xi \cdot \nabla (S_0 (\tau) f-f)\|_{Y^{\eta,\gamma}}
    &=
    \|\xi \cdot \nabla ( e^\tau( G_{\tau} \ast f)(e^{\tau/2} \cdot) - f)\|_{Y^{\eta,\gamma}}
    \\
    &\leq
    e^{\tau}\| e^{\tau/2}\xi \cdot \nabla ( G_{\tau} \ast f)(e^{\tau/2} \cdot) - e^{\tau/2}\xi \cdot \nabla f (e^{\tau/2} \cdot)\|_{Y^{\eta,\gamma}}
    \\
    &\quad+
    \| e^{3\tau/2} \xi \cdot \nabla f (e^{\tau/2} \cdot) - \xi \cdot \nabla f \|_{Y^{\eta,\gamma}}
    \\
    &\leq
    e^{\tau\left( 1-\frac{n}{2\gamma}\right)}
    \| \zeta \cdot \nabla (G_{\tau} \ast f)  -G_\tau \ast ( \zeta \cdot \nabla f) \|_{Y^{\eta,\gamma}}
    \\
    &\quad+
    e^{\tau\left( 1-\frac{n}{2\gamma}\right)}
    \| G_\tau \ast ( \zeta \cdot \nabla f) - \zeta \cdot \nabla f \|_{Y^{\eta,\gamma}}
    \\
    &\quad+
    \| e^{3\tau/2} \xi \cdot \nabla f (e^{\tau/2} \cdot) - \xi \cdot \nabla f \|_{Y^{\eta,\gamma}}.
\end{align}
Notice that the second term at the right-hand side goes to zero as $\tau \to 0$ by \eqref{eq:heat.bounded}, whereas the third term is infinitesimal by continuity of dilations in $Y^{\eta,\gamma}$, the fact that $f \in Y_\nabla^{\eta,\gamma}$, and dominated convergence.
As for the first term, observe that for every $\zeta \in \R^{n+2}$ it holds
\begin{align} \label{eq:commutator_1}
\zeta \cdot \nabla  &(G_\tau \ast f) (\zeta) -(G_\tau \ast ( \zeta \cdot \nabla f)) (\zeta)
\\
&= \int_{\R^{n+2}} (\zeta-\xi) \cdot \nabla G_\tau(\zeta-\xi) f(\xi) d\xi
+
(n+2) \int_{\R^{n+2}} G_\tau(\zeta-\xi) f(\xi) d\xi 
\\
&=- 2\alpha(\tau) \Delta(G_\tau \ast f)(\zeta).
\end{align}
Moreover, a change of variables shows that \eqref{eq:commutator_1} is infinitesimal as $\tau \to 0$, uniformly in $\zeta \in \R^{n+2}$, for every smooth $f$. We conclude by density of smooth functions in $Y^{\eta,\gamma}$, the bound \eqref{eq:heat.derivative.bounded}, and dominated convergence.

In order to conclude the proof of the lemma, it only remains to prove the bound \eqref{eq:growth.bound.Y}. 
First of all, we have by \eqref{eq:heat.bounded} and change of variables
\begin{align}
 \| S_0 (\tau) f \|_{Y^{\eta}} 
\lesssim 
e^{\omega_{\mathrm{ess}} \tau} \| G_\tau \ast f \|_{Y^{\eta}}
\lesssim 
e^{\omega_{\mathrm{ess}} \tau} \| f \|_{Y^{\eta}}.   
\end{align}
For the estimate in $Y^\gamma$, by \eqref{eq:heat.bounded} we can suppose $\tau>1$ without loss of generality. Using \eqref{eq:heat.ultracontractive} we get
\begin{align} \label{eq:bound.S0gamma}
 \| S_0 (\tau) f \|_{Y^{\gamma}} 
\lesssim 
e^{\tau \left( 1-\frac{n}{2\gamma} \right)} \| G_\tau \ast f \|_{Y^{\gamma}} 
\lesssim
e^{\tau \left( 1-\frac{n}{2\gamma} \right)} \alpha(\tau)^{-\frac{n}{2}\left( \frac{1}{\eta}-\frac{1}{\gamma} \right)} \| f \|_{Y^{\eta}}
\lesssim 
e^{\omega_{\mathrm{ess}} \tau} \| f \|_{Y^{\eta}}.
\end{align}
This completes the proof of the lemma in the space $Y^{\eta,\gamma}$.
As for the statement in $Y_\nabla^{\eta,\gamma}$, we additionally have to bound $\xi \cdot \nabla (S_0 (\tau) f)$ in $Y^{\eta,\gamma}$.
Let us discuss the bound in $Y^{\eta}$ first: by change of variable and formula \eqref{eq:commutator_1} we obtain
\begin{align}
\| \xi \cdot \nabla (S_0 (\tau) f) \|_{Y^{\eta}}
&=
e^{\omega_{\mathrm{ess}} \tau} \| \zeta \cdot \nabla (G_\tau \ast f) \|_{Y^\eta}
\\
&\lesssim
e^{\omega_{\mathrm{ess}} \tau} \| G_\tau \ast (\zeta \cdot \nabla f) \|_{Y^\eta}
+
e^{\omega_{\mathrm{ess}} \tau} \alpha(\tau) \| \Delta (G_\tau \ast f) \|_{Y^\eta}
\lesssim
e^{\omega_{\mathrm{ess}} \tau}\| f \|_{Y^{\eta}_\nabla},
\end{align}
where the last inequality follows by \eqref{eq:heat.bounded} and \eqref{eq:heat.derivative.bounded}.
Similar computations permit to give a bound on $\| \xi \cdot \nabla (S_0 (\tau) f) \|_{Y^{\gamma}}$ for $\tau \leq 1$. On the other hand, for $\tau>1$ we can invoke ultracontractivity estimates similarly to what we have done to obtain \eqref{eq:bound.S0gamma}.  
\end{proof}

By the explicit expression \eqref{eq:definition.semigroup.Y} and \autoref{lem:heat_semigroup_weighted}, it is easy to show the following smoothing and ultracontractivity properties for $S_0$. 
\begin{lemma}\label{properties_freesemigrup}
    Let $1\leq \eta\leq \gamma<\infty$. Then for every $f\in Y^{\eta,\gamma}$ and $\tau\in (0,1]$ it holds
    \begin{align} \label{reg_order_0_S0}
        \||\nabla|^kS_0(\tau)f\|_{Y^{\eta,\gamma}} 
        \lesssim
        \tau^{-k/2}
        \| f \|_{Y^{\eta,\gamma}},
        \quad
        k =1,2,3. 
    \end{align}
    Moreover, if $\eta\leq \eta',\ \gamma\leq \gamma'$ are such that $\frac{1}{\eta}-\frac{1}{\eta'}=\frac{1}{\gamma}-\frac{1}{\gamma'}$ then we also have 
    \begin{align} \label{ultracontractivity_free}
        \|S_0(\tau)f\|_{Y^{\eta',\gamma'}}
        \lesssim
        \tau^{-\frac{n}{2}\left(\frac{1}{\eta}-\frac{1}{\eta'}\right)}
        \| f \|_{Y^{\eta,\gamma}}, \quad \forall\tau\in(0,1].
    \end{align}
\end{lemma}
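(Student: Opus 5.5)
We will work directly with the explicit formula \eqref{eq:definition.semigroup.Y}, $(S_0(\tau)f)(\xi)=e^{\tau}(G_\tau\ast f)(e^{\tau/2}\xi)$ with $G_\tau=e^{\Delta\alpha(\tau)}$ and $\alpha(\tau)=e^\tau-1$, and reduce both estimates to the heat-semigroup bounds of \autoref{lem:heat_semigroup_weighted}. The key observation is that for $\tau\in(0,1]$ we have $\alpha(\tau)\simeq\tau$ and $e^{\tau},e^{\tau/2}\simeq 1$. Hence the dilation $\xi\mapsto e^{\tau/2}\xi$ and the prefactor $e^\tau$ change $Y^p$ norms only by constants uniformly bounded in $\tau\in(0,1]$. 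Indeed, by change of variables, $\|g(e^{\tau/2}\cdot)\|_{Y^p}=e^{-\frac{n}{2p}\tau}\|g\|_{Y^p}$, because the weight $|\xi|^{-2}d\xi$ scales like $\lambda^{n}$ in $\R^{n+2}$.

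For \eqref{reg_order_0_S0}, we first note $|\nabla|^k\big[(G_\tau\ast f)(e^{\tau/2}\cdot)\big]=e^{k\tau/2}(|\nabla|^kG_\tau\ast f)(e^{\tau/2}\cdot)$. By the dilation remark it therefore suffices to bound $\||\nabla|^k e^{\Delta\alpha(\tau)}f\|_{Y^p}$ for $p\in\{\eta,\gamma\}$ separately. Applying \eqref{eq:heat.derivative.bounded} with $p'=p$ and $t=\alpha(\tau)$ gives the bound $\alpha(\tau)^{-k/2}\|f\|_{Y^p}\lesssim\tau^{-k/2}\|f\|_{Y^p}$. Summing the two norms yields the $Y^{\eta,\gamma}$ estimate.

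For \eqref{ultracontractivity_free}, we argue in the same way, applying \eqref{eq:heat.ultracontractive} twice: once from $Y^\eta$ to $Y^{\eta'}$ and once from $Y^\gamma$ to $Y^{\gamma'}$, each with $t=\alpha(\tau)$. This gives factors $\alpha(\tau)^{-\frac n2(\frac1\eta-\frac1{\eta'})}$ and $\alpha(\tau)^{-\frac n2(\frac1\gamma-\frac1{\gamma'})}$. By the hypothesis $\frac1\eta-\frac1{\eta'}=\frac1\gamma-\frac1{\gamma'}$, these two exponents coincide, producing the single power $\tau^{-\frac n2(\frac1\eta-\frac1{\eta'})}$. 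The norms $\|f\|_{Y^\eta},\|f\|_{Y^\gamma}$ are both controlled by $\|f\|_{Y^{\eta,\gamma}}$.

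There is no genuine obstacle here. The only point requiring care is the interplay between the dilation, the derivatives, and the singular weight: one has to check the scaling identity for $Y^p$ norms under dilation and note that all resulting factors $e^{c\tau}$ are bounded on $(0,1]$. One also uses that \eqref{eq:heat.derivative.bounded} is stated for the $(n+2)$-dimensional heat kernel, which is exactly the kernel entering $G_\tau$.
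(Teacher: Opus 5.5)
Your proposal is correct and follows the paper's route. The paper proves this lemma only by remarking that it follows from the explicit formula \eqref{eq:definition.semigroup.Y} and \autoref{lem:heat_semigroup_weighted}. Your argument supplies exactly the omitted verification: the dilation identity $\|g(e^{\tau/2}\cdot)\|_{Y^p}=e^{-\frac{n\tau}{2p}}\|g\|_{Y^p}$, the bound $\alpha(\tau)=e^{\tau}-1\geq\tau$, and the heat estimates applied separately in each exponent, with the hypothesis $\frac1\eta-\frac1{\eta'}=\frac1\gamma-\frac1{\gamma'}$ matching the two powers.
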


\autoref{properties_freesemigrup} above implies that $D(\mathcal{L}_0)$ with its graph norm is embedded in ${Y_1^{\eta,\gamma}}.$
\begin{lemma}
In the same setting of \autoref{lem:L_0}, for every $f \in D(\mathcal{L}_0) \subset Y^{\eta,\gamma}$ it holds
\begin{align} \label{eq:bound.coerciveY1}
    \| f \|_{Y_1^{\eta,\gamma}} \lesssim \| f \|_{Y^{\eta,\gamma}} + \| \mathcal{L}_0 f \|_{Y^{\eta,\gamma}}.
\end{align}
\end{lemma}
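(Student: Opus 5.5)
The plan is to represent $f$ through the resolvent of $\mathcal{L}_0$ as a Laplace transform of the semigroup, and then to feed in the smoothing estimate \eqref{reg_order_0_S0} of \autoref{properties_freesemigrup}.

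\emph{Resolvent representation.} By \autoref{lem:L_0}, $S_0$ satisfies $\|S_0(\tau)\|\lesssim e^{\omega_{\mathrm{ess}}\tau}$ with $\omega_{\mathrm{ess}}<1$. Fix any $\lambda_0 > \max(\omega_{\mathrm{ess}},0)+1$, say $\lambda_0=2$. For $f \in D(\mathcal{L}_0)$ set $g := (\lambda_0-\mathcal{L}_0)f$, so that
\begin{align}
f = R(\lambda_0,\mathcal{L}_0) g = \int_0^\infty e^{-\lambda_0 \tau} S_0(\tau) g \, d\tau,
\qquad
\|g\|_{Y^{\eta,\gamma}} \lesssim \|f\|_{Y^{\eta,\gamma}} + \|\mathcal{L}_0 f\|_{Y^{\eta,\gamma}}.
\end{align}

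\emph{Gradient bound.} I would apply $\nabla$ under the integral, first for $g\in\Srad$ and then by density. The integral splits into two parts.
\begin{itemize}
\item For $\tau\in(0,1]$, \eqref{reg_order_0_S0} with $k=1$ gives $\|\nabla S_0(\tau)g\|_{Y^{\eta,\gamma}}\lesssim \tau^{-1/2}\|g\|_{Y^{\eta,\gamma}}$, and $\tau^{-1/2}$ is integrable near $0$.
\item For $\tau>1$, write $S_0(\tau)=S_0(1/2)S_0(\tau-1/2)$. Then \eqref{reg_order_0_S0} at time $1/2$ together with \eqref{eq:growth.bound.Y} yields $\|\nabla S_0(\tau) g\|_{Y^{\eta,\gamma}}\lesssim e^{\omega_{\mathrm{ess}}\tau}\|g\|_{Y^{\eta,\gamma}}$. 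This is integrated against $e^{-\lambda_0\tau}$.
\end{itemize}
Summing the two parts gives $\|\nabla f\|_{Y^{\eta,\gamma}}\lesssim \|g\|_{Y^{\eta,\gamma}}$. Since the $Y_1^{\eta,\gamma}$ norm is $\|f\|_{Y^{\eta,\gamma}}+\|\nabla f\|_{Y^{\eta,\gamma}}$, the bound \eqref{eq:bound.coerciveY1} follows.

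\emph{Main obstacle: justification.} The Bochner integral converges in $Y^{\eta,\gamma}$, and the integral of $\nabla S_0(\tau)g$ converges absolutely in $Y^{\eta,\gamma}$ by the bounds above. It therefore remains to show that $\nabla$ commutes with the integral. Since $\nabla$ is closed as a distributional operator, testing against $C_c^\infty$ functions suffices. An alternative is to approximate $f$ by $f_k\in\Srad$ in graph norm, prove the estimate for $f_k$, and pass to the limit using weak lower semicontinuity of the $Y_1$ norm; here $Y_1^{\eta,\gamma}$ is closed under graph-norm limits because distributional gradients pass to the limit. One caveat is that \eqref{reg_order_0_S0} is stated in terms of $|\nabla|$. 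For radial functions I would read it as the full gradient, which is what the proof of \autoref{lem:heat_semigroup_weighted} actually controls, since it bounds pointwise derivatives of the kernel.
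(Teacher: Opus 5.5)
Your proof is correct and follows the paper's argument: write $f=R(\lambda,\mathcal{L}_0)g$ with $g=(\lambda-\mathcal{L}_0)f$ for some $\lambda>\omega_{\mathrm{ess}}$, use the Laplace-transform representation of the resolvent, and bound the $Y_1^{\eta,\gamma}$ norm of the integral via the smoothing estimate \eqref{reg_order_0_S0}. The paper leaves two details implicit that you spell out: the split at $\tau=1$ using the semigroup property together with \eqref{eq:growth.bound.Y}, and the justification for moving $\nabla$ under the integral.
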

\begin{proof}
Let $\omega_{\mathrm{ess}}$ be given by \eqref{eq:growth.bound.Y} and choose $\lambda > \omega_{\mathrm{ess}}$. Then the resolvent
\begin{align}
    R(\lambda,\mathcal{L}_0) g = \int_0^\infty e^{-\lambda \tau} S_0(\tau) g d\tau
\end{align}
is well-defined in $Y^{\eta,\gamma}$ and satisfies $\| R(\lambda,\mathcal{L}_0) g \|_{Y^{\eta,\gamma}_1} \lesssim \| g \|_{Y^{\eta,\gamma}}$.
We conclude by choosing $g := (\mathcal{L}_0-\lambda)f$.
\end{proof}

We now move to introduce perturbations of the operator $(\mathcal{L}_0,D(\mathcal{L}_0))$.

Let $u_\alpha$ be the unique solution to \eqref{eq:ODE} corresponding to a fixed $\alpha>0$, let $\Omega_\alpha(y) := u_\alpha(|y|)$ for $y \in \R^{n+2}$, and define the operator
\begin{align} \label{eq:def.mathcalKalpha}
    \tilde{\mathcal{K}}_\alpha :f 
    &\mapsto 
    4n \Omega_\alpha f+ 2 \xi \cdot \nabla ( \Omega_\alpha f),
\quad
D(\tilde{\mathcal{K}}_\alpha)=\Srad.
\end{align}

\begin{lemma} \label{lem:K_alpha}
Fix $\eta\in [1,\infty)$ and $\gamma \in [\eta,\infty)$.  For every $\alpha>0$, the operator $(\tilde{\mathcal{K}}_\alpha,D(\tilde{\mathcal{K}}_\alpha))$ is closable in $Y^{\eta,\gamma}$ and its closure $(\mathcal{K}_\alpha,D(\mathcal{K}_\alpha))$ is relatively compact with respect to $(\mathcal{L}_0,D(\mathcal{L}_0))$.
  A similar statement holds true replacing the space $Y^{\eta,\gamma}$ with $Y_\nabla^{\eta,\gamma}$.
\end{lemma}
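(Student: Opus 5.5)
Expanding the operator, $\tilde{\mathcal{K}}_\alpha f = (4n\Omega_\alpha + 2\xi\cdot\nabla\Omega_\alpha) f + 2\Omega_\alpha\, \xi\cdot\nabla f$, that is,
\begin{align}
\tilde{\mathcal{K}}_\alpha f = \mathcal{W}_\alpha f + 2\rho\, u_\alpha(\rho)\, \partial_\rho f,
\quad
\mathcal{W}_\alpha(\xi) := 4n u_\alpha(|\xi|) + 2|\xi| u_\alpha'(|\xi|) = \mathcal{V}_\alpha(|\xi|).
\end{align}
By \autoref{prop:Omega_alpha>0}, \autoref{lem:decreasing} and \autoref{lem:bound.rho.m.Omega}, both $\mathcal{W}_\alpha$ and the drift coefficient $b_\alpha(\xi) := 2\xi\,\Omega_\alpha(\xi)$ are smooth and bounded. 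They decay at infinity: $|\mathcal{W}_\alpha| \lesssim (1+|\xi|)^{-2}$ and $|b_\alpha| \lesssim (1+|\xi|)^{-1}$.

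For closability I will show that $\tilde{\mathcal{K}}_\alpha$ extends to a bounded operator $\mathcal{K}^\sharp: Y_1^{\eta,\gamma} \to Y^{\eta,\gamma}$. This holds because $|\tilde{\mathcal{K}}_\alpha f| \lesssim |f| + |\nabla f|$ pointwise. Any closable operator dominated by a bounded map from a space continuously embedded in $Y^{\eta,\gamma}$ is then handled as follows. If $f_k \to 0$ in $Y^{\eta,\gamma}$ and $\tilde{\mathcal{K}}_\alpha f_k \to g$, I pair with a test function $\phi\in\Srad$ using the weight $|\xi|^{-2}$ and integrate by parts. This moves all derivatives onto $\phi$ and the smooth coefficients; the singular weight creates no trouble because the radial measure is $r^{n-1}dr$ with $n\geq3$. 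The pairing gives $\langle g,\phi\rangle = 0$, so $g=0$, exactly as in the closability argument for $\tilde{\mathcal{L}}_0$ in \autoref{lem:L_0}.

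The same domination shows $D(\mathcal{L}_0)\subset D(\mathcal{K}_\alpha)$. Relative compactness then amounts to proving that $\mathcal{K}_\alpha R(\lambda,\mathcal{L}_0)$ is compact on $Y^{\eta,\gamma}$ for some $\lambda>\omega_{\mathrm{ess}}$. By \eqref{eq:bound.coerciveY1}, the resolvent maps $Y^{\eta,\gamma}$ boundedly into $Y_1^{\eta,\gamma}$. It therefore suffices to show that multiplication by decaying coefficients is compact in the following sense: if $(f_k)$ is bounded in $D(\mathcal{L}_0)$ with graph norm, then $(\mathcal{W}_\alpha f_k + b_\alpha\cdot\nabla f_k)$ has a convergent subsequence in $Y^{\eta,\gamma}$. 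I split $\R^{n+2}$ into $\{|\xi|\le R\}$ and $\{|\xi|>R\}$. On the exterior region the coefficients are $O(R^{-1})$, so that part has norm $\lesssim R^{-1}\|f_k\|_{Y_1}$, which is uniformly small.

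On the ball, the zeroth-order term is compact by Rellich in the radial profile, since $\iota f_k$ is bounded in $W^{1,p}(\R^n)$ by \autoref{weighted_sobolev}. Near the origin the weight $|\xi|^{-2}$ is harmless in the radial variables. The first-order term needs more than $W^{1,p}$ bounds. My first approach is to use that the resolvent maps into functions with one extra degree of regularity: by \eqref{reg_order_0_S0} with $k=2$, interpolating the $\tau^{-1}$ singularity against $e^{-\lambda\tau}$, the Laplace transform gives $\|\,|\nabla|^{1+s} R(\lambda,\mathcal{L}_0) g\| \lesssim \|g\|$ for every $s<1$. Local compactness of $\nabla f_k$ then follows from fractional Rellich. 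An alternative is to factor as $R(\lambda,\mathcal{L}_0) = R(\lambda,\mathcal{L}_0)^{1/2}\cdot R(\lambda,\mathcal{L}_0)^{1/2}$, or to use the Kolmogorov–Riesz criterion through time-translation continuity of $\tau^{1/2}\nabla S_0(\tau)$. A uniform limit of compact operators is compact, so letting $R\to\infty$ finishes the argument.

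For $Y_\nabla^{\eta,\gamma}$ I must also control $\xi\cdot\nabla\mathcal{K}_\alpha f$. The weight $\xi$ is absorbed by the coefficient decay, since $|\xi|\,|\nabla b_\alpha| + |\xi|\,|b_\alpha|\cdot|\xi|^{-1}$ stays bounded. This term then involves $\xi\cdot\nabla f$ and second derivatives weighted by $b_\alpha$, and I treat it with the same cutoff-plus-smoothing scheme, using the $Y_\nabla$ version of \autoref{lem:L_0}. The main obstacle is this derivative-loss issue: $\mathcal{K}_\alpha$ is first order, and in $Y_\nabla$ the graph norm of $\mathcal{L}_0$ must deliver slightly more than one (respectively, two weighted) derivatives with local compactness. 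The fractional-smoothing estimate of the resolvent is the step I expect to require the most care, especially at the endpoint $\eta=1$.
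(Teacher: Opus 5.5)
Your architecture for the $Y^{\eta,\gamma}$ statement matches the paper's: closability by pairing with radial test functions, $D(\mathcal L_0)\subset D(\mathcal K_\alpha)$ via \eqref{eq:bound.coerciveY1}, small tails from coefficient decay, and local compactness on balls. The $Y_\nabla^{\eta,\gamma}$ part, however, has a genuine gap. With $D=\rho\partial_\rho$ one has
\[
D(\mathcal K_\alpha f)=2(D^2u_\alpha)f+4n(Du_\alpha)f+4n u_\alpha\,Df+4(Du_\alpha)(Df)+2u_\alpha D^2f .
\]
The first term carries the coefficient $D^2u_\alpha=\rho(\rho u_\alpha')'$, which is the coefficient $\xi\cdot\nabla\mathcal W_\alpha$. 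You never examine it: your remark about absorbing the weight concerns only $b_\alpha$ and $\nabla b_\alpha$, which do decay.

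From the ODE, $D^2u_\alpha=-(\rho^2u_\alpha+\tfrac{\rho^3}{2}u_\alpha')+O(\rho^{-2})$. The facts you cite, $\rho^2u_\alpha\to\ell_\alpha$ and $\rho^3|u_\alpha'|\lesssim1$, only make this bounded, not vanishing at infinity. Boundedness is not enough for your tail argument. The dilates $f_k=k^{-n/\eta}\phi(\cdot/k)$ with $\phi\in\Srad$ are bounded in the $Y_\nabla^{\eta,\gamma}$ graph norm of $\mathcal L_0$, yet they carry their $Y^\eta$-mass to infinity. So your cutoff argument needs $D^2u_\alpha\to0$ at infinity, and this does not follow from what you cite.

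The missing ingredient is the cancellation $\rho^3u_\alpha'\to-2\ell_\alpha$. The paper sets $z(t)=e^{2t}u_\alpha(e^t)$, so that $\dot z=2(\rho^2u_\alpha+\tfrac{\rho^3}{2}u_\alpha')$ at $\rho=e^t$. It then shows $\dot z\to0$ from the non-autonomous equation with damping $\tfrac{e^{2t}}{2}\dot z$, using an integrating factor and L'H\^opital (see \autoref{rmk_derivative}).

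A second gap is in the term $2u_\alpha D^2 f_k=2(\rho u_\alpha)\,\partial_\rho\ell_k$, where $\ell_k=\xi\cdot\nabla f_k$. Treating it requires $\ell_k$ to be bounded in $Y_1^{\eta,\gamma}$ and locally compact. The $Y_\nabla$ version of \autoref{lem:L_0} does not give this. The commutator identity $\mathcal L_0(\xi\cdot\nabla f)=\xi\cdot\nabla(\mathcal L_0f)+2\Delta f$ does, because in $Y_\nabla^{\eta,\gamma}$ the graph norm controls $\Delta f_k=\mathcal L_0f_k-\tfrac{\xi}{2}\cdot\nabla f_k-f_k$ globally.

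The same observation removes the step you flag as most delicate. Your fractional-smoothing detour is unnecessary: it interpolates the integer-order bounds \eqref{reg_order_0_S0}, then invokes a weighted fractional Rellich theorem, and leaves doubts at $\eta=1$. Instead, on $\{|\xi|\le R\}$ the graph norm already controls $\Delta f_k$ in $L^p$, because $|\xi|\le R$ and $\nabla f_k$ is bounded by \eqref{eq:bound.coerciveY1}. For radial profiles, $f_{k,\mathrm{rad}}'(r)/r=\int_0^1(\Delta f_k)_{\mathrm{rad}}(rt)\,t^{n+1}\,dt$. Minkowski's inequality then bounds both $f_{k,\mathrm{rad}}''$ and $f_{k,\mathrm{rad}}'/r$, hence the full Hessian of $\iota f_k$, locally in every $L^p$ with $p\ge1$, with no Calder\'on--Zygmund theory needed. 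Rellich--Kondrachov then gives $W^{1,p}_{\mathrm{loc}}$ convergence of $\iota f_k$, and likewise of $\iota\ell_k$, which is exactly the paper's route.
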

\begin{proof}
Closability of $(\tilde{\mathcal{K}}_\alpha,D(\tilde{\mathcal{K}}_\alpha))$ follows as in the proof of \autoref{lem:L_0}, by recalling the properties of the profile $u_\alpha$ entailed by \autoref{prop:Omega_alpha>0}.
Let us show that $(\mathcal{K}_\alpha,D(\mathcal{K}_\alpha))$ is relatively compact with respect to $(\mathcal{L}_0,D(\mathcal{L}_0))$.

For the statement in the $Y^{\eta,\gamma}$ topology, let $\{f_k\}_{k \in \NN}\subset D(\mathcal L_0)\subset Y^{\eta,\gamma}$ be a bounded sequence with $\|\mathcal{L}_0 f_k\|_{Y^{\eta,\gamma}} \leq 1$, then we want to prove $\mathcal{K}_\alpha f_k \to g$ for some $g \in Y^{\eta,\gamma}$ with respect to the $Y^{\eta,\gamma}$ topology.

Denote 
\begin{align} \label{eq:definition.hk}
    h_k := \mathcal{L}_0 f_k - \frac{\xi}{2} \cdot \nabla f_k - f_k;
\end{align}
If $r:=|\xi|$ and $f_{k,\mathrm{rad}}$ (resp. $h_{k,\mathrm{rad}}$) denotes the radial part of $f_k$ (resp. $h_k$), then we have
\begin{align} \label{eq:definition.hk.rad}
    h_{k,\mathrm{rad}} = f_{k,\mathrm{rad}}'' + \frac{n+1}{r} f_{k,\mathrm{rad}}' = \frac{(r^{n+1}  f_{k,\mathrm{rad}}')'}{r^{n+1}}. 
\end{align}
By the expression above we recover, after simple algebraic manipulations, 
\begin{align}
    \frac{f_{k,\mathrm{rad}}'(r)}{r} = \int_0^1 h_{k,\mathrm{rad}}(rt) t^{n+1} dt,
\end{align}
and by Minkowski inequality we get, for every fixed $R>0$, the uniform bound in $k \in \NN$
\begin{align}
    \left\| \frac{f_{k,\mathrm{rad}}'}{r}\right\|_{L^\gamma((0,R),r^{n-1}dr)}
    \lesssim
    \| h_{k,\mathrm{rad}} \|_{L^\gamma((0,R),r^{n-1}dr)} \lesssim 1,
\end{align}
where the second inequality descends by \eqref{eq:definition.hk}, the uniform bounds on $\| f_k \|_{Y^\gamma}, \| \mathcal{L}_0f_k \|_{Y^\gamma}$, and \eqref{eq:bound.coerciveY1}. By \eqref{eq:definition.hk.rad} we deduce as well $\| f_{k,\mathrm{rad}}'' \|_{L^\gamma((0,R),r^{n-1}dr)} \lesssim 1$ uniformly in $k$.

Applying the isomorphism $\iota$ and arguing similarly to \cite[Proposition 2.4]{GlHoLaLu25}, we deduce by Rellich-Kondrachov Theorem that $\iota f_k$ converges in $ W^{1,\gamma}_{loc}(\R^n,d\theta)$. Since $\eta \leq \gamma$, the convergence also takes place in $ W^{1,\eta}_{loc}(\R^n,d\theta)$.
Using the uniform bounds on $\|f_k\|_{Y^{\eta,\gamma}},\|f_k\|_{Y_1^{\eta,\gamma}}$ and the decay properties of $u_\alpha, u_\alpha'$ at infinity, we obtain the existence of $\tilde g =: \iota g \in X^{\eta,\gamma}$ such that
\begin{align}
    \| \mathcal{K}_\alpha f_k-g\|_{Y^{\eta,\gamma}}
    =
    \| \iota \mathcal{K}_\alpha f_k-\iota g\|_{X^{\eta,\gamma}}
    \to 0.
\end{align}

Let us move to the proof of the relative compactness in the $Y_\nabla^{\eta,\gamma}$ topology. 
Assume now $\{f_k\}_{k \in \NN}\subset D(\mathcal L_0)\subset Y_\nabla^{\eta,\gamma}$ is bounded with $\|\mathcal{L}_0f_k\|_{Y_\nabla^{\eta,\gamma}} \leq 1$.
By the previous step, we already have a candidate limit $g \in Y^{\eta,\gamma}$, and to conclude the proof of the lemma it only remains to show $\xi \cdot \nabla g \in Y^{\eta,\gamma}$ and the convergence
$\| \xi \cdot \nabla (\mathcal{K}_\alpha f_k-g)\|_{Y^{\eta,\gamma}} \to 0$, or equivalently $\| \theta \cdot \nabla (\iota \mathcal{K}_\alpha f_k-\iota g)\|_{X^{\eta,\gamma}}
    \to 0$.

To do this, observe that
\begin{align} \label{eq:commutator}
    \mathcal{L}_0(\xi \cdot \nabla f_k) = \xi \cdot \nabla (\mathcal{L}_0 f_k) +2\Delta f_k. 
\end{align}
The first term on the right-hand side is bounded in $Y^{\eta,\gamma}$ by the assumption $\|\mathcal{L}_0f_k\|_{Y_\nabla^{\eta,\gamma}} \leq 1$. On the other hand, by \eqref{eq:definition.hk} we also have $\|\Delta f_k\|_{Y^{\eta,\gamma}} \lesssim \| \mathcal{L}_0f_k \|_{Y^{\eta,\gamma}} + \| f_k \|_{Y_\nabla^{\eta,\gamma}} \lesssim 1$ and therefore $\ell_k := \xi \cdot \nabla f_k$ satisfies the uniform bound in $k \in \NN$:
\begin{align}
    \| \ell_k \|_{Y^{\eta,\gamma}} + \| \mathcal{L}_0 \ell_k \|_{Y^{\eta,\gamma}} \lesssim 1.
\end{align}
By the same arguments as above, $\iota \ell_k$ converges in $ W^{1,\eta}_{loc} \cap W^{1,\gamma}_{loc}(\R^n,d\theta)$. Hence, the following terms
\begin{align} \label{eq:nonremaining.term}
    \theta \cdot \nabla (\iota \Omega_\alpha \iota f_k) , \quad
    \iota \Omega_\alpha \theta \cdot \nabla (\theta \cdot \nabla (\iota f_k)), \quad
    (\theta \cdot \nabla (\iota \Omega_\alpha)) (\theta \cdot \nabla (\iota f_k)),
\end{align}
appearing in the expression of $\theta \cdot \nabla (\iota \mathcal{K}_\alpha f_k)$
converge in $X^{\eta,\gamma}$.
For the remaining term
\begin{align} \label{eq:remaining.term}
    \iota f_k \theta \cdot \nabla (\theta \cdot \nabla (\iota \Omega_\alpha))
\end{align}
we need to separately control the decay of $\theta \cdot \nabla (\theta \cdot \nabla (\iota \Omega_\alpha))$ when $|\theta| \to \infty$, for which we use the following argument.
First of all, observe that using the ODE satisfied by $u_\alpha$ we have
\begin{align}
  \rho  (\rho u_\alpha')'
  =
 - n\rho u_\alpha' - 2n\rho^2  u_\alpha^2 - 2\rho^3 u_\alpha u_\alpha'
 - \rho^2 u_\alpha-\frac{\rho^3}{2}u_\alpha'.
\end{align}
The first three terms at the right-hand side above decay as $\rho^{-2}$ when $\rho \to \infty$ by \autoref{prop:Omega_alpha>0} and \autoref{lem:bound.rho.m.Omega};
to verify decay of the remainder $\rho^2 u_\alpha +\frac{\rho^3}{2}u_\alpha'$ we argue as in \autoref{ssec:tail}.
Let $z(t) := e^{2t} u_\alpha(e^t)$; then by \autoref{prop:Omega_alpha>0}, \autoref{lem:bound.rho.m.Omega}, and using \eqref{eq:ODE}, we know that $z(t)$ and $\dot{z}(t)=2e^{2t} u_\alpha(e^t)+e^{3t} u_\alpha'(e^t)$ are bounded uniformly in $t \in \R$; in addition, $z(t)$ solves the (non-autonomous) ODE
\begin{align}
\ddot{z} + (n-4+2z) \dot{z} -  2(n-2)(z-z^2) +\frac{e^{2t}}{2} \dot{z}=0.
\end{align}
Using the analog of \eqref{eq:dotz0} with $h(t) := e^{2t}/2$, $g(t):=(n-4+2z(t))\dot{z}(t)-2(n-2)(z(t)-z^2(t))$, and $H(t) := \exp \left( \frac{e^{2t}-1}{4}\right)$, and recalling that $|g(t)|$ is bounded uniformly in $t \in \R$, by L'H\^opital's rule we have the bound
\begin{align}
    |\dot{z}(t)| \lesssim \frac{1+\int_0^t \exp (\frac{e^{2s}-1}{4})ds}{\exp (\frac{e^{2t}-1}{4})} \to 0,
    \quad
    \mbox{as } t \to \infty.
\end{align}
In particular this implies that $\theta \cdot \nabla (\theta \cdot \nabla (\iota \Omega_\alpha))$ is infinitesimal when $|\theta| \to \infty$ and therefore the remaining term \eqref{eq:remaining.term} does, in fact, converge in $X^{\eta,\gamma} $ when $k \to \infty$.

Putting all together, $\theta \cdot \nabla (\iota \mathcal{K}_\alpha f_k)$ converges in $X^{\eta,\gamma}$ when $k \to \infty$. It only remains to show that the limit is $\theta \cdot \nabla (\iota g)$, but this is implied by the fact that $\iota \mathcal{K}_\alpha f_k \to \iota g$ strongly in $X^{\eta,\gamma}$ thanks to the previous steps of the proof.
\end{proof}
\begin{rmk}\label{rmk_derivative}
    As a byproduct of the proof above and \autoref{prop:Omega_alpha>0} we also obtain
    \begin{align*}
      \lim_{\rho\rightarrow +\infty}  \rho^3 u_\alpha' =-2 \lim_{\rho\rightarrow +\infty}  \rho^2 u_\alpha=-2\ell_{\alpha}.
    \end{align*}
\end{rmk}

Putting \autoref{lem:L_0} and \autoref{lem:K_alpha} together, we deduce the following:
\begin{prop} \label{cor:S_alpha}
    The operator $L_\alpha=\mathcal{L}_0+\mathcal{K}_\alpha$ with domain $D(L_\alpha)=D(\mathcal{L}_0)$ generates a strongly continuous semigroup $S_\alpha$ on $Y^{\eta,\gamma}$ which is a compact perturbation of $S_0$. A similar statement holds true replacing the space $Y^{\eta,\gamma}$ with $Y_\nabla^{\eta,\gamma}$.
\end{prop}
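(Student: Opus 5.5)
The plan is to obtain generation from a perturbation theorem for generators under relatively compact perturbations, and then to read off that $S_\alpha - S_0$ is compact. By \autoref{lem:L_0}, $(\mathcal{L}_0,D(\mathcal{L}_0))$ generates a $C_0$-semigroup $S_0$ on $Y^{\eta,\gamma}$ (respectively on $Y_\nabla^{\eta,\gamma}$). By \autoref{lem:K_alpha}, $\mathcal{K}_\alpha$ is $\mathcal{L}_0$-compact. A relatively compact perturbation is in particular $\mathcal{L}_0$-bounded with relative bound $0$. This is the standard fact (e.g.\ Engel--Nagel, Ch.~III, Lemma~2.16) for operators on a Banach space where $\mathcal{L}_0$ is closed. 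Hence $L_\alpha=\mathcal{L}_0+\mathcal{K}_\alpha$ is closed on $D(\mathcal{L}_0)$.

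Relative bound zero alone does not give generation for general $C_0$-semigroups; it does for analytic semigroups. So I would argue via smoothing instead. The Miyadera--Voigt perturbation theorem requires
\begin{align*}
\int_0^{\tau_0}\|\mathcal{K}_\alpha S_0(\tau)f\|\,d\tau\le q\|f\|, \quad q<1,
\end{align*}
for $f\in D(\mathcal{L}_0)$. Write $\mathcal{K}_\alpha f = (4n\Omega_\alpha+2\xi\cdot\nabla\Omega_\alpha)f + 2\Omega_\alpha\,\xi\cdot\nabla f$. The potential part is bounded, since $\mathcal V_\alpha\in L^\infty$. For the transport part, $\Omega_\alpha|\xi|\lesssim (1+|\xi|)^{-1}$ is bounded by \autoref{prop:Omega_alpha>0}. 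By \eqref{reg_order_0_S0} with $k=1$ in \autoref{properties_freesemigrup}, this gives
\begin{align*}
\|\Omega_\alpha\,\xi\cdot\nabla S_0(\tau)f\|_{Y^{\eta,\gamma}}\lesssim \tau^{-1/2}\|f\|_{Y^{\eta,\gamma}},
\end{align*}
which is integrable near $0$. Taking $\tau_0$ small makes $q<1$, so Miyadera--Voigt yields generation of $S_\alpha$ on $Y^{\eta,\gamma}$. For $Y_\nabla^{\eta,\gamma}$ I would do the same after commuting $\xi\cdot\nabla$ with $S_0$. This uses the commutator identity \eqref{eq:commutator_1}, i.e.\ $\xi\cdot\nabla S_0 = S_0\,\xi\cdot\nabla + $ a term of the form $\alpha(\tau)\Delta$, which is bounded by $\tau^{-1}\alpha(\tau)\lesssim 1$ via \eqref{eq:heat.derivative.bounded}. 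It also requires the second-order weighted decay of $\Omega_\alpha$ established in the proof of \autoref{lem:K_alpha} and \autoref{rmk_derivative}, to control $\xi\cdot\nabla(\xi\cdot\nabla \Omega_\alpha)$.

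For compactness of $S_\alpha(\tau)-S_0(\tau)$, I would use the Duhamel formula
\begin{align*}
S_\alpha(\tau)f-S_0(\tau)f=\int_0^\tau S_\alpha(\tau-s)\mathcal{K}_\alpha S_0(s)f\,ds.
\end{align*}
For $s>0$, $S_0(s)$ maps bounded sets into bounded subsets of $D(\mathcal{L}_0)$ with graph norm $\lesssim s^{-1}$. This follows from smoothing at order $k=2$ together with the explicit dilation structure. Since $\mathcal{K}_\alpha$ is $\mathcal{L}_0$-compact, $\mathcal{K}_\alpha S_0(s)$ is then compact for each $s>0$. This operator is norm-continuous in $s$ on $(0,\tau]$, since $S_0$ is immediately norm continuous there thanks to the smoothing, and its norm is bounded by $s^{-1/2}$ from the previous paragraph. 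The integrand is therefore a Bochner integral, in the uniform operator topology, of compact operators, with an integrable singularity at $s=0$. Truncating at $s=\varepsilon$ gives compact operators converging in norm, so the limit is compact.

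I expect the main obstacle to be the $Y_\nabla^{\eta,\gamma}$ case. There the extra term $\xi\cdot\nabla(\xi\cdot\nabla f)\,\Omega_\alpha$ requires two derivatives of the semigroup weighted by $|\xi|^2\Omega_\alpha\sim 1$, which is borderline for integrability of the $\tau^{-1}$ singularity. I would handle this by splitting $\xi\cdot\nabla$ through the commutator so that only one genuine derivative falls on $S_0$, and the other acts on $f\in Y_\nabla$.
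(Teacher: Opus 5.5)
Your generation argument matches the paper's: Miyadera--Voigt with the $\tau^{-1/2}$ bound from \eqref{reg_order_0_S0}, and in $Y_\nabla^{\eta,\gamma}$ the commutator \eqref{eq:commutator_1} to put one $\xi\cdot\nabla$ on $f$. The compactness step, however, rests on a false claim. $S_0$ is \emph{not} immediately norm continuous on $Y^{\eta,\gamma}$, and not even eventually so. To see this, take any $\lambda$ with $\mathrm{Re}\,\lambda<\omega_{\mathrm{ess}}$. The regular radial solution of $\mathcal{L}_0 f=\lambda f$ is the Kummer function ${}_1F_1(1-\lambda;\frac{n+2}{2};-|\xi|^2/4)=O(|\xi|^{-2(1-\mathrm{Re}\,\lambda)})$, which lies in $Y^{\eta,\gamma}$ (and in $Y_\nabla^{\eta,\gamma}$). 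Hence $\sigma(\mathcal{L}_0)$ contains the whole half-plane $\{\mathrm{Re}\,\lambda<\omega_{\mathrm{ess}}\}$. But the generator of an eventually norm-continuous semigroup has $\sigma\cap\{\mathrm{Re}\,\lambda\ge b\}$ bounded for every $b$ \cite[Ch.~II, Sect.~4]{EnNa00}. Heat smoothing controls $|\nabla|^k S_0(s)f$, but not the drift, i.e.\ the dilation in \eqref{eq:definition.semigroup.Y}. The same objection applies to your claim that $S_0(s)$ maps bounded sets into bounded subsets of $D(\mathcal{L}_0)$ (the paper's proof states this too). It would make $S_0$ eventually differentiable, hence eventually norm continuous. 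Concretely, $\frac{\xi}{2}\cdot\nabla S_0(s)f$ contains a term $\nabla G_s\ast(\zeta f)$, which needs a moment of $f$ and is not bounded by $\|f\|_{Y^{\eta,\gamma}}$. A counterexample is a sum of unit-width radial shells at radii $R_k=2^k$ with suitable amplitudes.

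Both points are repairable, because only the decaying coefficients of $\mathcal{K}_\alpha$ ever see the weight $\xi$.
\begin{itemize}
\item Compactness of $\mathcal{K}_\alpha S_0(s)$ for $s>0$ follows by rerunning the proof of \autoref{lem:K_alpha} with $\|S_0(s)f\|_{Y_1^{\eta,\gamma}}+\||\nabla|^2S_0(s)f\|_{Y^{\eta,\gamma}}\lesssim_s\|f\|_{Y^{\eta,\gamma}}$ in place of a graph-norm bound. Rellich gives local compactness, and $|\mathcal{V}_\alpha|+|\xi|\Omega_\alpha\lesssim(1+|\xi|)^{-1}$ makes the tails uniformly small; in $Y_\nabla^{\eta,\gamma}$ do this after your commutator splitting.
\item Drop the uniform-operator Bochner integral. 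Instead use, as the paper does, that strong integrals of strongly continuous compact-valued families are compact \cite[Theorem C.7]{EnNa00}. Apply this on $[\varepsilon,\tau]$ and pass to $\varepsilon\to0$ in norm using $\|\mathcal{K}_\alpha S_0(s)\|\lesssim s^{-1/2}$.
\end{itemize}
The map $s\mapsto\mathcal{K}_\alpha S_0(s)$ is in fact norm continuous on $(0,\infty)$. Proving that, however, again uses this localization, not norm continuity of $S_0$.
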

\begin{proof}
Generation of a strongly continuous semigroup on $Y^{\eta,\gamma}$ descends from the Miyadera–Voigt perturbation Theorem \cite[Corollary III.3.16]{EnNa00} as soon as we prove that $\mathcal{K}_\alpha : D(\mathcal{L}_0) \to Y^{\eta,\gamma}$ is bounded and there exist $t_0>0$ and $q \in (0,1)$ such that
\begin{align}
    \int_0^{t_0} \| \mathcal{K}_\alpha S_0(\tau) f \|_{Y^{\eta,\gamma}} d\tau \leq q\|f\|_{Y^{\eta,\gamma}},
    \quad
    \forall f \in D(\mathcal{L}_0).
\end{align}
To check this we can use \eqref{eq:bound.coerciveY1}, the decay properties of $u_\alpha,u'_\alpha$, and the bounds \eqref{eq:growth.bound.Y} and \eqref{reg_order_0_S0}. 

Concerning the proof of the statement in the $Y^{\eta,\gamma}_{\nabla}$ topology, we adopt the same strategy by replacing $Y^{\eta,\gamma}$ with $Y^{\eta,\gamma}_{\nabla}$ in the above: here the bound needed to apply the Miyadera–Voigt perturbation Theorem comes from decomposing $\xi \cdot \nabla (\mathcal{K}_\alpha S_0(\tau)f)$ as in \eqref{eq:nonremaining.term}-\eqref{eq:remaining.term} and invoking the decay properties of $u_\alpha,u'_\alpha$. The only nontrivial quantity we need to control is
\begin{align}
    \| \Omega_\alpha \xi \cdot \nabla ( \xi \cdot \nabla S_0(\tau)f) \|_{Y^{\eta,\gamma}} 
    \lesssim
    \|  \xi \cdot \nabla (G_\tau \ast f) \|_{Y_1^{\eta,\gamma}}.
\end{align}
For this we recall the commutator property \eqref{eq:commutator_1}, the smoothing estimate \eqref{reg_order_0_S0}: 
\begin{align}
  \|  \xi \cdot \nabla (G_\tau \ast f) \|_{Y_1^{\eta,\gamma}}
  &\lesssim
  \|  S_0(\tau)( \xi \cdot \nabla f) \|_{Y_1^{\eta,\gamma}}
  +
  \alpha(\tau) \| \Delta S_0 (\tau)f\|_{Y_1^{\eta,\gamma}}
  \lesssim
  \tau^{-1/2} \|f\|_{Y_\nabla^{\eta,\gamma}} ,
  \quad
  \forall \tau \in (0,1).
\end{align}

Let us show that $S_\alpha$ is a compact perturbation of $S_0$. Fix $\tau>0$; using the mild formulation
    \begin{align}
        S_0(\tau)f -S_\alpha(\tau) f = -\int_0^\tau S_\alpha(\tau-s) \mathcal{K}_\alpha S_0(s)f ds
    \end{align}
and \cite[Theorem C.7]{EnNa00}, it suffices to show that $S_\alpha(\tau-s) \mathcal{K}_\alpha S_0(s)$ is compact for every $s \in (0,\tau)$. 

Since $S_\alpha$ is a strongly continuous semigroup, it is also bounded, and thus it is sufficient that for $\|f_k\| \leq 1$ (with respect to either $Y^{\eta,\gamma}$ or $Y^{\eta,\gamma}_\nabla$ norm) then $\mathcal{K}_\alpha S_0(s) f_k$ converges in the relevant topology. 
For every $s>0$ it holds $S_0(s)f_k \in D(\mathcal{L}_0)$ with $\|\mathcal{L}_0 S_0(s)f_k\| \lesssim_s \|f_k\| \lesssim_s 1$ (by the explicit formula \eqref{eq:definition.semigroup.Y} for the semigroup) and therefore $\mathcal{K}_\alpha S_0(s)f_k$ converges by the relative compactness stated in \autoref{lem:K_alpha}.
\end{proof}

\begin{rmk}
The operator $L_\alpha$ corresponds to the linearization of the right-hand side of \eqref{eq:Omega} around the $\tau$-independent profile $\Omega_\alpha(y)=u_\alpha(|y|)$: indeed, for $f \in D(L_\alpha)$, we have the explicit expression
\begin{align}\label{def_lalpha}
    L_\alpha f = \Delta f + \frac{\xi}{2} \cdot \nabla f +f+ 4n \Omega_\alpha f+ 2 \xi \cdot \nabla ( \Omega_\alpha f),
\end{align}
or equivalently, interpreting $f$ as a radial profile:
\begin{align} \label{eq:L_alpha.radial}
  L_\alpha f=   f'' + \left( \frac{n+1}{\rho}+ \frac{\rho}{2} \right) f' + f 
+ 4n u_\alpha f+ 2 \rho u_\alpha' f + 2 \rho u_\alpha f'.
\end{align}
\end{rmk}

\subsection{Existence of unstable eigenvalues}\label{subsec_unstable_eigen}

This subsection is devoted to the proof of existence of unstable eigenvalues for the linearized operator $L_\alpha$ given by previous \autoref{cor:S_alpha}. The results here presented are valid for both spaces $Y^{\eta,\gamma}$ and $Y_\nabla^{\eta,\gamma}$ on which the operator $L_\alpha$ can be defined according to previous \autoref{lem:L_0} and \autoref{lem:K_alpha}.  

We are in a similar setting as \cite{JiSv15}.
By \autoref{lem:L_0}, \autoref{cor:S_alpha}, and \cite[Corollary 2.11]{EnNa00}, for every $\omega>\omega_{\mathrm{ess}}$ the set
\begin{align} \label{eq:spectrum.finite}
 \sigma_{\mathrm{Re}\geq \omega}(L_\alpha) :=   \sigma(L_\alpha) \cap \left\{ \lambda \in \CC \,:\, \mathrm{Re}(\lambda) \geq \omega \right\}
\end{align}
has finite cardinality, and in particular the resolvent set satisfies
\begin{align} \label{eq:resolvent.non.empty}
    \rho(L_\alpha) \cap \left\{ \lambda \in \CC : \mathrm{Re}(\lambda) \geq \omega \right\}\neq \varnothing.
\end{align}

Since by \cite[Chapter IV, Theorem 5.26]{Kato} and \cite[Proposition 2.2]{EnNa00} we have
\begin{align}
  \sigma_{\mathrm{ess}}(L_\alpha) := \{ \lambda \in \CC \,:\, L_\alpha-\lambda \mbox{ is not Fredholm} \} = \sigma_{\mathrm{ess}}(\mathcal{L}_0) \subset \left\{ \lambda \in \CC \,:\, \mathrm{Re}(\lambda) \leq \omega_{\mathrm{ess}} \right\}  
\end{align}
and the Fredholm index of $L_\alpha-\lambda$ is constant when $\lambda$ varies in a connected component of the complementary of $\sigma_{\mathrm{ess}}(L_\alpha)$, from \eqref{eq:resolvent.non.empty} we deduce that: 
\begin{align} \label{eq:spectrum.made.of.eigen}
   \forall \omega>\omega_{\mathrm{ess}}, \quad \mbox{every } \lambda \in \sigma_{\mathrm{Re}\geq \omega} (L_\alpha) \mbox{ is an eigenvalue of }L_\alpha.
\end{align}

Moreover, again \cite[Corollary 2.11]{EnNa00} informs us that the growth bound $\omega_0$ of the semigroup $S_\alpha$ is equal to the maximum between the essential growth bound of the semigroup $S_\alpha$ (which is less than or equal to $\omega_{\mathrm{ess}}$ since $S_\alpha$ is a relatively compact perturbation of $S_0$) and the spectral bound of the operator $L_\alpha$ (defined as $\sup \{\mathrm{Re}(\lambda): \lambda \in \sigma(L_\alpha)\}$).
Therefore, we deduce that for every fixed $\omega>\omega_{\mathrm{ess}}$ the growth bound of $S_\alpha$ can be controlled by
\begin{align} \label{eq:growth.bound}
    \omega_0 \leq \omega \vee \max \left\{ \mathrm{Re}(\lambda) \,:\,
    \lambda \in \sigma_{\mathrm{Re}\geq \omega}(L_\alpha) \mbox{ is an eigenvalue of } L_\alpha  \right\}.
\end{align}

Given the above discussion, we turn our focus to the study of eigenvalues of $L_\alpha$. We have the following:
\begin{lemma} \label{lem:exponential.decay.eigenfunctions}
Fix $\eta\in [1,n/2)$, $\gamma \in [\eta,\infty)$, and $\omega \geq 0>\omega_{\mathrm{ess}}$, and suppose $\lambda \in \sigma_{\mathrm{Re}\geq \omega}(L_\alpha)$ is an eigenvalue of $L_\alpha$ with eigenfunction $\bar f \in Y^{\eta,\gamma}$. Then $\bar f \in L^2_\pi(\R^{n+2})$ where $L^2_\pi(\R^{n+2})$ has been defined in the proof of \autoref{lem:principal_eigen}.
\end{lemma}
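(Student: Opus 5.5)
The plan is to reduce the eigenvalue equation $L_\alpha \bar f = \lambda \bar f$ to a radial ODE and read off the behavior of $\bar f$ at infinity. Since $\lambda \in \sigma_{\mathrm{Re}\geq\omega}(L_\alpha)$ with $\omega\ge 0$, we have $\mathrm{Re}\,\lambda \geq 0$. The target decay will be Gaussian, of the form $|\bar f(\rho)| \lesssim \rho^{N} e^{-\rho^2/4}$, which is integrable against $\pi \sim e^{\rho^2/4}\rho^{2\ell_\alpha}$.

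\emph{Step 1: regularity.} First upgrade $\bar f$. From $\bar f\in D(\mathcal L_0)$ and \eqref{eq:bound.coerciveY1}, $\bar f\in Y_1^{\eta,\gamma}$. Since the coefficients $u_\alpha$ and $\mathcal V_\alpha$ are smooth, elliptic regularity applied to $\Delta \bar f + (\tfrac{\xi}{2}+2\xi\Omega_\alpha)\cdot\nabla\bar f + (1-\lambda+\mathcal V_\alpha)\bar f=0$ (in the distributional sense) gives that the radial profile $f$ is $C^2$ on $(0,\infty)$. It then solves the complex version of \eqref{eq:eigen},
\begin{align}
f''+\Big(\frac{n+1}{\rho}+\frac{\rho}{2}+2\rho u_\alpha\Big)f'+(1-\lambda+\mathcal V_\alpha)f=0 .
\end{align}

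\emph{Step 2: two-dimensional solution space at infinity.} For large $\rho$ the equation is a perturbation of the Kummer-type equation $f''+\frac{\rho}{2}f'+(1-\lambda)f=0$. By WKB/Levinson asymptotics, using $u_\alpha\sim \ell_\alpha\rho^{-2}$ and $\mathcal V_\alpha=O(\rho^{-2})$, there is a fundamental system with
\begin{align}
f_1(\rho)\sim \rho^{-2(1-\lambda)}\ \text{(up to a correction exponent depending on } \ell_\alpha\text{)},\qquad f_2(\rho)\sim \rho^{2(1-\lambda)-n-2-4\ell_\alpha}e^{-\rho^2/4}.
\end{align}
Concretely, I would substitute $f=\rho^{\beta}(1+o(1))$ to obtain the algebraic branch, with $\beta=-2(1-\lambda)+O(1)$ shifts coming from the $2\rho u_\alpha f'$ and $\mathcal V_\alpha$ terms. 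The decaying branch I would get from reduction of order, $f_2=f_1\int_\rho^\infty \frac{ds}{H(s)f_1(s)^2}$ with $H$ as in \eqref{eq:decay_f}, which makes the Gaussian factor explicit.

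\emph{Step 3: exclude the algebraic branch.} Write $f=c_1f_1+c_2f_2$. The algebraic branch satisfies $|f_1|\gtrsim\rho^{-2(1-\mathrm{Re}\lambda)-C\ell_\alpha/\rho^{0}}$, roughly $\rho^{-2+2\mathrm{Re}\lambda}$ up to a fixed shift. Its $Y^\eta$-norm involves $\int^\infty\rho^{(-2+2\mathrm{Re}\lambda)\eta}\rho^{n-1}d\rho$. With $\eta<n/2$ and $\mathrm{Re}\lambda\ge 0$, the exponent is $\ge n-1-2\eta>-1$, so this integral diverges. Hence $c_1=0$, and $|f|\lesssim \rho^{N}e^{-\rho^2/4}$. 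Then $\int|f|^2\pi\,\rho^{n+1}d\rho<\infty$ near infinity; near the origin $\bar f\in L^2_{loc}$ by regularity. This gives $\bar f\in L^2_\pi$.

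\emph{Main obstacle.} The hardest step is Step 2. The asymptotics of $f_1$ must be made precise enough that the shift of the exponent produced by $\ell_\alpha$ and the $O(\rho^{-2})$ potential does not spoil the non-integrability in Step 3. The margin here is exactly the condition $\eta<n/2$ together with $\mathrm{Re}\lambda\ge0$. I would handle this with a Levinson-type theorem after the change of variable $s=\rho^2/4$, which turns the equation into a confluent hypergeometric equation plus an integrable perturbation $O(s^{-2})$. I would track precisely the contribution of $2\rho u_\alpha f'\approx 2\ell_\alpha\rho^{-1}f'$, which affects only the decaying branch's polynomial prefactor, since on $f_1$ it acts as $O(\rho^{-2})f$.
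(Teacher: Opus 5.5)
Your treatment of infinity uses the same mechanism as the paper; the one step that needs more is the origin.

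\textbf{At infinity.} The paper passes to the Liouville normal form $f_\star''=vf_\star$ with $v\sim\rho^2/16$ and applies the Liouville--Green theorem of \cite{LeLeTrWi25} with explicit error bounds. This yields one branch that, after undoing the transformation, behaves like $\rho^{-2+2\mathrm{Re}\lambda}$, and one with Gaussian decay $e^{-\rho^2/4}$. The first branch is then excluded by exactly your computation using $\eta<n/2$ and $\mathrm{Re}\lambda\ge 0$.

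Your variant is more elementary. You build the algebraic branch $\rho^{-2(1-\lambda)}(1+o(1))$ directly, observing correctly that $\frac{n+1}{\rho}f'$, $2\rho u_\alpha f'$ and $\mathcal V_\alpha f$ are all $O(\rho^{-2})f$ on it, so the exponent is not shifted. You then obtain the Gaussian branch by reduction of order, where the factor $1/H$ makes the Gaussian explicit. Differentiating that formula also gives the derivative bound that the paper extracts from the Liouville--Green error estimates and uses later, namely \eqref{eq:behavior.f'.infty}.

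There is a harmless slip. Since $1/H\sim\rho^{-n-1-2\ell_\alpha}e^{-\rho^2/4}$, the decaying branch is $\rho^{2(1-\lambda)-n-2-2\ell_\alpha}e^{-\rho^2/4}$, not $\rho^{2(1-\lambda)-n-2-4\ell_\alpha}e^{-\rho^2/4}$. This does not affect membership in $L^2_\pi$.

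\textbf{At the origin.} The radial ODE has a regular singular point at $\rho=0$ with indicial roots $0$ and $-n$. The singular branch $\sim\rho^{-n}$ is smooth on $(0,\infty)$ but is not in $L^2_\pi$ near $0$, since $\int_0\rho^{-2n}\rho^{n+1}\,d\rho=\infty$. So ``$C^2$ on $(0,\infty)$'' together with ``$L^2_{loc}$ by regularity'' does not by itself exclude it. You can close this in either of two ways.
\begin{itemize}
\item Argue as the paper does. Since $\int_0\rho^{-n\eta}\rho^{n-1}\,d\rho=\infty$ for $\eta\ge 1$, the branch $\rho^{-n}$ is not in $Y^\eta$. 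Hence $f$ is a multiple of the regular Frobenius solution, which is bounded near $0$.
\item Make your regularity claim hold across $\xi=0$. The domain $D(\mathcal L_0)$ is the graph closure of $\Srad$ in $Y^{\eta,\gamma}$, and $Y^\eta\hookrightarrow L^1_{loc}(\R^{n+2})$, so the eigenvalue equation holds in $\mathcal D'(\R^{n+2})$. Its coefficients are smooth, since $\Omega_\alpha$ is an even analytic function near $0$. Hypoellipticity then gives $\bar f\in C^\infty(\R^{n+2})$. Equivalently, $|\xi|^{-n}$ is the fundamental solution of $\Delta$ on $\R^{n+2}$, so that branch would produce a Dirac mass at the origin.
\end{itemize}
With either addition your proof is complete.
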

\begin{proof}
Let $\bar f \in Y^{\eta,\gamma}$ be an eigenfunction relative to $\lambda$ and denote $f$ its radial part, i.e. $f(r) := \bar f(r e_1)$ for $e_1\in \R^{n+2}$ of unit norm; then  
\begin{align} \label{eq:eigenvalue.equation.f}
  f'' + \left( \frac{n+1}{\rho}+ \frac{\rho}{2} \right) f' + f 
+ 4n u_\alpha f+ 2 \rho u_\alpha' f + 2 \rho u_\alpha f'
=
\lambda f
\end{align}
where the equality is meant in $\L(\R_+,\rho^{n-1}d\rho)$.
Since $f' = \int f''$ in the sense of distributions, \eqref{eq:eigenvalue.equation.f} implies that $f' \in L_{loc}^{\eta,\gamma}((0,\infty),\CC)$. In particular there exists $\rho_0 >0$ such that $f(\rho_0), f'(\rho_0) \in \CC$, and thus the various integral formulations of the ODE used so far give $f \in C^\infty((0,\infty),\CC)$. In particular, by local existence and uniqueness there are at most two $\CC$-linearly independent solutions of \eqref{eq:eigenvalue.equation.f}.

\emph{Step 1: behavior as $\rho \downarrow 0$}.
Next we analyze the behavior of $f$ close to $\rho =0$, applying the Frobenius method.  
Since $u_\alpha$ is real analytic in a neighborhood of $0$ by \autoref{prop:local.existence.uniqueness}, $0$ is a regular singular point of \eqref{eq:eigenvalue.equation.f}. 
The roots of the associated indicial polynomial are $0$ and $-n$, and therefore Fuchs Theorem gives two $\CC$-linearly independent solutions behaving for $\rho \downarrow 0$ like 
\begin{align}
    f_1(\rho) &\to 1, \quad f_1'(\rho) \to 0,
    \\
    f_2(\rho)\rho^{n} &\to 1, \quad f_2'(\rho)\rho^{n+1} \to -n.
\end{align}
Notice that $f_2 \notin \L(\R_+,\rho^{n-1}d\rho)$ because of its behavior close to $\rho=0$ and therefore $f$ is a complex multiple of $f_1$. 

\emph{Step 2: behavior as $\rho \uparrow \infty$}.
We apply the transformation already encountered in the proof of \autoref{prop:f.zero}:
\begin{align}
    {f}_\star (\rho) 
    := 
     {f}(\rho) \exp \left( \frac12 \int_{1}^\rho \left( \frac{n+1}{s}+\frac{s}{2} +2 s u_\alpha (s) \right)ds \right),
\end{align}
which solves the ODE
\begin{align}
   {f}_\star'' & = v f_\star ,
   \quad
   \\
   v(\rho) &:=
\frac{\rho^2}{16}
+ \frac{n-2}{4}+\frac{\rho^2 u_\alpha}{2}+\lambda
-
(3n-2)u_\alpha 
-
 \rho u_\alpha'
-
\frac{n+1}{2\rho^2} 
+
\frac{(n+1)^2}{4\rho^2} 
+
\rho^2u_\alpha^2 .
\end{align}
We intend to apply \cite[Theorem 3.1]{LeLeTrWi25} to determine the asymptotic behavior of $f_\star$ for $\rho \to \infty$. To do this, let $\rho_0>0$ be large enough so that $\mathrm{arg}(v(\rho)) \in (-\pi,\pi)$ for every $\rho \geq \rho_0$ and define the complex branch of the $n$-th root of $v(\rho)$ as $v^{1/n}(\rho) := |v(\rho)|^{1/n} e^{i \mathrm{arg}(v(\rho))/n}$, according to the convention in \cite{LeLeTrWi25}.

By \cite[Theorem 3.1]{LeLeTrWi25}, equation \eqref{eq:eigenvalue.equation.f} has two $\CC$-linearly independent solutions
\begin{align}
    f_{\star,1}(\rho) &= \frac{1}{v^{1/4}(\rho)} \exp\left( \int_{\rho_0}^\rho v^{1/2}(s)ds\right) (1+\epsilon_1(\rho)),
    \\
    f_{\star,2}(\rho) &= \frac{1}{v^{1/4}(\rho)} \exp\left( -\int_{\rho_0}^\rho v^{1/2}(s)ds\right) (1+\epsilon_2(\rho)),
\end{align}
where the errors $\epsilon_1$, $\epsilon_2$ are controlled by
\begin{align}
    |\epsilon_1(\rho)| \leq 2\exp \left( \int_{\rho_0}^\rho \left|
    \frac{1}{v^{1/4}(s)} \frac{d^2}{ds^2}\left( \frac{1}{v^{1/4}(s)}\right)
    \right|ds\right)-2,
    \\
    |\epsilon_2(\rho)| \leq 2 \exp \left( \int_\rho^\infty \left|
    \frac{1}{v^{1/4}(s)} \frac{d^2}{ds^2}\left( \frac{1}{v^{1/4}(s)}\right)
    \right|ds\right)-2.
\end{align}
Up to taking a larger value of $\rho_0$, by the decaying properties of $u_\alpha$ we can additionally suppose
\begin{align}
    \int_{\rho_0}^\infty \left|
    \frac{1}{v^{1/4}(s)} \frac{d^2}{ds^2}\left( \frac{1}{v^{1/4}(s)}\right)
    \right|ds
    \leq C\int_{\rho_0}^\infty \frac{ds}{s^3} \ll 1,
\end{align}
so that $\sup_{\rho \geq \rho_0}|\epsilon_j(\rho)| \leq 1/2$ and the asymptotic behaviors 
\begin{align}
    |f_{\star,1}(\rho)| &\sim \frac{1}{v^{1/4}(\rho)} \exp\left( \int_{\rho_0}^\rho v^{1/2}(s)ds\right),
    \\
    |f_{\star,2}(\rho)| &\sim \frac{1}{v^{1/4}(\rho)} \exp\left( -\int_{\rho_0}^\rho v^{1/2}(s)ds\right),
\end{align}
are exact as $\rho \to \infty$.
Moreover, by \autoref{prop:Omega_alpha>0} and Taylor expansion we have for every fixed $a>0$
\begin{align}
    \mathrm{Re}(v^{1/2}(\rho)) \geq \frac{\rho}{4} + \frac{1-a}{\rho}\left( \frac{n-2}{2}+\ell_\alpha+2\mathrm{Re}(\lambda)\right) 
\end{align}
for every $\rho$ sufficiently large, and therefore
\begin{align}
\left|
\frac{1}{v^{1/4}(\rho)}\exp \left( \int_{\rho_0}^\rho v^{1/2}(s)ds \right) 
\right|
&\gtrsim 
\rho^{(1-a)\left( \frac{n-2}{2}+\ell_\alpha+2\mathrm{Re}(\lambda)\right)-\frac12} e^\frac{\rho^2}{8},
\\
\left|\frac{1}{v^{1/4}(\rho)}\exp \left( -\int_{\rho_0}^\rho v^{1/2}(s)ds \right)\right| 
&\lesssim 
\rho^{-(1-a)\left( \frac{n-2}{2}+\ell_\alpha+2\mathrm{Re}(\lambda)\right)-\frac12} e^{-\frac{\rho^2}{8}}.
\end{align}
In particular
\begin{align}
|f_{\star,1}(\rho)| \exp \left(- \frac12 \int_{1}^\rho \left( \frac{n+1}{s}+\frac{s}{2} +2 s u_\alpha (s) \right)ds \right)
\gtrsim 
\rho^{-2+2\mathrm{Re}(\lambda) -a\left( \frac{n-2}{2}+2\ell_\alpha+2\mathrm{Re}(\lambda)\right)}
\end{align}
as $\rho \to \infty$, and it is not in $\L(\R_+,\rho^{n-1}d\rho)$ for $0<a\ll 1$ because $\mathrm{Re}(\lambda) \geq 0$ and $\eta<n/2$; we conclude that $f_\star$ must be a multiple of $f_{\star,2}$ and thus $f$ behaves for large $\rho$ as 
\begin{align} \label{eq:behavior.f.infty}
   | f(\rho)| \lesssim \rho^{-\frac{n+2}{2}-(1-a)\left( \frac{n-2}{2}+2\ell_\alpha+2\mathrm{Re}(\lambda)\right)} e^{-\frac{\rho^2}{4}}.
\end{align}
By \eqref{eq:behavior.f.infty} above one immediately gets $\bar{f} \in L^2_\pi(\R^{n+2})$ and the proof is complete.
\end{proof}

\begin{rmk}
The same argument given in Step 2 above, together with the explicit error bound on $f_{\star,2}'$ stated in \cite[Theorem 3.1]{LeLeTrWi25}, yields the control on $f'(\rho)$ for $\rho$ large enough:
\begin{align} \label{eq:behavior.f'.infty}
  |f'(\rho)| \lesssim |v^{1/2}(\rho)||f(\rho)| 
  \lesssim \rho^{-\frac{n}{2}-(1-a)\left( \frac{n-2}{2}+2\ell_\alpha+2\mathrm{Re}(\lambda)\right)} e^{-\frac{\rho^2}{4}}.
\end{align}
\end{rmk}

Let us introduce the space 
\begin{align}
    L^2_{\pi,\mathrm{rad}} := \left\{ f : [0,\infty) \to \CC \, : \, f(|\cdot|) \in L^2_\pi(\R^{n+2
    })\right\}.
\end{align}
A second (and useful) point of view is looking at $L_\alpha$ as a linear operator acting on radial profiles $f \in L^2_{\pi,\mathrm{rad}}$ via formula \eqref{eq:L_alpha.radial}.
Indeed, since $n \geq 3$, the proof of previous \autoref{lem:exponential.decay.eigenfunctions} immediately implies that the extrema $0,\infty$ are \emph{limit-points} for the formal Sturm-Liouville operator \eqref{eq:L_alpha.radial}, according to Weyl’s classification. Therefore, \eqref{eq:L_alpha.radial} has a self-adjoint realization on $L^2_{\pi,\mathrm{rad}}$ that we denote by $(L_\alpha^\pi,D(L_\alpha^\pi))$.

The previous \autoref{lem:exponential.decay.eigenfunctions} implies the following:
\begin{cor} \label{cor:eigenvalues}
Fix $\eta \in [1,n/2)$, $\gamma \in [\eta,\infty)$, and suppose $\lambda \in \CC$ with $\mathrm{Re}(\lambda) \geq 0$.
Then $\lambda$ is an eigenvalue of $L_\alpha$ if and only if it is an eigenvalue of $L_\alpha^\pi$.
In particular, the eigenvalues of $L_\alpha$ with non-negative real part are independent of the particular choice of $\eta \in [1,n/2)$ and $\gamma \in [\eta,\infty)$ defining the spaces $Y^{\eta,\gamma}$ and $Y_\nabla^{\eta,\gamma}$.
\end{cor}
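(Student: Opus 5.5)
The two implications are proved separately, both via the radial ODE \eqref{eq:L_alpha.radial}, and then the independence from $(\eta,\gamma)$ follows because $L^\pi_\alpha$ does not involve these exponents.

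\emph{($\Rightarrow$)} Let $\lambda$ with $\mathrm{Re}(\lambda)\ge 0$ be an eigenvalue of $L_\alpha$ on $Y^{\eta,\gamma}$ (or on $Y^{\eta,\gamma}_\nabla\subset Y^{\eta,\gamma}$), with eigenfunction $\bar f$. Since $\eta<n/2$ gives $\omega_{\mathrm{ess}}<0\le \mathrm{Re}(\lambda)$, \autoref{lem:exponential.decay.eigenfunctions} applies. Its radial profile $f$ is a multiple of the regular solution $f_1$ at the origin, by Step 1 of that lemma. It also has the Gaussian decay \eqref{eq:behavior.f.infty} and, by the subsequent remark, \eqref{eq:behavior.f'.infty}. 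Hence $f\in L^2_{\pi,\mathrm{rad}}$, $f$ is smooth on $(0,\infty)$, and $f$ satisfies \eqref{eq:eigenvalue.equation.f} pointwise. Since the endpoints $0$ and $\infty$ are limit-point, $D(L^\pi_\alpha)$ is the maximal domain, namely those $g\in L^2_{\pi,\mathrm{rad}}$ with $g,g'$ locally absolutely continuous and the formal expression in $L^2_{\pi,\mathrm{rad}}$. The profile $f$ lies in this domain because the expression equals $\lambda f$. Thus $L^\pi_\alpha f=\lambda f$.

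\emph{($\Leftarrow$)} Let $\lambda$ with $\mathrm{Re}(\lambda)\ge0$ be an eigenvalue of $L^\pi_\alpha$ with eigenfunction $f$. Self-adjointness forces $\lambda\in\R$. Elements of the maximal domain are $C^2$ solutions of the ODE on $(0,\infty)$. Near $0$, the Frobenius analysis of Step 1 applies again: $f_2\sim\rho^{-n}$ is not in $L^2(\rho^{n+1}d\rho)$ for $n\ge3$, so $f$ is a multiple of $f_1$ and is bounded near $0$. At infinity, the Liouville–Green pair $f_{\star,1},f_{\star,2}$ from Step 2 is used. The growing solution multiplied by $\pi^{1/2}$ behaves like a power times $e^{\rho^2/8}\cdot e^{-\rho^2/8}\cdot e^{\rho^2/8}$, i.e. it is not in $L^2_\pi$. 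So $f$ is a multiple of the decaying one and obeys \eqref{eq:behavior.f.infty}–\eqref{eq:behavior.f'.infty}. Consequently $\bar f:=f(|\cdot|)$ lies in $Y^{\eta,\gamma}$ for every $1\le\eta\le\gamma<\infty$, and also in $Y^{\eta,\gamma}_\nabla$ and $Y_1^{\eta,\gamma}$, with $L_\alpha \bar f=\lambda\bar f$ pointwise.

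The main obstacle is showing that $\bar f\in D(L_\alpha)=D(\mathcal L_0)$, the closure from Schwartz radial functions, and not merely that the equation holds classically. I would take cutoffs $\chi_R\bar f$, mollified near $0$ if needed. These lie in (or are approximable by) $\Srad$, and the commutator terms, controlled by the Gaussian decay of $\bar f$ and $\bar f'$, give $\chi_R\bar f\to\bar f$ and $\mathcal L_0(\chi_R\bar f)\to \Delta\bar f+\frac\xi2\cdot\nabla\bar f+\bar f$ in $Y^{\eta,\gamma}$; closedness then places $\bar f$ in the domain. Smoothness of $\bar f$ at the origin follows since $f_1$ is analytic there, like $u_\alpha$. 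The same cutoff argument, with one extra $\xi\cdot\nabla$, works in $Y^{\eta,\gamma}_\nabla$.

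Finally, the eigenvalues of $L^\pi_\alpha$ are defined without reference to $\eta,\gamma$. Therefore the set of eigenvalues of $L_\alpha$ with $\mathrm{Re}\ge0$, in either $Y^{\eta,\gamma}$ or $Y^{\eta,\gamma}_\nabla$, is the same for every admissible choice of $\eta\in[1,n/2)$ and $\gamma\in[\eta,\infty)$.
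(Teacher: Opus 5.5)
Your proposal is correct and follows essentially the paper's route. The forward direction goes through \autoref{lem:exponential.decay.eigenfunctions} and the maximal (limit-point) domain of $L^\pi_\alpha$, and the converse through the Frobenius analysis at $0$ and the decaying Liouville--Green solution at $\infty$, which gives $f(|\cdot|)\in Y^{\eta,\gamma}_\nabla$. The paper obtains that integrability via a Cauchy--Schwarz bound against $\pi$ rather than directly from the pointwise Gaussian bounds. Two steps the paper leaves implicit are worth keeping: your explicit exclusion of the growing solution for an $L^2_\pi$ eigenfunction, and your cutoff-and-closedness argument placing $f(|\cdot|)$ in $D(\mathcal{L}_0)$.
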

\begin{proof}
By previous lemma, the radial profile $f$ of an eigenfunction $\bar f$ of $L_\alpha$ relative to the eigenvalue $\lambda$ satisfies $L_\alpha f = L^\pi_\alpha f =\lambda f\in L^2_{\pi,\mathrm{rad}}$, thus we have $f \in D(L^\pi_\alpha)$ and therefore $f$ is an eigenfunction of $L^\pi_\alpha$ with eigenvalue $\lambda$.
For the converse implication, it suffices to show that given an eigenfunction $f$ of $L_\alpha^\pi$ it holds $f(|\cdot|) \in Y_\nabla^{\eta,\gamma}$.
The same argument used in Step 1 of the proof of the lemma above ensures that $|f|$, $|f'|$ are bounded in a neighborhood of zero; therefore, for $g$ equal to either $f$ or $\rho f'$ and for every $q \in [1,\infty)$ we have
\begin{align}
\| g \|^q_{Y^q} 
&\lesssim 
\int_0^\infty \rho^{n-1} |g(\rho)|^q d\rho
\lesssim
\int_0^1 |g(\rho)|^q d\rho
+
\int_1^\infty \rho^{n-1}|g(\rho)|^qd\rho
\\
&\lesssim
1
+
\int_1^\infty \rho^{n-1}|g(\rho)|^{q-1}\pi^{-1/2}(\rho) |g(\rho)| \pi^{1/2}(\rho) d\rho
\\
&\lesssim
1
+
\left(\int_1^\infty \rho^{2n-2}|g(\rho)|^{2q-2}(\rho) \pi^{-1}(\rho) d\rho \right)^{1/2}
\left(\int_1^\infty |g(\rho)|^2 \pi(\rho) d\rho\right)^{1/2}
< \infty,
\end{align}
where the last expression is finite by the asymptotic behaviors \eqref{eq:behavior.f.infty} and \eqref{eq:behavior.f'.infty}.
\end{proof}

The following theorem states the existence of unstable eigenvalues for the linearized operator $L_\alpha$. It holds for both spaces $Y^{\eta,\gamma}$ and $Y_\nabla^{\eta,\gamma}$.

\begin{theorem} \label{thm:instability}
Fix $n \in \{3,\dots,9\}$ and $\eta\in [1,n/2)$, $\gamma \in [\eta,\infty)$. For every $\bar\lambda> 0$ there exists $\bar \alpha \in (0,\infty)$ such that $L_{\bar \alpha}$ has a real positive eigenvalue $\lambda \in (0,\bar \lambda]$, that is \emph{maximal} in the sense that $\sigma(L_{\bar \alpha}) \subset \{ \mu \in \CC \, : \, \mathrm{Re}(\mu) \leq \lambda\}$. 
Moreover, the eigenfunction corresponding to $\lambda$ decays at infinity exponentially fast.
\end{theorem}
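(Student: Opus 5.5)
The plan is to reduce the statement to the self-adjoint realization $L^\pi_\alpha$ and pick $\bar\alpha$ by a continuity argument in $\alpha$. By \autoref{cor:eigenvalues}, the eigenvalues of $L_\alpha$ with nonnegative real part coincide with those of $L^\pi_\alpha$ and do not depend on $\eta,\gamma$. Since $L^\pi_\alpha$ is self-adjoint, these eigenvalues are real. By \eqref{eq:spectrum.made.of.eigen} and $\omega_{\mathrm{ess}}<0$, the part of $\sigma(L_\alpha)$ in $\{\mathrm{Re}\geq 0\}$ consists of finitely many eigenvalues. Hence it suffices to produce $\bar\alpha$ for which the largest eigenvalue $\lambda_{\max}(\alpha)$ of $L^\pi_\alpha$ lies in $(0,\bar\lambda]$. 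Maximality is then automatic: everything else in the spectrum either has real part $<0$ or is a real eigenvalue $\leq\lambda_{\max}$. Exponential decay of the eigenfunction follows from \eqref{eq:behavior.f.infty}.

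To locate $\lambda_{\max}(\alpha)$, I will use Sturm--Liouville oscillation theory in the form of \autoref{lem:principal_eigen}. Fix $\lambda>0$ and view \eqref{eq:eigen} with $\mu=1$. Then $\lambda$ lies strictly below the top eigenvalue of $L^\pi_\alpha$ iff the solution $f$ of \eqref{eq:eigen} with $\mu=1$ has a zero, i.e. iff $\bar\mu(\alpha,\lambda)<1$. Here I use monotonicity of $\bar\mu$ in $\lambda$, which is clear from the variational characterization. Moreover, $\lambda$ is an eigenvalue with positive eigenfunction iff $\bar\mu(\alpha,\lambda)=1$; in that case the positive solution is the $L^2_\pi$ eigenfunction, which has the right decay by part $i)$. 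Two endpoints are needed.

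\begin{enumerate}
\item \textbf{Instability for large $\alpha$.} For $\alpha$ large, \autoref{prop:f.zero} gives a zero of the solution $f_\alpha$ of \eqref{eq:ODE.linearized} ($\lambda=0$, $\mu=1$). By the rescaling argument preceding \autoref{prop:f.zero}, the term $\lambda f$ becomes $\lambda\tilde f/\alpha$ and vanishes in the limit. Hence for any fixed small $\lambda_0>0$ the solution with $\lambda=\lambda_0$ still has a zero once $\alpha$ is large. Thus $\lambda_{\max}(\alpha)>\lambda_0$, and in particular $\lambda_{\max}(\alpha)>0$.
\item \textbf{Stability for small $\alpha$.} For $\alpha<\frac{1}{16n}$, the lower bound \eqref{eq:lower.bound.barmu} at $\lambda=0$ gives $\bar\mu(\alpha,0)\geq \frac{1}{16n\alpha}>1$. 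So the solution of \eqref{eq:ODE.linearized} is positive, and $L^\pi_\alpha$ has no eigenvalue $\geq 0$.
\end{enumerate}

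It then remains to show that $\alpha\mapsto \lambda_{\max}(\alpha)$, defined as $\sup\sigma(L^\pi_\alpha)$ and possibly negative, is continuous. The intermediate value theorem then gives $\bar\alpha$ with $\lambda_{\max}(\bar\alpha)\in(0,\bar\lambda]$. I expect this continuity to be the main obstacle, since both the weight $\pi$ and the Hilbert space vary with $\alpha$.

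My first attempt is to work through the Rayleigh quotient. Conjugating by $\pi^{1/2}$ turns $L^\pi_\alpha$ into a Schrödinger operator on unweighted radial $L^2$ with potential $W_\alpha$, of the form $-\frac{\theta}{2}+1+\mathcal V_\alpha$. By \autoref{prop:local.existence.uniqueness} together with the uniform bounds $0<u_\alpha\lesssim \rho^{-2}$ and $|u_\alpha'|\lesssim\rho^{-3}$, the potential $W_\alpha$ varies continuously in $\alpha$: locally uniformly, with $\rho^2$-type confinement uniform on compact $\alpha$-sets. This gives upper and lower semicontinuity of the top eigenvalue via min-max. Upper semicontinuity is the delicate direction; the uniform confinement should prevent mass from escaping to infinity. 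As an alternative, I will prove continuity of $\bar\mu(\alpha,\lambda)$ jointly in $(\alpha,\lambda)$ and read off $\lambda_{\max}$ as the solution of $\bar\mu=1$.

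If continuity proves troublesome, a fallback is to avoid it entirely. Choose $\bar\alpha$ as the infimum of the set of $\alpha$ for which the $\lambda=\bar\lambda$ solution has no zero but the $\lambda=0$ solution does. In this version the argument works only with zeros of ODE solutions, using the continuous dependence of \autoref{prop:local.existence.uniqueness} and the transversality of zeros.
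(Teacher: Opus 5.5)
Your main route is correct, but it finds $\bar\alpha$ differently from the paper. The paper never proves continuity of $\alpha\mapsto\lambda_{\max}(\alpha)$. Instead it fixes $\bar\lambda$ and lets $A$ be the set of $\alpha$ for which the solution of \eqref{eq:system_f,alpha} has a positive zero. It obtains $\inf A\geq\frac{1}{16n}$ from \eqref{eq:lower.bound.barmu} and takes $\bar\alpha=\inf A$. Openness of $A$ (transversal zeros plus continuous dependence) shows that $L^\pi_{\bar\alpha}$ has no eigenvalue above $\bar\lambda$. That $\bar\lambda$ is itself an eigenvalue is then proved by contradiction. If $\bar\mu>1$, the asymptotics from the proof of \autoref{lem:exponential.decay.eigenfunctions} give $f>0$ and $f'+\frac{\rho_0}{2}f>0$ at some large $\rho_0$. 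This persists for $\alpha\in A$ close to $\bar\alpha$, and an integral identity on $[\rho_0,\infty)$ rules out any later zero. That last step is exactly the ``zeros escaping to infinity'' problem, handled by hand at the ODE level.

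You handle the same problem functional-analytically. After conjugating by $\pi_\alpha^{1/2}$, the Hilbert space is the fixed $L^2_{\mathrm{rad}}(\R^{n+2})$. The potential is $-\frac{\rho^2}{16}$ plus a term bounded uniformly for $\alpha$ in compacts (by $\rho^2u_\alpha\leq 2$ and \autoref{lem:bound.rho.m.Omega}). That term converges locally uniformly, since only $u_\alpha,u_\alpha'$ enter, so the $C^1$ dependence in \autoref{prop:local.existence.uniqueness} suffices. The bound $\theta_\alpha\geq\frac{n+2}{2}+\frac{\rho^2}{8}$ holds for every $\alpha$ thanks to $G_\alpha>0$, and it makes near-maximizers tight. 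Tightness is genuinely needed for the upper semicontinuity: near infinity the two potentials differ by about $\frac12|\ell_\alpha-\ell_{\alpha_*}|$, and nothing in the paper shows this is small.

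Both routes realize every value in $(0,\lambda_{\max}(\alpha_0)]$ as a top eigenvalue. Yours additionally gives continuity of $\lambda_{\max}$ and trades the WKB tail argument for a standard compactness argument. The paper's stays inside the shooting and oscillation framework of \autoref{sec:ODE}. Your conjugated Rayleigh quotient even gives stability directly, $\lambda_{\max}(\alpha)\leq 4n\alpha-\frac14$. This sidesteps the fact that \autoref{lem:principal_eigen} is stated only for $\lambda>0$.

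Your fallback, however, would not work as phrased. The set of $\alpha$ whose $\bar\lambda$-solution has no zero while the $\lambda=0$ solution has one is exactly $\{\alpha:\lambda_{\max}(\alpha)\in(0,\bar\lambda]\}$. Taking its infimum therefore presupposes the nonemptiness you are trying to prove. Moreover, continuous dependence plus transversality only give openness of ``has a zero''. They do not give the control at infinity that the paper's contradiction step supplies. Since your main route stands, simply drop the fallback.
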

\begin{proof}
For $\alpha>0$ every $\lambda \in \sigma_{\mathrm{Re}\geq 0}(L_\alpha)$ is an eigenvalue of $L_\alpha$ by \eqref{eq:spectrum.made.of.eigen}, irrespective of the choice of $Y^{\eta,\gamma}$ or $Y_\nabla^{\eta,\gamma}$ topology.
Moreover, since $L_\alpha^\pi$ is self-adjoint we know that $\sigma(L_\alpha^\pi) \subset \RR$.
Thus, by \autoref{cor:eigenvalues} it is sufficient to show that there exists $\bar \alpha$ such that $L_{\bar \alpha}^\pi$ has at least a real positive eigenvalue $\lambda \in (0,\bar\lambda]$ and no eigenvalue larger than $\lambda$.

Sturm-Liouville oscillation theory  \cite[Theorem 1.2 and Corollary 2.4]{GeSiTe96} and \autoref{prop:f.zero} applied with $k=1$ guarantee the existence of $\alpha_0 \in (0,\infty)$ such that $L_{\alpha_0}^\pi$ has at least a positive eigenvalue. By \autoref{cor:eigenvalues} and the discussion above, eigenvalues of $L_{\alpha_0}^\pi$ cannot accumulate to $+\infty$; therefore,  there exists the maximum eigenvalue $\lambda_{\max}^{\alpha_0}$ and it satisfies $0 < \lambda^{\alpha_0}_{\max} < \infty$.

We distinguish two cases: if $\lambda_{\max}^{\alpha_0} \leq \bar\lambda$ then $\bar \alpha := \alpha_0$ and there is nothing to prove. Otherwise, let us consider the operator $L_\alpha^\pi - \bar\lambda$;
For $\alpha=\alpha_0$ we have by assumption that $L_{\alpha_0}^\pi - \bar\lambda$ has at least a strictly positive eigenvalue.
By Sturm-Liouville theory we get the following: the unique solution of the system
\begin{align}  
    \begin{dcases}
    \mathcal{L}_0 f  - \bar\lambda f + \mathcal{K}_{\alpha_0} f = 0 ,
    \\
    f(0) = 1,
    \\
    f'(0) = 0,
\end{dcases}
\end{align}
admits at least a positive zero $\rho = \rho(\alpha_0,\bar\lambda) > 0$.
Our next goal is to find $\bar \alpha\in (0,\infty)$ such that $L_{\bar \alpha}^\pi - \bar\lambda$ has eigenvalue $0$ and no strictly positive eigenvalue.

\emph{Step 1}.
For $\alpha \in (0,\infty)$ let us consider the system 
\begin{align} \label{eq:system_f,alpha}
    \begin{dcases}
    \mathcal{L}_0 f  - \bar\lambda f + \mathcal{K}_{\alpha} f = 0 ,
    \\
    f(0) = 1,
    \\
    f'(0) = 0,
\end{dcases}
\end{align}
and set
\begin{align}
    A = A^{\bar\lambda} := \{ \alpha>0 : f \mbox{ solution of } \eqref{eq:system_f,alpha} \mbox{ admits a positive zero} \}
    \neq \varnothing.
\end{align}
The fact that $A \neq \varnothing$ is implied by the fact that $\alpha_0 \in A$. Next we want to show that $\inf A >0$.

Recall the definition of $\mathcal{V}_\alpha$ from \autoref{sec:ODE} and notice that $\mathcal{K}_\alpha f = 2 \Omega_\alpha\xi \cdot \nabla f + \mathcal{V}_\alpha f$, when we interpret $\mathcal{V}_\alpha$ as acting on radial functions. 
By \autoref{lem:principal_eigen} and \eqref{eq:lower.bound.barmu} in the proof thereof, we know that the system
\begin{align}
    \begin{dcases}
    \mathcal{L}_0 f  - \bar\lambda f + \mathcal{K}_\alpha f + (\mu-1) \mathcal{V}_{\alpha} f = 0 ,
    \\
    f(0) = 1,
    \\
    f'(0) = 0,
\end{dcases}
\end{align}
has principal eigenvalue 
\begin{align}
   \bar\mu =  \bar{\mu}^{\alpha}_{\bar\lambda} \geq \frac{1}{16n \alpha} > 1,
   \qquad
   \forall \alpha < \frac{1}{16n}.
\end{align}
In particular, by point $iii$) of \autoref{lem:principal_eigen} the solution of the system \eqref{eq:system_f,alpha} cannot have positive zeros if $\alpha < \frac{1}{16n}$, meaning $\inf A \geq \frac{1}{16n}>0$.

\emph{Step 2}. Let us denote $\bar \alpha := \inf A \in (0,\infty)$. We claim that the operator $L_{\bar \alpha}^\pi - \bar\lambda$ has eigenvalue $0$ and no strictly positive eigenvalue. 

Let us show first that no strictly positive eigenvalue exists. For if that were the case, then by Sturm-Liouville theory the unique solution $f_{\bar \lambda}$ to the system \eqref{eq:system_f,alpha} with $\alpha = \bar \alpha$ would have a positive zero, contradicting the minimality of $\bar \alpha$ (by continuity of \eqref{eq:system_f,alpha} with respect to $\alpha$, there would be another $\bar \alpha' \in (0,\bar \alpha)$ for which the unique solution to \eqref{eq:system_f,alpha} with $\alpha = \bar \alpha'$ would have a positive zero, thus $\bar\alpha' \in A$ against the assumption that $\bar \alpha = \inf A$).

In order to conclude, we must show that $0$ is an eigenvalue for the operator $L_{\bar \alpha}^\pi - \bar \lambda$, or equivalently that $f_{\bar \lambda}$ is, in fact, an eigenfunction.
By Sturm-Liouville theory and \autoref{lem:principal_eigen} it suffices to show that the principal eigenvalue of the system
\begin{align}  
    \begin{dcases}
    \mathcal{L}_0 f  - \bar \lambda f + \mathcal{K}_\alpha f + (\mu-1) \mathcal{V}_{\alpha} f  = 0 ,
    \\
    f(0) = 1,
    \\
    f'(0) = 0,
\end{dcases}
\end{align}
is exactly equal to $\bar \mu = 1$: Indeed, this would mean that $f_{\bar \lambda}$ satisfies \eqref{eq:decay_f} with suitable bounds near $\rho=0$, and is therefore in the domain of the operator $L_{\bar \alpha}^\pi-\bar \lambda$. 

Since $f_{\bar \lambda}$ has no positive zeros, we know by point $ii$) of \autoref{lem:principal_eigen} that $\bar\mu \geq 1$. 
Suppose by contradiction that $\bar{\mu} > 1$; then by point $iii$) of \autoref{lem:principal_eigen} and the asymptotic behavior when $\rho \to \infty$ proven in Step 2 in the proof of \autoref{lem:exponential.decay.eigenfunctions} for the solution of the ODE \eqref{eq:eigenvalue.equation.f}, it must be
\begin{align}
f_{\bar \lambda}(\rho_0) 
>0,
\quad
f'_{\bar \lambda}(\rho_0) + \frac{\rho_0}{2} f_{\bar \lambda}(\rho_0)>0,
\end{align}
for every $\rho_0 \gg 1$, where the second inequality comes from the explicit expression for $f_{\star,2}(\rho)$ and the error bounds $\sup_{\rho \geq \rho_0}|\epsilon_j(\rho)| \leq 1/4$ and $\sup_{\rho \geq \rho_0}v^{-1/2}(\rho)|\epsilon_j'(\rho)| \leq 1/2$ for $j \in \{1,2\}$. 

Now we can argue as in Step 4 of \autoref{prop:Omega_alpha>0}: Take $\rho_0 \gg 1$ large enough so that $2(n-2)u_\alpha(\rho) \leq 4(n-2) \rho^{-2}< n/2+\bar{\lambda}$ for every $\rho \geq \rho_0$, and take $A \ni \alpha>\bar \alpha$ sufficiently close to $\bar \alpha$ such that the unique solution $f$ of \eqref{eq:system_f,alpha} satisfies   
\begin{align}
f(\rho) 
>0,
\quad
\forall \rho \leq \rho_0,
\quad
\mbox{and}
\quad
f'(\rho_0) + \frac{\rho_0}{2} f(\rho_0)>0;
\end{align}
For $\rho > \rho_0$ consider the inequality:
\begin{align}
   \rho^{n+1} f'(\rho) &=  \rho_0^{n+1} \left( f'(\rho_0)+ \rho_0 \left( \frac12 +2 u_\alpha(\rho_0) \right) f(\rho_0)\right)
   -
   \rho^{n+2} \left( \frac12 +2 u_\alpha(\rho) \right) f(\rho) 
\\
\quad&-
\int_{\rho_0}^\rho s^{n+1} \left( -\bar{\lambda}-\frac{n}{2} + 2(n-2)u_\alpha(s) \right) f(s) ds
\\
&>-
   \rho^{n+2} \left( \frac12 +2  u_\alpha(\rho) \right) f(\rho).
 \end{align}
From this we deduce $f'(\rho)>0$ for the first $\rho > \rho_0$ in which $f(\rho)=0$, which is absurd.
Therefore $\bar{\mu}=1$ and the proof is complete.
\end{proof}

\begin{rmk}
Notice that the operator $A^{-1}L_{\bar\alpha} A$ corresponds to the linearization of the right-hand side of \eqref{eq:Xi} around the profile $\bar{\Xi}_\alpha := A^{-1}[\bar\Omega_\alpha]$. 
By \autoref{lem:isomorphism}, the map $A : X^{\eta,\gamma} \to Y^{\eta,\gamma}_{\nabla}$ is an isomorphism when $\eta>1$ and therefore \autoref{thm:instability} implies \autoref{thm:instability.intro} in the introduction.
\end{rmk}

We conclude this section with the following corollary containing semigroup and smoothing estimates for $S_{\bar\alpha}$ analog to \eqref{eq:growth.bound.Y}, \eqref{reg_order_0_S0}, and \eqref{ultracontractivity_free} given in \autoref{lem:L_0} and \autoref{properties_freesemigrup}.  
\begin{cor} \label{prop:smoothing}
    Under the same assumptions of \autoref{properties_freesemigrup} and \autoref{thm:instability}, for every fixed $\delta>0$ we have the bounds, valid for every $\tau>0$:
    \begin{align}
    \label{eq:bound.semigroup}
    \| S_{\bar \alpha}(\tau) f \|_{Y^{\eta,\gamma}} 
    &\lesssim 
    e^{(\lambda+\delta) \tau} \|f\|_{Y^{\eta,\gamma}},
    \\
    \label{smoothing}
    \|S_{\bar\alpha}(\tau)f\|_{Y^{\eta,\gamma}_1} 
    &\lesssim
    \frac{e^{(\lambda+\delta)\tau}}{\tau^{1/2}} 
    \|f\|_{Y^{\eta,\gamma}},
    \\
    \label{ultracontractivity}
    \|S_{\bar\alpha}(\tau)f\|_{Y^{\eta',\gamma'}} & \lesssim\frac{e^{(\lambda+\delta)\tau}}{\tau^{\frac{n}{2}\left(\frac{1}{\eta}-\frac{1}{\eta'}\right)}}
    \|f\|_{Y^{\eta,\gamma}}.
    \end{align}
\end{cor}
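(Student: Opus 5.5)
The growth bound \eqref{eq:bound.semigroup} comes first. Choose $\omega \in (\omega_{\mathrm{ess}}, \lambda)$; this is possible because $\omega_{\mathrm{ess}} = 1-\frac{n}{2\eta} < 0 < \lambda$ when $\eta < n/2$. By \eqref{eq:growth.bound}, the growth bound $\omega_0$ of $S_{\bar\alpha}$ satisfies
\begin{align}
\omega_0 \leq \omega \vee \max\{\mathrm{Re}(\mu) : \mu \in \sigma_{\mathrm{Re}\geq\omega}(L_{\bar\alpha})\}.
\end{align}
By the maximality statement in \autoref{thm:instability}, this gives $\omega_0 \le \lambda$. The definition of the growth bound then yields $\|S_{\bar\alpha}(\tau)\|\le M_\delta e^{(\lambda+\delta)\tau}$ for every $\delta>0$, which is \eqref{eq:bound.semigroup}. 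Since growth bounds only control the semigroup up to the constant $M_\delta$, we cannot take $\delta=0$, and the statement correctly allows $\delta>0$.

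For \eqref{smoothing} and \eqref{ultracontractivity}, split into $\tau\in(0,1]$ and $\tau>1$. For $\tau>1$, write $S_{\bar\alpha}(\tau)=S_{\bar\alpha}(1/2)S_{\bar\alpha}(\tau-1/2)$. Applying the short-time estimate at time $1/2$ and \eqref{eq:bound.semigroup} on the remaining interval gives $e^{(\lambda+\delta)\tau}$ times a constant, and $\tau^{-1/2}$ together with the ultracontractive power are bounded below on $\tau>1$. This reduces everything to short times.

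For $\tau\in(0,1]$ use the Duhamel formula
\begin{align}
S_{\bar\alpha}(\tau)f = S_0(\tau)f + \int_0^\tau S_0(\tau-s)\mathcal{K}_{\bar\alpha}S_{\bar\alpha}(s)f\,ds,
\end{align}
and set $\Phi(\tau):=\tau^{1/2}\|S_{\bar\alpha}(\tau)f\|_{Y_1^{\eta,\gamma}}$. Write $\mathcal{K}_{\bar\alpha}g = \mathcal{V}_{\bar\alpha}g + 2\Omega_{\bar\alpha}\,\xi\cdot\nabla g$. Since $\rho u_{\bar\alpha}$ is bounded (indeed $\lesssim \rho^{-1}$ by \autoref{prop:Omega_alpha>0}) and $\mathcal{V}_{\bar\alpha}\in L^\infty$, we get $\|\mathcal{K}_{\bar\alpha}g\|_{Y^{\eta,\gamma}}\lesssim\|g\|_{Y_1^{\eta,\gamma}}$. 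Combining this with \eqref{reg_order_0_S0} for $k=1$ (and boundedness of $S_0$) gives
\begin{align}
\Phi(\tau)\lesssim \|f\|+\tau^{1/2}\int_0^\tau (\tau-s)^{-1/2}s^{-1/2}\Phi(s)\,ds.
\end{align}
The kernel integral is a Beta function, so on a short interval $[0,\tau_0]$ this closes by taking the supremum of $\Phi$, or by a singular Gronwall lemma, yielding $\Phi\lesssim\|f\|$. Iterating via the semigroup property (as in the long-time reduction) covers all of $(0,1]$.

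This Gronwall step is the main obstacle. It needs a priori finiteness of $\sup_{s\le\tau_0}\Phi(s)$, which I would obtain by a density argument using $f\in D(\mathcal{L}_0)$, \eqref{eq:bound.coerciveY1}, and the closedness of the operators. It also needs the norms compatible with the $Y^{\eta,\gamma}$ intersection, which is why the estimate for $\mathcal{K}_{\bar\alpha}$ must hold in $Y^{\eta}$ and $Y^{\gamma}$ simultaneously.

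The estimate \eqref{ultracontractivity} follows the same scheme, using \eqref{ultracontractivity_free} for the free part. In the Duhamel term I would estimate $\mathcal{K}_{\bar\alpha}S_{\bar\alpha}(s)f$ in $Y^{\eta,\gamma}$ via the just-proved \eqref{smoothing}, and then apply \eqref{ultracontractivity_free} to $S_0(\tau-s)$. This gives the integrand
\begin{align}
(\tau-s)^{-\frac n2(\frac1\eta-\frac1{\eta'})}s^{-1/2}.
\end{align}
It is integrable only if $\frac n2(\frac1\eta-\frac1{\eta'})<1$, and it then produces a bound no worse than $\tau^{-\frac n2(\frac1\eta-\frac1{\eta'})}$ for $\tau\le1$. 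For larger gaps between the exponents, iterate through intermediate exponents using the semigroup property.
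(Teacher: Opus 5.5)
Your proposal is correct and follows essentially the same route as the paper. The paper obtains the growth bound from \eqref{eq:growth.bound} together with the maximality of $\lambda$. It proves \eqref{smoothing} by the Duhamel formula and \eqref{reg_order_0_S0} for short times, then uses the semigroup property for long times. It gets \eqref{ultracontractivity} from Duhamel combined with \eqref{smoothing}, iterating through intermediate exponents when $\frac n2(\frac1\eta-\frac1{\eta'})\ge 1$. Your version only adds details the paper leaves implicit: the Beta-function Gronwall closure and the a priori finiteness via density.
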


\begin{proof}
The growth bound \eqref{eq:bound.semigroup} follows by \autoref{thm:instability} by recalling \eqref{eq:growth.bound}.
Bound \eqref{smoothing} for small values of $\tau\leq \tau_*\leq 1$ follows by the corresponding bound \eqref{reg_order_0_S0} for $S_0$ and the mild formula
    \begin{align*}
S_{\bar\alpha}(\tau) f=S_0(\tau)f+\int_0^{\tau} S_0(\tau-s)
\mathcal{K}_{\bar\alpha} [S_{\bar\alpha}(s) f]ds,
    \end{align*}
while for large  $\tau\geq  \tau_* $ it holds
    \begin{align*}
\|S_{\bar\alpha}(\tau)f\|_{Y^{\eta,\gamma}_1}= \|S_{\bar\alpha}(\tau_*)S_{\bar\alpha}(\tau-\tau_*)f\|_{Y^{\eta,\gamma}_1}& \lesssim e^{(\lambda+\delta)\tau} \|f\|_{{Y}^{\eta,\gamma}}.
    \end{align*}
As for \eqref{ultracontractivity}, let us start assuming $\frac{n}{2}\left(\frac{1}{\eta}-\frac{1}{\eta'}\right)<1.$ By the corresponding ultracontractivity property \eqref{ultracontractivity_free} for $S_0$, as well as the bounds on $\bar{\Omega}_{\bar \alpha}$ coming from \autoref{prop:Omega_alpha>0} and \autoref{lem:bound.rho.m.Omega}, we have for every $\tau\leq 1$
\begin{align*}
    \|S_{\bar\alpha}(\tau)f\|_{Y^{\eta',\gamma'}}
    &\lesssim 
    \frac{\|f\|_{Y^{\eta,\gamma}}}{\tau^{\frac{n}{2}\left(\frac{1}{\eta}-\frac{1}{\eta'}\right)}}
    +
    \int_0^\tau \frac{\|S_{\bar\alpha}(s)f\|_{ Y^{\eta,\gamma}_1}}{(\tau-s)^{\frac{n}{2}\left(\frac{1}{\eta}-\frac{1}{\eta'}\right)}} ds
    \\ 
    &\lesssim
    \frac{\|f\|_{L^{\eta,\gamma}}}{\tau^{\frac{n}{2}\left(\frac{1}{\eta}-\frac{1}{\eta'}\right)}}
    +
    \int_0^\tau \frac{\|f\|_{Y^{\eta,\gamma}}}{s^{1/2}(\tau-s)^{\frac{n}{2}\left(\frac{1}{\eta}-\frac{1}{\eta'}\right)}}ds
    \lesssim  
    \frac{\|f\|_{Y^{\eta,\gamma}}}{\tau^{\frac{n}{2}\left(\frac{1}{\eta}-\frac{1}{\eta'}\right)}}.
\end{align*}
In case of $\frac{n}{2}\left(\frac{1}{\eta}-\frac{1}{\eta'}\right)\geq 1$, one can successively perform the above steps for a finite number of intermediate values $\eta<\eta_i<\eta'$ and $\gamma<\gamma_i<\gamma'$. The case $\tau\geq 1$ is treated similarly as above.
\end{proof}

\section{Construction of Ancient Solutions} \label{sec:ancient}
Throughout this section we assume, even when not specified, that
\begin{align}\label{hp: coefficients 1}
    1\leq \hat{q}<\frac{n}{2}<\hat{r}<n,\quad \hat{r}\geq 2\hat{q}.
\end{align}
As outlined in \autoref{sec:strategy}, our goal is to construct two solutions of \eqref{eq:Omega} corresponding to the same initial datum at $\tau=-\infty$. One of these solutions is time-independent and is given by
\begin{align*}
    \Omega_1:=\bar{\Omega}_{\alpha},
\end{align*}
where $\bar{\Omega}_{\alpha}(y)=u_{\alpha}(|y|)$ and $u_{\alpha}$ solves the self-similar stationary equation \eqref{eq:ODE}. If, in addition, $\alpha$ is suitably chosen, then \autoref{thm:instability} applies and the linearized operator $L_{\alpha}$ admits positive eigenvalues.
Let $\lambda_\alpha$ be the corresponding maximal positive eigenvalue and $\bar{\Omega}^{lin}$ an associated eigenfunction. The second solution of \eqref{eq:Omega} that we aim to construct has the form
\begin{align*}
\Omega_2:=\bar{\Omega}_{\alpha}+\Psi:=\bar{\Omega}_{\alpha}+\Omega^{lin}+\Omega^{per},
\end{align*}
where $\Omega^{lin}=e^{\lambda_{\alpha}\tau}\bar{\Omega}^{lin}$.
This ansatz implies that $\Psi$ must solve
\begin{align}\label{eq:perturb}
\begin{dcases}
        \partial_{\tau}\Psi =L_{\alpha}\Psi+N(\Psi),
        \\
        \lim_{\tau\rightarrow-\infty}\Psi(\tau) =0,
    \end{dcases}    
\end{align}
or, equivalently, that $\Omega^{per}$ solves
\begin{align}\label{ancient_PDE}
    \begin{dcases}
        \partial_{\tau}\Omega^{per}=L_{\alpha}\Omega^{per}+N(\Omega^{per})+N(\Omega^{lin})+V(\Omega^{per}),
        \\
        \lim_{\tau\rightarrow-\infty}\Omega^{per}(\tau)=0,
    \end{dcases}
\end{align}
where $L_{\alpha}$ is defined in \eqref{def_lalpha} and $N,V$ in \eqref{nonlinearity_ancient}, respectively.
A function $\Psi$ solving \eqref{eq:perturb} and satisfying
\begin{align*}
    \|\Psi(\tau)\|_{Y_{\nabla}^{\hat{q},\hat{r}}}\rightarrow 0,
    \quad\text{as }\tau\rightarrow-\infty,
\end{align*}
will be called an \emph{ancient solution} of equation \eqref{eq:perturb}. Since 
\begin{align*}
    \|\Omega^{lin}(\tau)\|_{{Y_{\nabla}^{\hat{q},\hat{r}}}}=e^{\lambda_{\alpha}\tau}\|\bar\Omega^{lin}\|_{{Y_{\nabla}^{\hat{q},\hat{r}}}},
\end{align*}
it suffices to construct $\Omega^{per}\neq -\Omega^{lin}$ solving \eqref{ancient_PDE} and satisfying
\begin{align*}
    \|\Omega^{per}(\tau)\|_{Y_{\nabla}^{\hat{q},\hat{r}}}\rightarrow 0,
    \quad\text{as }\tau\rightarrow-\infty.
\end{align*}
This is the content of \autoref{prop:fixed_point_ancient}. Before stating and proving this result, we recall some properties of $\bar{\Omega}_{\alpha}$ and $\bar{\Omega}^{lin}$ and introduce some notation.\\
From the results of \autoref{subsec:selfsimilarprofile} and \autoref{subsec_unstable_eigen}, we know that 
\begin{align*}
    \bar{\Omega}_{\alpha}, \ \bar{\Omega}^{lin}\in C^2_{loc}(\R^{n+2}).
\end{align*}
Moreover,  
\begin{align}\label{eq:decay_ss}
 \bar{\Omega}_{\alpha}(\xi)=O(|\xi|^{-2}),
 \quad \mbox{and} \quad|\nabla\bar{\Omega}_{\alpha}(\xi)|=O(|\xi|^{-3}), \quad\text{for } |\xi|\rightarrow +\infty, 
\end{align}
while both $\bar{\Omega}^{lin}(\xi)$ and $|\nabla\bar{\Omega}^{lin}(\xi)|$ decay exponentially fast as $|\xi|\to +\infty$. Moreover, \autoref{cor:eigenvalues} gives that the maximal positive eigenvalue $\lambda_{\alpha}$ and the eigenfunction $\bar{\Omega}^{lin}$ are independent of the choice of $\hat{q},\hat{r}$, provided that $1\leq \hat{q}<\frac{n}{2}<\hat{r}$.
From now on, we drop the subscript $\alpha$ to simplify the notation. 

For $T \in \R$ and parameters $0<\delta\ll \lambda$, we introduce the space
\begin{align*}
\bar{Z}^T := \Big\{ f &\in C((-\infty,T];Y^{\hat{q},\hat{r}}_1)\cap C((-\infty,T];Y_{\nabla}^{\hat{q},\hat{r}}):\ |\xi|f \in C((-\infty,T];Y^{\hat{q},\infty}),\\
& \sup_{t\leq T} e^{-2(\lambda-\delta)t}
\left(
\|f(t)\|_{Y^{\hat{q},\hat{r}}_1}
+\||\xi|f(t)\|_{Y^{\hat{q},\infty}}
+\|f(t)\|_{Y_{\nabla}^{\hat{q},\hat{r}}}
\right)
<+\infty
\Big\}.
\end{align*}

This is a Banach space when endowed with its natural norm, denoted by $\|\cdot\|_{\bar{Z}^T}$. We also denote by $B^{\epsilon,T}$ the closed ball in $\bar{Z}^T$ of radius $\epsilon$. The following result holds.
\begin{prop}\label{prop:fixed_point_ancient}
Assume \eqref{hp: coefficients 1}, and let $\alpha>0$ be such that the corresponding expander $\bar{\Omega}$ yields a linearized operator 
$L:D(L)\subset Y^{\hat{q},\hat{r}}\rightarrow Y^{\hat{q},\hat{r}}$ with maximal positive eigenvalue $\lambda>0$.
Then, for every $\delta \in (0,\tfrac{\lambda}{10})$, there exists $\bar{\epsilon}=\bar{\epsilon}(\delta)>0$ such that, for all $\epsilon \in (0,\bar{\epsilon})$, there exists $\bar{T}=\bar{T}(\epsilon,\delta)<0$ with the property that equation \eqref{ancient_PDE} admits a unique solution in $B^{\epsilon,T}$ for every $T<\bar{T}$.
\end{prop}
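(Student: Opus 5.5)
The plan is a contraction-mapping argument for the Duhamel formulation of \eqref{ancient_PDE} on $(-\infty,T]$. I will look for $\Omega^{per}$ as a fixed point of
\begin{align*}
\Phi[\omega](\tau) := \int_{-\infty}^{\tau} S(\tau-s)\big( N(\omega(s)) + N(\Omega^{lin}(s)) + V(\omega(s)) \big)\, ds,
\end{align*}
where $S=S_{\alpha}$ is the semigroup of \autoref{cor:S_alpha}. Since the data are ancient, the Duhamel integral starts at $-\infty$ and no initial-data term appears. The weight $e^{-2(\lambda-\delta)\tau}$ in $\bar Z^T$ is chosen for two reasons. First, the forcing $N(\Omega^{lin})$ has size $e^{2\lambda \tau}$. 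Second, by \eqref{eq:bound.semigroup} the semigroup grows at most like $e^{(\lambda+\delta)(\tau-s)}$. Hence the time integral $\int_{-\infty}^\tau e^{(\lambda+\delta)(\tau-s)} e^{2(\lambda-\delta)s}\,ds$ converges, because $2(\lambda-\delta) > \lambda+\delta$ when $\delta<\lambda/3$. It is bounded by $C e^{2(\lambda-\delta)\tau}/(\lambda-3\delta)$.

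\textbf{Step 1: nonlinear estimates.} Write $N(\omega) = 2n\omega^2 + 2\omega\,\xi\cdot\nabla\omega$. I will estimate $N(\omega)$, $V(\omega)$ and $N(\Omega^{lin})$ in $Y^{\hat q,\hat r}$, placing all weight/derivative factors on the terms controlled by the norm.
\begin{itemize}
\item For $\omega\,\xi\cdot\nabla\omega$, pair $|\xi|\omega\in Y^{\hat q,\infty}$ (hence in $L^\infty$) with $\nabla\omega\in Y^{\hat q,\hat r}$, which is controlled by the $Y_1$ norm.
\item For $\omega^2$, use $\omega\in L^\infty$ away from the origin via $|\xi|\omega$. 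Near the origin, use Hardy/Sobolev from \autoref{weighted_sobolev}, since $\hat r<n$ gives $\hat r^*$ and $\hat r\ge 2\hat q$ helps Hölder.
\item $V$ is linear in $\omega$ with exponentially decaying coefficients $\Omega^{lin}$ and $\nabla\Omega^{lin}$. Hence $\|V(\omega(s))\|\lesssim e^{\lambda s}\|\omega(s)\|_{Y_1}$.
\end{itemize}
This gives
\begin{align*}
\|N(\omega)+V(\omega)+N(\Omega^{lin})\|(s)\lesssim e^{4(\lambda-\delta)s}\|\omega\|^2_{\bar Z}+e^{(3\lambda-2\delta)s}\|\omega\|_{\bar Z}+e^{2\lambda s}.
\end{align*}
Compared to $e^{2(\lambda-\delta)s}$, each term gains a factor $e^{cT}$ with $c>0$ (or a factor $\epsilon$), and this gain is what forces $T$ very negative.

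\textbf{Step 2: mapping $B^{\epsilon,T}$ into itself.} Using \eqref{eq:bound.semigroup} and \eqref{smoothing} for the $Y^{\hat q,\hat r}$ and $Y_1$ components, the singularity $(\tau-s)^{-1/2}$ is integrable. The other two components of the norm are harder.
\begin{itemize}
\item \emph{The $|\xi|\Phi$ component.} The operator $|\xi|$ does not commute with $L$, so I will write the equation satisfied by $|\xi|\omega$. It has the same structure with a commutator $[\Delta,|\xi|]\omega \sim |\xi|^{-1}\omega + \nabla\omega$ and $\xi\cdot\nabla$ terms. Alternatively, I will run the fixed point jointly on the pair $(\omega,|\xi|\omega)$, as announced in the overview. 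The $L^\infty$ bound then follows from ultracontractivity \eqref{ultracontractivity} with $\eta'$ large. Its time singularity $(\tau-s)^{-\frac n2(\frac1{\hat r}-\frac1{\eta'})}$ is integrable, possibly after iterating through intermediate exponents.
\item \emph{The $Y_\nabla$ component.} I will use the commutator identity \eqref{eq:commutator}, namely $[\mathcal L_0,\xi\cdot\nabla]=-2\Delta$, together with the corresponding commutator with $\mathcal K_\alpha$. The latter is controlled by the decay \eqref{eq:decay_ss} and \autoref{rmk_derivative}. This yields a Duhamel formula for $\xi\cdot\nabla\omega$ whose source terms involve at most $\Delta\omega$ and $\xi\cdot\nabla$ of the nonlinearity. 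These are handled by \eqref{smoothing}, absorbing one derivative into the $(\tau-s)^{-1/2}$ singularity.
\end{itemize}

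\textbf{Step 3: contraction and conclusion.} Choose $\bar\epsilon$ small so that the quadratic term $C\epsilon^2 e^{cT}$ and the Lipschitz constant $C\epsilon$ are harmless. Then choose $\bar T(\epsilon,\delta)$ so that the forcing term $C e^{2\delta T}\le\epsilon/2$ and the linear term $Ce^{(\lambda)T}<1/4$. Differences $N(\omega_1)-N(\omega_2)$ are bilinear and are estimated as in Step 1, so $\Phi$ is a contraction on $B^{\epsilon,T}$. Uniqueness in the ball follows from the contraction property. Continuity in time follows from strong continuity of $S$ on $Y^{\hat q,\hat r}$ and $Y_\nabla^{\hat q,\hat r}$ (\autoref{cor:S_alpha}).

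I expect the main obstacle to be closing the weighted components $|\xi|\omega\in Y^{\hat q,\infty}$ and $\xi\cdot\nabla\omega$. This requires commuting weights past the unbounded drift $\frac{\xi}{2}\cdot\nabla$ and the potential $2\xi\cdot\nabla(\bar\Omega\,\cdot)$ without losing the growth rate $\lambda+\delta$ of the semigroup.
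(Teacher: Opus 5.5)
Your architecture matches the paper's. You use Duhamel from $-\infty$ with the weight $e^{-2(\lambda-\delta)\tau}$, a separate mild formulation for $h=|\xi|v$, Hardy's inequality for the commutator terms, and ultracontractivity for the $Y^\infty$ bound. Your pairing $\||\xi|u\|_{L^\infty}\|\nabla u\|_{Y^{\hat q,\hat r}}$ for $u\,\xi\cdot\nabla u$ is a fine, slightly simpler substitute for the paper's $Y^{2\hat q}$/$Y^{\hat r'}$ bounds. The drift you worry about is harmless: $[\tfrac{\xi}{2}\cdot\nabla,|\xi|]=\tfrac12|\xi|$ just turns $L$ into $L-\tfrac12$ in the equation for $h$.

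The gap is your treatment of the $Y_\nabla^{\hat q,\hat r}$ component. Applying $\xi\cdot\nabla$ to the equation and using \eqref{eq:commutator} produces the sources $-2\Delta v$ and $\xi\cdot\nabla\big(N(u)+N(\Omega^{lin})+V(u)\big)$.
\begin{itemize}
\item The second source contains $2u\,(\xi\cdot\nabla)^2u$ and $2\Omega^{lin}(\xi\cdot\nabla)^2u$. These are second derivatives of the input, which the $\bar Z^T$ norm does not control.
\item The first source, $\Delta v$, carries two derivatives on the output. A $Y_2$ bound on $v$ would cost $(\tau-s)^{-1}$, which is not integrable.
\end{itemize}
You propose to handle these by ``absorbing one derivative into the $(\tau-s)^{-1/2}$ singularity'' via \eqref{smoothing}. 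But \eqref{smoothing} places the derivative on the \emph{output}: $\|S(t)f\|_{Y_1}\lesssim t^{-1/2}e^{(\lambda+\delta)t}\|f\|_{Y}$. Absorbing a derivative that sits on the \emph{source} would require a divergence-form estimate $\|S(t)\,\mathrm{div}\,G\|_{Y^{\hat q,\hat r}}\lesssim t^{-1/2}e^{(\lambda+\delta)t}\|G\|_{Y^{\hat q,\hat r}}$ for the perturbed semigroup $S=S_\alpha$. That estimate is not available and would need its own proof; it is not a trivial perturbation of the heat-kernel bound, since $\mathcal K_\alpha$ is itself first order with scale-invariant coefficient $|\xi|\bar\Omega$. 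As written, Step 2 does not close.

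The missing observation is that no equation for $\xi\cdot\nabla v$ is needed. You already run the mild formulation for $h=|\xi|v$, so apply \eqref{smoothing} to it to obtain $e^{-2(\lambda-\delta)\tau}\|h(\tau)\|_{Y_1^{\hat q,\hat r}}\lesssim e^{2\delta T}+\epsilon^2$. Its sources lie in $Y^{\hat q,\hat r}$:
\begin{itemize}
\item $\frac{\xi}{|\xi|}\cdot\nabla v$, $\frac{v}{|\xi|}$ and $|\xi|\bar\Omega v$ are controlled by Hardy from the $Y_1^{\hat q,\hat r}$ bound on $v$;
\item $|\xi|F$ is controlled by $\||\xi|u\|_{L^\infty}\big(\|u\|_{Y^{\hat q,\hat r}}+\|\xi\cdot\nabla u\|_{Y^{\hat q,\hat r}}\big)$ plus the exponentially decaying $\Omega^{lin}$ terms. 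This is exactly where the $Y_\nabla$ part of the \emph{input} norm is used.
\end{itemize}
Then the pointwise identity $|\xi|\nabla v=\nabla h-\frac{\xi}{|\xi|}v$ gives $\|\xi\cdot\nabla v\|_{Y^{\hat q,\hat r}}\le\|h\|_{Y_1^{\hat q,\hat r}}+\|v\|_{Y^{\hat q,\hat r}}$. This is how the paper returns to $\bar Z^T$, and the same argument works for differences in the contraction step. With this replacement the rest of your plan goes through.
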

\begin{proof}
    We construct the solution as a fixed point of the map $\Gamma$ defined as follows: given $u\in B^{\epsilon,T} $, we define $v:=\Gamma u$ as the solution of the linear PDE 
    \begin{align}\label{linearized_ancient_PDE}
        \begin{dcases}
            \partial_\tau v=Lv+N(u)+N(\Omega^{lin})+V(u),
            \\
            \lim_{\tau\rightarrow -\infty}v(\tau)=0.
        \end{dcases}
    \end{align}
 \emph{Step 1}.
 Our first goal is to show that $\Gamma$ maps $B^{\epsilon,T}$ in itself. 
 We begin by estimating the term $N(u)+N(\Omega^{lin})+V(u)$. Using the assumption $2\hat{q}\leq \hat{r}$, the regularity of $\Omega^{lin}$ (cf. the discussion before the statement), Young’s inequality, and interpolation, we obtain
    \begin{align*}
    \|(N(u)+N(\Omega^{lin})+V(u))(r)\|_{Y^{\hat{q}}}& \lesssim \|u(r)\|_{Y^{2\hat{q}}}^2+\|\xi\cdot\nabla u(r)\|_{Y^{2\hat{q}}}^2+ e^{2\lambda r}\\ & \lesssim\|u(r)\|_{Y^{\hat{q}}}^2+\|u(r)\|_{Y^{\hat{r}}}^2+\|\xi\cdot\nabla u(r)\|_{Y^{\hat{q}}}^2+\|\xi\cdot\nabla u(r)\|_{Y^{\hat{r}}}^2+ e^{2\lambda r}\\ & \lesssim e^{2\lambda r}+ \epsilon^2 e^{4(\lambda-\delta)r}.
    \end{align*}
    Next, define $r^*,\hat{r}'$ as
    \begin{align}\label{def_auxuliary_parameters}
        \frac{1}{r^*}:=\frac{1}{\hat{r}}-\frac{1}{n},
        \quad 
        \frac{1}{\hat{r}'}:=\frac{1}{r^*}+\frac{1}{\hat{r}}.
    \end{align}
    By the Sobolev embedding \autoref{weighted_sobolev}, we also have
    \begin{align*}
    \|(N(u)+N(\Omega^{lin})+V(u))(r)\|_{Y^{\hat{r}'}}& \lesssim \|u(r)\|_{Y^{\hat{q},\hat{r}}_1}\|\xi\cdot\nabla u(r)\|_{Y^{\hat{r}}} +\|u(r)\|_{Y^{\hat{q},\hat{r}}_1}\|u(r)\|_{Y^{\hat{r}}}+e^{2\lambda r}
    \\ & \lesssim e^{2\lambda r}+ \epsilon^2 e^{4(\lambda-\delta)r}.
    \end{align*}
    
Combining these bounds with the semigroup representation: 
\begin{align}\label{eq:semigroup_form_1}
        v(\tau)=\int_{-\infty}^{\tau}S(\tau-r)\left[N(u)+N(\Omega^{lin})+V(u)\right](r)dr,
    \end{align}
and using smoothing and ultracontractivity properties of the semigroup given respectively by \eqref{smoothing} and \eqref{ultracontractivity}, we obtain for $s:=\frac{n}{2}\left(\frac{1}{\hat{r}}-\frac{1}{n}\right)<\frac{1}{2}$ the following chain of inequalities:
\begin{align*}
    \|v(\tau)\|_{Y^{\hat{q},\hat{r}}_1}
    &\lesssim 
    \int_{-\infty}^{\tau}\frac{e^{\frac{(\lambda+\delta)(\tau-r)}{2}}}{(\tau-r)^{1/2}}\left\|S\left(\frac{\tau-r}{2}\right)\left[N(u)+N(\Omega^{lin})+V(u)\right](r) \right\|_{Y^{\hat{q},\hat{r}}} dr 
    \\ 
    &\lesssim 
    \int_{-\infty}^{\tau}\frac{e^{(\lambda+\delta)(\tau-r)}}{(\tau-r)^{1/2+s}}\left(e^{2\lambda r}+\epsilon^2 e^{4(\lambda-\delta)r}\right)dr
    \\ 
    &\leq
    \int_{-\infty}^{\tau -1} 
    e^{(\lambda+\delta)(\tau-r)} \left(e^{2\lambda r}+\epsilon^2 e^{4(\lambda-\delta)r}\right)dr
    +
    \int_{\tau-1}^{\tau} \frac{e^{(\lambda+\delta)(\tau-r)}}{(\tau-r)^{1/2+s}}\left(e^{2\lambda r}+\epsilon^2 e^{4(\lambda-\delta)r}\right)dr
    \\ 
    &=:
    I_1(\tau)+I_2(\tau).
\end{align*}
One trivially has 
\begin{align}\label{estimate_1_smooth_fixed}
    I_1(\tau)&\lesssim e^{2\lambda \tau}+\epsilon^2 e^{4(\lambda-\delta)\tau}\leq (e^{2\delta T}+\epsilon^2)e^{2(\lambda-\delta)\tau}.
\end{align}
Moreover, since $s<1/2$, by H\"older inequality it holds
\begin{align}\label{estimate_2_smooth_fixed}
    I_2(\tau)& \lesssim e^{2\lambda \tau}+\epsilon^2 e^{4(\lambda-\delta)\tau}\leq (e^{2\delta T}+\epsilon^2)e^{2(\lambda-\delta)\tau}.
\end{align}
In conclusion, we have just proved the bound 
\begin{align}\label{estimate_1_ball}
  e^{-2(\lambda-\delta)\tau} \|v(\tau)\|_{Y^{\hat{q},\hat{r}}_1}&\lesssim (e^{2\delta T}+\epsilon^2).
\end{align} 

To capture spatial decay, we introduce the weighted quantity $h:=|\xi|v$. A direct computation shows that $h$ satisfies
\begin{align}\label{linearized_ancient_PDE_aux1}
        \begin{dcases}
            \partial_\tau h=\left(L-\frac{1}{2}\right)h-2\frac{\xi}{\lvert \xi\rvert}\cdot\nabla v-{(n+1)}\frac{v}{\lvert \xi\rvert}-2|\xi|\bar{\Omega}v+|\xi|\left(N(u)+N(\Omega^{lin})+V(u)\right),
            \\
            \lim_{\tau\rightarrow -\infty}h(\tau)=0.
        \end{dcases}
    \end{align} 
Using the mild formulation for $h,$ we write
\begin{align}\label{eq:semigroup_form_2}
    h(\tau)=\int_{-\infty}^{\tau} e^{-\frac{1}{2}(\tau-r)}S(\tau-r)\left[-2\frac{\xi}{\lvert \xi\rvert}\nabla v-(n+1)\frac{v}{\lvert \xi\rvert}-2|\xi|\bar{\Omega}v+|\xi|\left(N(u)+N(\Omega^{lin})+V(u)\right)\right](r)dr.
\end{align}
Thanks to the properties of $\bar{\Omega}$ (cf. equation \eqref{eq:decay_ss}), the bound \eqref{estimate_1_ball} just obtained on $v$, and Hardy's inequality (\autoref{weighted_sobolev}), we can control:
\begin{align}\label{estimate_1_weight}
\left\|\left(2\frac{\xi}{\lvert \xi\rvert}\cdot\nabla v+(n+1)\frac{v}{\lvert \xi\rvert}+2|\xi|\bar{\Omega}v\right)(r)\right\|_{Y^{\hat{q},\hat{r}}}& \lesssim \|v(r)\|_{Y^{\hat{q},\hat{r}}_1}\leq (e^{2\delta T}+\epsilon^2)e^{2(\lambda-\delta)r}.
\end{align}
On the other hand, the weighted nonlinear terms satisfy, arguing similarly as above, 
\begin{align}\label{eq:weighted_nonlinearity}
\left\||\xi|\left(N(u)+N(\Omega^{lin})+V(u)\right)(r) \right\|_{Y^{\hat{q},\hat{r}}} 
&\lesssim  
\||\xi|u(r)\|_{Y^{\hat{q},\infty}}^2
+
\|u(r)\|_{Y^{\hat{q},\hat{r}}}^2
+
\|\xi\cdot\nabla u(r)\|_{Y^{\hat{q},\hat{r}}}^2
+
e^{2\lambda r}\notag
\\ 
& \lesssim 
\epsilon^2 e^{4(\lambda-\delta) r}+e^{2\lambda r}.
\end{align}
Now we are ready to estimate the $Y^{\hat{q},\hat{r}}_1$ and $Y^{\infty}$ norms of $h$.
Using again the smoothing properties \eqref{smoothing} and arguing as in \eqref{estimate_1_smooth_fixed} and \eqref{estimate_2_smooth_fixed}, we conclude
\begin{align*}
    \|h(\tau)\|_{Y^{\hat{q},\hat{r}}_1}& 
    \lesssim
    \int_{-\infty}^{\tau} e^{-\frac{1}{2}(\tau-r)}\left\|S(\tau-r)\left[-2\frac{\xi}{\lvert \xi\rvert}\nabla v-(n+1)\frac{v}{\lvert \xi\rvert}-2|\xi|\bar{\Omega}v \right](r)\right\|_{Y^{\hat{q},\hat{r}}_1} dr
    \\ &+\int_{-\infty}^{\tau}e^{-\frac{1}{2}(\tau-r)}\left\|S(\tau-r)\left[|\xi|(N(u)+N(\Omega^{lin})+V(u))(r) \right]\right\|_{Y^{\hat{q},\hat{r}}_1} dr 
    \\ & \lesssim
    (e^{2\delta T}+\epsilon^2)\int_{-\infty}^{\tau} \frac{e^{(\lambda+\delta-\frac{1}{2})(\tau-r)}e^{2(\lambda-\delta)r}}{(\tau-r)^{1/2}} dr+\int_{-\infty}^{\tau}\frac{e^{(\lambda+\delta-\frac{1}{2})(\tau-r)}\left(e^{2\lambda r}+\epsilon^2 e^{4(\lambda-\delta)r}\right)}{(\tau-r)^{1/2}}dr\\
    & \lesssim
    (e^{2\delta T}+\epsilon^2)\int_{-\infty}^{\tau-1} e^{(\lambda+\delta-\frac{1}{2})(\tau-r)}e^{2(\lambda-\delta)r} dr+\int_{-\infty}^{\tau-1}e^{(\lambda+\delta-\frac{1}{2})(\tau-r)}\left(e^{2\lambda r}+\epsilon^2 e^{4(\lambda-\delta)r}\right) dr
    \\ & + (e^{2\delta T}+\epsilon^2)\int_{\tau-1}^{\tau} \frac{e^{(\lambda+\delta-\frac{1}{2})(\tau-r)}e^{2(\lambda-\delta)r}}{(\tau-r)^{1/2}} dr+\int_{\tau-1}^{\tau}\frac{e^{(\lambda+\delta-\frac{1}{2})(\tau-r)}\left(e^{2\lambda r}+\epsilon^2 e^{4(\lambda-\delta)r}\right)}{(\tau-r)^{1/2}}dr\\
    &\lesssim (e^{2\delta T}+\epsilon^2)e^{2(\lambda-\delta)\tau}+e^{2\lambda\tau}+\epsilon^2e^{4(\lambda-\delta)\tau}.
\end{align*}
 Therefore 
\begin{align}\label{estimate_2_ball}
  e^{-2(\lambda-\delta)\tau} \|h(\tau)\|_{Y^{\hat{q},\hat{r}}_1}&\lesssim (e^{2\delta T}+\epsilon^2).
\end{align}
Lastly, by similar computations and \eqref{ultracontractivity} we have for $s:=\frac{n}{2\hat{r}}<1$ the bound
\begin{align*}
  \|h(\tau)\|_{Y^{\infty}}& 
    \lesssim
    (e^{2\delta T}+\epsilon^2)\int_{-\infty}^{\tau} \frac{e^{(\lambda+\delta-\frac{1}{2})(\tau-r)}e^{2(\lambda-\delta)r}}{(\tau-r)^{s}} dr+\int_{-\infty}^{\tau}\frac{e^{(\lambda+\delta-\frac{1}{2})(\tau-r)}\left(e^{2\lambda r}+\epsilon^2 e^{4(\lambda-\delta)r}\right)}{(\tau-r)^{s}}dr\\
    &\lesssim (e^{2\delta T}+\epsilon^2)e^{2(\lambda-\delta)\tau}+e^{2\lambda\tau}+\epsilon^2e^{4(\lambda-\delta)\tau}. 
\end{align*}

The previous bounds on $h$ and equation \eqref{estimate_1_ball} imply, in particular:
\begin{align}\label{estimate_3_ball}
     e^{-2(\lambda-\delta)\tau} 
     \left( \||\xi|v(\tau)\|_{Y^{\hat{q},\hat{r}}}
     +
     \|\xi\cdot\nabla v(\tau)\|_{Y^{\hat{q},\hat{r}}}
     +
      \||\xi|v(\tau)\|_{Y^{\infty}}
     \right)
     &\lesssim (e^{2\delta T}+\epsilon^2).
\end{align}
The time continuity of $v$ and $h$ can be established by analogous arguments, based on the mild formulations \eqref{eq:semigroup_form_1}-\eqref{eq:semigroup_form_2} and the Lebesgue and Vitali convergence theorems. We omit the easy details.\\
Combining the previous estimates \eqref{estimate_1_ball}, \eqref{estimate_2_ball}, and \eqref{estimate_3_ball}, we conclude that $\Gamma$ maps $B^{\epsilon,T}$ in itself, provided $\epsilon>0$ is sufficiently small and $T<0$ is sufficiently negative.

\emph{Step 2}.
We are left to show that $\Gamma$ is a contraction, possibly taking smaller $\epsilon$ and more negative $T$. This follows by arguments similar to the ones described above. More precisely, let $u_1,u_2\in B^{\epsilon,T}$ and set $v_1:=\Gamma u_1$, $v_2:=\Gamma u_2$, $h_1:=|\xi|v_1$, $h_2:=|\xi|v_2$.  Then 
\begin{align*}
(v_1-v_2)(\tau)
=
\int_{-\infty}^{\tau}S(\tau-r)\left[N(u_1)-N(u_2)+V(u_1-u_2)\right](r)dr.
\end{align*}
    First, since $u_1,u_2\in B^{\epsilon,T}$, proceeding as before we obtain for $\hat{r}'$ defined in \eqref{def_auxuliary_parameters}:
    \begin{align*}
    \|(N(u_1)-N(u_2)+V(u_1-u_2))(r)\|_{Y^{\hat{q}}}
    &\lesssim 
    \|u_1-u_2\|_{\bar{Z}^T} e^{2(\lambda-\delta)r}\left(\epsilon e^{2(\lambda-\delta)r}+e^{\lambda r}\right),
    \\
    \|(N(u_1)-N(u_2)+V(u_1-u_2))(r)\|_{Y^{\hat{r}'}}
    &\lesssim 
    \|u_1-u_2\|_{\bar{Z}^T} e^{2(\lambda-\delta)r}\left(\epsilon e^{2(\lambda-\delta)r}+e^{\lambda r}\right).
    \end{align*}
    Combining these bounds with the smoothing and ultracontractivity properties of the semigroup, cf. \eqref{smoothing}, \eqref{ultracontractivity}, we obtain   for $s:=\frac{n}{2}\left(\frac{1}{\hat{r}}-\frac{1}{n}\right)<\frac{1}{2}$
\begin{align*}
    \|(v_1-v_2)(\tau)\|_{Y^{\hat{q},\hat{r}}_1}
    & \lesssim 
    \int_{-\infty}^{\tau} \frac{e^{\frac{(\lambda+\delta)(\tau-r)}{2}}}{(\tau-r)^{1/2}} \left\|S\left(\frac{\tau-r}{2}\right)(N(u_1)-N(u_2)+V(u_1-u_2))(r) \right\|_{Y^{\hat{q},\hat{r}}} dr 
    \\ & \lesssim 
    \|u_1-u_2\|_{\bar{Z}^T}\int_{-\infty}^{\tau}\frac{e^{(\lambda+\delta)(\tau-r)}}{(\tau-r)^{1/2+s}} e^{2(\lambda-\delta)r}\left(\epsilon e^{2(\lambda-\delta)r}+e^{\lambda r}\right)dr
    \\ &\leq
    \|u_1-u_2\|_{\bar{Z}^T}\left(\int_{-\infty}^{\tau} e^{(\lambda+\delta)(\tau-r)} e^{2(\lambda-\delta)r}\left(\epsilon e^{2(\lambda-\delta)r}+e^{\lambda r}\right)dr\right.\\ & \qquad \qquad \qquad \left.+\int_{\tau-1}^{\tau}\frac{e^{(\lambda+\delta)(\tau-r)}}{(\tau-r)^{1/2+s}} e^{2(\lambda-\delta)r}\left(\epsilon e^{2(\lambda-\delta)r}+e^{\lambda r}\right)dr\right)\\ &=:\|u_1-u_2\|_{\bar{Z}^T}\left(J_1(\tau)+J_2(\tau)\right).
\end{align*}
The two terms $J_1(\tau), J_2(\tau)$ above can be treated as in \eqref{estimate_1_smooth_fixed} and \eqref{estimate_2_smooth_fixed}, leading to
\begin{align}\label{estimate_1_contraction}
  e^{-2(\lambda-\delta)\tau} \|(v_1-v_2)(\tau)\|_{Y^{\hat{q},\hat{r}}_1}&\lesssim \left(\epsilon e^{2(\lambda-\delta)\tau}+e^{\lambda \tau}\right)\|u_1-u_2\|_{\bar{Z}^T}.
\end{align} 
Lastly, we study the time behavior of $h_1-h_2$. Using the mild formulation, we have
\begin{align*}
    (h_1-h_2)(\tau)&=\int_{-\infty}^{\tau} e^{-\frac{1}{2}(\tau-r)}S(\tau-r)\bigg[-2\frac{\xi}{\lvert \xi\rvert}\nabla (v_1-v_2)-(n+1)\frac{v_1-v_2}{\lvert \xi\rvert}-2|\xi|\bar{\Omega}(v_1-v_2)\\ &\qquad\qquad \qquad\qquad \qquad\qquad+|\xi|\left(N(u_1)-N(u_2)+V(u_1-u_2)\right)\bigg](r)dr.
\end{align*}
Arguing as above we can bound
\begin{multline}
\label{estimate_1_weight_contr}
\left\|\left(2\frac{\xi}{\lvert \xi\rvert}\nabla (v_1-v_2)+(n+1)\frac{v_1-v_2}{\lvert \xi\rvert}+2|\xi|\bar{\Omega}(v_1-v_2)\right)(r)\right\|_{Y^{\hat{q},\hat{r}}}\\ \lesssim \|(v_1-v_2)(r)\|_{Y^{\hat{q},\hat{r}}_1} \lesssim e^{2(\lambda-\delta)r}\left(\epsilon e^{2(\lambda-\delta)r}+e^{\lambda r}\right)\|u_1-u_2\|_{\bar{Z}^T},
\end{multline}
and
\begin{align}\label{control_1_weighet_term_contr}
\||\xi|\left(N(u_1)-N(u_2)+V(u_1-u_2)\right)(r) \|_{Y^{\hat{q},\hat{r}}}&\lesssim  \|u_1-u_2\|_{\bar{Z}^T} e^{2(\lambda-\delta)r}\left(\epsilon e^{2(\lambda-\delta)r}+e^{\lambda r}\right).
\end{align}
Therefore, by the smoothing property \eqref{smoothing} and the bounds \eqref{estimate_1_weight_contr}, \eqref{control_1_weighet_term_contr} we can control the forcing terms appearing in the mild formulation for $h_1-h_2$ obtaining
\begin{align*}
    \|(h_1-h_2)(\tau)\|_{Y^{\hat{q},\hat{r}}_1}
    &\lesssim
    \|u_1-u_2\|_{\bar{Z}^T}\int_{-\infty}^{\tau} \frac{e^{(\lambda+\delta-\frac{1}{2})(\tau-r)}}{(\tau-r)^{1/2}}  e^{2(\lambda-\delta)r}\left(\epsilon e^{2(\lambda-\delta)r}+e^{\lambda r}\right) dr\\ & +\|u_1-u_2\|_{\bar{Z}^T}\int_{-\infty}^{\tau}\frac{e^{(\lambda+\delta-\frac{1}{2})(\tau-r)}}{(\tau-r)^{1/2}}e^{2(\lambda-\delta)r}\left(\epsilon e^{2(\lambda-\delta)r}+e^{\lambda r}\right)dr\\ &\lesssim \|u_1-u_2\|_{\bar{Z}^T} e^{2(\lambda-\delta)\tau}\left(\epsilon e^{2(\lambda-\delta)\tau}+e^{\lambda \tau}\right)\\ &+\|u_1-u_2\|_{\bar{Z}^T}\int_{\tau-1}^{\tau} \frac{e^{(\lambda+\delta-\frac{1}{2})(\tau-r)}}{(\tau-r)^{1/2}}e^{2(\lambda-\delta)r}\left(\epsilon e^{2(\lambda-\delta)r}+e^{\lambda r}\right)  dr\\ &\lesssim \|u_1-u_2\|_{\bar{Z}^T}e^{2(\lambda-\delta)\tau}\left(\epsilon e^{2(\lambda-\delta)\tau}+e^{\lambda \tau}\right).
\end{align*}
Therefore 
\begin{align} \label{estimate_2_contraction}
  e^{-2(\lambda-\delta)\tau} \|(h_1-h_2)(\tau)\|_{Y_1^{\hat{q},\hat{r}}}&\lesssim \|u_1-u_2\|_{\bar{Z}^T} \left(\epsilon e^{2(\lambda-\delta)\tau}+e^{\lambda \tau}\right).
\end{align}
The latter and equation \eqref{estimate_1_contraction} imply
\begin{align}
\label{estimate_3_contraction}
   e^{-2(\lambda-\delta)\tau} \|(h_1-h_2)(\tau)\|_{Y^{\hat{q},\hat{r}}}&+e^{-2(\lambda-\delta)\tau} \|\xi\cdot\nabla (v_1-v_2)(\tau)\|_{Y^{\hat{q},\hat{r}}}\notag
   \\
   &\lesssim \|u_1-u_2\|_{\bar{Z}^T} \left(\epsilon e^{2(\lambda-\delta)\tau}+e^{\lambda \tau}\right). 
\end{align}
Lastly, by \eqref{ultracontractivity} and similar computations
\begin{align}
\label{estimate_4_contraction}
    e^{-2(\lambda-\delta)\tau}\|(h_1-h_2)(\tau)\|_{Y^{\infty}}\lesssim \|u_1-u_2\|_{\bar{Z}^T} \left(\epsilon e^{2(\lambda-\delta)\tau}+e^{\lambda \tau}\right).
\end{align}
Collecting the estimates \eqref{estimate_1_contraction}, \eqref{estimate_2_contraction}, \eqref{estimate_3_contraction}, \eqref{estimate_4_contraction},  we conclude that $\Gamma$ is a contraction on  $B^{\epsilon,T}$, provided $\epsilon>0$ is sufficiently small and $T<0$ sufficiently negative. Therefore, by the Banach fixed point theorem, a unique solution exists in $B^{\epsilon,T}$, concluding the proof.
\end{proof}
The different asymptotic behavior of $\Omega^{lin}$ and $\Omega^{per}$ as $\tau\rightarrow-\infty$ implies that $\Psi\neq 0$, and consequently $\Omega_1\neq \Omega_2$. As a corollary of \autoref{prop:fixed_point_ancient} we now derive the main result of this section; we formulate it in the form of a theorem to be invoked later on.	\begin{theorem}\label{thm:existence_ancient_solutions}
		Assume \eqref{hp: coefficients 1}, and let $\alpha>0$ be such that the corresponding expander $\bar{\Omega}$ yields a linearized operator $L: \mathcal{D}(L) \subset Y^{\hat{q},\hat{r}} \rightarrow  Y^{\hat{q},\hat{r}}$ with maximal positive eigenvalue $\lambda>0$. Then, for every sufficiently small $\eps>0$ and $p\in [\hat{q},\hat{r}]$ there exist $T<0$ and an ancient solution $\Psi \in C((-\infty,T],Y^{\hat{q},\hat{r}})$ to \eqref{eq:perturb} satisfying
		\begin{align*}
        \Psi\in C((-\infty,T];Y^{\hat{q},\hat{r}}_1\cap Y^{\hat{q},\hat{r}}_\nabla ),\quad \ |\xi|\Psi\in C((-\infty,T];Y^{\hat{q},\infty}),\\ 
			\sup_{\tau\leq T }e^{-\frac{\lambda}{2}\tau}\left(\lVert \Psi(\tau)\rVert_{Y^{\hat{q},\hat{r}}_1}+\lVert |\xi|\Psi(\tau)\rVert_{Y^{\hat{q},\hat{r}}_1}+\lVert |\xi|\Psi(\tau)\rVert_{Y^{\infty}}\right)<\eps,
		\end{align*}
		and 
		\begin{align*}
			\lVert \Psi(\tau)\rVert_{Y_{\nabla}^p}> e^{\lambda\tau}\frac{\lVert \bar{\Omega}^{lin}\rVert_{Y_{\nabla}^p}}{2},
            \quad
            \mbox{ for }\tau\in (-\infty,T].
		\end{align*}
	\end{theorem}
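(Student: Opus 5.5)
Set $\Psi := \Omega^{lin} + \Omega^{per}$, where $\Omega^{lin}(\tau) = a\,e^{\lambda\tau}\bar\Omega^{lin}$ and $\Omega^{per}$ is the fixed point of \autoref{prop:fixed_point_ancient}. The amplitude $a>0$ is a small normalisation of the eigenfunction, which we may take with $a=1$ after rescaling $\bar\Omega^{lin}$. Fix $\delta\in(0,\lambda/10)$. By construction $\Psi$ solves \eqref{eq:perturb}, since $L\Omega^{lin}=\partial_\tau\Omega^{lin}$. So the theorem reduces to checking regularity, the quantitative smallness bound, and the lower bound. Everything follows from the norm of $\bar Z^T$ together with the explicit form of $\Omega^{lin}$.

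\emph{Regularity and smallness.} By the discussion before \autoref{prop:fixed_point_ancient}, $\bar\Omega^{lin}$ and $\nabla\bar\Omega^{lin}$ decay exponentially. Hence $\bar\Omega^{lin}$, $|\xi|\bar\Omega^{lin}$, $\xi\cdot\nabla\bar\Omega^{lin}$ and $\nabla(|\xi|\bar\Omega^{lin})$ lie in every $Y^p$ and in $Y^\infty$, using $C^2_{loc}$ regularity near the origin. So $\Omega^{lin}\in C((-\infty,T];Y_1^{\hat q,\hat r}\cap Y_\nabla^{\hat q,\hat r})$ with all relevant norms equal to $C_0 a e^{\lambda\tau}$. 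Then
\[
e^{-\frac\lambda2\tau}\big(\|\Omega^{lin}\|_{Y_1^{\hat q,\hat r}}+\||\xi|\Omega^{lin}\|_{Y_1^{\hat q,\hat r}}+\||\xi|\Omega^{lin}\|_{Y^\infty}\big)\le C_0 e^{\frac\lambda2 T},
\]
which is $<\eps/2$ once $T$ is negative enough. For $\Omega^{per}\in B^{\epsilon',T}$ we have control of $\|\Omega^{per}\|_{Y_1}$, $\|\Omega^{per}\|_{Y_\nabla}$ and $\||\xi|\Omega^{per}\|_{Y^{\hat q,\infty}}$ by $\epsilon' e^{2(\lambda-\delta)\tau}$. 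The norm $\||\xi|\Omega^{per}\|_{Y_1^{\hat q,\hat r}}$ is not literally part of $\bar Z^T$, but the fixed point satisfies $\Omega^{per}=\Gamma\Omega^{per}$. Estimate \eqref{estimate_2_ball} for $h=|\xi|v$ then gives $\||\xi|\Omega^{per}\|_{Y_1^{\hat q,\hat r}}\lesssim (e^{2\delta T}+\epsilon'^2)e^{2(\lambda-\delta)\tau}$, and similarly for the $Y^\infty$ norm. Since $2(\lambda-\delta)-\lambda/2>0$, multiplying by $e^{-\lambda\tau/2}$ yields a bound $\lesssim (\epsilon'+e^{2\delta T})e^{(3\lambda/2-2\delta)T}$, which is $<\eps/2$. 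Continuity in time of $\Psi$ and of $|\xi|\Psi$ in the stated spaces follows from the continuity of $\Omega^{per}$ (part of $\bar Z^T$ and of the continuity remark in the proof of the proposition) together with the explicit form of $\Omega^{lin}$.

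\emph{Lower bound.} For $p\in[\hat q,\hat r]$, interpolation gives $\|\Omega^{per}(\tau)\|_{Y^p_\nabla}\le\|\Omega^{per}(\tau)\|_{Y_\nabla^{\hat q,\hat r}}\le \epsilon' e^{2(\lambda-\delta)\tau}$. The reverse triangle inequality then yields
\[
\|\Psi(\tau)\|_{Y^p_\nabla}\ge e^{\lambda\tau}\|\bar\Omega^{lin}\|_{Y^p_\nabla}-\epsilon' e^{2(\lambda-\delta)\tau}.
\]
It suffices that $\epsilon' e^{(\lambda-2\delta)\tau}<\tfrac12\|\bar\Omega^{lin}\|_{Y^p_\nabla}$ for $\tau\le T$. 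This holds after decreasing $T$, because $\lambda-2\delta>0$. The constant can be taken uniform in $p$, since $\inf_{p\in[\hat q,\hat r]}\|\bar\Omega^{lin}\|_{Y^p_\nabla}>0$ by continuity in $p$ of the norms of a fixed nonzero rapidly decaying function. The strict inequality shows in particular that $\Psi\neq0$.

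\emph{Main obstacle.} The delicate point is bookkeeping rather than new analysis. First, the quantity $\||\xi|\Psi\|_{Y_1^{\hat q,\hat r}}$ is not part of the $\bar Z^T$ norm, so it must be recovered from the a priori estimate \eqref{estimate_2_ball} applied at the fixed point. Second, the order of parameter choices must be consistent: fix $\delta$, then $\epsilon'<\bar\epsilon(\delta)$, then $T<\bar T(\epsilon',\delta)$, shrinking $T$ further for both the $\eps$-smallness and the lower bound. All the conditions only require $T$ more negative, so they are compatible.
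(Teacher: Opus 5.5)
Your proposal is correct and follows the paper's proof essentially verbatim. You take $\Psi=\Omega^{lin}+\Omega^{per}$ with $\Omega^{per}$ from \autoref{prop:fixed_point_ancient}, shrink $T$ so that $e^{\frac{\lambda}{2}T}$ is small against the norms of $\bar\Omega^{lin}$ and $e^{(\lambda-2\delta)T}$ is small against $\|\bar\Omega^{lin}\|_{Y^p_\nabla}$, and conclude with interpolation and the reverse triangle inequality. Your remark that $\||\xi|\Omega^{per}\|_{Y_1^{\hat q,\hat r}}$ is not part of the $\bar Z^T$ norm, and must be read off from \eqref{estimate_2_ball} at the fixed point, is correct bookkeeping that the paper leaves implicit.
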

	\begin{proof}
		Let us choose $\delta,\frac{\eps}{2},T$ such that \autoref{prop:fixed_point_ancient} holds. Furthermore, by possibly choosing more negative $T$, we ensure that
		\begin{align}\label{cond 1 barT}
			e^{\frac{\lambda}{2}T}&< \frac{\eps}{2\left(\lVert \bar{\Omega}^{lin}\rVert_{Y_1^{\hat{q},\hat{r}}}+\lVert |\xi|\bar{\Omega}^{lin}\rVert_{Y^{\hat{q},\hat{r}}_1}+\lVert |\xi|\bar{\Omega}^{lin}\rVert_{Y^{\infty}}\right)},\\
			e^{(\lambda-2\delta) T}&< \frac{\lVert {\bar{\Omega}}^{lin}\rVert_{Y_{\nabla}^p}}{\eps}. \label{cond 2 barT}
		\end{align}
		Let us now define for $\tau\in (-\infty,T]$
		\begin{align*}
			\Psi(\tau):=\Omega^{lin}(\tau)+\Omega^{per}(\tau).
		\end{align*}
		In particular, we have that $\Psi\in C((-\infty,T], Y^{\hat{q},\hat{r}})$, $\Psi$ solves \eqref{eq:perturb}, and has the additional claimed regularity. Moreover, by the time decaying properties of $\Omega^{lin}, \Omega^{per}$ and the choice of $T$ in \eqref{cond 1 barT}, it follows that
		\begin{align*}
			\left(\lVert \Psi(\tau)\rVert_{Y_1^{\hat{q},\hat{r}}}+\lVert |\xi|\Psi(\tau)\rVert_{Y_1^{\hat{q},\hat{r}}}+\lVert |\xi|\Psi(\tau)\rVert_{Y^{\infty}}\right)<\eps e^{\frac{\lambda}{2}\tau},
            \quad \text{for } \tau\in(-\infty,T].
		\end{align*}
		In addition, by \eqref{cond 2 barT}, the time decaying properties of $\Omega^{per}$, and interpolation, we have
		\begin{align*}
			\lVert \Omega^{per}(\tau)\rVert_{Y_{\nabla}^p}<\frac{\eps}{2}e^{(\lambda-2\delta)\tau}e^{\lambda\tau}<\frac{\lVert \bar{\Omega}^{lin}\rVert_{Y_{\nabla}^p}}{2}e^{\lambda\tau},
            \quad \text{for } \tau\in(-\infty,T].  
		\end{align*}
		Therefore, recalling the definition of $\Omega^{lin}$:
		\begin{align*}
			\lVert \Psi(\tau)\rVert_{Y_{\nabla}^p}&\geq \lVert \Omega^{lin}(\tau)\rVert_{Y_{\nabla}^p}-\lVert \Omega^{per}(\tau)\rVert_{Y_{\nabla}^p}   > \frac{\lVert \bar{\Omega}^{lin}\rVert_{Y_{\nabla}^p}}{2}e^{\lambda\tau},
		\end{align*}
		for  $\tau\in (-\infty,T]$. This completes the proof.
	\end{proof}

\subsection{Proof of \autoref{thm:critical}} 
\label{ssec:proof.critical}
    
The previous \autoref{thm:existence_ancient_solutions} directly implies non-uniqueness for \eqref{eq:KS_physical} in the scaling-critical space $L^{\infty}([0,T'];L^{\frac{n}{2},\infty}(\R^n))$, where $T'>0$ and $L^{\frac{n}{2},\infty}(\R^n)$ is the Lorentz space. 
Namely, it allows to construct an initial datum $c_0\in L^{\frac{n}{2},\infty}(\R^n)$ and two distinct functions $c_1,c_2\in L^{\infty}([0,T'];L^{\frac{n}{2},\infty}(\R^n))$ satisfying \begin{align} \label{eq:regularity.nonlinearity}
    c_i\nabla(-\Delta)^{-1}c_i \in L^1([0,T'];L^1_{loc}(\R^n)),
    \quad
    i \in \{1,2\},
\end{align}
and \eqref{eq:KS_physical} in the sense of distributions on $(0,T)\times \R^n$. Moreover the initial datum is attained in the sense that for all $\phi\in C^{\infty}_c(\R^n)$:
\begin{align*}
    \lim_{t\rightarrow 0}\int_{\R^n}c_1(t,x)\phi(x)dx=\lim_{t\rightarrow 0}\int_{\R^n}c_2(t,x)\phi(x)dx=\int_{\R^n}c_0(x)\phi(x)dx.
\end{align*}

Assume \eqref{hp: coefficients 1}, take $\bar{\alpha}>0$ such that \autoref{thm:existence_ancient_solutions} applies, and denote by $u_{\bar{\alpha}}(|\cdot|)$ the profile associated with $\bar{\Omega}$, i.e. $\bar\Omega := u_{\bar{\alpha}}(|\cdot|)$.
Define $T':=e^T$, the initial condition
\begin{align*}
    c_0(x):=\frac{2(n-2)\ell_{\bar{\alpha}}}{|x|^2},
\end{align*}
and for $t\in (0,T']$
\begin{align*}
    c_1(t,x)&:=\frac{2}{t}\left(nu_{\bar{\alpha}}\left(\frac{|x|}{\sqrt{t}}\right)+\frac{|x|}{\sqrt{t}}u_{\bar{\alpha}}'\left(\frac{|x|}{\sqrt{t}}\right)\right),
    \quad 
    c_2(t,x)=c_1(t,x)+\frac{1}{t}A^{-1}\left[\Psi\left(\log t,\frac{  \cdot
    }{\sqrt{t}}\right) \right](x).
\end{align*}
By \autoref{thm:existence_ancient_solutions}, the function $\Psi$ satisfies 
\begin{align*}
    \Psi\in C((-\infty,T];Y^{\hat{q},\hat{r}}_{\nabla}),
    \quad \|\Psi(\tau)\|_{Y^{\hat{q},\hat{r}}_\nabla}\leq e^{\lambda\tau/2}, 
\end{align*}
and therefore
\begin{align*}
    \frac{1}{t}A^{-1}\Psi\left(\log t,\frac{\cdot}{\sqrt{t}}\right)\in C([0,T'];X^{\hat{q},n/2}) \cap C((0,T'];X^{\hat{r}}),
    \quad
    \quad  \lim_{t\rightarrow 0}\frac{1}{t}\left\|A^{-1}\Psi\left(\log t,\frac{\cdot}{\sqrt{t}}\right)\right\|_{L^{n/2}(\R^n)}=0.
\end{align*}
Combined with \eqref{eq:decay_ss}, this implies 
\begin{align*}
    c_1,c_2\in L^{\infty}([0,T'];L^{\frac{n}{2},\infty}(\R^n))\cap C((0,T'];L^{\hat{r}}(\R^n)).
\end{align*}
Also, it holds
\begin{align*}
    \|c_1(t)\|_{L^{\hat{r}}}+\|c_2(t)\|_{L^{\hat{r}}} &\lesssim \frac{1}{t^{1-\frac{n}{2\hat{r}}}}.
\end{align*}
By standard properties of Riesz transforms and Riesz potential of order 1, cf. \cite[Chapters 3-4]{stein1970singular}, $\nabla(-\Delta)^{-1}$ is a linear bounded operator between $L^{\hat{r}}(\R^n)$ and $L^{\frac{\hat{r}n}{n-\hat{r}}}(\R^n)$. The latter implies \eqref{eq:regularity.nonlinearity} since $\frac{n}{2}<\hat{r}<n.$
Lastly, if $\phi\in C^{\infty}_c(\R^n)$ is radially symmetric with radial part $\varphi$, then using integration by parts and dominated convergence we get
\begin{align*}
    \int_{\R^n}c_1(t,x)\phi(x) dx
    &=
    \frac{2\omega_{n-1}}{t}\int_0^{+\infty }\left(nu_{\bar{\alpha}}\left(\frac{r}{\sqrt{t}}\right)+\frac{r}{\sqrt{t}}u_{\bar{\alpha}}'\left(\frac{r}{\sqrt{t}}\right)\right) \varphi(r) r^{n-1}dr
    \\ 
    & = 
    -\frac{2\omega_{n-1}}{t}\int_0^{+\infty }u_{\bar{\alpha}}\left(\frac{r}{\sqrt{t}}\right)\varphi'(r)r^{n}dr
    \\ 
    & \rightarrow  
    -2\ell_{\bar\alpha}\omega_{n-1}\int_0^{+\infty }\varphi'(r)r^{n-2}dr
    =
    \int_{\R^n}c_0(x)\phi(x)dx,\quad \text{as }t\rightarrow 0. 
\end{align*}
The general case of $\phi\in C^{\infty}_c(\R^n)$ follows by observing that 
\begin{align*}
    \int_{\R^n}c_1(t,x)\phi(x) dx&=\int_{\R^n}c_1(t,x)\bar{\phi}(x) dx, 
\end{align*}
where $\bar{\phi}(x)$ is the average of $\phi$ over the sphere of radius $|x|$, namely
\begin{align}
    \bar{\phi}(x)&:=\frac{1}{\omega_{n-1}}\int_{\mathbb{S}^{n-1}}\phi(|x|\hat{\theta})\sigma(d\hat{\theta}),
\end{align}
that clearly is radially symmetric and satisfies $\bar\phi \in C^\infty_c(\R^n)$.

\section{Localization procedure} \label{sec:localization}
From the results of the previous section, it is possible to construct two solutions to the problem
\begin{align*}
   \begin{dcases}
        \partial_t w=\Delta w+2nw^2+y\cdot\nabla (w^2), \quad \text{in } (t,y) \in [0,e^T] \times \R^{n+2},\\
        w(0,y)=\frac{\ell_{\alpha}}{|y|^2},
   \end{dcases}
\end{align*}
living in the space $C([0,e^T];L^1_{loc}(\R^{n+2},|y|^{-2}dy))$.
One solution is given by
\begin{align*}
    \tilde{w}_1(t,y):=\bar{w}(t,y):=\frac{1}{t}\bar{\Omega}\left(\frac{y}{\sqrt{t}}\right),
\end{align*}
while the second one has the form
\begin{align*}
    \tilde{w}_2(t,y)=\bar{w}(t,y)+\frac{1}{t}\Psi\left(\log t,\frac{y}{\sqrt{t}}\right),
\end{align*}
where $\bar{\Omega}$ is the self-similar profile associated with an unstable eigenvalue (see \autoref{thm:instability}), and $\Psi$ is the corresponding ancient solution provided by \autoref{thm:existence_ancient_solutions}.
However, for $q<\frac{n}{2}$, neither these solutions nor the initial datum belong to $Y^q_{\nabla}$ due to their insufficient decay as $|y|\to+\infty$. Consequently, the associated ${c}_i := A^{-1}[\tilde{w}_i]$ do not lie in $L^q(\R^n)$.

As outlined in \autoref{sec:strategy}, the goal of this section is to localize $\tilde{w}_1$ and $\tilde{w}_2$ by suppressing their tails at the initial time. This is carried out in \autoref{subsec:localization_PDE}, where the problem is addressed by studying a nonlinear equation of the form \eqref{eq:w} with singular lower-order terms. Finally, in \autoref{subsec:end_proof}, we complete the proof of \autoref{thm:non.uniqueness} by showing that this localization procedure yields two distinct mild $L^q$-solutions to \eqref{eq:KS_physical} arising from the same initial datum.
\subsection{A Keller-Segel--type equation with singular lower order terms}\label{subsec:localization_PDE}
We recall that the results of \autoref{subsec:selfsimilarprofile} yield $\bar{\Omega}\in C^2_{loc}(\R^{n+2})$ and the decay properties \eqref{eq:decay_ss}:
\begin{align*}
    \bar{\Omega}(\xi)=O(|\xi|^{-2}),\quad 
    |\nabla\bar{\Omega}(\xi)|=O(|\xi|^{-3}),
    \quad \text{as } |\xi|\to+\infty.
\end{align*}
In this section we introduce three indices $q_a, r$ and $\sigma$ satisfying 
\begin{align}\label{condition_localization}
    \frac{nr}{2n-r}<q_a<\frac{n}{2}<r< \frac{2n}{3},\quad \sigma=\frac{nr}{n-r}.
\end{align}
Note that this choice implies $\sigma>n$.
We employ the notation $q_a$ for the supercritical exponent, since it is just an auxiliary parameter of this section that should not be confused with $q$ of \autoref{thm:non.uniqueness}.\\ 
Choosing $\hat{q}=1$ and $\hat{r}=(2\hat{q})\vee r$, \autoref{thm:existence_ancient_solutions} provides an ancient solution $\Psi$ satisfying
\begin{align*}
        \Psi\in & C((-\infty,T];Y^{q_a,r}_1\cap Y^{q_a,r}_\nabla ),\quad \ |\xi|\Psi\in C((-\infty,T];Y^{q_a,\infty}),\\ 
			\sup_{\tau\leq T }& e^{-\frac{\lambda}{2}\tau} \left(\lVert \Psi(\tau)\rVert_{Y_1^{q_a,r}}+\lVert |\xi|\Psi(\tau)\rVert_{Y_1^{q_a,r}}+\lVert |\xi|\Psi(\tau)\rVert_{Y^{\infty}}\right)<\eps,
\end{align*}
for every $\varepsilon>0$ sufficiently small and $T<0$ sufficiently negative.\\
The two solutions $\tilde{w}_1(t), \tilde{w}_2(t)$ introduced above converge locally in $Y^{q}_{\nabla}$ to
\begin{align*}
    \tilde{w}_0(y)=\frac{\ell_{\alpha}}{|y|^2},
    \quad \text{as } t\to 0.
\end{align*}
However, this profile does not belong to $Y^q_{\nabla}$. To enforce integrability, we localize $\tilde{w}_0$ by means of a smooth modulation. Let $\chi \in C^{\infty}([0,+\infty))$ satisfying $\chi(\rho)=1$ for $\rho \leq 1$, and define
\begin{align*}
    w_0(y):=\chi(|y|)\tilde{w}_0(y),
    \quad
    h_0:=\tilde{w}_0-w_0.
\end{align*}
We require that
\begin{align} \label{eq:assumptions.localization}
    w_0\in Y^q_{\nabla},\quad 
  h_0\in Y^r_{\nabla}\cap Y_1^r,
  \quad
  \mbox{and}
  \quad
  |y|h_0\in Y^{\sigma},  
\end{align}
which is true for instance if $\chi$ is compactly supported. We denote by $w_1$ and $w_2$ the two solutions of \eqref{eq:w} that we aim to construct, and we adopt the ansatz
\begin{align*}
    w_i = \tilde{w}_i - h_i, \qquad i\in\{1,2\}.
\end{align*}
This ansatz forces each $h_i$ to solve
\begin{align}\label{eq_reminder_trunc}
\begin{dcases}
\partial_t h_i=\Delta h_i+4n\bar{w}h_i+2y\cdot\nabla(\bar{w}h_i)+f_i(h_i),
\\
h_i(0)=h_0,
\end{dcases}
\end{align}
where 
\begin{align*}
    \bar{w}(t,y)&:=\frac{1}{t}\bar{\Omega}\left(\frac{y}{\sqrt{t}}\right),
    \\
f_i(h)&:=4n\psi_i h+2y\cdot\nabla(\psi_i h)-2nh^2-y\cdot \nabla (h^2),
\\
\psi_1(t,y)&:=0,
\\
\psi_2(t,y)&:=\frac{1}{t}\Psi\!\left(\log t,\frac{y}{\sqrt{t}}\right).
\end{align*}

The construction of solutions to \eqref{eq_reminder_trunc} is nontrivial due to the singular behavior of $\bar{w}$ at $t=0$. In particular, local well-posedness is tied to the ``strength'' of the instability associated with the eigenvalue of $L_{\alpha}$, see \eqref{Item instabilty} below. From now on, we drop the subscript $\alpha$ to simplify the notation. 
We construct solutions to \eqref{eq_reminder_trunc} via a fixed-point argument. The starting point for this procedure is the following proposition.

\begin{prop}\label{prop_linear_localization}
   Assume \eqref{condition_localization}. 
   Let $\alpha>0$ be such that the corresponding expander $\bar{\Omega}$ yields a linearized operator $L: \mathcal{D}(L) \subset Y^{q_a,r} \rightarrow  Y^{q_a,r}$ with maximal positive eigenvalue $\lambda$ satisfying
		\begin{align}
			\label{Item instabilty} {\lambda}<1-\frac{n}{2r}.  
		\end{align}
		Assume further that $h_0\in Y_1^r\cap Y_{\nabla}^r$, $|y|h_0\in Y^\sigma$, and $f:(0,1]\times \R^{n+2} \rightarrow \R$ is such that
\begin{align*}
        M:=\sup_{t\in (0,1)} \left(\lVert f(t)\rVert_{Y^r}t+\lVert |y|f(t)\rVert_{Y^r}t^{\frac{1}{2}}+\lVert f(t)\rVert_{Y^{q_{a}}}t^{1+\frac{n}{2r}-\frac{n}{2q_a}}+\lVert |y|f(t)\rVert_{Y^{q_{a}}}t^{\frac{1}{2}+\frac{n}{2r}-\frac{n}{2q_a}}\right) &<\infty,
        \\
			\operatorname{lim}_{t\rightarrow 0} \left(\lVert f(t)\rVert_{Y^r}t+\lVert |y|f(t)\rVert_{Y^r}t^{\frac{1}{2}}+\lVert f(t)\rVert_{Y^{q_{a}}}t^{1+\frac{n}{2r}-\frac{n}{2q_a}}+\lVert |y|f(t)\rVert_{Y^{q_{a}}}t^{\frac{1}{2}+\frac{n}{2r}-\frac{n}{2q_a}}\right)&=0,
		\end{align*}
and the following functions, extended by continuity at time $t=0$, satisfy
		\begin{align*}
            tf&\in C([0,1];Y^r), & \hspace{-2cm}t^{\frac{1}{2}}|y|f&\in C([0,1];Y^r),
            \\  
            t^{1+\frac{n}{2r}-\frac{n}{2q_a}} f&\in C([0,1];Y^{q_a}), 
            & \hspace{-2cm} 
            t^{\frac{1}{2}+\frac{n}{2r}-\frac{n}{2q_a}} |y|f&\in C([0,1];Y^{q_a}).
		\end{align*}
        Then there exists a unique $h\in C([0,1],Y_{\nabla}^r)$ solving the Cauchy problem 
		\begin{align}\label{linear pde}
			\begin{dcases}
				\partial_t h=\Delta h+4n\bar{w}h+2y\cdot\nabla(\bar{w}h)+f,\\
h(0)=h_0.
			\end{dcases} 
		\end{align}
		Furthermore, $h$ satisfies
        \begin{align*}
            t^{\frac{1}{2}}h\in C([0,1];{ Y_1^r}),\quad |y|h\in C([0,1],Y^{\sigma}),\quad t^{\frac{n}{2\sigma}}|y|h\in  C([0,1],Y^{\infty}),
        \end{align*}
        and 
		\begin{align}\label{en_est_lin_1}
			\lVert h\rVert_{ L^{\infty}([0,1];Y^r_{\nabla})}+\lVert |y| h\rVert_{ L^{\infty}([0,1];Y^\sigma)}&+\operatorname{sup}_{t\in (0,1)}t^{\frac{1}{2}}\lVert h(t)\rVert_{ Y_1^r} +\operatorname{sup}_{t\in (0,1)}t^{\frac{n}{2\sigma}}\lVert |y| h(t)\rVert_{ Y^{\infty}}
            \\ 
            &\lesssim M+\lVert h_0\rVert_{Y_1^{r}}+\lVert y\cdot\nabla h_0\rVert_{Y^r}+\lVert |y|h_0\rVert_{Y^{\sigma}},
            \\
			\operatorname{lim}_{t\rightarrow 0} t^{\frac{n}{2\sigma}}\lVert |y| h(t)&\rVert_{ Y^{\infty}}+t^{\frac{1}{2}}\lVert h(t)\rVert_{ Y^r_1}=0.
		\end{align}
\end{prop}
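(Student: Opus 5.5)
The plan is to pass to self-similar variables, where the singular potential $\bar w$ becomes the time-independent operator $L$. The linear problem then becomes a forced evolution for $L$, and the semigroup bounds of \autoref{prop:smoothing} apply. Set
\begin{align*}
H(\tau,\xi):=e^{\tau}h(e^\tau,e^{\tau/2}\xi),\qquad F(\tau,\xi):=e^{2\tau}f(e^\tau,e^{\tau/2}\xi),\qquad \tau\in(-\infty,0].
\end{align*}
A direct computation turns \eqref{linear pde} into $\partial_\tau H=LH+F$. The datum at $t=0$ becomes a condition at $\tau=-\infty$. Since $h_0$ is not self-similar, I would rather keep $h_0$ in physical variables and split
\begin{align*}
h=e^{t\Delta}h_0+k .
\end{align*}
The remainder $k$ then solves \eqref{linear pde} with zero datum and forcing
\begin{align*}
\tilde f:=f+4n\bar w\, e^{t\Delta}h_0+2y\cdot\nabla(\bar w\, e^{t\Delta}h_0).
\end{align*}
By \eqref{eq:heat.bounded}, \eqref{eq:heat.bounded.Y1}, the commutator identity \eqref{eq:commutator_1} (which controls $y\cdot\nabla e^{t\Delta}h_0$), Hardy's inequality (\autoref{weighted_sobolev}), and the bounds $|\bar w|\lesssim(t+|y|^2)^{-1}$, $|y||\nabla\bar w|\lesssim (t+|y|^2)^{-1}$, I expect $\tilde f$ to satisfy the same hypotheses as $f$. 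The constant is $M+\|h_0\|_{Y^r_1}+\|y\cdot\nabla h_0\|_{Y^r}+\||y|h_0\|_{Y^\sigma}$, and the weighted $Y^{q_a}$ norms are handled by Hölder on the regions $|y|\lessgtr1$. The weighted $Y^r$ bound is where $|y|h_0\in Y^\sigma$ is needed.

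For zero data, the mild formula in self-similar variables is $K(\tau)=\int_{-\infty}^\tau S(\tau-s)\tilde F(s)\,ds$. Unwinding the scaling, $\|\tilde F(s)\|_{Y^r}=e^{(2-1-\frac{n}{2r})s}\,(t\|\tilde f\|_{Y^r})|_{t=e^s}\lesssim M e^{(1-\frac{n}{2r})s}$. The $Y^{q_a}$ weights in $M$ are chosen exactly so that $\|\tilde F(s)\|_{Y^{q_a}}\lesssim Me^{(1-\frac n{2r})s}$ as well. By \eqref{eq:bound.semigroup} and \eqref{smoothing} on $Y^{q_a,r}$, with $\delta$ small so that $\lambda+\delta<1-\frac n{2r}$ (this is exactly \eqref{Item instabilty}),
\begin{align*}
\|K(\tau)\|_{Y^{q_a,r}_1}\lesssim M\int_{-\infty}^\tau \frac{e^{(\lambda+\delta)(\tau-s)}}{(\tau-s)^{1/2}}e^{(1-\frac n{2r})s}ds\lesssim M e^{(1-\frac n{2r})\tau}.
\end{align*}
Scaled back, this gives $t^{1/2}\|k(t)\|_{Y^r_1}\lesssim M$ and $\|k(t)\|_{Y^r}\lesssim M$. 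The limit $M(t)\to0$ gives vanishing as $t\to0$. Uniqueness follows the same way: a zero-data solution in $C([0,1];Y^r_\nabla)$ corresponds to an ancient solution of $\partial_\tau K=LK$ with $\|K(\tau)\|_{Y^r}\lesssim e^{(1-\frac n{2r})\tau}$. Duhamel from $\tau_0\to-\infty$ with $\lambda+\delta<1-\frac n{2r}$ forces $K\equiv0$. For the $Y^{q_a}$ part one needs a localization argument, or one restricts uniqueness to the class where the estimates hold.

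For the weights I would repeat the device of \autoref{prop:fixed_point_ancient}. The function $G:=|\xi|K$ satisfies
\begin{align*}
\partial_\tau G=(L-\tfrac12)G-2\tfrac{\xi}{|\xi|}\cdot\nabla K-(n+1)\tfrac{K}{|\xi|}-2|\xi|\bar\Omega K+|\xi|\tilde F .
\end{align*}
Its forcing is controlled through Hardy by $\|K\|_{Y^{q_a,r}_1}\lesssim Me^{(1-\frac n{2r})\tau}$, which has the extra $-\frac12$ of decay. Together with \eqref{ultracontractivity} (going $Y^r\to Y^\sigma$ costs $(\tau-s)^{-1/2}$, which is integrable, and going to $Y^\infty$ costs $(\tau-s)^{-n/(2r)}$, also integrable since $r>n/2$), this yields the bound on $|y|k$ in $Y^\sigma$. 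Rescaling produces the factor $t^{\frac n{2\sigma}}$ in the $Y^\infty$ bound. The bound on $y\cdot\nabla k$ in $Y^r$ comes from $\xi\cdot\nabla K=\xi\cdot\nabla K$, estimated by $\|G\|_{Y^r_1}+\|K\|_{Y^r}$. The heat part $e^{t\Delta}h_0$ carries these weighted bounds directly from the hypotheses. Time continuity follows by dominated/Vitali convergence as before.

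I expect the main obstacle to be the exponent bookkeeping in the weighted estimates. The Duhamel integral converges only because the forcing decays like $e^{(1-\frac n{2r})s}$ against the growth $e^{(\lambda+\delta)(\tau-s)}$. The weighted equation loses $\frac12$ in decay (since $|\xi|=e^{-\tau/2}|y|$) but gains the $-\frac12$ shift of the operator, so these must match exactly. Verifying that the initial-data contributions $\bar w\,e^{t\Delta}h_0$ obey the $M$-type bounds with the $Y^{q_a}$ weights (using $q_a>\frac{nr}{2n-r}$) is the delicate point I would check first.
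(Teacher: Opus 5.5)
Your existence argument and a priori bounds follow the paper's proof. You split off $e^{t\Delta}h_0$ and pass to self-similar variables; your $K$ is $e^{\tau}$ times the paper's $\phi$, which is why you get $L$ and $L-\tfrac12$ where the paper has $L-1$ and $L-\tfrac32$. You then close the Duhamel integral in $Y^{q_a,r}_1$ using $\lambda+\delta<1-\frac{n}{2r}$ and treat the weighted quantity $|\xi|K$. Absorbing $\bar w\,e^{t\Delta}h_0$ into $\tilde f$, using the commutator identity for $y\cdot\nabla e^{t\Delta}h_0$, and getting continuity at $t=0$ from the vanishing of the weighted forcing norms (instead of approximating by smooth data) are all sound variants.

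One misattribution: $|y|h_0\in Y^\sigma$ is not needed for $|y|\tilde f$, since $t^{1/2}|y|\bar w$ and $t^{1/2}|y|^2|\nabla\bar w|$ are bounded. It is needed for $|y|e^{t\Delta}h_0$ in $Y^\sigma$ and $Y^\infty$. That does not follow directly from the hypotheses: it uses the equation for $|y|\omega$, whose extra terms $-2\frac{y}{|y|}\cdot\nabla\omega-\frac{n+1}{|y|}\omega$ are controlled by Hardy's inequality.

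The genuine gap is uniqueness in the class $C([0,1];Y^r_\nabla)$ that the statement claims. Your argument (Duhamel from $\tau_0\to-\infty$) needs the growth bound $e^{(\lambda+\delta)\tau}$, which holds only on $Y^{q_a,r}$ with $q_a<\frac n2$. An arbitrary zero-data solution is a priori only in $Y^r$. On $Y^r$ alone the essential growth bound is $\omega_{\mathrm{ess}}=1-\frac{n}{2r}$, exactly the rate of $\|K(\tau_0)\|_{Y^r}$, so $S(\tau-\tau_0)K(\tau_0)$ does not vanish as $\tau_0\to-\infty$. Restricting uniqueness to the class where your estimates hold proves a weaker statement, and ``a localization argument'' is not specified.

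The missing step is an a priori $Y^{q_a}$ bound for any solution with $h_0=0$, $f=0$. Write
\[
h(t)=\int_0^t e^{(t-s)\Delta}\big[4n\bar wh+2(y\cdot\nabla\bar w)h+2\bar w\,y\cdot\nabla h\big](s)\,ds .
\]
Use boundedness of $e^{t\Delta}$ on $Y^{q_a}$ (\eqref{eq:heat.bounded}) and Hölder with $p=\frac{rq_a}{r-q_a}>\frac n2$, so that $\bar\Omega\in Y^p_\nabla$ and $\|\bar w(s)\|_{Y^p_\nabla}=s^{\frac{n}{2p}-1}\|\bar\Omega\|_{Y^p_\nabla}$. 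This gives
\begin{align*}
\|h(t)\|_{Y^{q_a}}\lesssim \int_0^t\big(\|h(s)\|_{Y^r}+\|y\cdot\nabla h(s)\|_{Y^r}\big)\,s^{\frac{n}{2p}-1}\,ds\lesssim t^{\frac{n}{2q_a}-\frac{n}{2r}},
\end{align*}
because $\frac{n}{2p}=\frac{n}{2q_a}-\frac{n}{2r}>0$. Note that this integral formulation makes sense in $Y^{q_a}$ but not in $Y^r$, since $\|\bar w(s)h(s)\|_{Y^r}\sim s^{-1}$ is not integrable.

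In self-similar variables the bound reads $\|K(\tau)\|_{Y^{q_a}}\lesssim e^{(1-\frac{n}{2r})\tau}$. Hence $K(\tau_0)\in Y^{q_a,r}$ with the right rate, $K(\tau)=S(\tau-\tau_0)K(\tau_0)$, and
\[
\|K(\tau)\|_{Y^{q_a,r}}\lesssim e^{(\lambda+\delta)\tau}e^{(1-\frac{n}{2r}-\lambda-\delta)\tau_0}\to0 ,
\]
which is how the paper concludes.
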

\begin{proof}
We look for solutions of the form
\begin{align*}
    h(t,y)=e^{\Delta t}h_0(y)+\phi\left(\log t,\frac{y}{\sqrt{t}}\right).
\end{align*}
Define
\begin{align*}
     f(t,y)=:\frac{1}{t}g\left(\log t,\frac{y}{\sqrt{t}}\right),
     \quad 
     \omega(t):=e^{\Delta t}h_0,
     \quad 
     \omega(t,y)=:b\left(\log t,\frac{y}{\sqrt{t}}\right).
\end{align*}

\emph{Step 1: Preliminary bounds on $\omega$ and $|y|\omega$}. 
By \autoref{lem:heat_semigroup_weighted}, we have 
\begin{align*}
    \omega\in C([0,1];Y_1^{r}),\quad \|\omega(t)\|_{Y_1^{r}}\leq \|h_0\|_{Y_1^{r}}.
\end{align*}
We now estimate $y\cdot\nabla \omega$ in $Y^r$. For this reason, define  $\theta(t)=\iota(y\cdot\nabla \omega(t))$ where the isomorphism $\iota$ has been introduced in \autoref{sec:spectral}. Then $\theta(t)=x\cdot\nabla \iota\omega(t)$ and it solves the equation on $\R^n$
\begin{align*}
    \begin{dcases}
        \partial_t \theta=\Delta \theta-2n\frac{x}{|x|^2}\cdot\nabla\iota \omega, \\
        \theta(0,x)=\iota(y\cdot\nabla h_0)(x).
    \end{dcases}
\end{align*}
Standard heat estimates on $\R^n$, H\"older's inequality on Lorentz spaces and the previous bound on $\|\omega(t)\|_{Y^r_1}$ yield
\begin{align*}
    \|\theta(t)\|_{X^r}&\lesssim \|\iota(y\cdot\nabla h_0)\|_{X^r}+\int_0^t \frac{\|\frac{x}{|x|^2}\cdot\nabla\iota \omega(s)\|_{L^{\frac{nr}{n+r},\infty}}}{(t-s)^{1/2}}ds\\ & \lesssim
     \|\iota(y\cdot\nabla h_0)\|_{X^r}+\int_0^t \frac{\|\iota \omega(s)\|_{W^{1,r}}}{(t-s)^{1/2}}ds
    \\ & \lesssim \|y\cdot\nabla h_0\|_{Y^r}+\int_0^t \frac{\|\omega(s)\|_{Y_1^{r}}}{(t-s)^{1/2}}ds \lesssim \|y\cdot\nabla h_0\|_{Y^r}+ \|h_0\|_{Y_1^{r}},
\end{align*}
and $\theta\in C([0,1];X^r)$. Therefore $y\cdot\nabla\omega\in C([0,1];Y^{r})$ and 
\begin{align}\label{eq:evolution_xgradientheat}
    \|y\cdot\nabla\omega(t)\|_{Y^{r}}
    & \lesssim 
    \|y\cdot\nabla h_0\|_{Y^{r}}+\|h_0\|_{Y_1^{r}}.
\end{align}

Also, $|y| \omega$ solves
\begin{align*}
\begin{dcases}
    \partial_t (|y| \omega)=\Delta (|y| \omega)-2\frac{y}{|y|} \cdot\nabla\omega-\frac{n+1}{|y|}\omega,\\
    |y| \omega(0,y)=|y| h_0(y),
\end{dcases}
\end{align*}
and by ultracontractivity of the heat semigroup and Hardy's inequality, cf. \autoref{lem:heat_semigroup_weighted} and \autoref{weighted_sobolev},
\begin{align}
    \||y|\omega(t)\|_{Y^{\sigma}} \label{eq:evolution_|y|heat.1}
    &\lesssim \||y| h_0\|_{Y^\sigma}+\int_0^t \frac{1}{(t-s)^{\frac{1}{2}}}\|h_0\|_{Y_1^{r}} ds
    \lesssim \||y| h_0\|_{Y^\sigma}+\|h_0\|_{Y_1^{r}},
    \\
    \||y|\omega(t)\|_{Y^{\infty}} \label{eq:evolution_|y|heat.2}
    & \lesssim 
    \frac{\||y| h_0\|_{Y^\sigma}}{t^{\frac{n}{2\sigma}}}+\int_0^t \frac{1}{(t-s)^{\frac{n}{2r}}}\|h_0\|_{Y_1^{r}} ds
    \lesssim \frac{1}{t^{\frac{n}{2\sigma}}}\left(\||y| h_0\|_{Y^\sigma}+\|h_0\|_{Y_1^{r}}\right).
\end{align}

\emph{Step 2: Bounds on $h$ and $|y|h$.}
After these preliminary computations, we can start to analyze the behavior of $\phi$. First note that it solves
\begin{align*}
    \partial_{\tau}\phi=(L-1)\phi+4n\bar{\Omega}b+2\xi\cdot\nabla(\bar{\Omega} b)+g,
    \quad(\tau,y)\in (-\infty,0]\times \R^{n+2}.
\end{align*}
Formally, a solution of the equation above is given by
\begin{align}\label{linear_mild_1}
    \phi(\tau)=\int_{-\infty}^\tau e^{-(\tau-s)}S(\tau-s)\left[4n\bar{\Omega}b+2\xi\cdot\nabla(\bar{\Omega} b)+g\right](s)ds.
\end{align}
In what follows, we show that the integral above, in fact, converges in $Y^{q_a,r}$ and then study further properties of the solution. By assumption,
\begin{align*}
    \|g(\tau)\|_{Y^{q_a,r}}&\leq M e^{-\frac{\tau n}{2r}}.
\end{align*}
Secondly, by scaling arguments we have for each $\sigma\in [1,+\infty]$
\begin{align*}
    \|b(\tau)\|_{Y^{\sigma}}
    =
    e^{-\frac{\tau n}{2\sigma}}\|\omega(e^{\tau})\|_{Y^\sigma},\quad 
    \|\xi \cdot\nabla b(\tau)\|_{Y^{\sigma}}
    =
    e^{-\frac{\tau n}{2\sigma}}\|y\cdot \nabla\omega(e^{\tau})\|_{Y^\sigma}.
\end{align*}
Therefore, by \autoref{lem:heat_semigroup_weighted} and \eqref{eq:evolution_xgradientheat}
\begin{align*}
    \|b(\tau)\|_{Y^{r}}
    \lesssim 
    e^{-\frac{\tau n}{2r}}\|h_0\|_{Y^r},\quad  
    \|\xi \cdot\nabla b(\tau)\|_{Y^{r}}
    \lesssim 
    e^{-\frac{\tau n}{2r}}(\|y\cdot\nabla h_0\|_{Y^{r}}+\|h_0\|_{Y_1^{r}}) .
\end{align*}
The computations above allow to study the forcings in the integral formulation \eqref{linear_mild_1}. On the one hand we have   
\begin{align*}
   \|[4n\bar{\Omega}b+2\xi\cdot\nabla(\bar{\Omega}b)](\tau)\|_{Y^r}
   &\lesssim 
   \|b(\tau)\|_{Y^{r}}+ \|\xi \cdot\nabla b(\tau)\|_{Y^{r}}
   \\ 
   &\lesssim 
   e^{-\frac{\tau n}{2r}}(\|y\cdot\nabla h_0\|_{Y^{r}}+\|h_0\|_{Y_1^{r}}),  
\end{align*}
while H\"older's inequality implies
\begin{align*}
    \|[4n\bar{\Omega}b+2\xi\cdot\nabla(\bar{\Omega}b)](\tau)\|_{Y^{q_a}}
    &\lesssim 
    \left(\|b(\tau)\|_{Y^{r}}+ \|\xi \cdot\nabla b(\tau)\|_{Y^{r}}\right)
    \left(\|\bar{\Omega}\|_{Y^{\frac{rq_a}{r-q_a}}}
    +
    \|\xi\cdot\nabla \bar{\Omega}\|_{Y^{\frac{rq_a}{r-q_a}}}\right)
    \\ 
    &\lesssim 
    e^{-\frac{\tau n}{2r}}(\|y\cdot\nabla h_0\|_{Y^{r}}+\|h_0\|_{Y_1^{r}}),
\end{align*}
where we can control the terms involving $\bar{\Omega}$ using \eqref{eq:decay_ss} and the fact that $\frac{rq_a}{r-q_a} > \frac{n}{2}$ by our assumptions on $q_a$. 
Using smoothing properties of $S$, \autoref{prop:smoothing}, and arguing as in the proof of \autoref{prop:fixed_point_ancient}, we obtain
\begin{align}\label{eq:estimate_loc_linear_1}
    \|\phi(\tau)\|_{Y_1^{q_a,r}}& \lesssim (M+\|y\cdot\nabla h_0\|_{Y^{r}}+\|h_0\|_{Y_1^{r}})e^{-\frac{\tau n}{2r}},\quad\text{for each }\tau\in (-\infty,0 ).
\end{align}
Returning to original variables, this yields
\begin{align*}
    \sup_{t\in (0,1)}\|h(t)-\omega(t)\|_{Y^{r}}
    +
    \sup_{t\in (0,1)}t^{\frac{1}{2}}\|h(t)-\omega(t)\|_{Y_1^{r}}& \lesssim M+\|y\cdot\nabla h_0\|_{Y^{r}}+\|h_0\|_{Y_1^{r}}
    .
\end{align*}
Moreover, by \autoref{lem:heat_semigroup_weighted},
\begin{align}\label{bound_1_linear}
    \sup_{t\in (0,1)}\|h(t)\|_{Y^{r}}
    +\sup_{t\in (0,1)}t^{\frac{1}{2}}\|h(t)\|_{Y_1^{r}}\lesssim M+\|y\cdot\nabla h_0\|_{Y^{r}}+\|h_0\|_{Y_1^{r}}.
\end{align}
On the other hand, one can readily check that $\Phi:=|\xi| \phi$ solves
\begin{align*}
    \partial_\tau \Phi
    &=
    \left(L-\frac{3}{2}\right)\Phi
    -
    2\frac{\xi}{\lvert \xi\rvert}\cdot\nabla \phi
    -
    (n+1)\frac{\phi}{\lvert \xi\rvert}
    -
    2|\xi|\bar{\Omega}\phi+|\xi|\left(4n\bar{\Omega}b+2\xi\cdot\nabla(\bar{\Omega}b)+g\right).
\end{align*}
Therefore, 
\begin{align*}
    \Phi(\tau)
    &=
    -\int_{-\infty}^\tau e^{-\frac{3}{2}(\tau-s)}S(\tau-s)\left[2\frac{\xi}{\lvert \xi\rvert}\cdot\nabla \phi+(n+1)\frac{\phi}{\lvert \xi\rvert}+2|\xi|\bar{\Omega}\phi\right](s) ds
    \\ 
    &\quad+
    \int_{-\infty}^\tau e^{-\frac{3}{2}(\tau-s)}S(\tau-s)\left[|\xi|\left(4n\bar{\Omega}b+2\xi\cdot\nabla(\bar{\Omega}b)+g\right)\right](s)ds,
\end{align*}
if the integral is well-defined in $Y^{q_a,r}$. We are left to study the forcings in the formula above. By \eqref{eq:estimate_loc_linear_1} and Hardy's inequality \autoref{weighted_sobolev} we have
\begin{align*}
    \left\|\left[2\frac{\xi}{\lvert \xi\rvert}\cdot\nabla \phi+(n+1)\frac{\phi}{\lvert \xi\rvert}+2|\xi|\bar{\Omega}\phi\right](\tau)\right\|_{Y^{q_a,r}}&\lesssim (M+\|y\cdot\nabla h_0\|_{Y^{r}}+\|h_0\|_{Y_1^{r}})e^{-\frac{\tau n}{2r}}.
\end{align*}
Moreover, by reasonings analogous to the previous ones
\begin{align*}
    \left\|\left[|\xi|(4n\bar{\Omega}b+2\xi\cdot\nabla(\bar{\Omega}b))\right](\tau)\right\|_{Y^{q_a,r}} 
    & \lesssim 
    \|b(\tau)\|_{Y_{\nabla}^{r}} (\||\xi|\bar{\Omega}\|_{Y^{\frac{rq_a}{r-q_a},\infty}}
    +
    \||\xi|\xi\cdot\nabla \bar{\Omega}\|_{Y^{\frac{rq_a}{r-q_a},\infty}})
    \\ 
    &\lesssim 
    e^{-\frac{\tau n}{2r}}(\|y\cdot\nabla h_0\|_{Y^{r}}+\|h_0\|_{Y_1^{r}}),
\end{align*}
since $\frac{r q_a}{r-q_a}>n.$ Also, by assumptions
\begin{align*}
    \||\xi|g(\tau)\|_{Y^{q_a,r}}&\leq M e^{-\frac{\tau n}{2r}}.
\end{align*}
Therefore, it is easy to check that the integral is well-defined and it holds
\begin{align}\label{eq:estimate_2_linear_eq}
    \|\Phi(\tau)\|_{Y^{\infty}}+\|\Phi(\tau)\|_{Y_1^{q_a,r}}& \lesssim (M+\|y\cdot\nabla h_0\|_{Y^{r}}+\|h_0\|_{Y_1^{r}})e^{-\frac{\tau n}{2r}}.
\end{align}
Together with equation \eqref{eq:estimate_loc_linear_1}, the latter also implies
\begin{align*}
    \|\xi\cdot\nabla \phi(\tau)\|_{Y^r}
    & \lesssim 
    (M+\|y\cdot\nabla h_0\|_{Y^{r}}+\|h_0\|_{Y_1^{r}})e^{-\frac{\tau n}{2r}}.
\end{align*}
Consequently,
\begin{align}\label{bound_2_linear}
    \sup_{t\in (0,1)}\|y\cdot \nabla h(t)\|_{Y^r}
    \lesssim 
    M+\|y\cdot\nabla h_0\|_{Y^{r}}+\|h_0\|_{Y_1^{r}}.
\end{align}
In addition, Sobolev embedding \autoref{weighted_sobolev}, scaling, and \eqref{eq:estimate_2_linear_eq}, yield
\begin{align*}
    \||y|h(t)-|y|\omega(t)\|_{Y^{\sigma}}
    &\lesssim 
    \left\||y|\phi\left(\log t,\frac{y}{\sqrt{t}}\right) \right\|_{Y_1^{r}}
    \lesssim M+\|y\cdot\nabla h_0\|_{Y^{r}}+\|h_0\|_{Y_1^{r}},
    \\
    \||y|h(t)-|y|\omega(t)\|_{Y^{\infty}}
    &\lesssim 
    \left\||y|\phi\left(\log t,\frac{y}{\sqrt{t}}\right)\right\|_{Y^{\infty}}
    \lesssim t^{\frac{1}{2}-\frac{n}{2r}}(M+\|y\cdot\nabla h_0\|_{Y^{r}}+\|h_0\|_{Y_1^{r}}).
\end{align*}

Combining the two above with \eqref{eq:evolution_|y|heat.1} and \eqref{eq:evolution_|y|heat.2} we obtain
\begin{align}\label{bound_3_linear}
    \sup_{t\in (0,1)} \||y|h(t)\|_{Y^{\sigma}}
    +
    \sup_{t\in (0,1)}t^{\frac{n}{2\sigma}} \||y|h(t)\|_{Y^{\infty}}
    &\lesssim M+\|y\cdot\nabla h_0\|_{Y^{r}}+\||y| h_0\|_{Y^\sigma}+\|h_0\|_{Y_1^{r}},
\end{align}
and relation \eqref{en_est_lin_1} follows by \eqref{bound_1_linear}, \eqref{bound_2_linear}, and \eqref{bound_3_linear}.

\emph{Step 3: Existence.}
Let us temporarily assume the additional regularity hypotheses $h_0\in C^{\infty}_c(\R^{n+2})$ and $f\in C^{\infty}_c((0,1]\times\R^{n+2})$. 
In this case, we have
\begin{align*}
    \sup_{t\in (0,1)}\|f(t)\|_{Y^{q_a,r}}+\sup_{t\in (0,1)}\||y|f(t)\|_{Y^{q_a,r}}&\lesssim 1,
\end{align*}
and therefore 
\begin{align} \label{eq:g.|xi|g}
     \|g(\tau)\|_{Y^{q_a,r}}\lesssim e^{\tau(1-\frac{n}{2q_a})},\ \||\xi|g(\tau)\|_{Y^{q_a,r}}&\lesssim e^{\frac{\tau}{2}\left(1-\frac{n}{q_a}\right)}.
\end{align}
Moreover, since $q_a>\frac{nr}{n+r}$ by our choice of parameters, we have 
\begin{align}
 1-\frac{n}{2q_a}>-\frac{n}{2r},
 \quad
 1-\frac{n}{q_a}>-\frac{n}{r}.
\end{align}

For the other term in \eqref{linear_mild_1} we can apply H\"older obtaining for some $\tilde{r}$ slightly larger than $r$ 
\begin{align*}
     \|[4n\bar{\Omega}b+2\xi\cdot\nabla(\bar{\Omega}b)](\tau)\|_{Y^{q_a,r}}&\lesssim e^{-\frac{\tau n}{2\tilde{r}}},\\
     \left\|\left[|\xi|(4n\bar{\Omega}b+2\xi\cdot\nabla(\bar{\Omega}b))\right](\tau)\right\|_{Y^{q_a,r}}&\lesssim e^{-\frac{\tau n}{2\tilde{r}}}.
\end{align*}
Repeating the estimates in Step 2 with an improved exponent
\begin{align*}
    \vartheta :=-\frac{ n}{2\tilde{r}}\wedge \frac{1}{2}\left(1-\frac{n}{q_a}\right)>-\frac{n}{2r},
\end{align*}
we obtain
\begin{align*}
\|\phi(\tau)\|_{Y_1^{q_a,r}}+\|\Phi(\tau)\|_{Y_1^{q_a,r}}+\|\Phi(\tau)\|_{Y^{\infty}}+\|\xi\cdot\nabla \phi(\tau)\|_{Y^r}& \lesssim e^{\vartheta \tau}.
\end{align*}
Therefore, it holds
\begin{align*}
    \|h(t)-\omega(t)\|_{Y^r}
    +
    \||y|h(t)-|y|\omega(t)\|_{Y^{\sigma}}
    +
    \|y\cdot\nabla h(t)-y\cdot\nabla\omega(t)\|_{Y^r}
    &\lesssim t^{\frac{n}{2r}+\vartheta}\rightarrow 0,
    \quad\text{as }t\rightarrow 0,
\end{align*}
Similarly, due to the smoothness of the initial data, one has:
\begin{align*}
    t^{\frac{1}{2}}\lVert h(t)\rVert_{ Y_1^r}&\lesssim t^{\frac{n}{2r}+\vartheta}+t^{\frac{1}{2}} \lVert \omega(t)\rVert_{ Y_1^r} \rightarrow 0, &\hspace{-2cm}\text{as }t\rightarrow 0,
    \\
    t^{\frac{n}{2\sigma}}\lVert |y| h(t)\rVert_{ Y^{\infty}}& \lesssim  t^{\frac{1}{2}+\frac{n}{2\sigma}+\vartheta}+t^{\frac{n}{2\sigma}}\||y|\omega(t)\|_{Y^{\infty}}\rightarrow 0, 
    &\hspace{-2cm}\text{as }t\rightarrow 0.
\end{align*}
The lines above guarantee the desired continuity at time $t=0$ under the additional assumptions $h_0\in C^{\infty}_c(\R^{n+2})$ and $f\in C^{\infty}_c((0,1]\times\R^{n+2})$. In addition, continuity for positive times follows from the fact that the terms $f$ and $\bar{w}$ are no longer singular for $t>0$. The general case follows by the uniform bounds proved in Step 2 and approximation.

\emph{Step 4: Uniqueness.} By linearity, we can suppose $h_0=f=0$ and want to show that 
\begin{align*}
    h(t)=\int_0^t e^{(t-s)\Delta}[4n\bar{w}h+2y\cdot\nabla(\bar{w}h)](s)ds
\end{align*}
is identically zero.
By H\"older's inequality and \autoref{lem:heat_semigroup_weighted} we have
\begin{align*}
    \|h(t)\|_{Y^{q_a}}
    &\lesssim 
    \int_0^t \left(\|h(s)\|_{Y^r}
    +
    \|y\cdot\nabla h(s)\|_{Y^r}\right) 
    \left(\|\bar{w}(s)\|_{Y^{\frac{rq_a}{r-q_a}}}
    +
    \|y\cdot\nabla \bar{w}(s)\|_{Y^{\frac{rq_a}{r-q_a}}}\right) ds.
\end{align*}
Since 
\begin{align*}
    \sup_{t\in (0,1)}\|h(t)\|_{Y^r}+\|y\cdot\nabla h(t)\|_{Y^r}\lesssim 1
\end{align*}
and 
\begin{align*}
    \|\bar{w}(s)\|_{Y_{\nabla}^{\frac{rq_a}{r-q_a}}}
    =
    s^{\frac{n(r-q_a)}{2rq_a}-1}\|\bar{\Omega}\|_{Y_{\nabla}^{\frac{rq_a}{r-q_a}}}
    \lesssim 
    s^{\frac{n(r-q_a)}{2rq_a}-1},
\end{align*}
with exponent $\frac{n(r-q_a)}{2rq_a}-1 > -1$, we obtain
\begin{align}\label{apriori_estimate_linear_sol}
     \|h(t)\|_{Y^{q_a}}&\lesssim t^{\frac{n(r-q_a)}{2rq_a}}.
\end{align}
Recall now that, since $h_0=0$, we have $h(t,y)=\phi\left(\log t,\frac{y}{\sqrt{t}}\right)$ as well as $b=0$. In addition, since $f=0$, we also have $g=0$ and thus from the mild formulation \eqref{linear_mild_1} we get  
\begin{align*}
    \phi(\tau)=e^{-(\tau-s)}S(\tau-s)\phi(s),
    \quad
    \mbox{for }-\infty<s<\tau<0.
\end{align*}
Combining the formula above, \eqref{apriori_estimate_linear_sol}, the uniform bound on $\|h(t)\|_{Y^r}$ and \autoref{prop:smoothing}, we finally obtain
\begin{align*}
    \|\phi(\tau)\|_{Y^{q_a,r}}
    &\lesssim e^{(\lambda+\delta-1)(\tau-s)}\|\phi(s)\|_{Y^{q_a,r}}
    \lesssim_{\tau} e^{-(\lambda+\delta-1+\frac{n}{2r})s}\rightarrow 0, \quad\text{as }s\rightarrow -\infty
\end{align*}
for some $0<\delta\ll 1$ sufficiently small that $\lambda+\delta-1+\frac{n}{2r}<0$, which exists due to condition \eqref{Item instabilty}. Thus $\phi\equiv 0$ and consequently also $h$ is identically zero, concluding the proof.
\end{proof}
\begin{rmk}
    One can readily verify that the proof of \autoref{prop_linear_localization} extends to the wider range of parameters $\frac{nr}{n+r}<q_a<\frac{n}{2}<r<n$. However, in the main result of this section, namely \autoref{thm:nonlinear_localization}, we will require the more restrictive condition \eqref{condition_localization}. Given the auxiliary role of \autoref{prop_linear_localization}, we have stated it in a slightly suboptimal form in order to avoid introducing additional parameter regimes.
\end{rmk}

Let us now fix $T'\in (0,1)$.
In order to run the fixed point argument we introduce the Banach space 
\begin{align*}
    Z^{T'}:=\bigg
    \{h &\in C([0,T'];Y_{\nabla}^r) \,:\, \\
    \quad& |y|h\in C([0,T'];Y^{\sigma}), \quad t^{\frac{n}{2\sigma}} |y| h\in C([0,T'];Y^{\infty}),\quad t^{\frac{1}{2}} h\in C([0,T'];Y^{r}_1)
    \\ 
    & 
    \lim_{t\rightarrow 0}t^{\frac{1}{2}}\lVert h(t)\rVert_{ Y_1^{r}}=  \lim_{t\rightarrow 0}t^{\frac{n}{2\sigma}}\lVert |y| h(t)\rVert_{ Y^{\infty}} =0 \qquad\bigg\},
\end{align*}
endowed with the natural norm 
\begin{align*}
    \|h(t)\|_{Z^{T'}}:=&\sup_{t\in (0,T')}\|h(t)\|_{Y_{\nabla}^r}+\sup_{t\in (0,T')}\||y| h(t)\|_{Y^\sigma} +\sup_{t\in (0,T')}t^{\frac{1}{2}}\| h(t)\|_{Y_1^{r}}+\sup_{t\in (0,T')}t^{\frac{n}{2\sigma}}\||y| h(t)\|_{Y^{\infty}}.
\end{align*}
The main result of this section reads as follows.
\begin{theorem}
   \label{thm:nonlinear_localization}
    Assume \eqref{condition_localization}. Let $\alpha>0$ be such that the corresponding expander $\bar{\Omega}$ yields a linearized operator $L: \mathcal{D}(L) \subset Y^{q_a,r} \rightarrow  Y^{q_a,r}$ with maximal positive eigenvalue $\lambda$ satisfying \eqref{Item instabilty}.
		Assume further that \begin{align*}
		    h_0\in Y_1^{r}\cap Y_{\nabla}^r,\quad |y|h_0\in Y^\sigma,
		\end{align*} 
        and $\psi(t,y)=\frac{1}{t}\Psi(\log t,\frac{y}{\sqrt{t}})$, where
        \begin{align*}
            \sup_{\tau\leq T }e^{-\kappa \tau}\left(\lVert \Psi(\tau)\rVert_{Y_1^{q_a,r}}+\lVert |y|\Psi(\tau)\rVert_{Y_1^{q_a,r}}+\lVert |y|\Psi(\tau)\rVert_{Y^{\infty}}\right)=:\epsilon<+\infty
        \end{align*}
        for some $\kappa>0$ and $T<0$. 
        Whenever the above displayed quantity is sufficiently small, there is $0<T'<e^T$ such that there exists a small ball in $Z^{T'}$ and a unique solution $h$ of \eqref{eq_reminder_trunc} therein. Moreover
        \begin{align}\label{estimate_trunc}
            \|h\|_{Z^{T'}}& \lesssim  \lVert h_0\rVert_{Y_1^{r}}+\lVert y\cdot\nabla h_0\rVert_{Y^r}+\lVert |y|  h_0\rVert_{Y^{\sigma}}.
        \end{align}
\end{theorem}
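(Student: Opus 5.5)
The plan is to use the contraction principle in a small closed ball of $Z^{T'}$, with \autoref{prop_linear_localization} as the linear solver. Given $k\in Z^{T'}$, let $h=\Gamma k$ be the unique solution in $C([0,T'],Y^r_\nabla)$ of \eqref{linear pde} with forcing $f:=f(k)=4n\psi k+2y\cdot\nabla(\psi k)-2nk^2-y\cdot\nabla(k^2)$ and initial datum $h_0$. The proposition is stated on $[0,1]$; we apply it on $[0,T']$ after extending $f$ by zero, or after a trivial rescaling. The estimate \eqref{en_est_lin_1} then gives
\begin{align*}
\|\Gamma k\|_{Z^{T'}}\lesssim M(k)+\|h_0\|_{Y_1^r}+\|y\cdot\nabla h_0\|_{Y^r}+\||y|h_0\|_{Y^\sigma}.
\end{align*}
It also gives the vanishing limits at $t=0$ built into $Z^{T'}$. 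So everything reduces to bounding $M(k)$, the weighted sup of $f(k)$ appearing in \autoref{prop_linear_localization}, together with its continuity and vanishing at $t=0$. We want $M(k)\le C(\epsilon+\|k\|_{Z^{T'}})\|k\|_{Z^{T'}}+o_{T'\to0}(1)$, plus the analogous Lipschitz bound for $M(k_1)-M(k_2)$.

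\emph{Linear term in $\psi$.} Expand $y\cdot\nabla(\psi k)=(y\cdot\nabla\psi)k+\psi\,y\cdot\nabla k$. By scaling,
\[
\|\psi(t)\|_{Y^p}=t^{-1+\frac{n}{2p}}\|\Psi(\log t)\|_{Y^p},\qquad \||y|\psi(t)\|_{Y^\infty}=t^{-1/2}\||\xi|\Psi(\log t)\|_{Y^\infty},
\]
and similarly for $y\cdot\nabla\psi$, using the Sobolev and Hardy bounds of \autoref{weighted_sobolev} applied to $\Psi,|\xi|\Psi\in Y_1^{q_a,r}$. Combined with the hypothesis $\|\cdot\|\le\epsilon e^{\kappa\tau}=\epsilon t^\kappa$, each weighted norm of $\psi$ is $\epsilon t^{\kappa}$ times the critical power of $t$. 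Pairing with $k$ by H\"older ($k,y\cdot\nabla k\in Y^r$, and $|y|k\in Y^\sigma\cap t^{-n/2\sigma}Y^\infty$) should give, for instance, $t\|\psi k\|_{Y^r}\lesssim\epsilon t^{\kappa}\|k\|_{Z^{T'}}$. Here I would put $\psi$ in $L^\infty$ via $|y|\psi\in Y^\infty$ and use $|y|^{-1}k\in Y^r$ by Hardy, with $t^{1/2}\|k\|_{Y_1^r}$ controlled. The $Y^{q_a}$ weights are checked in the same way by putting $\psi$ in $Y^{rq_a/(r-q_a)}$, which is fine since $\Psi\in Y^{q_a,r}_1$. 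The gain $t^\kappa$ yields both smallness and the vanishing at $t=0$.

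\emph{Quadratic term.} For $k^2$ and $y\cdot\nabla(k^2)=2k\,y\cdot\nabla k$ in $Y^r$, estimate $\|k\,y\cdot\nabla k\|_{Y^r}\le\|k\|_{Y^\infty}\|y\cdot\nabla k\|_{Y^r}$ and bound $\|k\|_{Y^\infty}$ through $|y|k\in t^{-n/2\sigma}Y^\infty$ near infinity and through Sobolev near the origin. Since $\sigma>n$, we have $n/2\sigma<1/2$, so this leaves a positive power of $t$ compared with the weight $t$. The weighted pieces $|y|k^2$ and $|y|\,y\cdot\nabla(k^2)$ are handled with $|y|k\in Y^\infty$ or $Y^\sigma$ against $Y^r$ norms. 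The $Y^{q_a}$ pieces use H\"older with $q_a>nr/(2n-r)$, i.e.\ $1/q_a<1/r+1/\sigma'$ for suitable exponents; this is exactly where $r<2n/3$ and the lower bound on $q_a$ in \eqref{condition_localization} enter. I expect this step to be the main obstacle: closing every one of the four weighted norms of $y\cdot\nabla(k^2)$ with nonnegative leftover powers of $t$, using only the norms available in $Z^{T'}$, without losing a derivative. If a direct H\"older pairing fails, I would first try interpolating $t^{1/2}\|k\|_{Y_1^r}$ against $\|k\|_{Y^\infty_{\text{loc}}}$.

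\emph{Closing the argument.} The difference estimates follow by bilinearity. The continuity requirements on $tf$ and the related quantities follow from the continuity of $k$ and $\Psi$ for $t>0$ and from the vanishing limits at $0$. Choosing a ball of radius $2C_0(\|h_0\|_{Y_1^r}+\|y\cdot\nabla h_0\|_{Y^r}+\||y|h_0\|_{Y^\sigma})$, with $\epsilon$ and $T'$ small, makes $\Gamma$ a contraction. This gives existence and uniqueness in the ball together with \eqref{estimate_trunc}.
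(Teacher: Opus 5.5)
Your architecture is the paper's: a contraction in a ball of $Z^{T'}$, with \autoref{prop_linear_localization} as the linear solver, so that everything reduces to the weighted bounds on $f(k)$. However, the proof consists of those product estimates, and two of the pairings you propose cannot be closed with the $Z^{T'}$ norms or the hypotheses on $\Psi$.

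First, for $k^2$ and $y\cdot\nabla(k^2)=2k\,y\cdot\nabla k$ in $Y^r$ you use $\|k\|_{Y^\infty}$, bounded ``through Sobolev near the origin''. Nothing in $Z^{T'}$ bounds $k$ near $y=0$. Since $r<n$, $Y^r_1$ embeds only into $Y^\sigma$, and $|y|k\in Y^\infty$ only gives $|k|\lesssim|y|^{-1}$. For instance, a profile $k\sim|y|^{-a}$ near $0$ with $0<a<\frac nr-1$ has finite $Z^{T'}$ norm. Your fallback of interpolating against $\|k\|_{Y^\infty_{\mathrm{loc}}}$ assumes the same missing bound.

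Second, for the $\psi$-terms in $Y^{q_a}$ you place $\psi$ in $Y^{rq_a/(r-q_a)}$, ``fine since $\Psi\in Y^{q_a,r}_1$''. But $q_a>\frac{nr}{2n-r}$ is equivalent to $\frac{rq_a}{r-q_a}>\sigma$, the Sobolev exponent of $Y^r_1$. So this integrability is out of reach: the same kind of singular profile, used for $\Psi$, satisfies all the hypotheses on $\Psi$.

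The missing idea is that the only sup-norm information available is on $|y|k$ and $|y|\psi$. So in every product coming from $y\cdot\nabla(\cdot)$, the weight $|y|$ must go on the undifferentiated factor and the derivative must be put in $Y^r$.

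\begin{itemize}
\item From $|y\cdot\nabla(k^2)|\le 2\,(|y||k|)\,|\nabla k|$ you get $\|y\cdot\nabla(k^2)\|_{Y^r}\lesssim\||y|k\|_{Y^\infty}\|\nabla k\|_{Y^r}\lesssim t^{-\frac{n}{2\sigma}-\frac12}\|k\|_{Z^{T'}}^2=t^{-\frac{n}{2r}}\|k\|_{Z^{T'}}^2$. After the weight $t$ this leaves $(T')^{1-\frac{n}{2r}}$.
\item In $Y^{q_a}$ one uses $\|\nabla k\|_{Y^r}\||y|k\|_{Y^{rq_a/(r-q_a)}}$, interpolating the second factor between $Y^\sigma$ and $Y^\infty$. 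This is where the lower bound on $q_a$ is actually used.
\item Likewise, for $\psi\,y\cdot\nabla k$ use $\||y|\psi\|_{Y^{rq_a/(r-q_a)}}\|\nabla k\|_{Y^r}$, interpolating $|y|\psi$ between $Y^{q_a}$ and $Y^\infty$.
\item The zeroth-order products go through Hardy, $\||y|k\|_{Y^\infty}\||y|^{-1}k\|_{Y^r}$, or through $Y^{2q_a}\times Y^{2q_a}$ with $Y^{2q_a}$ interpolated between $Y^r$ and $Y^\sigma$.
\item The term $(y\cdot\nabla\psi)k$ goes through $\|\nabla\psi\|_{Y^{q_a}}$ or $\|\nabla\psi\|_{Y^r}$ times $\||y|k\|_{Y^\infty}$.
\end{itemize}

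Note also that the ball radius is of the size of $h_0$, which is not small, so the quadratic part must carry a positive power of $T'$. Your summary inequality $M(k)\le C(\epsilon+\|k\|_{Z^{T'}})\|k\|_{Z^{T'}}+o(1)$ does not close as written. The correct pairings give $\epsilon\|k\|_{Z^{T'}}+(T')^{1-\frac{n}{2r}}\|k\|_{Z^{T'}}^2$, as in the paper.
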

\begin{proof}
    We look for solutions of \eqref{eq_reminder_trunc} in the form
    \begin{align*}
        h(t)=\mathcal{T}[h_0,0](t)+\mathcal{T}[0,f(h)](t),
    \end{align*}
    where $\mathcal{T}[g,f]$ denotes the unique solution to \eqref{linear pde} with initial condition $g$ and singular forcing term $f$, as provided by \autoref{prop_linear_localization}. We begin by collecting bounds for the nonlinear terms. 
    
    Using H\"older’s inequality, interpolation, and Sobolev embedding \autoref{weighted_sobolev}, we can control the product $\psi h$ as follows:    
    \begin{align}
      \|\psi(t) h(t)\|_{Y^{q_a}}&\leq \|\psi(t)\|_{Y^{2q_a}}\|h(t)\|_{Y^{2q_a}}\\ & \leq \|\psi(t)\|_{Y^{r}}^{1-\frac{n}{r}+\frac{n}{2q_a}}\|\psi(t)\|_{Y^{\sigma}}^{\frac{n}{r}-\frac{n}{2q_a}}\|h(t)\|_{Y^{r}}^{1-\frac{n}{r}+\frac{n}{2q_a}}\|h(t)\|_{Y^{\sigma}}^{\frac{n}{r}-\frac{n}{2q_a}}\\ &  \lesssim \epsilon t^{\frac{n}{4q_a}-1} \|h(t)\|_{Y^r}^{1-\frac{n}{r}+\frac{n}{2q_a}}\|h(t)\|_{Y_1^{r}}^{\frac{n}{r}-\frac{n}{2q_a}},\label{auxiliary_estimates_fixed_point1}
      \\
       \|\psi(t) h(t)\|_{Y^{r}} \label{auxiliary_estimates_fixed_point2}
       &\lesssim 
       \|\psi(t)\|_{Y_1^{r}}\|h(t)\|_{Y^{n}}
       \lesssim 
       \epsilon t^{\frac{n}{2r}-\frac{3}{2}}\|h(t)\|_{Y^{r}}^{2-\frac{n}{r}}\|h(t)\|_{Y_1^{r}}^{\frac{n}{r}-1} , 
       \\
       \||y|\psi(t) h(t)\|_{Y^{q_a}}&\leq  \|\psi(t)\|_{Y^{q_a}}\||y|h(t)\|_{Y^\infty}\leq \epsilon t^{\frac{n}{2q_a}-1}\||y|h(t)\|_{Y^\infty} ,\label{auxiliary_estimates_fixed_point3}\\
       \||y|\psi(t) h(t)\|_{Y^{r}} \label{auxiliary_estimates_fixed_point4}&\leq \|\psi(t)\|_{Y^r}\||y|h(t)\|_{Y^\infty}\leq \epsilon t^{\frac{n}{2r}-1}\||y|h(t)\|_{Y^\infty} .
       \end{align}       
      Similarly, for the gradient term $y\cdot\nabla (\psi h) = y\cdot\nabla \psi h + y\cdot\nabla h \psi$, we have: 
       \begin{align}
        \|y\cdot\nabla\psi(t) h(t)\|_{Y^{q_a}}
        &\lesssim 
    \|\nabla\psi(t)\|_{Y^{q_a}}\||y|h(t)\|_{Y^{\infty}}\lesssim \epsilon t^{\frac{n}{2q_a}-\frac{3}{2}}\||y|h(t)\|_{Y^{\infty}}\label{auxiliary_estimates_fixed_point5},\\
       \|y\cdot\nabla\psi(t) h(t)\|_{Y^{r}}&\lesssim \|\nabla\psi(t)\|_{Y^{r}}\||y|h(t)\|_{Y^{\infty}}\lesssim \epsilon t^{\frac{n}{2r}-\frac{3}{2}}\||y|h(t)\|_{Y^{\infty}} \label{auxiliary_estimates_fixed_point6},\\
       \||y|y\cdot\nabla\psi(t) h(t)\|_{Y^{q_a}}&\lesssim \|y\cdot\nabla\psi(t)\|_{Y^{q_a}}\||y|h(t)\|_{Y^{\infty}}\lesssim \epsilon t^{\frac{n}{2q_a}-1}\||y|h(t)\|_{Y^{\infty}}\label{auxiliary_estimates_fixed_point7},\\
       \||y|y\cdot\nabla\psi(t) h(t)\|_{Y^{r}}& \lesssim \|y\cdot\nabla\psi(t)\|_{Y^{r}}\||y|h(t)\|_{Y^{\infty}}\lesssim \epsilon t^{\frac{n}{2r}-1}\||y|h(t)\|_{Y^{\infty}} \label{auxiliary_estimates_fixed_point8},\\
       \|y\cdot\nabla h(t)\psi(t) \|_{Y^{q_a}}&\leq \||y|\psi(t)\|_{Y^{\frac{r q_a}{r-q_a}}}\|\nabla h(t)\|_{Y^r}\lesssim \||y|\psi(t)\|^{\frac{r-q_a}{r}}_{Y^{q_a}}\||y|\psi(t)\|^{\frac{q_a}{r}}_{Y^{\infty}}\|\nabla h(t)\|_{Y^r}\notag\\ & \lesssim \epsilon t^{\frac{1}{2}\left(-1-\frac{n}{r}+\frac{n}{q_a}\right)}\|\nabla h(t)\|_{Y^r}\label{auxiliary_estimates_fixed_point9},\\
       \|y\cdot\nabla h(t) \psi(t) \|_{Y^{r}}&\lesssim \||y|\psi(t)\|_{Y^\infty}\|\nabla h(t)\|_{Y^r} \lesssim \epsilon t^{-1/2}\|\nabla h(t)\|_{Y^r}  \label{auxiliary_estimates_fixed_point10},\\
       \||y|y\cdot\nabla h(t)\psi(t) \|_{Y^{q_a}}&\leq \||y|\psi(t)\|_{Y^{\frac{r q_a}{r-q_a}}}\| y\cdot \nabla h(t)\|_{Y^r}\lesssim \epsilon t^{\kappa+\frac{1}{2}\left(-1-\frac{n}{r}+\frac{n}{q_a}\right)}\|y\cdot\nabla h(t)\|_{Y^r}\label{auxiliary_estimates_fixed_point11},\\
       \||y|y\cdot\nabla h(t)\psi(t) \|_{Y^{r}}&\lesssim \||y|\psi(t)\|_{Y^{\infty}}\|y\cdot\nabla h(t)\|_{Y^{r}}\lesssim \epsilon t^{\kappa-\frac{1}{2}} \|y\cdot\nabla h(t)\|_{Y^{r}} \label{auxiliary_estimates_fixed_point12}.
    \end{align}  
    Lastly, for the purely nonlinear contributions we have by H\"older's inequality, interpolation and Sobolev embedding \autoref{weighted_sobolev}:
     \begin{align}
\|h^2(t)\|_{Y^{q_a}}
&\lesssim 
\|h(t)\|^2_{Y^{2q_a}}\lesssim \|h(t)\|_{Y^r}^{2-\frac{2n}{r}+\frac{n}{q_a}}\|h(t)\|_{Y_1^{r}}^{\frac{2n}{r}-\frac{n}{q_a}}\label{auxiliary_estimates_fixed_point13},
\\
\|h^2(t)\|_{Y^{r}}
&\lesssim  
\|h(t)\|_{Y_1^{r}}\|h(t)\|_{Y^n}\lesssim \|h(t)\|_{Y^{r}}^{2-\frac{n}{r}}\|h(t)\|_{Y_1^{r}}^{\frac{n}{r}}  \label{auxiliary_estimates_fixed_point14},
\\
\||y|h^2(t)\|_{Y^{q_a}}
&\leq 
\|h(t)\|_{Y^r} \||y|h(t)\|_{Y^{\frac{q_ar}{r-q_a}}}  
\leq 
\|h(t)\|_{Y^r}\||y|h(t)\|^{\frac{n(r-q_a)}{q_a(n-r)}}_{Y^\sigma} \||y|h(t)\|^{1-\frac{n(r-q_a)}{q_a(n-r)}}_{Y^\infty} \label{auxiliary_estimates_fixed_point15},
\\
\||y|h(t)^2\|_{Y^{r}}
& \leq  
\|h(t)\|_{Y^{r}}\||y|h(t)\|_{Y^\infty},\label{auxiliary_estimates_fixed_point16}
\\ 
\|y\cdot\nabla h(t) h(t)\|_{Y^{q_a}}
&\leq 
\|\nabla h(t)\|_{Y^r} \||y|h(t)\|_{Y^{\frac{q_ar}{r-q_a}}}  
\leq 
\|\nabla h(t)\|_{Y^r}\||y|h(t)\|^{\frac{n(r-q_a)}{q_a(n-r)}}_{Y^\sigma} \||y|h(t)\|^{1-\frac{n(r-q_a)}{q_a(n-r)}}_{L^\infty},\label{auxiliary_estimates_fixed_point17}
\\
\|y\cdot\nabla h(t) h(t) \|_{Y^{r}}
&\leq  
\|\nabla h(t)\|_{Y^r}\||y|h(t)\|_{Y^{\infty}},\label{auxiliary_estimates_fixed_point18}
\\
\||y|y\cdot\nabla h(t) h(t) \|_{Y^{q_a}}
&\leq 
\|y\cdot\nabla h(t)\|_{Y^r} \||y|h(t)\|_{Y^{\frac{q_ar}{r-q_a}}} 
\leq 
\|y\cdot\nabla h(t)\|_{Y^r}\||y|h(t)\|^{\frac{n(r-q_a)}{q_a(n-r)}}_{Y^\sigma} \||y|h(t)\|^{1-\frac{n(r-q_a)}{q_a(n-r)}}_{Y^\infty},\label{auxiliary_estimates_fixed_point19}
\\ 
\||y|y\cdot\nabla h(t) h(t)\|_{Y^{r}}
&\leq 
\|y\cdot\nabla h(t)\|_{Y^r}\||y|h(t)\|_{Y^{\infty}} \label{auxiliary_estimates_fixed_point20}.
\end{align}
       After these preliminary computations, we can run a fixed point argument. Let $M:=\lVert \mathcal{T}[h_0,0]\rVert_{Z^{e^{T}}}$, and for $T'>0$ we denote by $B_{2M} \subset Z^{T'}$ the closed ball in $Z^{T'}$ with center $0$ and radius $2M$. 
       We are looking for $\epsilon>0$ and $T'>0$ small enough such that the map
		\begin{align*}
			\Gamma({h}):=\mathcal{T}[h_0,0]+\mathcal{T}[0,f(h)]
		\end{align*}
		is a contraction on $B_{2M}$. First we need to show that $\Gamma$ maps $B_{2M}$ into itself. Using the estimates above and the definition of the norm in $Z^{T'}$, we obtain
        for $h\in B_{2M}$:
		\begin{align*}
			t^{1+\frac{n}{2r}-\frac{n}{2{q_a}}}\lVert f(h(t))\rVert_{Y^{q_a}}&\lesssim \epsilon M+M^2(T')^{1-\frac{n}{2r}},\\
			t\lVert f(h(t))\rVert_{Y^r}& \lesssim \epsilon M+M^2(T')^{1-\frac{n}{2r}},\\
             t^{\frac{1}{2}+\frac{n}{2r}-\frac{n}{2{q_a}}}\lVert |y|f(h(t))\rVert_{Y^{q_a}}&\lesssim \epsilon M+M^2(T')^{1-\frac{n}{2r}},\\
			t^{\frac{1}{2}}\lVert|y| f(h(t))\rVert_{Y^r}& \lesssim \epsilon M+M^2(T')^{1-\frac{n}{2r}},
            \\
			\operatorname{lim}_{t\rightarrow 0} t^{1+\frac{n}{2r}-\frac{n}{2{q_a}}}\lVert f(h(t))\rVert_{Y^{q_a}}&=0,\quad \operatorname{lim}_{t\rightarrow 0} t\lVert f(h(t))\rVert_{Y^r}=0,\\
            \operatorname{lim}_{t\rightarrow 0} t^{\frac{1}{2}+\frac{n}{2r}-\frac{n}{2{q_a}}}\lVert |y|f(h(t))\rVert_{Y^{q_a}}&=0,\quad \operatorname{lim}_{t\rightarrow 0} t^{\frac{1}{2}}\lVert |y|f(h(t))\rVert_{Y^r}=0,
		\end{align*}
        as well as the required continuity of the forcing for positive times.
Hence, \autoref{prop_linear_localization} applies and yields, for some finite constant $C$:
		\begin{align*}
			\lVert  \Gamma(h)\rVert_{Z^{T'}}&\leq M +C\left(\eps M+\left(T'\right)^{1-\frac{n}{2r}}M^2\right).
		\end{align*}
        Choosing $\eps$ and $T'$ sufficiently small, we obtain 
\begin{align*}
    \Gamma(B_{2M})\subset B_{2M}.
\end{align*}
		Now we show that $\Gamma$ is a contraction in $B_{2M}$, possibly reducing $\eps$ and $T'$. For $h_1,\ h_2\in B_{2M}$, we observe that
		\begin{align*}
			\Gamma(h_1)-\Gamma(h_2)=\mathcal{T}[0,f(h_1)-f(h_2)].
		\end{align*} 
		The estimates above imply, thanks to the fact that $h_1,h_2\in B_{2M}$
		\begin{align*}
			t^{1+\frac{n}{2r}-\frac{n}{2{q_a}}}\lVert f(h_1(t))-f(h_2(t))\rVert_{Y^{q_a}}
            &\lesssim
            \left(\epsilon +\left(T'\right)^{1-\frac{n}{2r}}M \right) \| h_1-h_2 \|_{Z^{T'}},\\
			t\lVert f(h_1(t))-f(h_2(t))\rVert_{Y^r}
            & \lesssim
            \left(\epsilon +\left(T'\right)^{1-\frac{n}{2r}}M \right) \| h_1-h_2 \|_{Z^{T'}},\\
            t^{\frac{1}{2}+\frac{n}{2r}-\frac{n}{2{q_a}}}\lVert|y| \left(f(h_1(t))-f(h_2(t))\right)\rVert_{Y^{q_a}}
            &\lesssim
            \left(\epsilon +\left(T'\right)^{1-\frac{n}{2r}}M \right) \| h_1-h_2 \|_{Z^{T'}},\\
			t^{\frac{1}{2}}\lVert |y|\left(f(h_1(t))-f(h_2(t))\right)\rVert_{Y^r}
            & \lesssim
           \left(\epsilon +\left(T'\right)^{1-\frac{n}{2r}}M \right) \| h_1-h_2 \|_{Z^{T'}},\\
			\operatorname{lim}_{t\rightarrow 0} t^{1+\frac{n}{2r}-\frac{n}{2{q_a}}}\lVert f(h_1(t))-f(h_2(t))\rVert_{Y^{q_a}}&=0,\\ \operatorname{lim}_{t\rightarrow 0} t\lVert f(h_1(t))-f(h_2(t))\rVert_{Y^r}&=0,
            \\
            \operatorname{lim}_{t\rightarrow 0} t^{\frac{1}{2}+\frac{n}{2r}-\frac{n}{2{q_a}}}\lVert |y|\left(f(h_1(t))-f(h_2(t))\right)\rVert_{Y^{q_a}}&=0,
            \\ 
            \operatorname{lim}_{t\rightarrow 0} t^{\frac{1}{2}}\lVert |y|\left(f(h_1(t))-f(h_2(t))\right)\rVert_{Y^r}&=0.
		\end{align*}
		Applying again \autoref{prop_linear_localization}, we conclude
		\begin{align*}
			\lVert  \Gamma(h_1)-\Gamma(h_2)\rVert_{Z^{T'}}
            &\leq 
            C'\left(\epsilon+\left(T'\right)^{1-\frac{n}{2r}}M\right)\lVert h_1-h_2\rVert_{Z^{T'}},
		\end{align*}
		for some finite constant $C'$; up to choosing $\epsilon$ and $T'$ sufficiently small, the coefficient in front of $\lVert h_1-h_2\rVert_{Z^{T'}}$ is smaller than $1$, hence $\Gamma$ is a contraction. By the Banach fixed point theorem, $\Gamma$ admits a unique fixed point in $B_{2M}$, which provides the desired solution.
\end{proof}
\subsection{Proof of \autoref{thm:non.uniqueness}}\label{subsec:end_proof}
Let $1\leq q<\frac{n}{2}$ and $\beta$ such that
\begin{align*}
\left(\frac{n}{2q}-2\right)\vee \left(-\frac{2q}{q+n}\right)<\beta<\frac{2n}{3q}-2.
\end{align*}
Then set $r:=(2+\beta)q\in \left(\frac{n}{2},\frac{2n}{3}\right)$. By \autoref{thm:instability}, for each $\eta\in [1,\frac{n}{2})$ and $\gamma>\frac{n}{2}$ there is $\bar{\alpha}>0$ such that for the corresponding expander $\bar{\Omega}$ the operator $L_{\bar{\alpha}}:D(L_{\bar{\alpha}})\subset Y^{\eta,\gamma}\rightarrow Y^{\eta,\gamma}$ admits a maximal positive eigenvalue $\lambda_{\bar{\alpha}}$ for which 
\begin{align*}
    \lambda_{\bar{\alpha}}<1-\frac{n}{2r}.
\end{align*}
Moreover, $\lambda_{\bar{\alpha}}$ is independent of the particular choice of $\eta,
\gamma$. In particular we can choose both $(\eta,\gamma)=(\hat{q},\hat{r})$ and $(\eta,\gamma)=(q_a,r)$ for
\begin{align*}
    \hat{q}=1,\quad  \hat{r}=r\vee(2\hat{q}),\quad q_a\in \left(\frac{nr}{2n-r},\frac{n}{2}\right).
\end{align*}
Recall $\tilde{w}_0(y)=\frac{\ell_{\bar{\alpha}}}{|y|^2}$. 
Define $w_0, h_0: \R^{n+2}\rightarrow \R$ as above, namely:
\begin{align*}
    w_0(y):=\tilde{w}_0(y)\chi\left(|y|\right),
    \quad
    h_0:=\tilde{w}_0-w_0,
\end{align*}
for some cutoff function $\chi$ such that \eqref{eq:assumptions.localization} holds.

We can invoke \autoref{thm:existence_ancient_solutions} and \autoref{thm:nonlinear_localization}
to obtain 
\begin{align*}
   w_1(t,y)&=\frac{1}{t}\bar{\Omega}\left(\frac{y}{\sqrt{t}}\right)-h_1(t,y),\\
w_2(t,y)&=\frac{1}{t}\bar{\Omega}\left(\frac{y}{\sqrt{t}}\right)+\frac{1}{t}\Psi\left(\log t, \frac{y}{\sqrt{t}}\right)-h_2(t,y).
\end{align*}
Inverting the change of variables by \autoref{lem:isomorphism}, \autoref{rmk:isomorphism} we obtain the initial datum
\begin{align*}
    c_0:=A^{-1}[w_0]\in L^q_{rad}(\R^n),\quad c_0\neq 0,
\end{align*}
as well as the two functions 
\begin{align*}
  c_1(t)=A^{-1}[w_1(t)],\quad c_2(t)=A^{-1}[w_2(t)]. 
\end{align*}

One can easily check from the regularity of the self-similar profile $\bar{\Omega}, $ cf. \autoref{subsec:selfsimilarprofile}, \autoref{thm:existence_ancient_solutions}, \autoref{thm:nonlinear_localization} and the continuity of $A^{-1}$, cf. \autoref{lem:isomorphism}, \autoref{rmk:isomorphism}, that 
\begin{align}\label{properties_inversion_localization_1}
    \frac{1}{t}A^{-1}\bar{\Omega}\left(\frac{\cdot}{\sqrt{t}}\right)
    &\in  C((0,T'];L^r_{rad}(\R^n)),
    \quad 
    \frac{1}{t}\left\|A^{-1}\bar{\Omega}\left(\frac{\cdot}{\sqrt{t}}\right)\right\|_{L^r_{rad}}\lesssim\ \frac{1}{t^{1-\frac{n}{2r}}},\\ \ A^{-1}h_1(t),\ A^{-1}h_2(t)
    &\in
    C([0,T'];L^r_{rad}(\R^n)),\label{properties_inversion_localization_2}\\ 
     \frac{1}{t}A^{-1}{\Psi}\left(\log t,\frac{\cdot}{\sqrt{t}}\right)
     &\in
     C((0,T'];L^r_{rad}(\R^n))\cap C([0,T'];L^q_{rad}(\R^n)),\label{properties_inversion_localization_3}
     \\
    \frac{1}{t}\left\|A^{-1}{\Psi}\left(\log t,\frac{\cdot}{\sqrt{t}}\right)\right\|_{L^q_{rad}}& \lesssim \frac{1}{t^{1-\frac{n}{2q}}},
    \quad 
    \frac{1}{t}\left\|A^{-1}{\Psi}\left(\log t,\frac{\cdot}{\sqrt{t}}\right)\right\|_{L^r_{rad}} \lesssim \frac{1}{t^{1-\frac{n}{2r}}}\label{properties_inversion_localization_4}.
\end{align}
By the same arguments as in \autoref{ssec:proof.critical}, \eqref{eq:regularity.nonlinearity} holds and $c_1, c_2$ are distributional solutions of equation \eqref{eq:KS_physical}. We claim that they are, in fact, two different mild $L^q$-solutions on $[0,T']$ with the same initial datum $c_0$. For that, we have to show that $c_1, c_2\in C([0,T'];L^q(\R^n))$, they satisfy the mild formulation \eqref{eq:mild.formulation}, and $c_1\neq c_2$. We show the first two properties in the form of a lemma.
\begin{lemma}\label{lem:localization}
    $c_1,c_2\in C([0,T'];L^q(\R^n))$ and they both satisfy the mild formulation \eqref{eq:mild.formulation}.
\end{lemma}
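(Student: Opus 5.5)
The plan is to work entirely at the level of the reduced mass $w_i = A[c_i]$ and to transfer everything back through $A^{-1}$. By \autoref{rmk:isomorphism}, $A^{-1}: Y^q_\nabla \to X^q$ is bounded also for $q=1$. So continuity $c_i \in C([0,T'];L^q(\R^n))$ follows once we show
\[
w_i \in C([0,T'];Y^q_\nabla), \qquad w_i(0)=w_0 .
\]
For the mild formulation I would use two algebraic identities, each checked on smooth radial functions and extended by density. The first is an intertwining relation: for radial $g$, $A^{-1} e^{t\Delta_{\R^{n+2}}} g = e^{t\Delta_{\R^n}} A^{-1} g$. This follows because $A$ maps $\Delta_{\R^n}$ on radial functions to $\Delta_{\R^{n+2}}$; it is the linear part of the computation giving \eqref{eq:w}. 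The second is
\[
A^{-1}\bigl[2n w^2 + y\cdot\nabla (w^2)\bigr] = -\nabla\cdot(vc) \qquad \text{for } w=A[c],
\]
which is the nonlinear part of the same computation. With these, \eqref{eq:mild.formulation} for $c_i$ is equivalent to the Duhamel formula
\[
w_i(t) = e^{t\Delta} w_0 + \int_0^t e^{(t-s)\Delta}\bigl[2n w_i^2 + y\cdot\nabla(w_i^2)\bigr](s)\,ds
\]
in $Y^q_\nabla$. So both claims reduce to statements about $w_i$.

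\emph{Step 1: the Duhamel formula holds distributionally and in $Y^r_{\mathrm{loc}}$.} For $t>0$, $\tilde w_i$ and $h_i$ are smooth enough and satisfy their PDEs: the self-similar equation for $\tilde w_i$ via $\bar\Omega$ and $\Psi$ from \autoref{thm:existence_ancient_solutions}, and \eqref{eq_reminder_trunc} for $h_i$ via \autoref{thm:nonlinear_localization}. Hence $w_i$ solves \eqref{eq:w} classically on $(0,T']$. Passing to $s\downarrow 0$ in the Duhamel formula from time $s$ to time $t$ should give the formula from $0$, provided $w_i(s)\to w_0$ in a weak sense. This convergence follows from two facts: $h_i(s)\to h_0$ in $Y^r_\nabla$, and $\tilde w_i(s)\to \tilde w_0$ in distributions, the latter by the computation in \autoref{ssec:proof.critical}.

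\emph{Step 2: quantitative bounds in $Y^q_\nabla$.} I would show that the linear part and the Duhamel integral both lie in $C([0,T'];Y^q_\nabla)$, with the integral vanishing at $t=0$.
\begin{itemize}
\item For $e^{t\Delta}w_0$ this follows from \autoref{lem:heat_semigroup_weighted}, together with the commutator argument of Step 1 in \autoref{prop_linear_localization} for $y\cdot\nabla e^{t\Delta}w_0$, now run in $Y^q$.
\item For the nonlinear term, split $w_i = \bar w + \psi_i - h_i$ and estimate each product in $N(w_i)$ in $Y^q$ with explicit singular time weights.
\begin{itemize}
\item Self-similar pieces such as $\bar w^2$ and $\bar w\,y\cdot\nabla \bar w$ are not in $Y^q$ individually because of their $|y|^{-4}$ tails. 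So I would not estimate $\bar w$ alone. Instead I would group $\bar w - h_i$, whose tail at large $|y|$ is controlled by $|y|h_i\in Y^\sigma\cap Y^\infty$ and by the decay of $\bar\Omega$ beyond the cutoff scale.
\item Concretely, use H\"older with $\|\bar w(s)\|_{Y^{p}_\nabla}\lesssim s^{\frac{n}{2p}-1}$ for $p>n/2$. Combine this with the $Y^r_\nabla$ and weighted bounds on $h_i$ in the norm $Z^{T'}$, and with \eqref{properties_inversion_localization_4} for $\psi_i$.
\item The goal is a bound $\|N(w_i)(s)\|_{Y^q_\nabla}\lesssim s^{-1+\kappa'}$ for some $\kappa'>0$. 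The heat semigroup is bounded on $Y^q$, so the Duhamel integral converges, is continuous, and is $O(t^{\kappa'})$ as $t\to 0$.
\end{itemize}
\end{itemize}

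\emph{Main obstacle.} The delicate point is the tail of $N(w_i)$ in $Y^q$ for small $q$, especially $q=1$. There the weight $|y|^{-2}dy$ and the $|y|^{-2}$ tails of $\tilde w_0$ and $h_0$ make integrability hinge on exact cancellation: $w_i=\tilde w_i-h_i$ must decay better than its two pieces. I would first try to prove an auxiliary bound $\||y|^{2} w_i(t)\|_{Y^{\infty}(|y|\ge 2)} \lesssim t^{\kappa''}$ by writing $w_i$ near infinity through its own heat-type equation, whose initial datum $w_0$ vanishes outside the support of $\chi$. This is a weighted maximum-principle or Duhamel estimate in the spirit of \eqref{eq:evolution_|y|heat.2}. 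Such a bound gives $w_i^2\in Y^q$ away from the origin whenever $4q>n-2$; if that is insufficient, I would iterate the weighted estimate with higher powers $|y|^{m}$, which is allowed because $w_0$ is compactly supported.
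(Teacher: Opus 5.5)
\textbf{Gap: Step 2 never establishes that $c_i(t)$ lies in $L^q$ globally.}

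Your reduction to the reduced mass is sound as far as it goes: both intertwining identities are correct on radial functions. But it only relocates the difficulty. The whole content of the lemma is that $c_i(t)$ lies in $L^q$ \emph{globally}, i.e.\ that the $\ell_{\bar\alpha}|y|^{-2}$ tails of $\tilde w_i(t)$ and $h_i(t)$ cancel, and Step 2 cannot produce this.

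Everything you know about $h_i$ is an upper bound in $Z^{T'}$: $Y^r_\nabla$, $|y|h_i\in Y^\sigma$, and $t^{\frac{n}{2\sigma}}|y|h_i\in Y^\infty$. All these norms are finite for functions with an exact $|y|^{-2}$ tail, as they must be, since $h_0\notin Y^q$ has one. Pointwise they give only $|h_i(t,y)|\lesssim t^{-\frac{n}{2\sigma}}|y|^{-1}$. Grouping $\bar w-h_i$ and bounding each piece by its own norms can never detect that $h_i\approx\bar w$ at infinity. So cross terms such as $h_i^2$ or $\bar w h_i$ in $N(w_i)$ are not controlled in $Y^q$ for small $q$. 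The localization step only controls $h_i$ in $Y^r$-based norms, with $q_a$ close to $n/2$ entering only through the forcing; this is exactly why the paper keeps $q_a$ and $q$ distinct.

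Demanding $N(w_i)\in Y^q_\nabla$ also requires control of $y\cdot\nabla\bigl(y\cdot\nabla(w_i^2)\bigr)$, i.e.\ weighted second derivatives that none of the constructed norms provide. The divergence should instead be absorbed by heat smoothing, $\|e^{(t-s)\Delta}\nabla\cdot(vc)\|_{L^q}\lesssim (t-s)^{-1/2}\|vc\|_{L^q}$, which is natural in physical variables.

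Your ``main obstacle'' paragraph locates the problem correctly but offers only a plan. The bound $\||y|^2w_i(t)\|_{Y^\infty(|y|\ge 2)}\lesssim t^{\kappa''}$ is false at $t=0$ unless $\chi$ vanishes on $[2,\infty)$, since $|y|^2w_0=\ell_{\bar\alpha}\chi$. A comparison argument would also need classical regularity of $w_i$, control of the nonlinear drift $2w_i\,y\cdot\nabla w_i$, and gradient decay, which a maximum principle does not provide.

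\textbf{Missing idea: propagate $L^q$ from $c_0$ instead of proving decay.}

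The paper never proves tail decay. It propagates the $L^q$-integrability of $c_0$ by a localized Gronwall argument in physical variables, as follows.
\begin{enumerate}
\item Multiply by $\chi_{x_0,R}$ and write the localized mild formula from a time $t_0>0$.
\item Bound the nonlinearity by H\"older, Sobolev and interpolation:
$\|\chi_{x_0,R}c\,\nabla(-\Delta)^{-1}c\|_{L^q}\lesssim \|\chi_{x_0,R}c\|_{L^q}^{\theta}\|c\|_{L^r}^{2-\theta}\lesssim \|\chi_{x_0,R}c\|_{L^q}^{\theta}s^{-(1-\frac{n}{2r})(2-\theta)}$.
By \eqref{eq:numerology}, this kernel is integrable against $(t-s)^{-1/2}$.
\item Let $t_0\to 0$, using only the \emph{local} $L^q$-continuity of $\chi_{x_0,R}c$ up to $t=0$ and $c_0\in L^q$.
\item After Young's inequality, apply Gronwall to $c_R(t)=\sup_{x_0}\|c(t)\|_{L^q(B_R(x_0))}$, with constants uniform in $R\geq 1$.
\item Send $R\to\infty$ to get $c\in L^\infty([0,T'];L^q)$. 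The global mild formula then follows by dominated convergence, and time continuity by Vitali.
\end{enumerate}
This uses only $\|c(s)\|_{L^r}\lesssim s^{-(1-\frac{n}{2r})}$ and $c_0\in L^q$. The tail cancellation you were trying to prove by hand is therefore obtained without ever being addressed.
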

\begin{proof}
    We just prove the claim for $c_2$, the other case being simpler. For simplicity we also drop the subscripts on $c_2, h_2$ since no ambiguity arises. 
    
    Fix a monotone function $\chi_0\in C^{\infty}_c([0,+\infty))$ such that $\chi_0\geq 0$ and
\begin{align*}
    \chi_0(\rho)=\begin{dcases}
        1\quad \text{if }\rho\leq 1,\\
        0\quad \text{if }\rho>2.
    \end{dcases}
\end{align*}
    For $x_0\in \R^n$ and $R>0$, define the cutoff 
    \begin{align*}
        \chi_{x_0,R}(x):=\chi_0\left(|x-x_0|/R\right).
    \end{align*}
    Using the properties of the self-similar profile $\bar{\Omega}$ (see \autoref{subsec:selfsimilarprofile}), the mapping properties of $A^{-1}$ (cf. \autoref{lem:isomorphism}, \autoref{rmk:isomorphism}), and \eqref{properties_inversion_localization_1}, \eqref{properties_inversion_localization_2}, \eqref{properties_inversion_localization_3}, \eqref{properties_inversion_localization_4}, we obtain
\begin{align}\label{eq:continuity_localization}
        c\chi_{x_0,R}\in C([0,T'];L^q(\R^n))\cap C((0,T'];L^r(\R^n))
    \end{align}
    and, for each $0<t_0<t\leq T'$, it holds
    \begin{align}\label{mild_eq_1}
     \chi_{x_0,R}c(t)
     &=
     e^{\Delta(t-t_0)}\chi_{x_0,R}c(t_0)+\int_{t_0}^t e^{\Delta(t-s)}[ \Delta\chi_{x_0,R}c(s)-2\div (\nabla \chi_{x_0,R}c(s))]ds\notag\\ & -\int_{t_0}^t e^{\Delta(t-s)}[\operatorname{div}\left(\chi_{x_0,R}c(s)\nabla(-\Delta)^{-1}c(s)\right)-\nabla\chi_{x_0,R}c(s)\cdot\nabla(-\Delta)^{-1}c(s)]ds  .  
    \end{align}
    By the regularization properties of the heat semigroup and the definition of $\chi$ we have

\begin{align*}
    \|c(t)\|_{L^q(B_R(x_0))}&\lesssim \|\chi_{x_0,R}c(t_0)\|_{L^q}+\int_{t_0}^t\left(1+\frac{1}{\sqrt{t-s}}\right)  \|c(s)\|_{L^q(B_{2R}(x_0))}ds\\ & +\int_{t_0}^t\left(1+\frac{1}{\sqrt{t-s}}\right)  \|\chi_{x_0,R}c(s)\nabla(-\Delta)^{-1}c(s)\|_{L^q}ds,
\end{align*}
for some hidden constant independent of $t_0, t, x_0, R$.
Let us further analyze the last term. By H\"older's inequality, Sobolev embedding, interpolation and equations \eqref{properties_inversion_localization_1}, \eqref{properties_inversion_localization_2}, \eqref{properties_inversion_localization_3}, \eqref{properties_inversion_localization_4} it holds, for $r_0\in (q,r)$ satisfying $\frac{1}{q}-\frac{1}{r}+\frac{1}{n}=\frac{1}{r_0}$ and $\theta=\frac{\beta}{1+\beta}+\frac{(2+\beta)q}{n(1+\beta)}\in (0,1)$,
\begin{align*}
    \|\chi_{x_0,R}c(s)\nabla(-\Delta)^{-1}c(s)\|_{L^q}&\leq \|\nabla(-\Delta)^{-1}c(s)\|_{L^{\frac{nr}{n-r}}}\|\chi_{x_0,R} c(s)\|_{L^{r_0}}\\ & \lesssim \|c(s)\|_{L^{r}}\|\chi_{x_0,R} c(s)\|_{L^{q}}^{\theta}\|\chi_{x_0,R} c(s)\|_{L^{r}}^{1-\theta}\\ & \leq \|\chi_{x_0,R} c(s)\|_{L^{q}}^{\theta}\|c(s)\|_{L^{r}}^{2-\theta}\\ & \lesssim \|\chi_{x_0,R} c(s)\|_{L^{q}}^{\theta} \frac{1}{s^{\left(1-\frac{n}{2r}\right)(2-\theta)}}.
\end{align*}
Since $r\in (\frac{n}{2},\frac{2n}{3})$ and $\theta\in (0,1)$, one can check
\begin{align}\label{eq:numerology}
    \left(1-\frac{n}{2r}\right)(2-\theta)+\frac{1}{2}<\frac{1}{4}(2-\theta)+\frac{1}{2}<1.
\end{align}
Therefore, from Young's inequality we obtain 
\begin{align*}
    \|c(t)\|_{L^q(B_R(x_0))}&\lesssim \|\chi_{x_0,R}c(t_0)\|_{L^q}+\int_{t_0}^t \frac{\|c(s)\|_{L^q(B_{2R}(x_0))}}{s^{\left(1-\frac{n}{2r}\right)(2-\theta)}\sqrt{t-s}}  ds\\ & +\int_{t_0}^t \frac{1}{s^{\left(1-\frac{n}{2r}\right)(2-\theta)}\sqrt{t-s}}ds.
\end{align*}
Since $c_0\in L^q$ and \eqref{eq:continuity_localization}, \eqref{eq:numerology} hold we can let $t_0\rightarrow 0$:
\begin{align*}
    \|c(t)\|_{L^q(B_R(x_0))}&\lesssim 1+\|c_0\|_{L^q}+\int_0^t \frac{\|c(s)\|_{L^q(B_{2R}(x_0))}}{s^{\left(1-\frac{n}{2r}\right)(2-\theta)}\sqrt{t-s}}  ds.
\end{align*}
Now, let us introduce the function
\begin{align*}
    c_R(t):=\sup_{x_0\in \R^n}\|c(t)\|_{L^q(B_R(x_0))}.
\end{align*}
Obviously $\|c(s)\|_{L^q(B_{2R}(x_0))}\lesssim c_R(s)$, and therefore
\begin{align*}
    \|c(t)\|_{L^q(B_R(x_0))} &\lesssim 1+\|c_0\|_{L^q}+\int_0^t \frac{c_R(s)}{s^{\left(1-\frac{n}{2r}\right)(2-\theta)}\sqrt{t-s}}  ds.
\end{align*}
Taking the supremum in $x_0$ in the expression above and applying Gr\"onwall's inequality we obtain
\begin{align*}
    c_R(t)&\lesssim  1+\|c_0\|_{L^q}.
\end{align*}
Letting $R\rightarrow +\infty$ this implies $c\in L^{\infty}([0,T'];L^q(\R^n))$.
Since we already know that $c \in C((0,T];L^r(\R^n))$, the latter implies that $c\in \mathcal{B}([0,T];L^q(\R^n)).$ Computations above and the continuity of $c(t)\chi_{x_0,R}$ in $L^q$ also imply that equation \eqref{mild_eq_1} can be extended up to time $t_0=0$, namely for each $t\in [0,T]$ it holds
\begin{align}\label{eq_mild_2}
    \chi_{x_0,R}c(t)&=e^{\Delta t}\chi_{x_0,R}c_0+\int_{0}^t e^{\Delta(t-s)}[ \Delta\chi_{x_0,R}c(s)-2\div (\nabla \chi_{x_0,R}c(s))]ds\notag\\ & -\int_{0}^t e^{\Delta(t-s)}[\operatorname{div}\left(\chi_{x_0,R}c(s)\nabla(-\Delta)^{-1}c(s)\right)-\nabla\chi_{x_0,R}c(s)\cdot\nabla(-\Delta)^{-1}c(s)]ds.  
\end{align}
We now exploit this formulation to show \eqref{eq:mild.formulation}, by letting $R\rightarrow+\infty$. Indeed, one trivially has
\begin{align*}
    \chi_{x_0,R}c(t)\rightarrow c(t),\ \chi_{x_0,R}c_0\rightarrow c_0,\quad \mbox{in } L^q,\quad\mbox{as }R\rightarrow +\infty.
\end{align*}
Moreover, since $c\in L^\infty(0,T;L^q(\R^n))$, the regularizing properties of the heat semigroup and previous computations imply that, as $R \to +\infty$, the following convergences hold in $L^q$ for each $s\in (0,t)$:
\begin{align*}
     &e^{\Delta(t-s)}[ \Delta\chi_{x_0,R}c(s)-2\div (\nabla \chi_{x_0,R}c(s))+\nabla\chi_{x_0,R}c(s)\cdot\nabla(-\Delta)^{-1}c(s)]\rightarrow 0,
     \\
     &
     e^{\Delta(t-s)}[\operatorname{div}\left(\chi_{x_0,R}c(s)\nabla(-\Delta)^{-1}c(s)\right)]\rightarrow e^{\Delta(t-s)}[\operatorname{div}\left(c(s)\nabla(-\Delta)^{-1}c(s)\right)],
\end{align*}
as well as the bounds
\begin{align}
    \|e^{\Delta(t-s)}[ \Delta\chi_{x_0,R}c(s)-2\div (\nabla \chi_{x_0,R}c(s))+\nabla\chi_{x_0,R}c(s)\cdot\nabla(-\Delta)^{-1}c(s)]\|_{L^q}
    &\lesssim \frac{1+\|c(s)\|_{L^q}}{s^{\left(1-\frac{n}{2r}\right)(2-\theta)}}, 
    \\ 
    \|e^{\Delta(t-s)}[\operatorname{div}\left(\chi_{x_0,R}c(s)\nabla(-\Delta)^{-1}c(s)\right)]\|_{L^q}
    &\lesssim 
     \frac{1+\|c(s)\|}{s^{\left(1-\frac{n}{2r}\right)(2-\theta)}\sqrt{t-s}}.
\end{align}
The right-hand sides above are in $L^1(0,t)$ as functions of $s$; therefore, we can pass to the limit in \eqref{eq_mild_2} as $R\rightarrow+\infty$ by dominated convergence theorem and recover the mild formulation \eqref{eq:mild.formulation}.
\\
We are left to show the continuity of $c$ in $L^q(\R^n).$ Let $0\leq t_0\leq t\leq T $; equation \eqref{eq:mild.formulation} implies
\begin{align*}
     c(t)-   c(t_0)=&\left(e^{\Delta t}c_0-e^{\Delta t_0}c_0\right) -\int_{t_0}^t e^{\Delta(t-s)}\operatorname{div}\left(c(s)\nabla(-\Delta)^{-1}c(s)\right)ds\\ & -\int_0^{t_0} (e^{\Delta(t-s)}-e^{\Delta(t_0-s)})\operatorname{div}\left(c(s)\nabla(-\Delta)^{-1}c(s)\right)ds ,   
    \end{align*}
as an equality in $L^q.$  Taking the $L^q$-norm of the expression above, by analogous considerations to the ones employed to obtain the uniform bound on the $L^q$-norm, we get
\begin{align*}
    \|c(t)-c(t_0)\|_{L^q}&\leq \|e^{\Delta t}c_0-e^{\Delta t_0}c_0\|_{L^q}+\left\|\int_{t_0}^t e^{\Delta(t-s)}\operatorname{div}\left(c(s)\nabla(-\Delta)^{-1}c(s)\right) ds\right\|_{L^q}\\ &+\left\|\int_0^{t_0} (e^{\Delta(t-s)}-e^{\Delta(t_0-s)})\operatorname{div}\left(c(s)\nabla(-\Delta)^{-1}c(s)\right) ds\right\|_{L^q}\\ & \lesssim  \|e^{\Delta t}c_0-e^{\Delta t_0}c_0\|_{L^q}+\int_{t_0}^t \frac{\|c(s)\|^{\theta}_{L^q}}{s^{\left(1-\frac{n}{2r}\right)(2-\theta)}\sqrt{t-s}} ds\\ & +\left\|\int_0^{t_0} (e^{\Delta(t-s)}-e^{\Delta(t_0-s)})\operatorname{div}\left(c(s)\nabla(-\Delta)^{-1}c(s)\right) ds\right\|_{L^q}.
\end{align*}
Since we already proved that $c\in L^{\infty}([0,T];L^q(\R^n))$, the continuity follows from the last inequality and Vitali convergence theorem. 
\end{proof}
We are left to show that $c_1\neq c_2$. Since $r>1$, $A$ is continuous from $L^r(\R^n)$ to $Y_{\nabla}^r$ thanks to \autoref{lem:isomorphism}. Therefore from \autoref{thm:existence_ancient_solutions} and \autoref{thm:nonlinear_localization} we have the lower bound
\begin{align*}
   \|c_1(t)-c_2(t)\|_{L^r}&\gtrsim \frac{1}{t^{1-\frac{n}{2r}}}\|\Psi(\log t)\|_{Y^{r}_{\nabla}} -\|h_1(t)-h_2(t)\|_{Y^{r}_{\nabla}}\\ & \gtrsim
    \frac{1}{t^{1-\frac{n}{2r}-\lambda_{\bar{\alpha}}}}\lVert \bar{\Omega}^{lin}\rVert_{Y_{\nabla}^r} -1\rightarrow +\infty,\quad \text{as }t\rightarrow 0,
\end{align*}
and the proof is complete.
\begin{rmk}
    For $q\in (1,\frac{n}{2})$, \autoref{lem:localization} and the fact that $A$ is a linear continuous map between $L^q_{rad}(\R^n)$ and $Y^q_{\nabla}$, together with the computations above, imply that $w_1,\ w_2$ are two different $Y^q$-mild solutions of equation \eqref{eq:w} in a sense analogous to \autoref{def_mild_sol}. In case of $q=1$ one can redo computations similar to those of this section directly on $w_1,\ w_2$ to show that $w_1,\ w_2\in C([0,T];Y^1)$ and they are two different $Y^1$-mild solutions of equation \eqref{eq:w} in a sense analogous to \autoref{def_mild_sol}.
\end{rmk}

\bibliography{biblio}{}
\bibliographystyle{alpha}

\end{document}